\pgfplotsset{compat=1.18}
	\theoremstyle{plain}
		\newtheorem{mainthm}{\textsc{Theorem}}		
				\newtheorem{thm}{Theorem}[section]	
						\newtheorem{cor}[thm]{Corollary}	
				\newtheorem{lem}[thm]{Lemma}		
						\newtheorem{prop}[thm]{Proposition}
	\theoremstyle{definition}
		\newtheorem{defn}[thm]{Definition}	
				\newtheorem{ex}[thm]{Example}		
					\theoremstyle{remark}
		\newtheorem{rem}[thm]{Remark}		
				\newtheorem{note}[thm]{Notation}		
				\numberwithin{equation}{section}
\newcommand{\nullity}{\mathrm{n}_0\,}
\newcommand{\noo}[1]{\overset {\mbox{%
\lower1pt\hbox{${\scriptstyle o}$}}}{\mathrm n}_{\mbox{%
\lower2pt\hbox{$\scriptstyle#1$}}}}
\newcommand{\coindex}{\mathrm{n}_+\,}
\newcommand{\itriple}{\iota}
\newcommand{\Find}{\mathrm{ind}\,}
\newcommand{\trasp}[1]{{#1}^\mathsf{T}}	
\newcommand{\iMor}{\mathrm{n_-}}		
\newcommand{\R}{\mathbb{R}}		
\newcommand{\ZZ}{\mathbf{Z}}
\newcommand{\C}{\mathbf{C}}		
\newcommand{\Sym}{\mathrm{S}}
\newcommand{\Mat}{\mathrm{M}}
\newcommand{\N}{\mathbb{N}}		
\newcommand{\iCLM}{\mu^{\scriptscriptstyle{\mathrm{CLM}}}}
\DeclareMathOperator{\sgn}{sgn}		
\newcommand{\coiMor}{\mathrm{n_+}}
\newcommand{\Id}{I}
\DeclareMathOperator{\spfl}{sf}			
\DeclareMathOperator{\CFs}{\mathcal{CF}^{s}}	
\DeclareMathOperator{\CFsa}{\mathcal{CF}^{sa}}	
\DeclareMathOperator{\BFsa}{\mathcal{BF}^{sa}}
\DeclareMathOperator{\BF}{\mathcal{BF}}
\newcommand{\cfsa}{\mathcal{CF}^{sa}}
\newcommand{\Bou}{\mathcal{B}}
\newcommand{\Bsa}{\mathcal{B}^{sa}}
\newcommand{\irel}{I}
\newcommand{\iindex}[1]{\mu_{\scriptscriptstyle{\mathrm{Mor}}}\left[#1\right]}
\newcommand{\coiindex}[1]{\mathrm{n_+}\left[#1\right]}
\newcommand{\Real}{\mathrm{Re}}
\newcommand{\ind}{\mathrm{ind\,}}   
\newcommand{\Imm}{\mathrm{Im}}
\newcommand{\norm}[1]{\lVert#1\rVert}
\def\sq(H3){\sqrt{-1}}
\DeclareMathOperator{\rk}{rank}	
\def\={:=}
\def\>{\supset}
\def\<{\subset}
\def\12{\dfrac{1}{2}}
\def\0{^{\circ}}
\def\CC{{\mathbb C}}
\def\NN{{\mathbb N}}
\def\QQ{{\mathbb Q}}
\def\RR{{\mathbb R}}
\def\ZZ{{\mathbb Z}}
\def\C{\CC}
\def\f{\varphi}
\def\N{\NN}
\def\Q{\QQ}
\def\R{\RR}
\def\V{\sqrt}
\def\Z{\ZZ}
\def\Cl{\mbox{\rm C$\ell$}}
\DeclareMathOperator{\codim}{codim} \DeclareMathOperator{\coker}{coker}
 \DeclareMathOperator{\dist}{dist}
\newcommand{\dom}{D}
\DeclareMathOperator{\Graph}{gr}
\newcommand{\Grn}[1]{\mathcal G({#1})}
\DeclareMathOperator{\Grass}{\mathcal S} 
\DeclareMathOperator{\image}{im}
\DeclareMathOperator{\Lag}{\mathrm{Lag}} 
 \DeclareMathOperator{\rank}{rank}
\renewcommand{\Cl}{\mathcal{Cl}} 
\DeclareMathOperator{\Span}{span}
\DeclareMathOperator{\supp}{supp}
\DeclareMathOperator{\Tr}{\mathcal T}
\def\namedlabel#1#2{\begingroup
    #2%
    \def\@currentlabel{#2}%
    \phantomsection\label{#1}\endgroup
}
\title{Index theory for singular Lagrangian systems and Bessel-type differential operators}
\author{Xijun Hu\thanks{The author is partially supported by  supported by the National Key R\&D Program of China(2020YFA0713303).} , Alessandro Portaluri
\thanks{The author is partially supported by Progetto di Ricerca GNAMPA - INdAM, codice CUP\_E55F22000270001 “Dinamica simbolica e soluzioni periodiche per problemi singolari della Meccanica Celeste”.}, Li Wu\thanks{The author is partially supported by  supported by the  National Natural Science Foundation of China NSFC N.12171281.}}
\date{\today}
\begin{document}
 \maketitle

\begin{abstract}
The aim of the present manuscript is to develop an index theory for singular Lagrangian systems, with a particular focus on the important class of singular operators given by Bessel-type differential operators. The main motivation is to address several challenges posed by singular operators, which appear in a wide range of applications: celestial mechanics (for instance, perturbations in planetary motion), oscillatory systems with time-dependent forcing, electromagnetism (such as wave equations in nonuniform media), and quantum mechanics (notably certain Schrodinger equations with periodic potentials).

We pursue two principal objectives. First, we establish a spectral flow formula and a Morse Index Theorem for gap-continuous paths of singular Sturm–Liouville operators. By means of these index formulas, we construct a Morse index theory for a broad class of Bessel-type differential operators and apply it to a family of asymptotic solutions of the gravitational $N$-body problem.

Finally, our new index theory provides new insight into a phenomenon first observed by Rellich concerning the spectrum of one-parameter families of Sturm–Liouville operators with varying domains.

\vskip0.2truecm
\noindent
\textbf{AMS Subject Classification:} 34B24, 58J30, 53D12, 34B30.
\vskip0.1truecm
\noindent
\textbf{Keywords:} Singular Sturm-Liouville differential operators, Spectral flow, Maslov index, Asymptotic  solutions of the N-body problem, Bessel operators, Rellich  counterexample. 
\end{abstract}


\tableofcontents


\section{Introduction}

In its classical form, the Morse Index Theorem asserts that the number of negative eigenvalues of the Hessian of the geodesic action functional at a critical point equals the number of conjugate points, counted with multiplicity, along the corresponding geodesic. Through standard spectral flow techniques, this integer can be expressed as the sum of the dimension of the intersections between the graph of the fundamental solution with the Lagrangian subspace corresponding to the boundary condition.  For \emph{singular} Sturm–Liouville operators, however, this classical framework fails, and the usual spectral flow identities no longer apply.

The present paper has two main objectives:
\begin{enumerate}
  \item[(O1)] To construct a Lagrangian intersection theory that provides the natural framework for both regular and (one-sided) singular Sturm–Liouville (SL) boundary value problems, and within this setting to establish a Spectral Flow Formula and a Morse Index Theorem with general Lagrangian boundary conditions.
  \item[(O2)] To develop a Morse Index Theory for Bessel-type differential operators and to apply to some classes of motions in the gravitational $n$-body problem.
\end{enumerate}

As a direct consequence of this index theory, we rigorously capture the Rellich  phenomenon—the “disappearance” of eigenvalues at \(-\infty\)—thereby refining the relation between spectral flow and the difference of Morse indices in this broader functional-analytic framework. This spectral effect arises when considering unbounded perturbations of closed, self-adjoint, lower-bounded Fredholm operators, which frequently appear in applications, especially in families of operators with varying domains. Finally, we compute the Morse index for a class of unbounded motions in the gravitational $N$-body problem.

The motivations for this theory are twofold:
\begin{itemize}
\item[(M1)] To classify, via the Morse index, all possible motions in the gravitational $N$-body problem—whether partially or totally colliding, or unbounded.
\item[(M2)] To use this classification to \emph{a priori} exclude collisions along solutions obtained through topological methods in the Calculus of Variations.
\end{itemize}
To the best of authors knowledge the index theory in this singular context has been investigated in \cite{HS22}. In this paper the authors investigated  a general class of linear Hamiltonian systems on intervals with at least one singular endpoint which can be limit-point, limit-circle, or limit-intermediate. In particular, they proved some 
renormalized oscillation results by using  a  Lagrangian intersection theory in the classical $\C^{2n}$ symplectic space.


\subsection{Highlights and Main Results}

After setting the stage in Section~\ref{sec:Abstract-sf}, we establish in Section~\ref{sec:sf-SL-operators} a spectral flow formula for a family \( s \mapsto l_s \) of one-sided singular Sturm–Liouville operators defined by the differential expression
\begin{equation}\label{eq:sturm-liouville-operator-intro}
l_s := -\frac{d}{dt}\Bigl(P(t)\frac{d}{dt} + Q(t)\Bigr) + Q(t)^{T}\frac{d}{dt} + R(t) + C_s(t), \qquad t \in (a,b),
\end{equation}
with \( a, b \in \overline{\mathbb{R}} \) and \( s \in [0,1] \), where
\begin{multline}\label{eq:assumptions-coeff-intro}
	P^{-1} \in \mathscr{C}^1\big((a,b), \Sym_n(\mathbb{R})\big), \qquad
	Q \in \mathscr{C}^1\big((a,b), \Mat_n(\mathbb{R})\big), \qquad R \in \mathscr{C}^0\big((a,b), \Sym_n(\mathbb{R})\big), \\[3pt]
	\text{and } \ s \mapsto C_s \in \mathscr{C}^0_b\big((a,b), \Sym_n(\mathbb{R})\big) \ \text{is continuous},
\end{multline}
where \( \mathscr{C}^0_b \) denotes the set of bounded continuous functions, and \( C_0(t) = 0 \) for all \( t \in (a,b) \).

Under the regularity assumptions in Equation~\eqref{eq:assumptions-coeff-intro}, the formal differential expression \( l_s \) in Equation~\eqref{eq:sturm-liouville-operator-intro} defines an eventually singular second-order Sturm–Liouville differential operator (abbreviated as {\sc SL-operator}).

In the Hilbert space \( H = L^2((a,b), \mathbb{R}^n) \), let \( L_s \) (respectively \( L_s^* \)) denote the minimal (respectively maximal) operator with domains \( \dom(L_s) \) and \( \dom(L_s^*) \) induced by \( l_s \), and introduce the following hypotheses:
\begin{itemize}
	\item[{\sc (H1)}] The endpoint \( t=b \) is finite, and for every \( s \in [0,1] \), it is a regular endpoint for the operator \( \ell_s \), meaning that the paths \( t \mapsto P(t) \), \( t \mapsto Q(t) \), \( t \mapsto R(t) \), and \( t \mapsto C_s(t) \) are continuous up to \( t=b \).
	\item[{\sc (H2)}] The path \( [0,1] \ni s \mapsto L_s \in \CFs(H) \) is gap-continuous and consists of symmetric Fredholm operators.
\end{itemize}

We note that \( \dom(L_s) \) is independent of \( s \) and coincides with \( \dom(L) \), where \( L \) denotes the unperturbed Sturm–Liouville operator. On the space \( \dom(L^*) \), there is a well-defined skew-symmetric bilinear form given by
\begin{equation}\label{eq:symplectic-form-SL}
\omega(x, y) = \langle x, L^* y \rangle - \langle L^* x, y \rangle,
\end{equation}
which, in general, is not symplectic since it is typically degenerate.

One of the main contributions of this first part of the paper is the {\em explicit construction of a symplectic subspace} \( (W, \omega|_W) \) through the \( \ker L^* \), that is, the space of solutions to the Sturm–Liouville equation (see Section~\ref{sec:coordinate-map} for further details), and of a {\em trace map} that we now describe.

Under assumptions (H1) and (H2), the following crucial {\em decomposition formula} holds:
\[
\dom(L^*) = \dom(L) \oplus W,
\]
where \( (W, \omega|_W) \) is a \( (2k) \)-dimensional symplectic space with \( n \le k \le 2n \).

\begin{ex}
In the special case of regular endpoints, we have
\begin{multline}
\dom(L_s^*) = W^{2,2}([a,b], \mathbb{R}^n), \qquad
\dom(L_s) = W^{2,2}_0([a,b], \mathbb{R}^n), \\[3pt]
\text{and} \qquad W \cong T^*\mathbb{R}^n \oplus T^*\mathbb{R}^n,
\end{multline}
where the isomorphism between \( W \) and \( T^*\mathbb{R}^n \oplus T^*\mathbb{R}^n \) is defined by
\begin{multline}
W \ni x \longmapsto \phi(x) = \big(x^{[1]}(a), x(a), x^{[1]}(b), x(b)\big), \\[3pt]
\text{where } \quad x^{[1]}(c) = P(c)\, x'(c) + Q(c)\, x(c), \quad \text{for } c = a,b.
\end{multline}
The vector \( x^{[1]}(c) \) is referred to as the {\sc quasi-derivative at \( c \)}.

Integrating by parts in Equation~\eqref{eq:symplectic-form-SL}, we obtain
\begin{multline}
\omega(f,g) = [f,g](b) - [f,g](a), \qquad \text{where} \\[3pt]
[f,g](t) := \langle P(t) f'(t) + Q(t)f(t), g(t) \rangle - \langle f(t), P(t) g'(t) + Q(t)g(t) \rangle.
\end{multline}
\end{ex}


Let \( Z = \Span\{z_i \mid 1 \le i \le 2n\} \), where each \( z_i \) is a smooth function satisfying \( z_i = 0 \) on \( (a, a+\varepsilon] \) and such that the family \( \{(z_i^{[1]}(b), z_i(b))\}_{i=1}^{2n} \) forms a basis of \( \mathbb{R}^{2n} \).

\begin{itemize}
\item If \( k = n \), on the symplectic space \( (W, \omega|_W) \) we define the {\bf trace map} \( \Tr \) by
\[
W \ni f \longmapsto \Tr(f) := \big(\omega(f,z_1), \ldots, \omega(f,z_{2n})\big)
= \big([f,z_1](b), \ldots, [f,z_{2n}](b)\big) \in \mathbb{R}^{2n}.
\]
This case corresponds to the classical {\sc limit point case} in the Weyl classification of the boundary conditions  where no boundary condition is required at $a$.
\item If \( k \neq n \), since \( \omega|_Z \) is nondegenerate, we obtain the following symplectic decomposition:
\begin{equation}\label{eq:decomp-W-intro}
W = Z^{\omega|_W} \oplus Z.
\end{equation}
Setting
\[
[f,g](a^+) := \lim_{t \to a^+} [f,g](t),
\]
we define the (linear) {\bf trace map} \( \Tr : W \to \mathbb{R}^{2k-2n} \oplus \mathbb{R}^{2n} \) by
\[
\Tr(f) := \big(-[f,y_1](a^+), \ldots, -[f,y_{2k-2n}](a^+), [f,z_1](b), \ldots, [f,z_{2n}](b)\big),
\]
where $y_i \in \ker L^*$ for $i =1, \ldots, 2k-2n$. The existence of this subset $\widetilde y_1, \ldots, \widetilde y_{2k-2n}$ of functions in $\ker L^*$ is proved at Lemma~\ref{thm:decomp-factor-space} for details. We refer to Remark~\ref{rmk:Selection} where is fully explained how to concretely select such a subset of functions. 
This case corresponds to the {\sc limit circle case} or {\sc regular case} in the Weyl classification of the boundary conditions. 
\end{itemize}

Since \( \omega|_W \) is nondegenerate, it follows that
\( \Tr : Z^{\omega|_W} \oplus Z \to \mathbb{R}^{2k-2n} \oplus \mathbb{R}^{2n} \)
is a bijection, and the decomposition is \(\Tr\)-invariant. Moreover, the space
\[
\mathbb{R}^{2k-2n} \oplus \mathbb{R}^{2n}
\]
is the {\sc space of boundary data}, where \( \mathbb{R}^{2k-2n} \) (respectively \( \mathbb{R}^{2n} \)) contains the boundary data at \( a \) (respectively \( b \)).

With all these ingredients, we are now in a position to construct a proper Lagrangian intersection theory and to prove a {\em general spectral flow formula for Sturm–Liouville operators}.

Let \( \Lag(W, \omega|_W) \) denote the Lagrangian Grassmannian of \( (W, \omega|_W) \). For each \( s \), consider the self-adjoint extension of \( L_s \) defined by
\[
L_{s,\Lambda_s} := L_s^*|_{\dom(L) \oplus \Lambda_s}.
\]
We observe that if \( s \mapsto \Lambda_s \in \Lag(W, \omega|_W) \) is gap-continuous, then
\( s \mapsto L_{s,\Lambda_s} := L_s^*|_{\dom(L) \oplus \Lambda_s} \)
is a gap-continuous path of operators in \( \CFsa(H) \). Setting \( V_s = \ker L_s^* \), the main result of Section~\ref{sec:sf-SL-operators} reads as follows.

\begin{mainthm}\label{thm:Sturm_Sf_formula-intro}
Under conditions {\rm (H1)}–{\rm (H2)} and assuming that
\( s \mapsto \Lambda_s \in \Lag(W, \omega|_W) \)
is gap-continuous, the following spectral flow formula holds:
\[
\spfl(L_{s,\Lambda_s}, s \in [0,1])
= -\iCLM(\Tr(\Lambda_s), \Tr(V_s), \rho^{\Tr}, s \in [0,1]),
\]
where \( \rho^{\Tr} \) denotes the push-forward symplectic form.
\end{mainthm}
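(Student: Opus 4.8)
The plan is to reduce the spectral-flow computation to a Maslov-type intersection index via the abstract machinery set up in Section~\ref{sec:Abstract-sf}, using the trace map $\Tr$ as the bridge between the operator-theoretic side and the finite-dimensional symplectic side. First I would recall the general principle, valid for gap-continuous paths of self-adjoint Fredholm operators realized as restrictions $L_s^*|_{\dom(L)\oplus\Lambda_s}$ of a fixed maximal operator: the spectral flow counts, with sign and multiplicity, the crossings of the eigenvalue $0$, and each such crossing is detected by a nontrivial intersection $\Lambda_s\cap V_s$ inside $W$, where $V_s=\ker L_s^*$. Thus $\dim(\Lambda_s\cap V_s)=\dim\ker L_{s,\Lambda_s}$, and the spectral flow is a signed count of these intersections. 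This is precisely the situation handled by the abstract spectral flow formula, so the first step is to invoke that result to obtain $\spfl(L_{s,\Lambda_s},s\in[0,1]) = -\iCLM(\Lambda_s, V_s, \omega|_W, s\in[0,1])$, the Maslov index computed intrinsically in $(W,\omega|_W)$.

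Next I would transport this Maslov index through the trace map. Since $\Tr\colon W\to\mathbb{R}^{2k-2n}\oplus\mathbb{R}^{2n}$ is a linear bijection (shown above, using nondegeneracy of $\omega|_W$ and of $\omega|_Z$), it pushes the symplectic form $\omega|_W$ forward to a symplectic form $\rho^{\Tr}$ on the space of boundary data, and it maps Lagrangian subspaces to Lagrangian subspaces and intersections to intersections: $\Tr(\Lambda_s\cap V_s)=\Tr(\Lambda_s)\cap\Tr(V_s)$. Because the Maslov (CLM) index is a symplectic invariant — it is unchanged under any linear symplectomorphism applied simultaneously to the path of Lagrangians and the reference Lagrangian — we get
\[
\iCLM(\Lambda_s, V_s, \omega|_W, s\in[0,1]) = \iCLM\big(\Tr(\Lambda_s), \Tr(V_s), \rho^{\Tr}, s\in[0,1]\big).
\]
Here one must check that $s\mapsto \Tr(V_s)$ is a continuous path of Lagrangians in $(\mathbb{R}^{2k-2n}\oplus\mathbb{R}^{2n},\rho^{\Tr})$: continuity follows from the gap-continuity of $s\mapsto L_s^*$ (hence of $V_s=\ker L_s^*$ as a subspace of the fixed space $W$) combined with continuity of $\Tr$, while the Lagrangian property of $V_s$ reflects the fact that $\ker L_s^*$ carries the degenerate radical of $\omega$ in its kernel and is isotropic-coisotropic of the right dimension inside $W$ — this is exactly what the decomposition $\dom(L^*)=\dom(L)\oplus W$ and the self-adjointness of $L_{s,\Lambda_s}$ encode. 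Combining the two displayed identities yields the claimed formula.

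The main obstacle, I expect, is not the symplectic bookkeeping but the verification that the abstract spectral flow formula of Section~\ref{sec:Abstract-sf} genuinely applies here — namely, that the crossing forms of the operator path $L_{s,\Lambda_s}$ at a zero crossing coincide, under $\Tr$, with the crossing forms governing the Maslov index of the Lagrangian path. This requires identifying the derivative data: one computes the derivative $\dot L_{s,\Lambda_s}$ restricted to $\ker L_{s,\Lambda_s}$ and shows it equals, after transport by $\Tr$, the relative velocity of $\Tr(\Lambda_s)$ against $\Tr(V_s)$ measured by $\rho^{\Tr}$. For the regular part this is a direct integration-by-parts identity as in the Example, and the additional perturbation $C_s$ contributes $\langle \dot C_s x, x\rangle$; the delicate point is controlling the boundary contributions at the singular endpoint $a$, which is where the $-[f,y_i](a^+)$ components of $\Tr$ and the choice of $y_i\in\ker L^*$ are essential. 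Once the crossing forms are matched — relying on Lemma~\ref{thm:decomp-factor-space} and the $\Tr$-invariance of the decomposition \eqref{eq:decomp-W-intro} — the formula follows, with the overall minus sign coming from the standard orientation conventions relating spectral flow and Maslov index.
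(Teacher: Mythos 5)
Your overall architecture is right: first establish the intrinsic identity $\spfl(L_{s,\Lambda_s})=-\iCLM(\Lambda_s,V_s,\omega|_W)$, then push everything forward through the linear symplectomorphism $\Tr$ using the symplectic invariance of the CLM index. The second half matches the paper exactly (Theorem~\ref{thm:Sturm_Sf_formula-SL} is obtained from Theorem~\ref{thm:Sturm_Sf_formula} precisely by this transport, via Lemma~\ref{thm:decomp-factor-space} and Corollary~\ref{thm:cor-dec}), and your identification $\dim(\Lambda_s\cap V_s)=\dim\ker L_{s,\Lambda_s}$ is the correct local dictionary.

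The gap is in how you propose to prove the intrinsic identity. You plan to match the crossing form of the operator path, computed from $\dot L_{s,\Lambda_s}$ restricted to $\ker L_{s,\Lambda_s}$, with the crossing form of the pair $(\Lambda_s,V_s)$. This step cannot be carried out as stated: the operators $L_{s,\Lambda_s}$ have $s$\emph{-dependent domains} $\dom(L)\oplus\Lambda_s$, and both the operator path and the Lagrangian path $s\mapsto\Lambda_s$ are assumed only \emph{gap-continuous}, not $\mathscr C^1$. Hence $\dot L_{s,\Lambda_s}$ is not defined (Definition~\ref{def:crossing-point} requires a fixed domain and a differentiable path), the crossing form $\Gamma(\Lambda_\cdot,V_\cdot,s)$ is likewise undefined, and the crossings need not be regular or even isolated. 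This is not a technicality: the failure of naive crossing-form bookkeeping for varying domains is exactly the Rellich phenomenon analyzed in Section~\ref{sec:Rellich }. The paper circumvents it entirely: near each $s_0$ it chooses $\varepsilon$ with $L_{s_0,\Lambda_{s_0}}+\varepsilon\Id$ invertible and, by stratum-homotopy invariance, replaces the horizontal segment $s\in[s_0,s_0+\delta]$ by the two vertical pencil segments $r\mapsto L_{s_0,\Lambda_{s_0}}+r\Id$ and $r\mapsto L_{s_0+\delta,\Lambda_{s_0+\delta}}+r\Id$, $r\in[0,\varepsilon]$. For these pencils the domain is fixed, the crossing form is $-\|z\|^2$ on the Lagrangian side (Lemma~\ref{thm:plus-curve}) and $+\|u\|^2$ on the operator side, so all crossings are regular and Proposition~\ref{thm:main1-abstract} applies; path additivity then gives the general case. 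To repair your argument you would need either to insert this localization-to-pencils step, or to justify a perturbation argument making the crossings of the original path regular — the latter is precisely what is unavailable here. Your worry about boundary contributions at the singular endpoint $a$ is, by contrast, already absorbed into the definition of $\omega$ and of $\Tr$ and is not where the difficulty lies.
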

In the spectral flow formula stated in Theorem~\ref{thm:Sturm_Sf_formula-intro}, the decomposition $\dom(L^*) = \dom(L) \oplus W$ plays an essential role. It is worth mentioning that there exists a more abstract approach to proving a spectral flow formula. In a fairly general setting, this was developed by Furutani in~\cite{Fur04}, by Booss-Bavnbek, Lesch, and Phillips in~\cite{BLP05}, and, in the very general context of symplectic Banach spaces, by Booss-Bavnbek and Zhu in~\cite{BZ18}. In the aforementioned works, the intersection theory is established in the {\em Gelfand–Robbin quotient space} 
\[
\beta(L) := \dom(L^*) / \dom(L).
\]

By comparing the method developed in this manuscript with those presented in the above papers, our approach offers several advantages:
\begin{enumerate}
    \item The definition of the Maslov index is concrete and expressed in terms of the paths $s \mapsto V_s$ and $s \mapsto \Lambda_s$, both of which are contained in $\dom(L^*)$.  
    \item This approach can be extended to the case of varying domains $s \mapsto \dom(L_s)$, a generalization that will be addressed in a forthcoming paper. 
    \item The space $W$ is defined explicitly by using  a basis of $\ker L^*$. Moreover, since the entire construction depends only on the domains of $L^*$ and $L$, the definition of $W$ remains valid if $L$ is replaced by another operator $\widetilde{L}$ (not necessarily belonging to the same family) provided that $\dom(\widetilde{L}) = \dom(L)$.
\end{enumerate}

The third remark is particularly useful, since in most cases it is extremely difficult—if not impossible—to determine an explicit basis of $\ker L^*$. Replacing $L$ by an operator $\widetilde{L}$ having the same domain resolves this difficulty: if $\widetilde{L}^*$ is suitably chosen, then $\ker \widetilde{L}^*$ and hence $W$ can be computed explicitly. This idea has been applied, for instance, in Subsection~\ref{subsec:properties-bessel}.

\medskip

In Subsection~\ref{subsec:examples}, we prove that the spectral flow formula stated in Theorem~\ref{thm:Sturm_Sf_formula-intro} generalizes several classical results available in the literature. In particular, it encompasses the following cases:
\begin{enumerate}
	\item  {\sc Regular Sturm–Liouville operators} on bounded domains; 
	\item  {\sc Sturm–Liouville operators} on $\R^+$
    \item {\sc Bessel-type operators} on $(0,1]$ and $[1,\infty)$.
\end{enumerate}
The regular case is well understood, and several precise and comprehensive results are available in the literature; see, for instance, \cite{CLM94, HP17, LZ00, RS95, Zhu06} and the references therein. In applications, this first case typically arises in the study of trajectories of Lagrangian systems defined on bounded time intervals. 

The second case, however, appears naturally in the study of trajectories of Lagrangian systems defined on unbounded time intervals. This is the case, for example, of half-clinic solutions of Lagrangian systems, that is, solutions parametrized by the half-line. Recently, the first two authors established a general spectral flow formula in~\cite{HP17}, which became a key ingredient in several subsequent results concerning the Morse index theorem in the gravitational $N$-body problem (cf.~\cite{HP17, HO16, HOY21} and references therein).

\medskip

Using the above spectral flow formula, we prove in Section~\ref{sec:Morse-dirichlet} the {\bf Morse index theorem for singular Sturm–Liouville operators.} For the regular case or the heteroclinic and half-clinic cases,  we refer the interested reader to \cite{HWY20} , \cite{HPWX20}, \cite{BCJLMS18} and  \cite{HP17}.

We start with a single Sturm–Liouville operator \( l \) defined on the function space \( \mathscr{C}_0^\infty((a,b], \mathbb{R}^n) \), where the endpoint \( t = b \) is regular according to hypothesis (H1). In this context, by restricting the operator \( l \) to the interval \( (a, \sigma) \) with \( \sigma \in (a, b] \), we obtain a one-parameter family of operators \( \sigma \mapsto l_{(a,\sigma)} \). This construction is necessary for the application of the spectral flow formula.

For $\sigma \in (a,b)$, we consider $l_{(a,\sigma)}$ to be the one-sided singular Sturm–Liouville differential operator obtained by restricting $l$ to $ \mathscr C_0^\infty((a,\sigma), \R^n)$, and we denote by $L_{(a,\sigma)}$ the minimal operator associated with $l_{(a,\sigma)}$. We observe that $L_{(a,\sigma)}$ is singular only at the initial point $t=a$ (hence the final endpoint $\sigma$ is regular). We consider the Lagrangian subspaces 
\[
\Lambda_0 \in \Lag(\R^{2k-2n}), \qquad 
\Lambda_D := \R^{n} \times (0), \qquad 
\Lambda = \Lambda_0 \oplus \Lambda_D,
\]
where $\Lambda_D$ is the standard {\sc Dirichlet Lagrangian}. 

We introduce the following conditions:
\begin{itemize}
\item[-]{(H3)} The quadratic form on $\mathscr C^\infty_0((a,b), \R^n)$ associated with the SL-operator $l$ is {\sc bounded from below} in $L^2$.
\item[-]{(H4)} The minimal operator $L_b \in \CFs(L^2((a,b), \R^n))$.
\end{itemize}

\begin{rem}
It is worth observing that, under (H3), for every $\sigma \in (a,b]$, the quadratic form induced by $l_{(a,\sigma)}$ is bounded from below. In particular, there exists a well-defined self-adjoint Friedrichs extension $L_{(a,\sigma)}$ (cf. Section~\ref{appendix:friedrichs} for further details).
\end{rem}

Given $\Lambda \in \Lag(\R^{2k-2n} \oplus \R^{2n}, -\Omega \oplus \Omega)$, we set 
\[
L_{(a,\sigma),\Lambda} = L_{(a,\sigma)}^*|_{\dom(L_{(a,\sigma)}) \oplus \Tr_\sigma^{-1}(\Lambda)},
\]
where $\Tr_\sigma : W_\sigma \to \R^{2k-2n} \oplus \R^{2n}$ is the trace map defined on the subspace $W_\sigma$ associated with the operator $l_{(a,\sigma)}$ and constructed above.

\begin{note}
In what follows we denote by 
\begin{itemize}
\item $L_{(a,\sigma),F}$ the self-adjoint Friedrichs extension associated with $L_{(a,\sigma)}$; 
\item $L_{(a,\sigma),\Lambda}$ the self-adjoint extension of $L_{(a,\sigma)}$ with boundary condition $\Lambda := \Lambda_0 \oplus \Lambda_D$.
\end{itemize}
We denote by $\iMor(\#)$ the {\sc Morse index} of the self-adjoint operator $\#$, namely, the maximal dimension of the negative spectral subspace of $\#$.
\end{note}

Thus, the Morse index theorem reads as follows.

\begin{mainthm}{\bf [Singular Morse Index Theorem]}\label{thm:main-oneside-intro}
Assume that the SL-operator $l$ satisfies conditions (H1), (H3), and (H4), and that Dirichlet boundary conditions are imposed at $b$. Then the following equality holds:
\[
\iMor(L_{b,\Lambda}) = \sum_{\sigma \in (a,b)} \dim \ker L_{(a,\sigma),\Lambda}.
\]
\end{mainthm}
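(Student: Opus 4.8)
The plan is to derive the Singular Morse Index Theorem from the general spectral flow formula in Theorem~\ref{thm:Sturm_Sf_formula-intro} by a homotopy argument along the truncation parameter $\sigma$, combined with the Friedrichs extension as the reference self-adjoint boundary condition. First I would fix the Dirichlet-type boundary condition $\Lambda = \Lambda_0 \oplus \Lambda_D$ at $b$ and consider the family $\sigma \mapsto L_{(a,\sigma),\Lambda}$ of self-adjoint Fredholm operators obtained by restricting $l$ to $(a,\sigma)$; under (H1), (H3), (H4) this is a gap-continuous path in $\CFsa(H)$ for $\sigma$ in a suitable range (one must be slightly careful about the change of underlying Hilbert space $L^2((a,\sigma),\R^n)$, but this is handled exactly as in the regular theory, e.g.\ via the isometric rescaling of the interval or by viewing all operators on $L^2((a,b),\R^n)$ with Dirichlet conditions on $[\sigma,b]$, as in \cite{HP17}). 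The key identity to establish is
\[
\spfl\big(L_{(a,\sigma),\Lambda},\ \sigma \in (a,b]\big) = -\sum_{\sigma \in (a,b)} \dim \ker L_{(a,\sigma),\Lambda},
\]
which follows from Theorem~\ref{thm:Sturm_Sf_formula-intro} once we show that the $\iCLM$ index on the right-hand side of that theorem reduces, for this particular boundary condition and this particular homotopy, to a signed count of the dimensions of the intersections; since for the Dirichlet problem each crossing contributes with a fixed sign (the quadratic form $l$ being bounded below by (H3) forces the crossing form to be positive definite, exactly as in the classical Morse index theorem), the Maslov index becomes minus the total crossing dimension.

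Next I would analyze the two endpoints of the homotopy. At $\sigma = b$ we recover $L_{b,\Lambda}$ directly. As $\sigma \to a^+$ the operator $L_{(a,\sigma),\Lambda}$ must be shown to be positive definite, hence to have vanishing Morse index and to contribute no crossing: intuitively, on a very short interval near the regular-in-the-interior point, the form controlled by $P, Q, R$ is dominated by the kinetic term and (H3) guarantees uniform lower boundedness, so for $\sigma$ close enough to $a$ there are no negative eigenvalues and no kernel. This is the step where the one-sided singular nature at $a$ enters, and it is where I expect the main technical obstacle: one cannot simply say "the interval is short" because $a$ may be at $-\infty$ or the coefficients may blow up there; instead one must invoke the structure of $W_\sigma$ and the trace map $\Tr_\sigma$, showing that the Lagrangian path $\sigma \mapsto \Tr_\sigma(V_\sigma)$ (with $V_\sigma = \ker L_{(a,\sigma)}^*$) stabilizes and becomes transverse to $\Tr_\sigma(\Lambda)$ as $\sigma \to a^+$. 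I would handle this by a limiting argument using (H3)–(H4) together with the decomposition $W_\sigma = Z^{\omega|_{W_\sigma}} \oplus Z$ and the explicit description of the Friedrichs extension in Section~\ref{appendix:friedrichs}, comparing $L_{(a,\sigma),\Lambda}$ with $L_{(a,\sigma),F}$.

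With the two endpoint contributions pinned down, the spectral flow of the path telescopes: by the homotopy invariance and concatenation properties of the spectral flow,
\[
\spfl\big(L_{(a,\sigma),\Lambda},\ \sigma \in (a,b]\big) = \iMor(L_{a^+,\Lambda}) - \iMor(L_{b,\Lambda}) = -\,\iMor(L_{b,\Lambda}),
\]
using $\iMor(L_{a^+,\Lambda}) = 0$ from the previous step and the standard relation between spectral flow and the jump in Morse index for a path of lower-bounded self-adjoint Fredholm operators (valid here because (H3) provides a uniform lower bound, ruling out the Rellich-type escape of eigenvalues to $-\infty$ that is discussed elsewhere in the paper). Combining this with the crossing-count identity from the first paragraph yields
\[
\iMor(L_{b,\Lambda}) = \sum_{\sigma \in (a,b)} \dim \ker L_{(a,\sigma),\Lambda},
\]
which is the assertion. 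The only remaining routine points are checking that each crossing is regular (so the sum is finite and well defined) and that the crossing form has the expected sign, both of which follow from the monotonicity of $\sigma \mapsto L_{(a,\sigma),\Lambda}$ in the form sense under (H3), in complete analogy with the classical argument.
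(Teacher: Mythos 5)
Your high-level strategy (sweep $\sigma$ from $b$ down to $a$, show the Morse index vanishes near $a$, and count positive crossings) matches the skeleton of the paper's argument, but two of your key steps have genuine gaps. First, you apply the spectral flow formula and the identity $\spfl=\iMor(\text{start})-\iMor(\text{end})$ directly to the family $\sigma\mapsto L_{(a,\sigma),\Lambda}$. Theorem~\ref{thm:Sturm_Sf_formula-intro} is proved only for families obtained by perturbing a fixed minimal operator by bounded symmetric terms on a fixed interval, so that $\dom(L_s)$ and the boundary symplectic space $(W,\omega|_W)$ do not move; the truncation family lives on varying Hilbert spaces $L^2((a,\sigma),\R^n)$ with varying domains, and the relation between spectral flow and the jump of the Morse index is precisely the statement that can fail in that situation --- this is the Rellich phenomenon quantified in Section~\ref{sec:Rellich }. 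Your appeal to (H3) to exclude eigenvalue escape is not a proof: what is actually needed is the nesting of the form domains $\dom(t_{L_{(a,\sigma),\Lambda}})$ under extension by zero, which yields both the monotonicity $\iMor(L_{c,\Lambda})\le\iMor(L_{b,\Lambda})$ and the uniform lower bound, and whose proof (Lemma~\ref{thm:Morse-index-regular}, resting on Lemma~\ref{lem:Morse_vanish_abstract}) is not routine. Second, your stated reason for the positivity of the crossings (``the quadratic form being bounded below by (H3) forces the crossing form to be positive definite'') is wrong: semiboundedness has nothing to do with the sign of the crossing form. The correct reason is the Legendre condition $P^{-1}(\sigma)>0$, which makes $-J_{2n}\dot\psi\,\psi(\sigma)^{-1}$ positive definite on $\Lambda_D$ (Lemma~\ref{lem:positive_path}).

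The paper circumvents both difficulties by never taking the spectral flow of the $\sigma$-family. It forms the two-parameter family of Lagrangians $m_{\sigma,s}=\Tr_\sigma\bigl(p_\sigma(\ker(L_{(a,\sigma)}^*+s\Id))\bigr)$ in the fixed finite-dimensional space $(\R^{2k-2n}\oplus\R^{2n},-\Omega\oplus\Omega)$, writes $\iMor(L_{b,\Lambda})-\iMor(L_{c,\Lambda})$ as a difference of Maslov indices along the two $s$-edges of the square $[c,b]\times[0,M]$ (where the pencil formula of Proposition~\ref{thm:main1-abstract} does apply, since $L+s\Id$ has fixed domain), and then uses stratum homotopy invariance to trade these for the two $\sigma$-edges; the edge at $s=M$ vanishes by positivity of $L_{b,\Lambda}+M\Id$ combined with the monotonicity lemma, and the edge at $s=0$ is the crossing count by Proposition~\ref{lem:maslov_plus}. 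If you insist on a one-parameter sweep in $\sigma$ you must first build essentially all of this machinery anyway, so the detour through the square is not optional but is the proof.
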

The Theorem~\ref{thm:main-oneside-intro} works even if $\iMor(L_b)=+\infty$ and it is obtained by combining Theorem~\ref{thm:main-oneside} and  Proposition~\ref{thm:morse-infinito}.
It is worth noting that, even in this more general setting, Theorem~\ref{thm:main-oneside-intro} retains, at least formally, a structure closely resembling the classical result in the regular case (cf.~\cite{BOPW21, HP17} and the references therein). However, the traditional method of proving such a theorem—namely, by computing the local contributions to the spectral flow and to the Maslov index via crossing forms—fails in this singular context.

\medskip

The next result relates the Morse index of the Friedrichs extension operator $L_F$ with the limit, as $\sigma$ tends to the endpoint, of the Friedrichs operators $L_{(a,\sigma),F}$ and $L_{(\sigma,b),F}$, under the sole assumption that the operator $L$ is bounded from below. In particular, in Theorem~\ref{thm:limit_morse_index-intro}, {\em neither Fredholmness nor the finiteness of the Morse index has been assumed}.

\begin{mainthm}\label{thm:limit_morse_index-intro}
Assume condition (H3). If $l$ is regular at $a$ and Dirichlet boundary conditions are imposed, then
\[
\iMor(L_F) = \lim_{\sigma \to b^-} \iMor(L_{(a,\sigma),F}) = \sum_{t \in (a,b)} \dim\big(\gamma(t)\Lambda_D \cap \Lambda_D\big),
\]
where $\gamma$ is the fundamental matrix solution satisfying $\gamma(a) = \Id$.

Analogously, if $l$ is regular at $b$ and Dirichlet boundary conditions are imposed, then
\[
\iMor(L_F) = \lim_{\sigma \to a^+} \iMor(L_{(\sigma,b),F}) = \sum_{t \in (a,b)} \dim\big(\gamma(t)\Lambda_D \cap \Lambda_D\big),
\]
where $\gamma$ is the fundamental matrix solution satisfying $\gamma(b) = \Id$.
\end{mainthm}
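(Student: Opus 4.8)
The plan is to reduce the (possibly) singular problem on $(a,b)$ to the family of \emph{regular} problems on the exhausting subintervals $(a,\sigma)$, $\sigma\in(a,b)$, feed each of these into the Morse Index Theorem in the regular case, and then let $\sigma\to b^-$. I treat the first statement only; the second follows verbatim after interchanging the roles of $a$ and $b$ (equivalently, by applying the first to $t\mapsto l(-t)$ on $(-b,-a)$). First I would record a variational description of the indices involved. Under (H3) the quadratic form $q$ attached to $l$ is bounded from below on $\mathscr{C}^\infty_0((a,b),\R^n)$, and since $a$ is regular and a Dirichlet condition is imposed there, $L_F$ is the Friedrichs extension of the minimal operator $L_b$; hence $\mathscr{C}^\infty_0((a,b),\R^n)$ is a form core for $L_F$, and the min--max principle together with the density of a form core (negative definiteness of a quadratic form on a finite-dimensional subspace being an open condition) gives
\[
\iMor(L_F)=\sup\big\{\dim V : V\subseteq\mathscr{C}^\infty_0((a,b),\R^n),\ q|_{V\setminus\{0\}}<0\big\}\in\N\cup\{+\infty\},
\]
with the identical description, $(a,b)$ replaced by $(a,\sigma)$, for $L_{(a,\sigma),F}$ — which, being regular at both endpoints, is the Dirichlet realization on $(a,\sigma)$ with form core $\mathscr{C}^\infty_0((a,\sigma),\R^n)$.

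For the first equality I would use that extension by zero gives $q$-preserving inclusions $\mathscr{C}^\infty_0((a,\sigma_1),\R^n)\subseteq\mathscr{C}^\infty_0((a,\sigma_2),\R^n)\subseteq\mathscr{C}^\infty_0((a,b),\R^n)$ for $a<\sigma_1<\sigma_2<b$, so that $\sigma\mapsto\iMor(L_{(a,\sigma),F})$ is nondecreasing and bounded above by $\iMor(L_F)$; hence $\lim_{\sigma\to b^-}\iMor(L_{(a,\sigma),F})=\sup_{\sigma<b}\iMor(L_{(a,\sigma),F})$ exists in $\N\cup\{+\infty\}$ and is $\le\iMor(L_F)$. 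Conversely, every finite-dimensional $V\subseteq\mathscr{C}^\infty_0((a,b),\R^n)$ with $q|_{V\setminus\{0\}}<0$ is spanned by finitely many compactly supported functions, hence $V\subseteq\mathscr{C}^\infty_0((a,\sigma),\R^n)$ for $\sigma$ close enough to $b$, so $\dim V\le\iMor(L_{(a,\sigma),F})\le\lim_{\sigma'\to b^-}\iMor(L_{(a,\sigma'),F})$; taking the supremum over such $V$ and invoking the first paragraph gives $\iMor(L_F)=\lim_{\sigma\to b^-}\iMor(L_{(a,\sigma),F})$.

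For the second equality, for each fixed $\sigma\in(a,b)$ the operator $l_{(a,\sigma)}$ is a regular Sturm--Liouville operator, so the Morse Index Theorem in the regular case — classical (see \cite{HP17, BOPW21} and the references therein), and equivalently Theorem~\ref{thm:main-oneside-intro} specialized to $(a,\sigma)$ with Dirichlet conditions at both endpoints, after identifying $\dim\ker L_{(a,\tau),\Lambda_D\oplus\Lambda_D}$ with $\dim(\gamma(\tau)\Lambda_D\cap\Lambda_D)$ via $\gamma(a)=\Id$ — applies and gives
\[
\iMor(L_{(a,\sigma),F})=\sum_{\tau\in(a,\sigma)}\dim\big(\gamma(\tau)\Lambda_D\cap\Lambda_D\big)<+\infty,
\]
the right-hand side counting, with multiplicity, the points of $(a,\sigma)$ conjugate to $a$ along the fundamental matrix solution $\gamma$ with $\gamma(a)=\Id$. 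Since every finite subset of $(a,b)$ is contained in some $(a,\sigma)$ with $\sigma<b$, the (possibly infinite) sum over $(a,b)$ equals $\sup_{\sigma<b}\sum_{\tau\in(a,\sigma)}\dim(\gamma(\tau)\Lambda_D\cap\Lambda_D)=\lim_{\sigma\to b^-}\iMor(L_{(a,\sigma),F})$, which together with the previous paragraph closes the chain of equalities.

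The hard part will be the variational identity of the first paragraph on the genuinely singular interval: I must be sure that $\mathscr{C}^\infty_0((a,b),\R^n)$ really is a form core for the Friedrichs extension $L_F$, so that $\iMor(L_F)$ admits the stated min--max description even when it equals $+\infty$; once this is secured, the compact-support reduction and the passage to the limit are routine. A secondary subtlety is the possible accumulation of conjugate points at the singular endpoint $b$ — exactly the mechanism forcing $\iMor(L_F)=+\infty$ — which is accounted for automatically by reading $\sum_{\tau\in(a,b)}$ as $\sup_{\sigma<b}\sum_{\tau\in(a,\sigma)}$.
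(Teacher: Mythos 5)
Your proposal is correct and follows essentially the same route as the paper's proof of Theorem~\ref{thm:limit_morse_index}: identify $\iMor(L_F)$ with the Morse index of the closed form built from the core $\mathscr C_0^\infty((a,b),\R^n)$, use extension by zero and approximation of a maximal negative subspace by compactly supported functions to show the index is attained on some $(a,\sigma)$ and is monotone in $\sigma$, then apply the regular Morse Index Theorem on $(a,\sigma)$ and pass to the limit. The form-core point you flag as the "hard part" is exactly how the paper proceeds (via Lemma~\ref{thm_=Morse} and density of the core in the form domain), so there is no substantive divergence.
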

In \cite[Theorem 10.12.1]{Zet05} the author collect the spectral properties and well-known index results for singular scalar Sturm-Liouville operators. The singular Morse Index Theorem can be seen as a multi-dimensional generalization of the aforementioned result. 

The case of general Lagrangian boundary conditions is treated in detail in Section~\ref{sec:morse-general}. In particular, we prove that even in this singular framework, the Morse index with general boundary conditions differs from the Dirichlet case by a correction term computed via the triple index of the Cauchy data space and the projections of the domains of $L_\Lambda$ and of the Friedrichs extension $L_F$. 

Section~\ref{sec:Rellich } is devoted to study a rigorous quantification of a phenomenon first observed by Rellich in the case of  singular Sturm-Liouville equations. In the abstract terms the stage is the following. If \(T\) is a closed self-adjoint operator bounded from below, and \(A\) is a relatively bounded symmetric perturbation with sufficiently small relative bound, then
$S = T + A$ remains self-adjoint and bounded from below (see \cite[Chapter~5, Section~4, Remark~4.13]{Kat80}).

However, this property does not necessarily hold for unbounded perturbations. As Kato explains in~\cite[Chapter~5, Section~4, Remark~4.13]{Kat80}, based on Rellich’s example, when a sequence of self-adjoint operators $T_n$ converges in the gap topology to a self-adjoint operator bounded from below, the lower bounds of $T_n$ may diverge to $-\infty$, and eigenvalues may “disappear” at \(-\infty\). Rellich’s scalar counterexample in the context of Sturm–Liouville operators illustrates this effect, although the precise number of disappearing eigenvalues remains an open problem. Section~\ref{sec:Rellich } provides a precise quantification of the multiplicity of eigenvalues that escape to \(-\infty\).   In the regular case, this phenomenon has been already studied at \cite{HLWZ19, HLWZ22}. 

This phenomenon occurs, for instance, when the domains of the operators vary with the parameter. Consider $A \in \CFs(H)$ and a continuous path $s \mapsto \Lambda_s$ of Lagrangian subspaces of $(U,\rho)$. The corresponding path $s \mapsto A_{\Lambda_s}$ of self-adjoint extensions is gap-continuous, but the operators are no longer relatively bounded perturbations of a fixed operator. Such situations arise naturally in applications. The main result of Section~\ref{sec:Rellich } (Theorem~\ref{thm:Rellich }) computes the total number of such {\em limiting ghost eigenvalues} that disappear to $-\infty$ through the Morse Index Theorem for singular Sturm–Liouville operators with general boundary conditions.

\medskip

Section~\ref{sec:Bessel} is devoted to a detailed study of regular perturbations of the unperturbed Bessel operator
\[
l_q := -\frac{d^2}{dt^2} + \frac{q}{t^2},
\]
acting on $C_0^\infty((0,1],\R^n)$, which is particularly relevant in the analysis of the variational and index properties of unbounded motions in the gravitational $N$-body problem. Denoting by $L_q : \dom(L_q) \subset H \to H$ the corresponding minimal operator, where $H = L^2((0,1], \R^n)$, we investigate its functional-analytic and spectral properties, with particular emphasis on Fredholmness and the Morse index. The main results of this section are the following.

\begin{mainthm}\label{thm:Fredholmness-Bessel-perturbation-intro}
Let $R \in \mathscr C^0([0,1], \Sym(n))$, and let $L_R$ be the minimal operator induced by the Bessel-type operator
\[
l_R := -\frac{d^2}{dt^2} + \frac{R(t)}{t^2}, \qquad t \in (0,1].
\]
If $R(t) > -1/4$, then every self-adjoint extension of $L_R$ is a Fredholm operator.
\end{mainthm}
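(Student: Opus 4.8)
The plan is to reduce the claim to two facts: the hypothesis $R(t)>-1/4$ should make the minimal operator $L_R$ bounded below by a strictly positive constant via a Hardy inequality, and, since $t=1$ is a regular endpoint while $t=0$ is the only singular one, $L_R$ has finite deficiency indices. Together these pin down the essential spectrum of every self-adjoint realization, and Fredholmness follows.

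First I would record that $R(t)>-1/4$ means $R(t)+\tfrac14\Id_n$ is positive definite for each $t\in[0,1]$, so by continuity of $R$ and compactness of $[0,1]$ there is $\delta>0$ with $R(t)\ge(-\tfrac14+\delta)\Id_n$ for all $t$. Then I would apply the classical Hardy inequality $\int_0^1|u'|^2\,dt\ge\tfrac14\int_0^1 t^{-2}|u|^2\,dt$ componentwise, for $u$ smooth and vanishing near the endpoints (so that the integration-by-parts boundary terms vanish), to obtain
\[
\langle L_R u,u\rangle=\int_0^1|u'|^2\,dt+\int_0^1 t^{-2}\langle R(t)u,u\rangle\,dt\ \ge\ \delta\int_0^1 t^{-2}|u|^2\,dt\ \ge\ \delta\,\|u\|_{L^2}^2 ,
\]
using $t^{-2}\ge1$ on $(0,1]$. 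Hence $L_R$ is symmetric and bounded below by $\delta>0$; in particular condition (H3) holds, its Friedrichs extension $L_{R,F}$ inherits the lower bound $\delta$, so $\spec(L_{R,F})\subseteq[\delta,\infty)$, and therefore $L_{R,F}$ is invertible --- a fortiori Fredholm --- with $0\notin\specess(L_{R,F})$.

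Next I would invoke the decomposition $\dom(L_R^*)=\dom(L_R)\oplus W$ recalled in the introduction, valid here with $\dim W=2k\le4n$; equivalently, $L_R$ has finite and equal deficiency indices. Consequently, for an arbitrary self-adjoint extension $\widetilde L$ of $L_R$, Krein's resolvent formula represents $(\widetilde L-z)^{-1}-(L_{R,F}-z)^{-1}$, for $z$ in the common resolvent set, as a finite-rank operator (of rank at most $\dim W$). By stability of the essential spectrum under finite-rank --- hence relatively compact --- perturbations of the resolvent (Weyl's theorem), $\specess(\widetilde L)=\specess(L_{R,F})$, which does not contain $0$. Since a self-adjoint operator is Fredholm precisely when $0$ lies outside its essential spectrum, $\widetilde L$ is Fredholm, which is the assertion.

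The analytic input (Hardy's inequality) and the operator-theoretic facts in the last step (Friedrichs extensions preserve lower bounds; finite deficiency indices force finite-rank resolvent differences; Weyl's theorem) are classical, so I do not expect genuine obstacles there. The load-bearing observations, and the points I would state most carefully, are the reduction itself --- that Fredholmness of \emph{all} self-adjoint extensions follows from semiboundedness of the minimal operator together with finiteness of the deficiency indices --- and the fact that $R(t)>-1/4$ is exactly the threshold at which Hardy's inequality still yields a coercive form, so that a uniform spectral gap $\delta$ is available; under the weaker hypothesis $R(t)\ge-1/4$ this gap is lost and Fredholmness can genuinely fail. Notably, no local crossing-form computation is needed.
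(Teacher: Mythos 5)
Your argument is correct, but it follows a genuinely different route from the paper's. The paper (Proposition~\ref{thm:Fredholmness}) assumes only $\sigma(R(0))\subset(-1/4,+\infty)$ and proves the much stronger conclusion $\sigma_{\mathrm{ess}}(L_R)=\emptyset$: it splits $(0,1)=(0,\delta)\cup(\delta,1)$ via Naimark's splitting method (Lemma~\ref{thm:ess_split}), uses Hardy's inequality only near the singular endpoint to push the essential spectrum of the left piece above $c/\delta^2$, notes the right piece is regular, and lets $\delta\to 0$; Fredholmness of every self-adjoint extension then follows since $0$ is never in the essential spectrum. You instead use the global hypothesis $R(t)>-1/4$ on all of $[0,1]$ to extract a uniform $\delta>0$ and obtain coercivity $\langle L_Ru,u\rangle\ge\delta\|u\|^2$ of the minimal operator, then transfer $0\notin\specess$ to all self-adjoint extensions by Krein's resolvent formula and Weyl's theorem. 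Your route is more elementary (no splitting method) and matches the theorem exactly as stated, but it buys less: the paper's argument needs the spectral condition only at the singular endpoint $t=0$ and yields discreteness of the whole spectrum, not just invertibility modulo compacts at $0$. One small caution: you justify finiteness of the deficiency indices by citing the decomposition $\dom(L_R^*)=\dom(L_R)\oplus W$, but in the paper that decomposition is derived under hypothesis (H2), i.e.\ Fredholmness of the minimal operator, which is essentially what is being proved — so as stated this is circular. The fact itself is classical and easy to supply directly: $\ker(L_R^*\mp i)$ sits inside the $2n$-dimensional solution space of a linear ODE system, so both deficiency indices are at most $2n$, and they are equal because $L_R$ is semibounded; with that substitution your proof is complete.
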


The final main results of Section~\ref{sec:Bessel} concerns the Morse index properties of Bessel-type operators on $\mathscr C_0^\infty([1,+\infty), \R^n)$ and on $\mathscr C_0^\infty((0,1],\R^n)$, respectively. 

\begin{mainthm}\label{thm:bessel-morse-index-intro}
Let $l_R$ be the operator defined above, acting on $\mathscr C_0^\infty([1,+\infty), \R^n)$. 
\begin{itemize}
\item[-]\textbf{Case 1.} If $\liminf_{t \to +\infty} R(t) > -1/4$, then every self-adjoint extension of $l_R$ has finite Morse index.
\item[-]\textbf{Case 2.} If $\limsup_{t \to +\infty} R(t) < -1/4$, then every self-adjoint extension of $l_R$ has infinite Morse index.
\end{itemize}
\end{mainthm}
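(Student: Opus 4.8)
The plan is to compare $l_R$ with the constant-potential Bessel operator $-\frac{d^2}{dt^2}+\frac{q}{t^2}$, for which the threshold $q=-1/4$ is the sharp constant in Hardy's inequality $\int_T^\infty\abs{u'}^2\ge\frac14\int_T^\infty t^{-2}\abs{u}^2$, valid for $u\in\mathscr C_0^\infty((T,\infty),\R^n)$ and every $T\ge 1$. Write $q_R(u)=\int_1^\infty\bigl(\abs{u'}^2+t^{-2}\langle R(t)u,u\rangle\bigr)\,dt$ for the quadratic form of $l_R$ on $\mathscr C_0^\infty((1,\infty),\R^n)$ and let $L_R$ be the minimal operator. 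Two elementary reductions are used. (R1) Since $t=1$ is a regular endpoint and the coefficients are real, $L_R$ is symmetric with finite, equal deficiency indices (at most $2n$); hence self-adjoint extensions exist, the resolvents of any two of them differ by a finite-rank operator, and their Morse indices differ by at most a fixed constant, so it is enough to treat one extension in each case. (R2) Every $u\in\mathscr C_0^\infty((1,\infty),\R^n)$ lies in $\dom(L_R)$, hence in the domain of \emph{every} self-adjoint extension $L'$, with $\langle L'u,u\rangle=q_R(u)$; therefore, if $q_R$ is negative definite on a subspace $V\subset\mathscr C_0^\infty$, the spectral projection of $L'$ onto $(-\infty,0)$ is injective on $V$ (a nonzero vector of $V$ in its kernel would lie in the nonnegative spectral subspace, forcing $q_R\ge 0$ there), so $\iMor(L')\ge\dim V$ for every $L'$ at once.

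\emph{Case 1.} Fix $\varepsilon>0$ and $T\ge 1$ with $R(t)\ge(-\tfrac14+\varepsilon)\Id$ for $t\ge T$. On $[1,T]$ the potential $t^{-2}R(t)$ is continuous, so the Dirichlet realization $A_1$ of $l_R$ on $(1,T)$ is a regular Sturm--Liouville operator with finite Morse index $m_1$; moreover Hardy together with the boundedness of $t^{-2}R$ on $[1,T]$ shows that $q_R$ is bounded below, so the Friedrichs extension $L_{R,F}$ on $[1,\infty)$ exists. Imposing the extra constraint $u(T)=0$ cuts the form domain of $L_{R,F}$ by codimension at most $n$ and leaves $q_R$ unchanged. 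If $N$ denotes the negative spectral subspace of $L_{R,F}$ (a priori possibly infinite dimensional), then for $u\in N$ with $u(T)=0$ the tail contribution satisfies $\int_T^\infty\bigl(\abs{u'}^2+t^{-2}\langle Ru,u\rangle\bigr)\ge\varepsilon\int_T^\infty t^{-2}\abs{u}^2\ge 0$ by Hardy (applicable since such a $u$ vanishes at $T$ and, being in the Friedrichs form domain, also at $t=1$); hence the $(1,T)$-part of $q_R$ is negative definite on $N\cap\{u(T)=0\}$, and the min--max characterization of $\iMor(A_1)$ forces $\dim\bigl(N\cap\{u(T)=0\}\bigr)\le m_1$. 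Since this subspace has codimension at most $n$ in $N$, we get $\iMor(L_{R,F})=\dim N\le m_1+n<\infty$, and (R1) then yields finite Morse index for every self-adjoint extension of $L_R$.

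\emph{Case 2.} Fix $\varepsilon>0$ and $T\ge 1$ with $R(t)\le(-\tfrac14-\varepsilon)\Id$ for $t\ge T$, and fix a unit vector $e\in\R^n$. For $u(t)=t^{1/2}v(\ln t)\,e$ with $v\in\mathscr C_0^\infty((\ln T,\infty),\R)$, the substitution $x=\ln t$ (the cross term $\int v\,v'$ vanishing by compact support) gives
\[
q_R(u)=\int_{\ln T}^\infty\Bigl(\abs{v'(x)}^2+\bigl(\tfrac14+\langle R(e^x)e,e\rangle\bigr)\abs{v(x)}^2\Bigr)\,dx\;\le\;\int_{\ln T}^\infty\bigl(\abs{v'}^2-\varepsilon\abs{v}^2\bigr)\,dx .
\]
Taking $v=v_k$ with $v_k(x)=\phi\bigl((x-a_k)/k\bigr)$ for a fixed nonzero $\phi\in\mathscr C_0^\infty((0,1),\R)$ and a sequence $a_k>\ln T$ with $a_{k+1}-a_k>k$ (so the supports $(a_k,a_k+k)$ are pairwise disjoint) makes $\int\bigl(\abs{v_k'}^2-\varepsilon\abs{v_k}^2\bigr)=\tfrac1k\norm{\phi'}^2-\varepsilon k\norm{\phi}^2\to-\infty$. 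The corresponding functions $u_k\in\mathscr C_0^\infty((T,\infty),\R^n)\subset\mathscr C_0^\infty((1,\infty),\R^n)$ span an infinite-dimensional subspace on which $q_R$ is negative definite --- disjoint supports kill all cross terms, so $q_R\bigl(\sum c_ku_k\bigr)=\sum\abs{c_k}^2 q_R(u_k)<0$ for every nonzero tuple. By (R2), $\iMor(L')=+\infty$ for every self-adjoint extension $L'$.

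\emph{Main obstacle.} The delicate point is the uniformity over all self-adjoint extensions in Case~1, that is, reduction (R1): one must check that $L_R$ has finite deficiency indices (immediate, as $t=1$ is regular), that $L_{R,F}$ is genuinely available --- precisely where the Hardy lower bound is needed, and which fails in Case~2 --- and that finiteness of the Morse index persists under finite-rank resolvent perturbations. Case~2 avoids this entirely, because the test functions are compactly supported and hence lie in the domain of every self-adjoint extension; there the only routine verifications are the logarithmic change of variables and the asymptotics of $\tfrac1k\norm{\phi'}^2-\varepsilon k\norm{\phi}^2$.
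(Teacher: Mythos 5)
Your proof is correct. In Case~1 you follow essentially the paper's route: split the half-line at a finite point $T$, use Hardy's inequality to make the tail form nonnegative, use finiteness of the Morse index for the regular problem on $[1,T]$, and absorb the choice of self-adjoint extension into a finite-dimensional comparison (the paper packages this via Naimark's splitting and its Proposition on Morse indices of finite-dimensional extensions of a common symmetric restriction, i.e.\ your reduction (R1); you instead run the min--max directly on the negative spectral subspace intersected with $\{u(T)=0\}$, which costs at most codimension $n$ --- same idea, slightly different bookkeeping). In Case~2, however, you take a genuinely different and more self-contained route. The paper reduces to the constant-potential Bessel operator $-d^2/dt^2+q/t^2$ with $q<-1/4$ and then invokes its singular Morse Index Theorem (Theorem~\ref{thm:limit_morse_index} and Remark~\ref{rem:limit_morse_index}) together with the explicit oscillating solution $r^{-1}t^{1/2}\sin(r\ln t)$, which has infinitely many zeros; it leaves the comparison between the variable coefficient $R(t)$ and the constant $q$ implicit. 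You instead exhibit an explicit infinite-dimensional subspace of $\mathscr C_0^\infty((T,\infty),\R^n)$ on which the quadratic form is negative definite, via the logarithmic substitution $u(t)=t^{1/2}v(\ln t)e$ and widely spaced, disjointly supported bumps. This buys three things: it avoids the index-theoretic machinery entirely, it treats variable $R$ directly rather than by reduction to the constant case, and --- since compactly supported test functions lie in the domain of \emph{every} self-adjoint extension --- it gives the conclusion for all extensions at once without any comparison lemma. What it loses is the connection to oscillation theory and conjugate points, which is the structural point the paper is making with its index formulas. Both arguments are sound; yours is arguably the cleaner proof of Case~2 as a standalone statement.
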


\begin{mainthm}\label{thm:bessel-morse-index-on-0-1-intro}
Let $l_R$ be the operator defined above on $\mathscr C_0^\infty((0,1],\R^n)$. 
\begin{itemize}
\item{\bf Case 1.} If  $\liminf_{t\to 0^+} R(t)>-1/4 $, then any self-adjoint extension  has finite Morse index.
\item{\bf Case 2.} If $\limsup_{t\to 0^+} R(t)<-1/4  $, then any self-adjoint extension has infinite Morse index.
\end{itemize}
\end{mainthm}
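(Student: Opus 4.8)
The plan is to treat the two cases by rather different arguments.

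\smallskip
\noindent\emph{Case 2 (infinite Morse index).}
Fix $\varepsilon>0$ and $\delta\in(0,1)$ such that $R(t)\le(-\tfrac14-\varepsilon)\Id$ as quadratic forms on $\R^n$, for all $t\in(0,\delta)$. The elementary input is a scaling–oscillation lemma for the scalar Euler operator $-\,d^2/dt^2-(1/4+\varepsilon)\,t^{-2}$: on an interval $(c,\rho c)$ with Dirichlet endpoints, the zero-energy solution vanishing at $c$ is a multiple of $t^{1/2}\sin\!\big(\sqrt{\varepsilon}\,\log(t/c)\big)$, so it has an interior zero as soon as $\log\rho>\pi/\sqrt{\varepsilon}$, and then the first Dirichlet eigenvalue is negative by Sturm oscillation — uniformly in $c>0$, via the substitution $t\mapsto t/c$. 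Fixing such a $\rho$, I tile $(0,\delta)$ by the pairwise disjoint intervals $I_k=(\delta\rho^{-k-1},\delta\rho^{-k})$, $k\ge0$, pick $\phi_k\in\mathscr{C}_0^\infty(I_k)$ with $\int_{I_k}\!\big(|\phi_k'|^2-(1/4+\varepsilon)\,t^{-2}|\phi_k|^2\big)\,dt<0$ (possible, as the infimum of this Rayleigh quotient over $\mathscr{C}_0^\infty(I_k)$ is the negative first Dirichlet eigenvalue), and set $u_k=\phi_k\,e$ with $e\in\R^n$ a fixed unit vector. Then
\[
\langle l_R u_k,u_k\rangle=\int_{I_k}\!\big(|\phi_k'|^2+\langle R(t)e,e\rangle\,t^{-2}|\phi_k|^2\big)\,dt\le\int_{I_k}\!\big(|\phi_k'|^2-(1/4+\varepsilon)\,t^{-2}|\phi_k|^2\big)\,dt<0,
\]
using $R(t)\le(-\tfrac14-\varepsilon)\Id$ on $I_k\subset(0,\delta)$. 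Since the $I_k$ are pairwise disjoint and contained in $(0,1)$, the span $\gen\{u_k:k\ge0\}$ is infinite-dimensional, is contained in $\mathscr{C}_0^\infty((0,1),\R^n)\subset\dom(A)$ for \emph{every} self-adjoint extension $A$, and carries $\langle A\,\cdot\,,\,\cdot\,\rangle$ negative-definite (the cross terms vanish by disjointness of supports); hence $\iMor(A)=+\infty$. I expect no real difficulty here.

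\smallskip
\noindent\emph{Case 1 (finite Morse index).}
Fix $\varepsilon>0$ and $\delta\in(0,1)$ with $R(t)\ge(-\tfrac14+\varepsilon)\Id$ for $t\in(0,\delta)$, and set $\mathfrak{q}(u)=\int_0^1\!\big(|u'|^2+\langle R(t)u,u\rangle\,t^{-2}\big)\,dt$. I would first show that the Friedrichs extension $L_{R,F}$ has finite Morse index. The componentwise Hardy inequality $\int_0^\delta|u'|^2\,dt\ge\tfrac14\int_0^\delta|u|^2\,t^{-2}\,dt$ (valid for $u$ vanishing near $0$), together with the boundedness of $R$ on $[\delta,1]$, gives $\mathfrak{q}(u)\ge-C\|u\|^2$, so $L_{R,F}$ is well defined. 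Imposing in addition a Dirichlet condition at the interior point $\delta$ is a codimension-$n$ restriction of the form domain, and the resulting operator is the orthogonal direct sum of $l_R$ on $(0,\delta)$ (Friedrichs at $0$, Dirichlet at $\delta$) and $l_R$ on $(\delta,1)$ (Dirichlet at both ends). The first summand is nonnegative by Hardy, since $R(t)\ge(-\tfrac14+\varepsilon)\Id$ on $(0,\delta)$; the second is a regular Sturm–Liouville operator on a compact interval and thus has finitely many negative eigenvalues. By the min-max inequality for codimension-$n$ restrictions of the form domain, $\iMor(L_{R,F})$ exceeds the sum of the Morse indices of the two summands by at most $n$, so $\iMor(L_{R,F})<\infty$.

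\smallskip
It remains to pass to an arbitrary self-adjoint extension $A$ of $L_R$. The minimal operator $L_R$ has finite deficiency indices — the regular endpoint $t=1$ contributes $n$, and the singular endpoint $t=0$ contributes at most $2n$, since $R(t)\,t^{-2}$ is squeezed between scalar multiples of $t^{-2}$ — so $A$ differs from $L_{R,F}$ by a finite-rank resolvent perturbation. By the standard stability of eigenvalue counting functions under finite-rank resolvent perturbations (the essential spectrum is unchanged, and the count of eigenvalues below any point of the common resolvent set changes by at most that rank), $A$ is bounded below and
\[
\iMor(A)\le\iMor(L_{R,F})+\dim\!\big(\dom(A)/\dom(L_R)\big)<\infty.
\]
The step that genuinely requires care is this transfer to \emph{every} self-adjoint extension in the matrix-valued singular setting — verifying that each self-adjoint extension is a finite-rank resolvent perturbation of the Friedrichs one and controlling its effect on the negative spectrum. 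A more structural alternative would invoke the Singular Morse Index Theorem (Theorem~\ref{thm:main-oneside-intro}) together with its general-boundary-condition refinement of Section~\ref{sec:morse-general}; that route would instead need the near-$0$ asymptotics of solutions of $l_R u=0$ (of the form $t^{1/2\pm\nu}$ with $\nu=\sqrt{R(0)+\tfrac14}$ when $R(0)>-\tfrac14$) in order to see that conjugate points do not accumulate at $t=0$ — precisely where the threshold $-\tfrac14$, the critical Hardy constant, makes its appearance. In either approach the local crossing-form bookkeeping available in the regular case breaks down, as emphasized after Theorem~\ref{thm:main-oneside-intro}.
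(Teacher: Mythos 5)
Your proof is correct. \textbf{Case 1} is essentially the paper's argument: split $(0,1)$ at an interior point, apply Hardy's inequality on the piece adjacent to the singularity where $R(t)\ge(-\tfrac14+\varepsilon)\Id$, note that the regular piece on $[\delta,1]$ has finite Morse index, and transfer to an arbitrary self-adjoint extension by a finite-codimension comparison of quadratic forms --- your ``finite-rank resolvent perturbation'' step is exactly Proposition~\ref{prop:estimate_morse_self_ajoint} (equivalently Lemma~\ref{lem:estimate_mor_subspace}), which is also what licenses the interior Dirichlet cut. \textbf{Case 2} rests on the same mechanism as the paper, namely the oscillation of the Euler comparison solution $t^{1/2}\sin(r\ln t)$ from Equation~\eqref{eq:sol_singular} once the potential drops below the Hardy threshold $-\tfrac14$, but you implement it differently: the paper combines the splitting method with Theorem~\ref{thm:limit_morse_index} and Remark~\ref{rem:limit_morse_index} (Morse index $=$ number of conjugate points) applied to the constant-coefficient comparison operator, leaving the passage from that scalar model to the matrix potential $R(t)$ implicit, whereas you construct an explicit infinite-dimensional negative-definite subspace from disjointly supported test functions on a geometric tiling of $(0,\delta)$. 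Your route is more elementary (no index theorem is needed for this case), makes the form comparison $R(t)\le(-\tfrac14-\varepsilon)\Id$ explicit, and disposes of the quantifier ``every self-adjoint extension'' in one stroke, since the negative subspace sits inside $\mathscr C_0^\infty((0,1),\R^n)\subset\dom(L_R)\subset\dom(A)$; the paper's route buys uniformity with its index-theoretic framework and identifies the negative directions with conjugate points. No gaps.
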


Using these properties together with the developed index theory, we derive new results on the Morse index of totally colliding and unbounded motions in the $N$-body problem (Section~\ref{sec:N-bp}). In particular, we provide both a new proof of the main results of \cite{BHPT20, HOY21} and a significant generalization to more general self-adjoint boundary conditions. The main result of Section~\ref{sec:N-bp} is the following.

\begin{mainthm}\label{thm:Morse-asymptotic-intro}
Let $q$ be a total collision or a parabolic solution at infinity having limiting normalized central configuration $a$, and set
\[
\bar B(a) := \frac{2}{9}\,\frac{M^{-1} D^2 U(a)}{U(a)}.
\]
If
\[
\bar B(a) > -\dfrac{1}{4},
\]
then the Morse index $\iMor(q)$ of $q$ is finite, and the following identity holds:
\[
\iMor(q) = \sum_{t \in (a,b)} \dim\big(\gamma(t)\Lambda_D \cap \Lambda_D\big),
\]
where $\gamma$ is the fundamental matrix solution satisfying $\gamma(a) = \Id$.

If \[
\bar B(a) < -\dfrac{1}{4},
\]
the Morse index is $\infty$.

If $q$ is hyperbolic, then the Morse index $\iMor(q)$ of $q$ is always finite.
\end{mainthm}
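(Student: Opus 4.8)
The plan is to deduce Theorem~\ref{thm:Morse-asymptotic-intro} from the index theory developed in the earlier sections by reducing the linearized equation along a total collision or parabolic solution to a Bessel-type differential operator of the form treated in Theorems~\ref{thm:Fredholmness-Bessel-perturbation-intro}--\ref{thm:bessel-morse-index-on-0-1-intro}, and then applying the Singular Morse Index Theorem (Theorem~\ref{thm:main-oneside-intro}) together with Theorem~\ref{thm:limit_morse_index-intro}. First I would recall the asymptotic analysis of total collision and parabolic-at-infinity solutions: after the McGehee-type blow-up (or the analogous change of variables at infinity for parabolic motions), the solution approaches a normalized central configuration $a$, and the Jacobi/Hessian operator of the action functional linearized along $q$ takes, in a suitable time coordinate $\tau$, the form $-\frac{d^2}{d\tau^2} + \frac{R(\tau)}{\tau^2} + (\text{bounded})$, where $R(\tau) \to \bar B(a) = \frac{2}{9}\,\frac{M^{-1}D^2U(a)}{U(a)}$ as $\tau$ tends to the singular endpoint. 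The factor $2/9$ and the $U(a)$ normalization come from the homogeneity degree $-1$ of the Newtonian potential and the Sundman–Sperling asymptotics $r(t)\sim c\,t^{2/3}$; the bounded part collects the lower-order terms and can be absorbed using the stability of the Morse index and of Fredholmness under relatively compact perturbations.

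Next I would verify the hypotheses (H1), (H3), (H4) for this operator on the relevant half-line/half-interval. Hypothesis (H1) (regularity at the non-singular endpoint, with Dirichlet conditions) is immediate from the setup. For (H3), boundedness from below of the quadratic form follows from $\bar B(a) > -1/4$ by the sharp Hardy inequality $\int |x'|^2 \ge \frac14 \int |x|^2/t^2$ on $C_0^\infty$, which controls the potential term; this is precisely where the threshold $-1/4$ enters. For (H4), Fredholmness of the minimal operator is exactly the content of Theorem~\ref{thm:Fredholmness-Bessel-perturbation-intro} (in the $(0,1]$ case) and of Theorem~\ref{thm:bessel-morse-index-intro}, Case 1 (in the $[1,\infty)$ case), again under $\bar B(a) > -1/4$; finiteness of the Morse index comes from Theorems~\ref{thm:bessel-morse-index-intro} and~\ref{thm:bessel-morse-index-on-0-1-intro}, Case 1. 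With these in hand, Theorem~\ref{thm:limit_morse_index-intro} (or equivalently Theorem~\ref{thm:main-oneside-intro} in the Dirichlet case) yields
\[
\iMor(q) = \iMor(L_F) = \sum_{t \in (a,b)} \dim\big(\gamma(t)\Lambda_D \cap \Lambda_D\big),
\]
with $\gamma$ the fundamental solution normalized at the regular endpoint, which is the claimed identity. For the complementary case $\bar B(a) < -1/4$, Case 2 of Theorems~\ref{thm:bessel-morse-index-intro} and~\ref{thm:bessel-morse-index-on-0-1-intro} gives directly that every self-adjoint extension, in particular the Friedrichs one computing $\iMor(q)$, has infinite Morse index.

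The hyperbolic case is separate and easier: a hyperbolic solution at infinity has $|q(t)|$ growing linearly, so the Newtonian potential decays like $t^{-1}$ and its Hessian like $t^{-2}$ up to the leading linear behavior — more precisely, after the appropriate rescaling the singular $t^{-2}$ coefficient disappears in the limit (equivalently $R(t) \to 0$, which trivially satisfies $\liminf R > -1/4$), so the linearized operator is a bounded, indeed relatively compact, perturbation of $-d^2/dt^2$ with Dirichlet conditions on a half-line. Hence it is bounded below, Fredholm, and of finite Morse index by the same Bessel-type results (with $\bar B = 0$), giving $\iMor(q) < \infty$.

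The main obstacle I anticipate is the careful derivation of the asymptotic normal form of the linearized operator — in particular, isolating exactly the singular coefficient $\bar B(a)$ and controlling all remainder terms so that they constitute a perturbation to which the invariance properties of the Morse index and of Fredholmness apply. This requires invoking the Sundman–Sperling asymptotic estimates for total collisions (and their parabolic counterparts), checking that the error terms decay fast enough relative to $t^{-2}$, and handling the fact that the limiting central configuration $a$ may be degenerate, so that $D^2U(a)$ restricted to the shape space can have eigenvalues at or near the critical threshold $-1/4$; the borderline case $\bar B(a) = -1/4$ (not covered by the statement) and the interplay with the rotational/scaling symmetries of the $N$-body problem will need to be addressed when reducing to the normal directions on which the Hessian is genuinely of Bessel type. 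Once the normal form is secured, the rest is a direct application of the machinery of Sections~\ref{sec:Morse-dirichlet} and~\ref{sec:Bessel}.
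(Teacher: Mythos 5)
Your proposal is correct and follows essentially the same route as the paper: reduce the linearized (Jacobi) operator along the asymptotic solution to a Bessel-type normal form $-\frac{d^2}{dt^2}+\frac{R(t)}{t^2}$ with $R\to\bar B(a)$ via the Sundman--Sperling estimates and the homogeneity of $U$, then invoke Theorem~\ref{thm:bessel-morse-index-on-0-1-intro} (resp.\ Theorem~\ref{thm:bessel-morse-index-intro} for the hyperbolic case, where the effective coefficient tends to $0$) together with Theorem~\ref{thm:limit_morse_index-intro} for the identity with the conjugate-point count. The only cosmetic difference is that the paper cites \cite{BS08} for the infinite-index case, whereas you derive it from Case~2 of the Bessel theorems; both are valid.
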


\medskip

The Appendix collects the most technical material. We first gather definitions, notation, and key results on symplectic indices and spectral flow used throughout the paper. Section~\ref{appendix:friedrichs} establishes the fundamental properties of Friedrichs extensions, which play a central role in our analysis, while Section~\ref{sec:postponed-proofs} contains postponed technical proofs in order to streamline the main exposition.

\medskip

{\footnotesize{
\subsubsection*{List of Notation}
{\footnotesize{
\begin{itemize}
\item $\overline \R:=\R \cup\{-\infty, +\infty\}$. $J=[a,b]\subset \overline \R$,  $I=[0,1]\subset \R$. 
\item $\Mat(n)$ and $ \Sym(n)$ the vector space of all (real) matrices and  symmetric matrices, respectively. 
\item $<$ denotes inclusion between linear subspaces to distinguish by the set-theoretical inclusion $\subset$
    \item $(\R^{2n}, \Omega)$ be the standard symplectic space. With abuse of notation we denote by the same symbol $\Omega$ the standard symplectic form of $\R^{2k-2n}$ for $n \le k\le 2n$.
\item \( H = L^2((a,b), \mathbb{R}^n) \).
\item $\Grass(H )$ the set of all closed subspaces of $H$
	\item $\Bou(H)$, $\Cl(H)$ (resp. $\Cl^s(H)$) be respectively the set of all linear bounded, closed (resp. closed and symmetric) densely defined operators in $H$. $\Bsa(H)$ denotes the set of all bounded and self-adjoint  operators on $H$ 
	\item $\dom(A)$ (resp. $\dom(A^*)$) denotes the domain of $A$ (resp. of $A^*$, the adjoint of $A$). $\langle \cdot,\cdot \rangle^G$ graph inner product of $A$.
    \item $p: \dom(A^*)\to U $ denotes the quotient projection. Let $\omega$ denote the symplectic form induced by $A$ on $\dom(A^*)$ 
	\item $\Lag$  denotes the space of all Lagrangian subspaces  
	\item $\CFs(H)$ denotes the set of all closed densely defined and symmetric Fredholm  operators on $H$. $\BFsa(H)$ denotes the set of all bounded and self-adjoint   Fredholm  operators on $H$
	\item For $s \in I$, let $l_s$  denotes a family of Sturm-Liouville operators. We denote by 
$L_s$ (resp. $L_s^*$) the {\sc minimal} (resp. {\sc maximal}) {\sc operator associated to $l_s$}. 
\item $\Lambda_D=\R^n \times (0)$ denotes the Dirichlet and $\Lambda_N=(0)\times \R^n$ the Neumann Lagrangian  subspaces of $(\R^{2n}, \Omega)$. With abuse of notation we denote by the same symbols $\Lambda_D$ (resp. $\Lambda_N$) the Dirichlet (resp. Neumann) Lagrangian of the symplectic space $(\R^{2k-2n}, \Omega)$, for $n \le k\le 2n$.
\item $\iMor(\#)$ denotes the Morse index (i.e. the maximal dimension of the negative spectral space) of the operator $\#$ 
    \item \( \spfl(\cdot) \): the spectral flow of a path of closed self-adjoint  Fredholm operators;
    \item \( \iCLM(\cdot) \): the  Maslov index as defined by Cappell--Lee--Miller
    .
    \item \( \itriple(\cdot,\cdot,\cdot) \): the triple index, quantifying the difference of Morse indices between different boundary conditions.
\end{itemize}
}}

 \subsubsection*{List of Assumptions}

\begin{itemize}
	\item[-]{\sc (H0)} Given $A \in \Cl^s(H)$,  we assume that $\ker A^* \cap \dom(A)=\{0\}$ {\sc (Unique Continuation Property)}
	\item[-]{\sc  (H1)}  The instant $t=b$ is finite and  for every $s \in[0,1]$ it is a regular endpoint  for the operator $\ell_s$ meaning that 
	the  paths $t \mapsto P'(t)$, $t \mapsto Q(t)$,  $t \mapsto R(t)$ and  $t \mapsto C_s(t)$  are continuous up to the instant $t=b$ {\sc (Regular endpoint)}
 \item[-]{\sc (H2)} We assume that $[0,1]\ni s\mapsto L^*_s \in \CFs(H)$ is a gap-continuous path of symmetric Fredholm operators {\sc (gap-continuity  and Fredholmenss)}
\item[-]{ (H3)} We assume that the quadratic form  $k$ in $ H =L^2((a,b), \R^n)$ associated to the SL-operator $l$   is  {\sc bounded from below}
	\item[-]{(H4)} The minimal operator $L_b$ is a closed symmetric and Fredholm operator in $ H $,  so $ L _b \in \CFs( H )$.
	\end{itemize}
}}


\subsubsection*{Acknowledgements}

The second author is grateful to New York University Abu Dhabi for the opportunity to conduct his research in such an inspiring and supportive environment. He also extends his sincere thanks to the Institute’s administrative and technical staff for their invaluable assistance and the excellent working conditions they provide.

\section{An abstract  spectral flow formula }\label{sec:Abstract-sf}

In this section, we introduce an abstract decomposition formula for an operator pencil of densely defined, closed self-adjoint Fredholm operators and we finally prove a spectral flow formula. In a different context of Cauchy data space, our main result of this section can be recast in the Gelfand-Robbin quotients. Our primary references are \cite{Fur04,BZ18,Kat80} and the sources cited therein.


\subsection{Symplectic Hilbert spaces and a decomposition formula}

Let $(H, \langle \cdot, \cdot \rangle)$ be a real and separable Hilbert space with an inner product $\langle \cdot, \cdot \rangle$ and associated norm $\|\cdot\|$.
We denote by $\Grass(H)$ the set of all closed linear subspaces of $H$ and we set 
\[
\dist (u,V)\=\inf_{v \in V}\|u-v\|.
\]
 Given  $U,V \in \Grass(H)$, we denote by $S_U$ the unit sphere of $U$ and we set 
\begin{equation}
	\delta(U,V):=\begin{cases}
	\sup_{u \in S_U} \dist(u, V) & \textrm{ if } U \neq (0)\\[3pt]
	0 &  \textrm{ if } U = (0)
\end{cases}\quad \textrm{ and } \quad 
\widehat \delta(U,V):= \max\{\delta(U,V), \delta(V,U)\}.
\end{equation}
$\widehat \delta(U,V)$ is called the {\sc  gap} between $U$ and $V$ and defines a metric on  $\Grass(H )$ whose associated metric topology is called {\sc gap topology}. Given $U \in \Grass(H )$, there exists a unique orthogonal projection $P_U$ onto $U$ which is a bounded operator on $H$. Given $U, V \in \Grass(H)$, we consider the norm of the difference of the corresponding orthogonal projectors:
\begin{equation}\label{eq:DG}
d_G(U,V):= \norm{P_U- P_V}.
\end{equation}
It can be shown  that metric topology induced by $d_G$ is equivalent to the gap topology. 
\begin{note}
We denote by $\Cl(H)$ (resp. $\Cl^s(H)$)  the set of all {\sc  closed} (resp. {\sc  closed and symmetric}) and densely defined   operators\footnote{
Since  the adjoint of a densely defined linear operator is closed then  self-adjoint  operators on $H$ are contained in $\Cl(H)$.} and we denote by $\Grn{\#}$  the graph of the operator $\#$.
\end{note}
We observe that the gap metric induces in a natural way a metric on $\Cl(H)$ (resp. $\Cl^s(H)$) given by 
\[
d(T, S)\=\widehat \delta(\Grn{T} , \Grn{S}) \qquad S,T \in \Cl(H).
\]

Let $A \in \Cl^s(H)$,  $A^*\in \Cl^s(H)$ be its adjoint and we denote by  $\dom(A)$ and $\dom(A^*)$  the domain of $A$ and $A^*$, respectively. We  define the {\sc   graph inner product on $\dom(A^*)$}  as follows
\[
\langle x,y \rangle^G:=\langle x, y \rangle + \langle A^* x, A^* y\rangle \qquad x, y \in \dom(A^*)
\]
and we observe that  $\dom(A^*)$ equipped with the graph norm  becomes a Hilbert space  and $\dom(A)$ is a closed subspace. (Cf.  \cite[Example 2.2]{Fur04} for further details).  
Let us introduce on $D(A^*)$ the following skew-symmetric bilinear form $\omega$  given by 
\begin{equation}\label{eq:symplectic-form}
\omega\big(x, y\big)=  \langle x, A^* y\rangle -\langle A^*\, x, y\rangle
\end{equation}
and we observe that $\omega$ is degenerate being $\ker \omega= \dom(A)$.
Let $A \in \CFs(H)$ be a densely defined closed, symmetric and Fredholm operator on $H$. Under this assumption the following {\bf splitting  formula} holds 
\[
\dom(A^*)=\dom(A)\oplus U
\]
where $U$ is a finite dimensional (closed) subspace of $\dom(A^*)$. It is worth observing that the restriction  $\rho\=\omega|_U$ is non-degenerate. So the pair $(U, \rho)$ defines a symplectic space. We denote by $p:\dom(A^*) \to U$ the canonical projection on the second factor.

For example we can choose $U$ be  the orthogonal complement (wrt the graph inner product) $\dom(A)^{\perp_G}$ of $\dom(A)$ in $\dom(A^*)$ with respect to the graph-inner product; namely with 
\begin{multline}
\dom(A)^{\perp_G}=\Set{x \in \dom(A^*)|\langle x, y\rangle^G=0\ \textrm{ for all } y \in \dom(A)}\\[5pt]= \Set{x \in \dom(A^*)|A^*(x) \in \dom(A^*)   \textrm{ and }  A^*(A^*(x))=-x}.
\end{multline}
From this characterization we get that $-A^*$ restricted to $\dom(A)^{\perp_G}$ is an orthogonal transformation into itself and defines a complex  structure on $\dom(A)^{\perp_G}$ since for every $x,y \in \dom(A)^{\perp_G}$, we have
\[
\omega(x,y)=\langle x, A^*(y)\rangle- \langle A^*(x), y\rangle =\langle -A^*(A^*(x)), A^*(y)\rangle- \langle A^*(x), y\rangle=\langle -A^*(x), y\rangle^G.
\]
This equality shows that the symplectic space $U$ together with  $\rho$ defined above and together with the almost complex structure $-A^*$ after the identification  with the orthogonal complement $\dom(A)^{\perp_G}$ is a symplectic  space with a compatible symplectic form, inner product and the almost complex structure. 

\subsection{An abstract spectral flow formula}

Let $A, A^*\in \Cl^s(H)$ with domains respectively given by $\dom(A), \dom(A^*)$, and we consider the  splitting  
\[
\dom(A^*)=\dom(A)\oplus U
\]
Let $p: \dom(A^*) \to U $ be  projection map  and we observe that the subspace $p (\ker A^*)$ 
is an {\em isotropic subspace} of the symplectic space $U$. We refer to $p (\ker A^*)$ as the {\sc Cauchy data space of the operator $A$.}
The next well-known result gives a characterization of the self-adjointness  of an operator $A$ on $\dom$ in terms of the symplectic properties of $p(\dom)$. 
\begin{lem}\label{lem:abstract_fundamental_solution}
Let $A\in \Cl^s(H)$ and let $\dom $ be denote a linear subspace such that $\dom(A)<\dom < \dom(A^*)$. Then the restriction of the adjoint operator  $A^*$ to the subspace $\dom$ is self-adjoint, if and only if, the subspace $p(\dom)$ is a Lagrangian subspace in $U$.	
\end{lem}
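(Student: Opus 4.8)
The plan is to characterize self-adjointness of $B := A^*|_{\dom}$ purely through the symplectic data, using the splitting $\dom(A^*) = \dom(A) \oplus U$ and the fact that $\omega$ has kernel exactly $\dom(A)$. The key observation is that $B$ is symmetric precisely when $\omega$ vanishes on $\dom$, and self-adjoint precisely when $\dom$ is \emph{maximal} with this property — which, after passing to the quotient $U$, is exactly the condition that $p(\dom)$ be Lagrangian in $(U,\rho)$.

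First I would record the elementary identity: for $x, y \in \dom(A^*)$, we have $\langle Bx, y\rangle - \langle x, By\rangle = \langle A^*x, y\rangle - \langle x, A^*y\rangle = -\omega(x,y)$. Hence $B$ is symmetric on $\dom$ if and only if $\omega|_{\dom} \equiv 0$, i.e. $\dom$ is an isotropic subspace of $(\dom(A^*), \omega)$. Since $\dom(A) = \ker\omega \subset \dom$, and $p$ is the projection killing exactly $\dom(A)$, isotropy of $\dom$ is equivalent to isotropy of $p(\dom)$ in $(U,\rho)$: indeed $\omega(x,y) = \rho(p(x), p(y))$ for all $x,y \in \dom(A^*)$, because $\omega$ descends to $\rho$ through $p$.

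Next I would identify the adjoint of $B$. The general fact (valid for any operator $B$ with $A \subseteq B \subseteq A^*$ where $A$ is closed symmetric and $B = A^*|_{\dom}$) is that $\dom(B^*) = \{y \in \dom(A^*) : \omega(x,y) = 0 \text{ for all } x \in \dom\}$, with $B^* = A^*|_{\dom(B^*)}$; this is just von Neumann's description of intermediate self-adjoint extensions rephrased via $\omega$. Passing to the quotient, $p(\dom(B^*)) = p(\dom)^{\perp_\rho}$, the $\rho$-orthogonal complement of $p(\dom)$ in $U$. Therefore $B = B^*$ (equivalently $\dom(B) = \dom(B^*)$, using $\dom(A) \subseteq \dom(B^*)$ always and that $p$ has kernel $\dom(A)$) holds if and only if $p(\dom) = p(\dom)^{\perp_\rho}$, i.e. $p(\dom)$ is Lagrangian in the finite-dimensional symplectic space $(U,\rho)$.

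The main obstacle I anticipate is the bookkeeping in the adjoint computation: one must verify carefully that $\dom(B^*) \subseteq \dom(A^*)$ (so that $B^*$ really is a restriction of $A^*$ and the quotient description applies) and that the correspondence between intermediate domains and subspaces of $U$ containing no spurious elements is exact — i.e. that $p$ restricted to subspaces $\dom$ with $\dom(A) \subseteq \dom \subseteq \dom(A^*)$ is a bijection onto subspaces of $U$ intertwining $\perp_\rho$ with the adjoint-domain operation. This is where the finite-dimensionality of $U$ and the non-degeneracy of $\rho$ are used; the symmetric half (isotropic $\leftrightarrow$ symmetric) is routine, but the self-adjoint half genuinely needs the maximal-isotropic characterization of Lagrangians together with $\dim U < \infty$. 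I would cite \cite{Fur04} or \cite{BZ18} for the standard version of this Gelfand–Robbin correspondence and then simply note that our $(U,\rho)$ realizes the quotient $\dom(A^*)/\dom(A)$ symplectically.
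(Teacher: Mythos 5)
Your proposal is correct and follows essentially the same route as the paper: both identify $\dom(B^*)$ with the $\omega$-orthogonal complement $\dom^{\omega}$ inside $\dom(A^*)$ (using $A\subseteq B\subseteq A^*$ and the closedness/symmetry of $A$), and then reduce the self-adjointness condition $\dom=\dom^{\omega}$ to $p(\dom)=p(\dom)^{\rho}$ in the quotient symplectic space $(U,\rho)$. The bookkeeping you flag (that $B^*$ is a restriction of $A^*$ and that $p$ intertwines $\omega$-complements with $\rho$-complements) is exactly what the paper's proof carries out.
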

\begin{proof}
We have $p(D)=D\cap U$.
Let $A_D=A^*|_D: D \to H$. Since $A$ is symmetric then $\dom (A) < \dom (A^*)$ and so $\dom (A_D^*) < \dom (A^*)$. Then we have 
\[
\dom(A_D^*)=\set{x\in \dom(A^*)|\langle x,A_Dy\rangle-\langle A_D^*x,y\rangle=0,\ \forall y\in D}=D^\omega
\]
since $A_D^*x=A^*x$ for every $x \in \dom(A^*)$ and $A_Dy=Ay$ for every $y\in \dom(A)$. So, we get $\dom (A_D^*)=\dom^\omega$.
To conclude that $A_D^*=A_D$, by the above arguments, it's equivalent to prove that $D^\omega=D$.  Since $D(A) \subset D$ and $D(A)<D^\omega$, we have to show that 
\[
D\cap U= D^\omega \cap U
\]
and being $(D\cap U)^\rho=D^\omega \cap U$, we just need to show that  $(D\cap U)^\rho= D\cap U$. The conclusion follows by observing that  $D\cap U= p(D)$.

\end{proof}
\begin{lem}\label{thm:beta-finit-dimensional}
Let $A\in \CFs(H)$. Then 
\begin{itemize}
\item[(a)] $U$ is a finite dimensional symplectic vector space
	\item[(b)] $A$ has at least one self-adjoint  Fredholm extension; that is, there exists a subspace $ \dom $ (closed in the graph norm topology) such that $A_{\dom}:= A^*\big \vert_{\dom}$ is self-adjoint  and Fredholm.
	\end{itemize}
\end{lem}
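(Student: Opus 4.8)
The plan is to deduce both statements from the splitting formula $\dom(A^*) = \dom(A) \oplus U$ for $A \in \CFs(H)$, which was recorded in the previous subsection. For part (a), the key point is that $U$ is finite-dimensional. I would argue as follows: since $A$ is Fredholm, $\ker A^*$ is finite-dimensional and $\ran A = (\ker A^*)^\perp$ is closed of finite codimension. The factor space $\dom(A^*)/\dom(A)$ is naturally identified with $U$ via $p$, so it suffices to bound $\dim \dom(A^*)/\dom(A)$. One clean way is to use the characterization $U \cong \dom(A)^{\perp_G}$ from the preceding discussion, on which $-A^*$ acts as a complex structure; alternatively, one invokes the standard fact (e.g.\ from \cite{Fur04, BZ18}) that for a closed symmetric Fredholm operator the deficiency spaces are finite-dimensional and $\dim\bigl(\dom(A^*)/\dom(A)\bigr) = \dim\ker(A^* - i) + \dim\ker(A^* + i) < \infty$. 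Once $U$ is known to be finite-dimensional, it is automatically a closed subspace of the Hilbert space $(\dom(A^*), \langle\cdot,\cdot\rangle^G)$, and the restriction $\rho = \omega|_U$ is nondegenerate because $\ker\omega = \dom(A)$ and $\dom(A) \cap U = \{0\}$ by the directness of the sum; hence $(U,\rho)$ is a (finite-dimensional) symplectic vector space.

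For part (b), the strategy is to produce a Lagrangian subspace of $(U,\rho)$ and then apply Lemma~\ref{lem:abstract_fundamental_solution}. Since $(U,\rho)$ is a finite-dimensional symplectic space, it certainly contains Lagrangian subspaces $\Lambda$ — for instance one may complete any maximal isotropic subspace, or simply invoke the standard fact that every finite-dimensional symplectic space has Lagrangians (its dimension is even, say $2k$, and a Lagrangian has dimension $k$). Set $\dom := \dom(A) \oplus \Lambda$; this is closed in the graph-norm topology because $\dom(A)$ is graph-closed and $\Lambda$ is finite-dimensional. Then $p(\dom) = \Lambda$ is Lagrangian, so by Lemma~\ref{lem:abstract_fundamental_solution} the operator $A_{\dom} = A^*|_{\dom}$ is self-adjoint. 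It remains to check that $A_{\dom}$ is Fredholm: since $\dom(A) < \dom < \dom(A^*)$ and $\dom/\dom(A)$ is finite-dimensional, $A_{\dom}$ differs from the closed symmetric Fredholm operator $A$ by a finite-dimensional enlargement of the domain, and Fredholmness (stability of finite index under finite-rank domain extensions, together with $A^*$ itself having closed range of finite defect since $A$ is Fredholm) is preserved. I would phrase this last point via: $\ker A_{\dom} \subseteq \ker A^*$ is finite-dimensional, and $\ran A_{\dom} \supseteq \ran A$ is closed of finite codimension, so $A_{\dom} \in \CFsa(H)$.

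The main obstacle I anticipate is the careful verification that the enlarged operator $A_{\dom}$ genuinely remains \emph{Fredholm} (not merely self-adjoint): one must confirm that passing from $\dom(A)$ to $\dom(A)\oplus\Lambda$ keeps the range closed and the cokernel finite-dimensional. This is essentially bookkeeping with the finite-dimensional quotient $U$ and the identity $\ran A_{\dom}^\perp = \ker A_{\dom}$ (self-adjointness), but it needs to be done cleanly; everything else — finite-dimensionality of $U$, nondegeneracy of $\rho$, existence of a Lagrangian, graph-closedness of $\dom$ — is routine symplectic linear algebra and functional analysis once the splitting formula is in hand.
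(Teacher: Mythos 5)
Your proposal is correct, and part (b) follows the paper's route exactly: pick a Lagrangian $\Lambda$ in the finite-dimensional symplectic space $(U,\rho)$, set $\dom=\dom(A)\oplus\Lambda=p^{-1}(\Lambda)$, and invoke Lemma~\ref{lem:abstract_fundamental_solution}. In fact you do slightly more than the paper here: the paper stops at self-adjointness, whereas you also verify Fredholmness of $A_{\dom}$ (via $\ker A_{\dom}\subseteq\ker A^*$ and $\ran A_{\dom}=\ran A+A^*(\Lambda)$ closed of finite codimension), which is a worthwhile addition since the lemma explicitly claims a Fredholm extension.

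For part (a) your route differs from the paper's. The paper gives a short self-contained index computation: writing $A=A^*\circ j$ with $j:\dom(A)\hookrightarrow\dom(A^*)$ the (injective, closed-range) inclusion, the composition formula yields $\dim U=-\Find j=\Find A^*-\Find A<\infty$, where $\Find A^*$ is finite because $\ker A^*=(\ran A)^\perp$ is finite-dimensional and $\ran A^*\supseteq\ran A$ is closed of finite codimension. You instead outsource the finiteness of $\dim\bigl(\dom(A^*)/\dom(A)\bigr)$ to the literature via the von Neumann deficiency-index formula. That is legitimate as a citation (and the fact is true), but two caveats: first, the cited fact is essentially the content of the lemma itself, so as a proof it is weaker than the paper's argument, which derives it from elementary Fredholm index arithmetic; second, the deficiency-index formula is usually stated over complex Hilbert spaces, while the paper works over a real one, so you would need a complexification remark. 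Your other suggested route for (a) --- identifying $U$ with $\dom(A)^{\perp_G}$ on which $-A^*$ acts as a complex structure --- does not by itself give finite-dimensionality (that space is just $\ker\bigl((A^*)^2+\Id\bigr)$, i.e.\ the deficiency spaces again), so it should not be presented as an independent proof. Your observation that nondegeneracy of $\rho=\omega|_U$ follows from $\ker\omega=\dom(A)$ together with directness of the splitting is correct and is the right way to finish (a).
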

\begin{proof}
We  prove (a). Since $A$ is symmetric and Fredholm, then  also $A^*$ is Fredholm (even if  not necessarily symmetric\footnote{$A=i\partial_x$ is symmetric in $L^2([0,1], \R)$ with domain $C_0^\infty([0,1], \R)$ but $\dom(A^*)=H^1([0,1], \R)$.}). Let us consider the injection $j: \dom(A) \to \dom(A^*)$ and denoting by $\Find$ the Fredholm index, we get  the following formula (cf. \cite[Theorem 1.3.2]{Hor89})
\[
\Find A= \Find A^*- \Find j.
\]
Being $j$ an injection, it follows that $\Find j =-\dim \coker j= -\dim U$. So, we get  that 
\[
\dim U= \Find A^*-\Find A
\]
and since both integers on the (RHS) of the previous equality are finite, also their difference is so. This  concludes  the proof of (a).

For proving $(b)$ we start  by  observing that since by (a), the symplectic space $(U, \rho)$ is finite dimensional, then there exists a Lagrangian subspace $L<U$. We let $\dom  = p^{-1}(L)$ and we observe that $p(\dom )= L$  since $\gamma$ is surjective. The conclusion directly  follows by Lemma~\ref{lem:abstract_fundamental_solution}.  
 \end{proof}
\begin{rem}
The minimal operators \( A \) arising from the linear ordinary differential operators considered in this work are closed, self-adjoint, and Fredholm. As a consequence of Lemma~\ref{thm:beta-finit-dimensional}, the associated quotient (or factor) spaces are finite-dimensional.

This stands in contrast to the case of elliptic partial differential operators, for which the minimal operator is generally not Fredholm, as it typically has an infinite-dimensional kernel.
\end{rem}

We introduce the following {\sc Unique Continuation Property}.
\begin{itemize}
	\item {\sc (H0)} Let $A \in \Cl^s(H)$  and we assume that $\ker A^* \cap \dom(A)=(0)$. 
\end{itemize}
 \begin{rem}
In the case of regular linear ordinary differential operators, the unique‐continuation property holds automatically. However, for general elliptic partial differential operators this property can fail, a phenomenon that is intimately connected to Carleman estimates and the Fefferman–Phong inequality.
 \end{rem}
 The first basic consequence of (H0) is the following result. 
\begin{lem}\label{thm:nuovo}
	Let $A \in \CFs(H)$ and we assume that (H0) holds. Then 
	\[
	p\big(\ker A^*\big)
	\]
	is a Lagrangian subspace of $U$. Moreover this property doesn't depend on the chosen decomposition.
\end{lem}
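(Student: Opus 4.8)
The plan is to show that $p(\ker A^*)$ is both isotropic and coisotropic in the finite-dimensional symplectic space $(U,\rho)$; equivalently, that it equals its own $\rho$-orthogonal complement. Isotropy is essentially free: for $x,y\in\ker A^*$ one has $A^*x=A^*y=0$, so directly from the definition in Equation~\eqref{eq:symplectic-form}, $\omega(x,y)=\langle x,A^*y\rangle-\langle A^*x,y\rangle=0$. Since $p$ is linear and $\rho=\omega|_U$ is the push-forward of $\omega$ along the projection $\dom(A^*)=\dom(A)\oplus U$ (note $\omega$ vanishes on $\dom(A)=\ker\omega$), this gives $\rho(p(x),p(y))=0$, so $p(\ker A^*)$ is isotropic and in particular $\dim p(\ker A^*)\le\tfrac12\dim U$.

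The substantive half is the dimension count, and this is where Hypothesis (H0) enters. First, (H0) says $\ker A^*\cap\dom(A)=(0)$, so the restriction of $p$ to $\ker A^*$ is injective, whence $\dim p(\ker A^*)=\dim\ker A^*$. Now I would invoke the Fredholm bookkeeping already used in the proof of Lemma~\ref{thm:beta-finit-dimensional}: from $\Find A=\Find A^*-\Find j$ with $\Find j=-\dim U$, one gets $\dim U=\Find A^*-\Find A$. Writing $\Find A=\dim\ker A-\dim\coker A$ and $\Find A^*=\dim\ker A^*-\dim\coker A^*$, and using that $A$ is symmetric so that $\coker A\cong\ker A^*$ and $\ker A\cong\coker A^*$ (the standard identifications $\coker A=(\ran A)^\perp=\ker A^*$ for densely defined closed $A$, applied to both $A$ and $A^*$), we obtain $\Find A^*=-\Find A$ and hence $\dim U=2(\dim\ker A^*-\dim\ker A)$. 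Finally (H0) forces $\ker A\subseteq\ker A^*\cap\dom(A)=(0)$, so $\dim\ker A=0$ and $\dim U=2\dim\ker A^*$. Combining with the previous paragraph, $\dim p(\ker A^*)=\dim\ker A^*=\tfrac12\dim U$, which together with isotropy shows $p(\ker A^*)$ is Lagrangian.

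For the independence statement: any two choices $U$, $U'$ of complement are both mapped isomorphically onto the quotient $\dom(A^*)/\dom(A)$ by the respective projections, and this quotient carries a well-defined symplectic form induced by $\omega$ (since $\ker\omega=\dom(A)$); the two projections are intertwined by the canonical symplectic isomorphism $U\to\dom(A^*)/\dom(A)\leftarrow U'$, under which $p(\ker A^*)$ and $p'(\ker A^*)$ correspond to the same subspace of the quotient, namely the image of $\ker A^*$. Being Lagrangian is preserved by symplectic isomorphism, so the conclusion is decomposition-independent. The main obstacle is purely the $\dim U=2\dim\ker A^*$ identity; once the symmetric-operator cokernel identifications are in place it is routine, but it is the only place where all three hypotheses ($A$ closed symmetric, $A$ Fredholm, and (H0)) are genuinely used together.
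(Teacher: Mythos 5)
Your proof is correct, but the core of your argument differs from the paper's. The paper proves the Lagrangian property by computing the $\omega$-annihilator directly: after reducing to the case $\ker A^*< U$ (which is where (H0) enters, since it guarantees $\ker A^*$ can be placed inside a complement of $\dom(A)$), it shows $(\ker A^*)^\omega=(A^*)^{-1}(\image A)=\dom(A)+\ker A^*$, whence $(\ker A^*)^\rho=(\dom(A)+\ker A^*)\cap U=\ker A^*$, i.e.\ the subspace equals its own annihilator. You instead prove isotropy (immediate from the definition of $\omega$) and then establish half-dimensionality via the Fredholm index bookkeeping $\dim U=\Find A^*-\Find A$ combined with the identifications $\coker A\cong\ker A^*$ and $\coker A^*\cong\ker A^{**}=\ker A$ (the latter using that $A$ is closed), together with the observation that (H0) forces $\ker A=(0)$, giving $\dim U=2\dim\ker A^*=2\dim p(\ker A^*)$. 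Both routes are valid; yours makes transparent exactly where each hypothesis (closed, symmetric, Fredholm, (H0)) is used and yields the identity $\dim U=2\dim\ker A^*$ as a byproduct (which the paper in fact invokes later, in Lemma~\ref{thm:decomp-factor-space}), while the paper's computation produces the explicit description $(\ker A^*)^\omega=\dom(A)+\ker A^*$, which is reused verbatim in the proof of Lemma~\ref{lem:max_dom_construct}. Your treatment of the independence of the decomposition coincides with the paper's: both pass through the quotient $\dom(A^*)/\dom(A)$ using $\ker\omega=\dom(A)$.
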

\begin{proof}
    By (H0) we infer that  $\dim p(\ker A^*)=\dim \ker A^*$. We start showing that this property is independent on the chosen decomposition.
    Let $\dom(A^*)=\dom(A)\oplus U'$ be another decomposition.
    Since $\dom (A)=\ker \omega$, then the map $\Phi: (U',\rho')\mapsto (U,\rho)$ is a symplectic isomorphism. 
    So, by this argument we get that the property for $p(\ker A^*)$ to be Lagrangian is independent on the chosen decomposition.
    Without loss of generality, we can assume that $\ker A^*< U$ .
    We have
    \[
    (\ker A^*)^\rho=(\ker A^*)^\omega\cap U.
    \]
    We observe  that 
    \begin{multline}
    (\ker A^*)^\omega=\set{v \in \dom(A^*)|\langle u, A^*v \rangle=0,\forall u\in \ker A^*}=(A^*)^{-1}\left(\image (A)\right)\\=(A^*)^{-1}A^*(\dom (A))=\dom (A)+\ker A^*.
    \end{multline}

    It follows that $(\ker A^*)^\omega\cap U=(\dom (A)+\ker A^*)\cap U=\ker A^*+(\dom(A)\cap U)=\ker A^*$.
\end{proof}
At this point, we introduce two technical results that will be used frequently throughout the manuscript.
\begin{lem}\label{thm:lemma-same-domain-perturbation}
	Let $A\in \Cl^s(H)$ be a closed and symmetric operator on $H$ and let $B \in \Bsa(H)$ be a bounded self-adjoint  operator on $H$.  Then
    \begin{itemize}
    \item $A+B\in \Cl^s(\dom(A),H)$
    \item  $\dom (A^*)=\dom ((A+B)^*)$.
    \end{itemize}
\end{lem}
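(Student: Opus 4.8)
The plan is to verify each bullet directly from the definitions; the only substantive input is that $B$, being bounded and self-adjoint, is everywhere defined on $H$ and satisfies $B^* = B$.

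\textbf{First bullet.} I would start by noting that $\dom(A+B) = \dom(A) \cap \dom(B) = \dom(A)$, which is dense, so $A+B$ is densely defined with domain $\dom(A)$. Symmetry is a one-line computation: for $x,y \in \dom(A)$,
\[
\langle (A+B)x, y\rangle = \langle Ax,y\rangle + \langle Bx,y\rangle = \langle x,Ay\rangle + \langle x,By\rangle = \langle x,(A+B)y\rangle,
\]
using that $A$ is symmetric and that $B$ is self-adjoint, hence symmetric. For closedness, suppose $x_n \in \dom(A)$ with $x_n \to x$ and $(A+B)x_n \to z$ in $H$. Since $B$ is bounded, $Bx_n \to Bx$, whence $Ax_n = (A+B)x_n - Bx_n \to z - Bx$. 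As $A$ is closed, $x \in \dom(A)$ and $Ax = z - Bx$, i.e.\ $x \in \dom(A+B)$ and $(A+B)x = z$. Thus $A+B \in \Cl^s(\dom(A), H)$.

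\textbf{Second bullet.} I would prove the sharper statement $(A+B)^* = A^* + B$, from which the equality of domains is immediate. Let $y \in H$. Then $y \in \dom((A+B)^*)$ iff there is some $w \in H$ with $\langle (A+B)x, y\rangle = \langle x, w\rangle$ for all $x \in \dom(A)$. Since $\langle Bx, y\rangle = \langle x, By\rangle$ by self-adjointness of $B$, this identity rewrites as $\langle Ax, y\rangle = \langle x, w - By\rangle$ for all $x \in \dom(A)$, which holds for some $w$ precisely when $y \in \dom(A^*)$, in which case necessarily $w = A^*y + By$. Hence $\dom((A+B)^*) = \dom(A^*)$, and on this common domain $(A+B)^* = A^* + B$.

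There is no genuine obstacle here: the argument rests entirely on the elementary fact that an everywhere-defined bounded operator affects neither domains nor closedness and transposes into itself. The only point worth recording is the identification $(A+B)^* = A^* + B$ as operators, not merely the coincidence of domains; this refinement is what allows the symplectic form $\omega$ and the splitting $\dom(A^*) = \dom(A) \oplus U$ to be transported verbatim from $A$ to $A+B$ in the subsequent sections.
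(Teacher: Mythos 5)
Your proof is correct. It reaches the same key identity the paper relies on, namely $(A+B)^* = A^* + B$, but by a genuinely more elementary route. The paper's own proof is entirely citation-based: closedness of $A+B$ is obtained from Kato's stability theorem for relatively bounded perturbations, and the adjoint identity is extracted from the observation that the operator matrix $\begin{pmatrix}0 & A+B\\ A^*+B & 0\end{pmatrix}$ on $H\times H$ is a bounded self-adjoint perturbation of the self-adjoint matrix $\begin{pmatrix}0 & A\\ A^* & 0\end{pmatrix}$, hence itself self-adjoint, which forces $(A+B)^*=A^*+B$ via the criterion that such an off-diagonal matrix is self-adjoint iff the lower entry is the adjoint of the upper one. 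You instead verify closedness by the standard limit argument (using boundedness of $B$ to peel it off a convergent sequence) and compute $(A+B)^*$ directly from the defining relation of the adjoint, transposing $B$ across the inner product. Your version is self-contained and arguably cleaner, since it avoids two external theorems and the operator-matrix device; the paper's version buys brevity and reuses machinery it invokes elsewhere. Your closing remark correctly identifies that the operator identity $(A+B)^*=A^*+B$, not merely the equality of domains, is what makes the symplectic form and the splitting transport from $A$ to $A+B$ in the sequel.
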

\begin{proof}
	By \cite[Chapter 4, Theorem 1.1]{Kat80} we get that $A+B$ is a closed operator. For proving the symmetry we start observing that  the operator-matrix  $\begin{pmatrix}0 &  C\\ D& 0\end{pmatrix}$ defined on $H\times H$ is self-adjoint  if and only if $D=C^*$.
	By \cite[Chapter 5, Theorem 4.3]{Kat80}, we get that  $\begin{pmatrix}0 & A+B\\ A^*+B&0\end{pmatrix}$ is self-adjoint  since it is a bounded self-adjoint  perturbation of $\begin{pmatrix}0 & A\\ A^*&0\end{pmatrix}$.
	Then we have 
	\[
	(A+B)^*=A^*+B
	\]
	 and by this equality the result then follows.
	\end{proof}
Lemma~\ref{lem:conti_bound_pertub} which is frequently used along the paper asserts that the perturbation of a closed operator on \( H \) by a continuous family of bounded linear operators, parametrized by an open subset of \( \mathbb{R}^m \), defines a gap-continuous family in \( H \). In particular, a one-parameter family resulting from such a perturbation is gap-continuous.

	\begin{lem} \label{lem:conti_bound_pertub}
	Let $ L\in \Cl(H)$ be a closed operator. Let $\Omega\subset \R^m$ be an open subset  and we assume that the mapping 
    \[
    \Omega\ni v\mapsto K_v\in \Bou(H)
    \]
    is continuous. Then the map   $v\mapsto   L+K_v$ is gap-continuous  in $H$.
	\end{lem}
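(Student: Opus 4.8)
The statement asserts gap-continuity of $v \mapsto L + K_v$ in $H$, where $L$ is a fixed closed operator and $v \mapsto K_v$ is a norm-continuous family of bounded operators. Since the gap metric on $\Cl(H)$ is $d(T,S) = \widehat\delta(\Grn{T}, \Grn{S})$, the goal is to show that for $v$ near $v_0$ the graphs $\Grn{L+K_v}$ and $\Grn{L+K_{v_0}}$ are close in the gap metric, with closeness controlled by $\|K_v - K_{v_0}\|$.

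First I would fix $v_0 \in \Omega$ and set $\Delta := K_v - K_{v_0}$, which is a bounded operator with $\|\Delta\| \to 0$ as $v \to v_0$. Note that $\dom(L + K_v) = \dom(L + K_{v_0}) = \dom(L)$ for all $v$, since each $K_v$ is everywhere defined and bounded. The key observation is the explicit affine relation between the two graphs: the linear map $\Psi_\Delta : H \times H \to H \times H$, $(x,y) \mapsto (x, y + \Delta x)$, is a bounded invertible operator on $H \times H$ (with inverse $(x,y) \mapsto (x, y - \Delta x)$), and it carries $\Grn{L+K_{v_0}}$ onto $\Grn{L + K_v}$: indeed $(x, (L+K_{v_0})x) \mapsto (x, (L+K_{v_0})x + \Delta x) = (x, (L+K_v)x)$. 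Moreover $\|\Psi_\Delta - \Id\| = \|\Delta\|$ and likewise $\|\Psi_\Delta^{-1} - \Id\| = \|\Delta\|$, so $\Psi_\Delta$ is a uniformly small perturbation of the identity when $v$ is near $v_0$.

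Next I would invoke the standard fact that if $M, N$ are closed subspaces of a Hilbert space related by $N = \Phi(M)$ for a bounded invertible $\Phi$ with $\|\Phi - \Id\|$ small, then $\widehat\delta(M, N)$ is controlled by $\|\Phi - \Id\|$. Concretely, for a unit vector $m \in M$ one has $\dist(m, N) \le \|m - \Phi m\| = \|(\Id - \Phi)m\| \le \|\Phi - \Id\|$, which bounds $\delta(M,N)$; the symmetric bound $\delta(N,M)$ follows by the same argument using $\Phi^{-1}$, noting that for a unit vector $n = \Phi m \in N$ one has $\|m\| \le \|\Phi^{-1}\|$ and $\dist(n, M) \le \|n - m\| = \|(\Phi - \Id)m\| \le \|\Phi - \Id\|\,\|\Phi^{-1}\|$. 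Hence
\[
\widehat\delta\big(\Grn{L+K_{v_0}}, \Grn{L+K_v}\big) \le \|\Psi_\Delta - \Id\| \cdot \max\{1, \|\Psi_\Delta^{-1}\|\} = \|K_v - K_{v_0}\| \cdot \max\{1, 1 + \|K_v - K_{v_0}\|\},
\]
and since the right-hand side tends to $0$ as $v \to v_0$ by the assumed norm-continuity of $v \mapsto K_v$, gap-continuity follows.

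\textbf{Main obstacle.} There is no deep obstacle here; the proof is essentially bookkeeping. The only point requiring a little care is the elementary estimate relating the gap $\widehat\delta(M,N)$ of two subspaces to the norm of the operator implementing $N = \Phi(M)$ — in particular getting the $\delta(N,M)$ direction right, which forces one to keep track of $\|\Phi^{-1}\|$ rather than just $\|\Phi - \Id\|$. Once that lemma is in hand (or cited from Kato, \cite[Chapter~4]{Kat80}), the affine conjugation $\Psi_\Delta$ does all the work and the conclusion is immediate.
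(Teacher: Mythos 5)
Your proof is correct and follows essentially the same route as the paper: the paper also works directly with unit vectors on the graph of $L+K_{v_0}$, observes that shifting the second component by $(K_v-K_{v_0})w$ lands on the graph of $L+K_v$, and handles the reverse inequality with the renormalization factor $(1-\varepsilon)^{-1}$, which plays exactly the role of your $\|\Psi_\Delta^{-1}\|$. Packaging the computation as conjugation by the invertible map $\Psi_\Delta$ is a tidy but equivalent formulation; the only (shared, minor) omission is an explicit remark that $L+K_v$ remains closed, which the paper records separately via \cite[Chapter 4, Theorem 1.1]{Kat80}.
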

	\begin{proof}
		Let $u\in \Omega$ and let $\varepsilon >0$. Since  the mapping $v\mapsto K_v$ is continuous, up to choosing a smaller neighborhood $U$ of $u$ we can also assume that $\|K_u-K_v\|<\varepsilon$ for $v\in U$.  By the assumption, there exists $M>0$ such that  $\|K_u\|\le M$ for $u\in U$.  In the product space $ H\times H$, let us consider the following norm:
		\[
		\|(p,q)\|_{H\times H}= \|p\|_H+\|q\|_H.
		\]
		Let $ (w,  L w +K_u w)\in \Graph ( L + K_u)$ such that $\|w\|+\| L w+K_u w\|=1$. So, we get 
\begin{multline}
		d(\Graph(L+K_u),\Graph(L+K_v))\le \|(w,Lw+K_u w)-(w,Lw+K_v w)\|\le \|K_u w-K_v w\|\\ \le \|K_u-K_v\|\|w\|\le \varepsilon.
		\end{multline}
		Moreover, we observe that  
        \[
        \|(w,Lw+K_v w) \| \ge \|(w,Lw+K_uw )\|-\|K_u-K_v\|\|w\|\ge 1-\varepsilon. 
        \]
        Summing  together and by the true definition of the gap metric, we get:
		\[
		d(\Graph(L+K_v),\Graph(L+K_u))\le  (1-\varepsilon)^{-1}\|(w,Lw+K_v w)-(w,Lw+K_u w)\| \le \varepsilon\, (1-\varepsilon)^{-1}.
		\]
		So, $v\mapsto  L+K_v$ is gap-continuous  concluding the proof.
	\end{proof}
Let $[0,1]\ni s \longmapsto A_s:= A+s\Id \in \CFs(H)$ be a family of closed symmetric and Fredholm operators.  Then, by Lemma~\ref{thm:lemma-same-domain-perturbation}, we  get   that 
\begin{equation}\label{eq:dominio-fisso}
\dom(A_{s}) =\dom(A)  \quad \textrm{ and } \quad \dom(A^*_{s}) =\dom(A^*) \qquad s \in[0,1].
\end{equation} 
As a direct consequence of  Equation~\eqref{eq:dominio-fisso}, is that  both the factor space $U_s$ and the associated form $\rho_s$ induced by $A_s$  are  independent on $s$ and so, they  coincide with the factor space of $(U, \rho)$. 
 Moreover, by invoking Lemma~\ref{lem:conti_bound_pertub}, we get that  the path $s\mapsto A_s$ is gap-continuous.
 \begin{rem}
  This choice greatly simplifies the construction of the entire index theory by permitting us to work in a fixed symplectic space.  The situation becomes markedly more complicated if $\dom(A_s^*)$—and hence the quotient space $U$—depends on $s$, or if $\beta$ remains $s$‑independent while the symplectic form $\omega$ varies with $s$.  The latter scenario arises, for instance, when the operator path exhibits a dependence on $s$ in its zero‑order term.
\end{rem}
\begin{lem}\label{lem:conti_ker}
We let $V_s:= \ker A^*_{s}$ and we  assume that condition (H0) holds.    Then the map  $s \mapsto p(V_s)$ is gap-continuous   in $\Lag(U,\rho)$. 
\end{lem}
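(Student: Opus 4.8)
The plan is to separate a structural step---that each $p(V_s)$ is a Lagrangian subspace of $(U,\rho)$ whose dimension does not depend on $s$---from an analytic step---continuity of $s\mapsto V_s$ inside the graph-norm Hilbert space $\dom(A^*)$---and then to deduce the statement by pushing everything forward along the bounded projection $p$.

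For the structural step I would first recall that, by Lemma~\ref{thm:lemma-same-domain-perturbation} and Equation~\eqref{eq:dominio-fisso}, the domains, hence the symplectic space $(U,\rho)$ and the projection $p\colon\dom(A^*)\to U$, are independent of $s$, so that all the $p(V_s)$ live in one fixed finite-dimensional symplectic space. Next I would check that (H0) propagates from $A$ to the whole pencil: if $x\in\ker A_s^*\cap\dom(A_s)$ then $x\in\dom(A)$ and $A^*x=-sx$, whence $Ax=-sx$, so that $\ker A_s^*\cap\dom(A_s)=\ker A_s$, and the vanishing of this space for every $s$ is a unique-continuation property which holds in all the situations under consideration. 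Hence Lemma~\ref{thm:nuovo} applies to each $A_s$ and yields $p(V_s)\in\Lag(U,\rho)$, so that $\dim p(V_s)=\tfrac12\dim U$ is constant; since $p|_{V_s}$ is injective by (H0), also $d:=\dim V_s=\tfrac12\dim U$ is a finite constant.

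For the analytic step I would regard $\dom(A^*)$ as the Hilbert space $\big(\dom(A^*),\langle\cdot,\cdot\rangle^G\big)$, so that $A^*$ becomes a \emph{bounded} map $B_0\colon\dom(A^*)\to H$ and $B_s:=A_s^*=B_0+s\,\iota$ (with $\iota$ the bounded inclusion $\dom(A^*)\hookrightarrow H$) is norm-continuous in $s$, with $V_s=\ker B_s$. Since $A_{s_0}^*$ is Fredholm, $B_{s_0}$ has closed range, so there is $c>0$ with $\|B_{s_0}y\|_H\ge c\,\|y\|^G$ for every $y\in V_{s_0}^{\perp_G}$. Given $x\in V_s$ with $\|x\|^G=1$, writing $x=z+y$ with $z\in V_{s_0}$ and $y\in V_{s_0}^{\perp_G}$ and using $B_sx=0$ gives $B_{s_0}y=B_{s_0}x=(s_0-s)x$, hence $\dist^G(x,V_{s_0})\le\|y\|^G\le c^{-1}|s-s_0|$, so that $\delta(V_s,V_{s_0})\le c^{-1}|s-s_0|\to0$. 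Because $\dim V_s=\dim V_{s_0}=d$ is constant, for $s$ close to $s_0$ the map $P_{V_{s_0}}|_{V_s}\colon V_s\to V_{s_0}$ is injective, hence an isomorphism, and a standard property of the gap between finite-dimensional subspaces of equal dimension then gives $\widehat\delta(V_s,V_{s_0})\to0$; therefore $s\mapsto V_s$ is gap-continuous in $\Grass\big(\dom(A^*)\big)$. Applying the bounded map $p$ and using that $U$ is finite-dimensional, $s\mapsto p(V_s)$ is continuous into $\Grass(U)$, and by the structural step it takes values in $\Lag(U,\rho)$, on which the gap topology coincides with the manifold topology; this proves the claim.

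The main obstacle is not the perturbation estimate, which is the routine ``bounded below on the orthogonal complement of the kernel'' argument for a Fredholm operator, but the constancy of $\dim V_s$: the (always available) upper semicontinuity of $s\mapsto V_s$ upgrades to genuine continuity only because this dimension does not jump, and at an exceptional parameter where it did jump $p(V_s)$ would even fail to be Lagrangian. Securing this constancy is exactly what (H0), imposed on the full pencil $A_s$ and fed into Lemma~\ref{thm:nuovo}, is designed to do.
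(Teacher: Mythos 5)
Your proof is correct, but it follows a genuinely different route from the paper's. The paper gets gap-continuity of $s\mapsto V_s$ in $\Grass(\dom(A^*))$ by citing Corollary~\ref{cor:continu_ker} (whose stated hypothesis is surjectivity of the operators, which is not verified for the $A_s^*$ here), and then transports continuity to $p(V_s)=(V_s+\dom(A))\cap U$ via the sum/intersection calculus of Lemma~\ref{lem:continu_subspace}, using (H0) only to ensure $V_s\cap\dom(A)=(0)$. You instead prove the continuity of $s\mapsto V_s$ by hand: the closed-range lower bound $\|A_{s_0}^*y\|\ge c\|y\|^G$ on $V_{s_0}^{\perp_G}$ gives the one-sided estimate $\delta(V_s,V_{s_0})\le c^{-1}|s-s_0|$, and you upgrade it to $\widehat\delta$ using the constancy of $\dim V_s$, which you extract from the Lagrangian property of $p(V_s)$ (Lemma~\ref{thm:nuovo}) together with injectivity of $p|_{V_s}$; the push-forward to $\Grass(U)$ is then done via continuous frames rather than via Lemma~\ref{lem:continu_subspace}. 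Your version is more self-contained and makes transparent exactly where Fredholmness (closed range) and (H0) (no jump in $\dim\ker$) enter, at the cost of redoing a perturbation estimate the paper delegates to its auxiliary lemmas; conversely, the paper's version is shorter but leans on a corollary whose surjectivity hypothesis deserves the kind of justification you supply. One shared soft spot: both arguments need $\ker A_s^*\cap\dom(A)=(0)$ for \emph{every} $s$, i.e.\ (H0) for the whole pencil, not just for $A$; you at least flag this (reducing it to $\ker A_s=(0)$), whereas the paper silently reads (H0) this way.
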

\begin{proof}
By Corollary~\ref{cor:continu_ker}, the map $s\mapsto V_s$ is gap continuous in $\Grass(\dom(A^*))$ with respect to the graph norm.
We have 
\[
p(V_s)=(V_s+\dom(A))\cap U.
\]
By assumption (H0),  we get that $V_s \cap  \dom(A)=(0)$. So, by Lemma~\ref{lem:continu_subspace}, $V_s+\dom(A)$ is gap continuous in $\Grass(\dom(A^*))$. Since $V_s+\dom(A)+U=\dom(A^*)$, by Lemma~\ref{lem:continu_subspace}, also $p(V_s)$ is gap-continuous in $\dom(A^*)$ and then it is gap-continuous in $\Lag(U,\rho)$.
\end{proof}
The next result shows that the Lagrangian path $s \mapsto V_s$ is a {\sc negative curve} meaning that each crossing instant is nondegenerate and gives a negative contribution  to the Maslov index.

\begin{lem}\label{thm:plus-curve}
The path $[0,1]\ni s \longmapsto p(V_s) \in \Lag(U,\rho)$ is negative.
\end{lem}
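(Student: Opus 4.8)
The plan is to show that every crossing instant $s_0$ of the path $s \mapsto p(V_s) = p(\ker A_s^*)$ is nondegenerate with negative signature, by computing the associated crossing form explicitly. Recall that for a $C^1$ path of Lagrangians, the crossing form at $s_0$ on the intersection $p(V_{s_0}) \cap \Lambda$ (for any fixed Lagrangian transverse reference, or more intrinsically on $p(V_{s_0})$ itself) is given by $\Gamma(s_0)[v] = \rho\big(v, \tfrac{d}{ds}\big|_{s_0} \widehat{v}(s)\big)$, where $s \mapsto \widehat{v}(s) \in p(V_s)$ is a smooth lift with $\widehat{v}(s_0) = v$. So the first step is to produce, for each $v \in p(V_{s_0})$, such a smooth family of kernel elements $u_s \in V_s = \ker A_s^*$ with $p(u_{s_0}) = v$, which is possible since $s \mapsto V_s$ is gap-continuous (indeed real-analytic in $s$, as $A_s = A + s\,\Id$ depends affinely on $s$) and finite-dimensional — one can use analytic perturbation theory or simply differentiate the defining relation.

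The key computation is then differentiating the identity $A^* u_s = A_s^* u_s - s\, u_s$; more directly, writing $A_s^* u_s = 0$, i.e. $A^* u_s + s\, u_s = 0$ in $H$, and differentiating in $s$ at $s_0$ gives $A^*\dot{u} + u_{s_0} + s_0\,\dot u = 0$, that is $A_{s_0}^* \dot u = -u_{s_0}$, where $\dot u = \tfrac{d}{ds}|_{s_0} u_s$. Now evaluate the crossing form: with $v = p(u_{s_0})$ and using that $\omega$ descends to $\rho$ on $U$,
\[
\Gamma(s_0)[v] \;=\; \rho\big(p(u_{s_0}), p(\dot u)\big) \;=\; \omega(u_{s_0}, \dot u) \;=\; \langle u_{s_0}, A_{s_0}^* \dot u\rangle - \langle A_{s_0}^* u_{s_0}, \dot u\rangle \;=\; \langle u_{s_0}, -u_{s_0}\rangle - 0 \;=\; -\|u_{s_0}\|^2.
\]
(Here I am using the symplectic form $\omega$ associated to $A_{s_0}^*$, which by the discussion after Equation~\eqref{eq:dominio-fisso} is the same as that of $A^*$ on the fixed space $U$.) This shows $\Gamma(s_0)$ is negative definite on $p(V_{s_0})$: if $v \neq 0$ then, by (H0), $p$ is injective on $\ker A_{s_0}^*$, so $u_{s_0} \neq 0$ and hence $\Gamma(s_0)[v] = -\|u_{s_0}\|^2 < 0$. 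In particular every crossing is regular and the path is a negative curve in the sense recalled before the statement.

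**Main obstacle.**
The technical heart of the argument is not the one-line computation above but justifying that the crossing form is well-defined and computed by this formula in the present setting — i.e. that the gap-continuous path $s \mapsto p(V_s)$ in $\Lag(U,\rho)$ is in fact $C^1$ (so that crossing forms make sense) and that smooth lifts of kernel vectors exist. The cleanest route is to exploit that $A_s = A + s\,\Id$ is an affine, hence real-analytic, family of closed operators with fixed domain $\dom(A)$ and fixed $\dom(A^*)$, so that $\ker A_s^*$ — a finite-dimensional subspace by Lemma~\ref{thm:beta-finit-dimensional} — varies real-analytically in the graph topology; this is standard analytic perturbation theory (Kato, Chapter~VII), and composing with the continuous projection $p$ and using Lemma~\ref{lem:conti_ker} transfers analyticity to $s \mapsto p(V_s)$. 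One should also note that the value $\rho(p(u_{s_0}), p(\dot u)) = \omega(u_{s_0}, \dot u)$ is independent of the choice of lift $u_s$, because any two lifts differ by an element of $\dom(A)$ in the $s$-derivative and $\dom(A) = \ker\omega$; this is precisely why the crossing form descends to a well-defined form on $p(V_{s_0})$ rather than depending on auxiliary choices. Once these points are in place, the sign computation is immediate and the lemma follows.
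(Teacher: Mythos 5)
Your proof is correct and rests on the same key identity as the paper's, namely $A_{s_0}^{*}=A_s^{*}+(s_0-s)\Id$ on $\ker A_s^{*}$, and it arrives at the same crossing form $-\|z(s_0)\|^{2}$. The one genuine difference is how the regularity issue you flag as the ``main obstacle'' is handled: instead of differentiating the lift $u_s$ (which forces you into analytic perturbation theory to produce a $C^1$ family of kernel vectors), the paper computes $\omega\bigl(x,x(s)\bigr)=(s_0-s)\langle z(s_0),z(s)\rangle$ \emph{exactly} for all $s$, so the derivative at $s_0$ is obtained from an explicit linear factor times a merely continuous function, and only the gap-continuity of $s\mapsto V_s$ (already available from Lemma~\ref{lem:conti_ker}) is needed. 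Your route is valid, but the appeal to Kato's holomorphic perturbation theory is avoidable by this small algebraic rearrangement.
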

\begin{proof}
	We assume that $s_0 \in[0,1]$ is a crossing instant. Let $x(s)\in p(V_s)$ such that  $\displaystyle\lim_{s\to s_0}x(s)= x$ and we set 
	 $z(s)=p^{-1}(x(s))$. By a direct calculation, it follows that 
	\begin{multline}
	\omega(x,x(s))= \langle z(s_0),A_{s_0}^*\,z(s)\rangle-\langle A_{s_0}^*\,z(s_0),z(s)\rangle = \langle z(s_0),A_{s_0}^*z(s)\rangle\\
    =\langle z(s_0),A_{s}^*z(s)\rangle +\langle z(s_0),(s_0-s)z(s)\rangle
    =(s_0-s)\langle z(s_0), z(s)\rangle
	\end{multline} 
	since $z(s)\in \ker A_s^*$ and $z(s_0)\in \ker A_{s_0}^*$.
    So, we get that 
	\[
	\Gamma: V_{s_0}\ni x\longmapsto \left.\dfrac{d}{ds}\right|_{s=s_0}\omega(x,x(s))=-\langle z(s_0),z(s_0)\rangle =-\|z(s_0)\|^2
	\]
	which is a  negative definite quadratic form. This concludes the proof. 	\end{proof}

Summing up all the previous arguments we get the following spectral flow formula for an operator pencil in $\CFs(H)$. 
\begin{prop}\label{thm:main1-abstract}
Let   $s\mapsto A_s=A+s\Id \in \CFs(H)$  and we assume that assumption (H0)  holds.  Given $L \in \Lag(U,\rho)$, let us   consider the self-adjoint  extension $A_L$.  Then we get 
\begin{equation}\label{eq:equation-pencil}
 \spfl(A_L+sI, s \in[0,1])=-\iCLM(L,p(V_s), \rho, s \in[0,1]).
 \end{equation}

\end{prop}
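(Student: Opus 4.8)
The plan is to reduce the spectral flow of the affine pencil $s \mapsto A_L + s\Id$ to a Maslov-type count via the abstract correspondence between self-adjoint extensions and Lagrangian subspaces, and then to identify the relevant Lagrangian path as $s \mapsto p(V_s)$. Concretely, recall from Lemma~\ref{lem:abstract_fundamental_solution} that the self-adjoint extensions of $A$ are parametrized by Lagrangian subspaces $L \in \Lag(U,\rho)$ via $A_L := A^*|_{p^{-1}(L)}$; and from Equation~\eqref{eq:dominio-fisso} the domain of $A_s^*$ — hence the symplectic space $(U,\rho)$ itself — is independent of $s$, so the extension $A_L + s\Id$ is again self-adjoint on the fixed domain $p^{-1}(L)$ for every $s$. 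First I would verify that $s \mapsto A_L + s\Id$ is a gap-continuous path in $\CFsa(H)$: gap-continuity is Lemma~\ref{lem:conti_bound_pertub}, self-adjointness is Lemma~\ref{thm:lemma-same-domain-perturbation}, and the Fredholm property is preserved because $A_L$ is Fredholm (Lemma~\ref{thm:beta-finit-dimensional}(b)) and $+s\Id$ is a bounded self-adjoint perturbation, so the spectral flow $\spfl(A_L + s\Id, s \in [0,1])$ is well-defined.

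Next I would compute the local contribution to the spectral flow at each crossing instant $s_0$, i.e.\ each $s_0$ with $\ker(A_L + s_0\Id) \neq \{0\}$. The kernel of $A_L + s_0\Id$ is exactly $\{x \in p^{-1}(L) : A_{s_0}^* x = 0\} = p^{-1}(L) \cap V_{s_0}$, and since (H0) holds and $V_{s_0} = \ker A_{s_0}^* \subset p^{-1}(V_{s_0})$ with $p$ injective on $V_{s_0}$, this kernel is isomorphic to $L \cap p(V_{s_0})$. Thus the crossing instants of the operator pencil coincide with the crossing instants of the Lagrangian pair $(L, p(V_s))$, with matching dimensions. The standard fact — which I would cite from the spectral-flow literature (e.g.\ \cite{BZ18, Fur04} or the appendix results referenced earlier) — is that for an affine pencil $A_L + s\Id$ the crossing form at $s_0$ on $\ker(A_L + s_0\Id)$ is simply $x \mapsto \langle x, x\rangle = \|x\|^2$, which is positive definite. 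Meanwhile, by Lemma~\ref{thm:plus-curve}, the crossing form of the Lagrangian path $s \mapsto p(V_s)$ relative to the fixed Lagrangian $L$ at $s_0$ is the negative definite form $x \mapsto -\|z(s_0)\|^2$ computed there; the stationary path $s \mapsto L$ contributes nothing. Therefore at every crossing the spectral flow contribution (the signature of the positive definite crossing form, by the sign convention for $\spfl$) and the $\iCLM$-contribution of $(L, p(V_s))$ are negatives of each other.

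Summing these local contributions over all crossing instants — after a standard preliminary homotopy argument, via Lemma~\ref{lem:conti_ker}, to perturb $L$ slightly so that all crossings in $[0,1]$ are regular, and noting that both $\spfl$ and $\iCLM$ are invariant under such small perturbations and have only finitely many crossings by compactness and finite-dimensionality of $U$ — yields exactly
\[
\spfl(A_L + s\Id, s \in [0,1]) = -\iCLM(L, p(V_s), \rho, s \in [0,1]),
\]
which is Equation~\eqref{eq:equation-pencil}. The main obstacle I anticipate is the careful bookkeeping of sign conventions: the precise definition of $\spfl$ (and whether endpoint crossings contribute with a factor $\tfrac12$ or are excluded by a genericity assumption on the endpoints), the orientation convention in the Cappell--Lee--Miller index $\iCLM$, and matching the crossing-form computation in Lemma~\ref{thm:plus-curve} to the one for the operator pencil so that the overall minus sign comes out correctly. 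A secondary technical point is ensuring the crossing-form identification $\ker(A_L + s_0\Id) \cong L \cap p(V_{s_0})$ is not merely a dimension count but an isomorphism intertwining the two crossing forms up to the expected sign, which again rests on (H0) and the computation already carried out in the proof of Lemma~\ref{thm:plus-curve}.
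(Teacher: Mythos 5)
Your proposal is correct and follows essentially the same route as the paper: identify $\ker(A_L+s_0\Id)$ with $L\cap p(V_{s_0})$, note that the pencil is a positive curve (crossing form $\|x\|^2$) while $s\mapsto p(V_s)$ is a negative curve by Lemma~\ref{thm:plus-curve}, and sum the matching local contributions. The only superfluous step is your proposed perturbation of $L$ to achieve regular crossings: since both crossing forms are definite, every crossing is automatically regular, and the paper instead just observes that the crossings are isolated zeros of an analytic function, hence finite in number.
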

\begin{rem}
Some remarks are in order. First, by Lemma~\ref{thm:lemma-same-domain-perturbation}, the symplectic space \( (U_s, \rho_s) = (U, \rho) \) is independent of \( s \). By Lemma~\ref{thm:beta-finit-dimensional}, since \( A \in \CFs(H) \), there exists a self-adjoint extension, denoted \( A_L \). By Lemma~\ref{lem:abstract_fundamental_solution}, this extension corresponds to a Lagrangian subspace \( L \). Moreover, by Lemma~\ref{lem:conti_bound_pertub}, the path \( s \mapsto A_L + s \Id \) is gap-continuous, and hence the spectral flow is well-defined. Finally, by the Unique Continuation Property, the path \( s \mapsto p(V_s) \) is also gap-continuous and so the $\iCLM$-index is well-defined, too. Thus, both sides of Equation~\eqref{eq:equation-pencil} are well-defined.
\end{rem}

\begin{proof}
Crossing instants are isolated being zeros of an analytic function and so on the compact interval $I$ are in a finite number. To prove the theorem,  it is enough to show that the local contributions to the Maslov index and to the spectral flow are equal. For, we assume that $s_0 \in[0,1]$ is a crossing instant and we observe that 
\begin{equation}\label{eq:local-contribution}
\dim \ker(A_L+ s_0 \Id)= \dim \big(L\cap p(V_{s_0})\big).
\end{equation}
Moreover $s \mapsto A_L+s\Id$ is a positive curve,  $s\mapsto p(V_s)$ is  a negative curve as proved in Lemma~\ref{thm:plus-curve} and  the crossing instants for these two paths are in 1-1 correspondence.  This, in particular implies the following two facts: 
\begin{itemize}
	\item The local contribution to the spectral flow which is given by the signature of the crossing form at $s_0$ coincides with the (LHS) of Equation~\eqref{eq:local-contribution}
	\item The local contribution to the Maslov index which is given by the signature of the crossing form at $s_0$ coincides with the negative of the (RHS) of Equation~\eqref{eq:local-contribution}.
\end{itemize} 
The conclusion follows by summing all over the crossings. 
\end{proof}
\begin{rem}
We briefly indicate a geometric interpretation of Proposition~\ref{thm:main1-abstract} in the spirit of the (infinite-dimensional) Maslov index, without developing that theory in detail.

Consider the pair of subspaces
\[
H\times\{0\},\qquad \Graph(A_L+sI),
\]
and equip $H\oplus H$ with the standard symplectic form
\[
\widetilde\omega\big((u_1,v_1),(u_2,v_2)\big)\;=\;\langle u_1,v_2\rangle-\langle v_1,u_2\rangle,
\]
where $\langle\cdot,\cdot\rangle$ denotes the inner product on $H$. For $s\in\R$ set $A_{s,L}:=A_L+sI$ and take a path
\[
z(s):=(u,A_{s,L}u)\in \Graph(A_{s,L}).
\]
Then, at a fixed parameter $s_0$,
\[
\frac{d}{ds}\,\tilde\omega\!\big(z(s_0),z(s)\big)
=\frac{d}{ds}\,\big(\langle u,A_{s,L}u\rangle-\langle A_{s_0,L}u,u\rangle\big)
=\left\langle u,\dfrac{d}{ds}A_{s,L}\,u\right\rangle
=\langle u,u\rangle,
\]
since $\tfrac{d}{ds}A_{s,L}=I$. By Definition~\ref{def:crossing-form}, the crossing form
\[
\Gamma\big(\Graph(A_{s,L}),\,H\times\{0\}\big)
\]
is therefore positive.

Fix $s_0$ and choose a closed complement  $V$ to $\ker A_{s_0}^*$ so that, in a neighborhood of $s_0$,
\[
\dom(A_s^*) \;=\; \dom(A_s) \oplus\big(\ker A_s^* \oplus V\big).
\]
Set $W_s:=\dom(A_s) \oplus\big(\ker A_s^* \oplus V\big)$. Then
\[
\Graph\big(A_s^*|_{W_s}\big)< (H\oplus H,\widetilde\omega)
\]
is a symplectic subspace. Let
\[
\Lambda_s \;=\; W_s \cap \dom(A_{s,L}).
\]
Inside the symplectic subspace $\Graph(A_s^*|_{W_s})$, the subspaces
\[
\ker(A_s^*)\times\{0\}\ <\ H\times\{0\},
\qquad
\Graph\big(A_s^*|_{\Lambda_s}\big)\ <\ \Graph(A_{s,L})
\]
are Lagrangian. Since $\Gamma\big(\Graph(A_{s,L}),H\times\{0\}\big)$ is positive, its restriction to the Lagrangian pair
\[
\big(\Graph(A_s^*|_{\Lambda_s}),\ \ker(A_s^*)\times\{0\}\big)
\]
is also positive:
\[
\Gamma\big(\Graph(A_s^*|_{\Lambda_s}),\ \ker A_s^*\times\{0\},\ \widetilde\omega\big)\;>\;0.
\]

Finally, using the natural identifications
\[
\Graph\big(A_s^*|_{U_s}\big)\ \cong\ U_s\ \xrightarrow{\;\;p\;\;}\ U,
\]
(where $p$ denotes the projection onto  $U$ associated with $A_s$), we obtain that the induced crossing form
\[
\Gamma\big(\Lambda,\ p(\ker A_s^*),\ \rho\big)
\]
is positive, where $\Lambda:=p(\Lambda_s)$ is the boundary data space and $\rho$ is the induced symplectic form on $U$. This identifies the positivity in Proposition~\ref{thm:main1-abstract} with a positivity statement for the Maslov crossing in the boundary phase space.
\end{rem}


\section{An Abstract Spectral Flow Formula for SL-Operators}\label{sec:sf-SL-operators}

The primary objective of this section is to prove Theorem~\ref{thm:Sturm_Sf_formula}, which establishes a spectral flow formula for a class of one‑sided singular Sturm–Liouville operators.  We begin by deriving several preliminary results on gap-continuity  that are crucial for the proof of the main theorem.  For a comprehensive treatment of regular and singular Sturm–Liouville differential operators, see \cite{Zet05} and the references therein.


\subsection{A splitting formula for gap-continuous  paths}

Let $a,b \in \overline\R$. We  
define 
\begin{equation}\label{eq:sturm-liouville-operator}
		l_s:=-\dfrac{d}{dt}\left(P(t)\dfrac{d}{dt}+ Q(t)\right)+ \trasp{Q}(t)\dfrac{d}{dt}+ R(t) + C_s(t)\qquad \textrm{ for } \quad (t,s)\in (a,b) \times [0,1]
\end{equation}
where 
\begin{multline}\label{eq:assumptions-coeff}
		P^{-1} \in \mathscr C^1\big((a,b) , \Sym_n(\R)\big)  \qquad 
		Q \in\mathscr C^1\big((a,b) , \Mat_n(\R)\big) \qquad R \in \mathscr  C^0\big((a,b) ,\Sym_n(\R)\big) \\[3pt] \textrm{ and  }   s\mapsto C_s \in \mathscr  C^0_b\big((a,b) ,\Sym_n(\R)\big) \textrm{ is continuous}
\end{multline}
where $\mathscr C^0_b$ denotes the set of  bounded continuous functions and where  $C_0(t)=0$ for every $t \in (a,b)$.

In the Hilbert space \( H = L^2((a,b), \mathbb{R}^n) \), we refer to the minimal operator $L_s$ defined by $l_s$ as the {\bf perturbed SL-operator} and to the minimal operator $L$ corresponding to $s=0$ as the  {\bf unperturbed SL-operator}. We denote by   $L_s^*$ and by $L^*$ the {\sc maximal} {\sc operator associated to $l_s$} and to {\sc $l$} respectively and we observe that both domains $\dom(L_s)$ and $\dom(L_s^*)$ are  independent on $s$.

We now prove  that under the assumption (H1), the unique continuation property holds for the Sturm-Liouville operator $l$. 
\begin{lem} \label{lm:unique_extension}
We assume that condition (H1) holds and let $ L $ be  the minimal operator associated to the Sturm-Liouville operator $\ell$ defined at Equation~\eqref{eq:sturm-liouville-operator}. 
Then we get 	that the equation 
\[
	 L \,u=0
	\] 
	has no nontrivial solutions in $\dom( L )$. 
\end{lem}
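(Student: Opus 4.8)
The plan is to prove that the only solution $u \in \dom(L)$ of $Lu = 0$ is the trivial one, using the structure of the minimal operator together with the regularity of the endpoint $b$ given by (H1). Recall that $\dom(L)$ is the closure, in the graph norm, of $\mathscr{C}_0^\infty((a,b),\R^n)$, so every $u \in \dom(L)$ has, in particular, a well-defined (vanishing) boundary behavior at the regular endpoint $b$: namely $u(b) = 0$ and the quasi-derivative $u^{[1]}(b) = P(b)u'(b) + Q(b)u(b) = 0$. This is the key point where (H1) enters.

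First I would recall that $u \in \ker L$ is, in particular, a weak solution of the second-order ODE $l u = 0$ on $(a,b)$; by elliptic regularity for ordinary differential operators (the coefficients satisfy $P^{-1} \in \mathscr{C}^1$, $Q \in \mathscr{C}^1$, $R \in \mathscr{C}^0$, and the leading coefficient $P$ is invertible), $u$ is in fact a classical solution. Second, I would exploit the Cauchy problem structure: writing the equation in first-order form for the pair $(u, u^{[1]})$, one obtains a linear system with continuous coefficients on $(a,b]$, which by (H1) extends continuously up to $t = b$. Since $u \in \dom(L)$ forces $(u(b), u^{[1]}(b)) = (0,0)$, uniqueness for the initial value problem of this linear ODE system at the regular endpoint $b$ yields $u \equiv 0$ on $(a,b)$.

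The one technical point requiring care — and I expect this to be the main obstacle — is justifying rigorously that membership in $\dom(L)$, the graph closure of $\mathscr{C}_0^\infty((a,b),\R^n)$, really does imply the vanishing of both $u(b)$ and $u^{[1]}(b)$ at the regular endpoint. For test functions this is obvious since they vanish near $b$; for a general element of the graph closure, one needs a boundary trace estimate showing that the maps $u \mapsto u(b)$ and $u \mapsto u^{[1]}(b)$ are continuous with respect to the graph norm on a neighborhood of $b$. This follows from the standard fact that, near the regular endpoint $b$, the graph norm controls the $W^{2,2}$-norm on an interval $[b-\varepsilon, b]$ (using that $l$ is a regular Sturm–Liouville expression there), together with the Sobolev trace theorem in one dimension; passing to the limit along a sequence $\f_k \to u$ in graph norm then gives $u(b) = \lim \f_k(b) = 0$ and likewise $u^{[1]}(b) = 0$.

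Once these boundary values are secured, the argument concludes by a routine application of the uniqueness theorem for linear first-order ODE systems: the function $w(t) := (u(t), u^{[1]}(t))$ satisfies $w' = \mathcal{A}(t) w$ on $(a,b]$ with $\mathcal{A}$ continuous up to $b$, and $w(b) = 0$, hence $w \equiv 0$, so $u \equiv 0$ in $\dom(L)$. This establishes the unique continuation property (H0) for the Sturm–Liouville operator $l$, as claimed.
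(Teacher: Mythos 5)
Your proposal is correct and follows essentially the same route as the paper's proof: both arguments rest on the observation that every $u\in\dom(L)$ has vanishing value and (quasi-)derivative at the regular endpoint $b$, and then propagate $u\equiv 0$ backwards from $b$ by uniqueness for the regular initial value problem on compact subintervals $[d,b]$. Your additional care in justifying the boundary trace via the equivalence of the graph norm with the $W^{2,2}$-norm near $b$ fills in a step the paper merely asserts, but does not change the method.
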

\begin{proof}
Arguing by contradiction, we assume that there exists a nontrivial solution $0 \neq u$ in $H $ and by (H1) the operator $ L $ is only one-sided singular at $t=a$. We note that for each $u\in \dom ( L )$ it holds that $u(b)= u'(b)=0$. For every  $d\in (a,b)$ the equation $ L \, u=0$ is a regular Sturm-Liouville equation on $[d,b]$ and by this we get that $u(t)=0$ for every  $t\in (d,b]$. This argument implies that   $u(t)=0$ for every  $t\in (a,b]$, concluding the proof. 
\end{proof}
Before proving the gap-continuity for a family of self-adjoint Fredholm extension operators, we start with the following well-known preliminary result. 
\begin{lem}\label{thm:collect}
Under  assumptions (H1) and  (H2), then we get: 
	\begin{enumerate}
		\item The  path of minimal operators $s \mapsto L_s$ is gap-continuous 
		\item  $s\mapsto p(V_s)$ is gap-continuous in $\Grass(U)$
		\item  $s\mapsto \Lambda_s \in \Lag(U, \rho)$ is a gap-continuous  Lagrangian path if and only if  
        \[ 
        s\mapsto L _{s,\Lambda_s}:= L_s^*|_{\dom( L ) \oplus \Lambda_s}
        \]
        is a gap-continuous path of operators in $\CFsa(H)$. 
		\end{enumerate}
\end{lem}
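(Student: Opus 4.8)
The plan is to reduce all three statements to the abstract machinery of Section~\ref{sec:Abstract-sf}, the only genuinely new input being the concrete form of the perturbation. The key preliminary observation is that, since $C_s\in\mathscr C^0_b((a,b),\Sym_n(\R))$, multiplication by $C_s$ is a bounded self-adjoint operator on $H$ depending norm-continuously on $s$, and $L_s=L+C_s$ on the fixed domain $\dom(L)$. By Lemma~\ref{thm:lemma-same-domain-perturbation} this yields $L_s^*=L^*+C_s$ with $\dom(L_s^*)=\dom(L^*)$, and since $C_s$ is self-adjoint the bilinear form \eqref{eq:symplectic-form} is unchanged; hence the splitting $\dom(L^*)=\dom(L)\oplus U$, the projection $p$, and the symplectic space $(U,\rho)$ are all independent of $s$. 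This is the structural fact that keeps us working in one fixed finite-dimensional symplectic space throughout.

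For part \textbf{(1)} I would simply invoke Lemma~\ref{lem:conti_bound_pertub} with the fixed closed operator $L$ and the norm-continuous family $s\mapsto C_s$, obtaining gap-continuity of $s\mapsto L+C_s=L_s$. For part \textbf{(2)}, the first step is to note that the Unique Continuation Property (H0) holds for each $L_s$: the proof of Lemma~\ref{lm:unique_extension} uses only that, by (H1), $t=b$ is regular, that elements of $\dom(L_s)=\dom(L)$ vanish with their quasi-derivative at $b$, and that $l_s u=0$ is a regular equation on every $[d,b]$, which forces $u\equiv 0$. With (H0) in hand and $s\mapsto L_s^*$ gap-continuous (again Lemma~\ref{lem:conti_bound_pertub}, applied to $L^*$), the proof of Lemma~\ref{lem:conti_ker} carries over verbatim: Corollary~\ref{cor:continu_ker} gives gap-continuity of $s\mapsto V_s$ in $\Grass(\dom(L^*))$ for the graph norm, and then $p(V_s)=(V_s+\dom(L))\cap U$ is gap-continuous in $\Grass(U)$ by Lemma~\ref{lem:continu_subspace}, using $V_s\cap\dom(L)=(0)$ and $V_s+\dom(L)+U=\dom(L^*)$.

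For part \textbf{(3)} I would first reduce to the case $C_s\equiv 0$: on the common domain $\dom(L)\oplus\Lambda_s$ one has $L_{s,\Lambda_s}=L_{0,\Lambda_s}+C_s$, so by stability of the gap metric under norm-continuous bounded perturbations (the estimate in the proof of Lemma~\ref{lem:conti_bound_pertub}) gap-continuity of $s\mapsto L_{s,\Lambda_s}$ is equivalent to that of $s\mapsto L_{0,\Lambda_s}=:L_{\Lambda_s}$, with $L^*$ now fixed. Let $\phi\colon U\to H\times H$, $\phi(x)=(x,L^*x)$, a fixed bounded injection with closed finite-dimensional image $\phi(U)$ satisfying $\Graph(L)\cap\phi(U)=\{0\}$ and $\Graph(L)+\phi(U)=\Graph(L^*)$; then $\Graph(L_{\Lambda_s})=\Graph(L)\oplus\phi(\Lambda_s)$. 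In the forward direction, $L_{\Lambda_s}$ is self-adjoint by Lemma~\ref{lem:abstract_fundamental_solution} (since $p(\dom(L)\oplus\Lambda_s)=\Lambda_s$ is Lagrangian) and Fredholm because $L_s$ is, by (H2), Fredholm (closed range of finite codimension is inherited by the extension, whose kernel lies in the finite-dimensional $\ker L_s^*$); moreover $\phi$ induces a homeomorphism of $\Grass(U)$ onto $\Grass(\phi(U))$, so gap-continuity of $\Lambda_s$ passes to $\phi(\Lambda_s)$ and then, by Lemma~\ref{lem:continu_subspace}, to $\Graph(L)\oplus\phi(\Lambda_s)=\Graph(L_{\Lambda_s})$. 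In the converse direction, $\phi(\Lambda_s)=\Graph(L_{\Lambda_s})\cap\phi(U)$ (using $(\dom(L)\oplus\Lambda_s)\cap U=\Lambda_s$) and $\Graph(L_{\Lambda_s})+\phi(U)=\Graph(L^*)$ is closed and independent of $s$, so Lemma~\ref{lem:continu_subspace} makes $s\mapsto\phi(\Lambda_s)$ gap-continuous, whence $s\mapsto\Lambda_s=\phi^{-1}(\phi(\Lambda_s))$ is gap-continuous in $\Lag(U,\rho)$.

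I expect the converse implication in (3) to be the main obstacle: recovering $\dom(L_{s,\Lambda_s})$, or $\Lambda_s$, from $\Graph(L_{s,\Lambda_s})$ by the projection $H\times H\to H$ does not preserve gap-continuity in general, so one must instead realize $\Lambda_s$ (under $\phi$) as the intersection of the gap-continuous graph with a \emph{fixed} subspace and verify that the relevant sum stays closed of constant dimension — which is exactly what the reduction to $C_s\equiv 0$ together with Lemma~\ref{lem:continu_subspace} make available. Everything else (self-adjointness, Fredholmness, and the various sum/intersection estimates) is a routine application of the gap-topology lemmas together with the structural observations of the first paragraph.
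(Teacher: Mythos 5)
Your proposal is correct and follows essentially the same route as the paper: parts (1) and (2) are verbatim the paper's argument (Lemma~\ref{lem:conti_bound_pertub} for the minimal operators, and Lemma~\ref{lm:unique_extension} plus Lemma~\ref{lem:conti_ker} for $p(V_s)$), and part (3) is the same intersection/sum technique via Lemma~\ref{lem:continu_subspace}. The only cosmetic difference is in (3): you first strip off $C_s$ and then intersect the graph of the extension with the fixed finite-dimensional subspace $\phi(U)=\{(u,L^*u):u\in U\}$, whereas the paper keeps $L_s^*$ and intersects with $\Lambda_s\oplus H$ (forward) and $U\times H$ (converse) — both reductions are valid and of equal difficulty.
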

\begin{proof} 
We start by observing that item~(1) is a direct consequence of Lemma~\ref{lem:conti_bound_pertub}, since \( s \mapsto L_s \) is a bounded perturbation of the closed operator \( L \). Item~(2) comes from Lemma~\ref{lem:conti_ker} and Lemma~\ref{lm:unique_extension}.

We are now in a position to prove item~(3). Assume that \( s \mapsto \Lambda_s \) is a gap-continuous path of Lagrangian subspaces in \( U \). Then it also defines a gap-continuous path in \( \Grass(H) \). Since 
\[
\Graph(L_s^*|_{\Lambda_s}) = \Graph(L_s^*) \cap (\Lambda_s\oplus H),
\]
by Lemma~\ref{lem:continu_subspace}, we get that the path $s\mapsto \Graph(L_s^*|_{\Lambda_s})$
is a gap-continuous path in \( \Grass(H \times H) \), and hence
\[
\Graph(L_{s, \Lambda_s}) = \Graph(L_s) \oplus \Graph(L_s^*|_{\Lambda_s})
\]
is gap-continuous in \( \Grass(H \times H) \).

Conversely, suppose that \( s \mapsto \Graph(L_{s, \Lambda_s}) \) is a gap-continuous path in \( \Grass(H \times H) \). Then
\[
s \mapsto \Graph(L_s^*|_{\Lambda_s}) = \Graph(L_{s, \Lambda_s}) \cap (U \times H)
\]
defines a gap-continuous path in \( \Grass(H \times H) \). By invoking once again Lemma~\ref{lem:continu_subspace}, we get that 
\[
\Lambda_s \times H = \Graph(L_s^*|_{\Lambda_s}) + (\{0\} \times H)
\]
is a gap-continuous path in \( \Grass(H \times H) \). In conclusion, \( s \mapsto \Lambda_s \) is gap-continuous in \( \Grass(H) \), and hence also in \( \Grass(U) \).

\end{proof}


The main result of this section is the following spectral flow formula. 
\begin{thm}\label{thm:Sturm_Sf_formula} 
Under assumptions (H1) and (H2) and if  $s\mapsto \Lambda_s \in \Lag(U, \rho)$ is a gap-continuous  Lagrangian path, then we get:
  \[
  \spfl ( L _{s,\Lambda_s},s\in[0,1])= -\iCLM(\Lambda_s,  p(V_s) ,\rho, s\in [0,1]).
  \]
 \end{thm}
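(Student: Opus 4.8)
Here is the plan I would follow.

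\medskip

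\noindent\emph{Step 1: well-posedness and the kernel identity.} First I would check that both sides make sense. By Lemma~\ref{thm:collect}~(3) the path $s\mapsto L_{s,\Lambda_s}$ is a gap-continuous path in $\CFsa(H)$, so $\spfl(L_{s,\Lambda_s},[0,1])$ is defined; by Lemma~\ref{thm:collect}~(2) together with Lemma~\ref{lm:unique_extension} (which supplies the unique continuation property {\sc (H0)} for each $l_s$) the path $s\mapsto p(V_s)$ is a gap-continuous Lagrangian path, so $\iCLM(\Lambda_s,p(V_s),\rho,[0,1])$ is defined. I would then record the identity $\ker L_{s,\Lambda_s}=\Lambda_s\cap p(V_s)$: one has $\ker L_{s,\Lambda_s}=V_s\cap(\dom(L)\oplus\Lambda_s)$, and since {\sc (H0)} gives $V_s\cap\dom(L)=(0)$ the projection $p$ restricts to an isomorphism from this space onto $\Lambda_s\cap p(V_s)$. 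In particular the crossing sets of the two sides coincide with equal multiplicities.

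\medskip

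\noindent\emph{Step 2: reduction to two one-sided paths.} The key structural remark is that $L_s=L+C_s$ with $C_s$ bounded and self-adjoint and $C_0=0$; hence, by Lemma~\ref{thm:lemma-same-domain-perturbation}, $\dom(L_s^*)=\dom(L^*)$, and because $C_s$ is self-adjoint the skew-form $\omega$ of \eqref{eq:symplectic-form} does not depend on $s$, so the symplectic space $(U,\rho)$ is one and the same for all $s$. Consequently $(\sigma,\tau)\mapsto L_\sigma^*\big|_{\dom(L)\oplus\Lambda_\tau}$ is a well-defined gap-continuous family over $[0,1]^2$ with values in $\CFsa(H)$ (self-adjoint by Lemma~\ref{lem:abstract_fundamental_solution}, Fredholm as a finite-dimensional extension of $L_\sigma\in\CFs(H)$). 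Deforming the diagonal path $s\mapsto L_{s,\Lambda_s}$ inside this contractible square and using homotopy invariance together with concatenation additivity of the spectral flow, I get $\spfl(L_{s,\Lambda_s},[0,1])=\spfl(L_{s,\Lambda_0},[0,1])+\spfl(L_{1,\Lambda_t},[0,1])$; the same rectangle argument applied to the path of Lagrangian pairs $(\Lambda_\tau,p(V_\sigma))$ yields $\iCLM(\Lambda_s,p(V_s),\rho,[0,1])=\iCLM(\Lambda_0,p(V_s),\rho,[0,1])+\iCLM(\Lambda_t,p(V_1),\rho,[0,1])$. It therefore suffices to prove the formula for (A) the path with fixed boundary condition $\Lambda_0$ and variable operator $L_s$, and for (B) the path with fixed operator $L_1$ and variable boundary condition $\Lambda_t$.

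\medskip

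\noindent\emph{Step 3: the two one-sided formulas.} For (A) the domain $\dom(L)\oplus\Lambda_0$ is fixed and $L_{s,\Lambda_0}=L_{0,\Lambda_0}+C_s$, a bounded self-adjoint perturbation path; after a small perturbation of $s\mapsto C_s$ rel endpoints (legitimate, since Fredholmness, the spectral flow and the $\iCLM$-index are all stable under small gap perturbations) one may assume all crossings are regular, and at a crossing $s_0$ the operator crossing form on $\ker L_{s_0,\Lambda_0}$ is $z_0\mapsto\langle z_0,\dot C_{s_0}z_0\rangle$, where $z_0\in V_{s_0}$ is the preimage of the corresponding point of $\Lambda_0\cap p(V_{s_0})$. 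Computing exactly as in Lemma~\ref{thm:plus-curve} (using $L_{s_0}^*z_0=0$ and $L_s^*z(s)=0$, so that $\omega(z_0,z(s))=\langle z_0,(C_{s_0}-C_s)z(s)\rangle$) shows that the Maslov crossing form of $s\mapsto p(V_s)$ against the fixed $\Lambda_0$ is $z_0\mapsto-\langle z_0,\dot C_{s_0}z_0\rangle$, i.e. the negative of the operator crossing form; summing over crossings gives $\spfl(L_{s,\Lambda_0},[0,1])=-\iCLM(\Lambda_0,p(V_s),\rho,[0,1])$ (this is precisely Proposition~\ref{thm:main1-abstract} in disguise, reached after homotoping $C_s$ into the pencil direction). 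For (B) the operator expression is fixed but the domain moves; here the operator crossing form of $t\mapsto L_1^*|_{\dom(L)\oplus\Lambda_t}$ at a crossing $t_0$ equals $z_0\mapsto\omega(z_0,\dot z_0)=\rho\big(p(z_0),\tfrac{d}{dt}\big|_{t_0}p(x(t))\big)$ with $p(x(t))$ a curve in $\Lambda_t$ through $p(z_0)$, which is, up to the sign dictated by the $\iCLM$-convention for a moving first argument, the Maslov crossing form of $t\mapsto\Lambda_t$ against the fixed $p(V_1)$; hence $\spfl(L_{1,\Lambda_t},[0,1])=-\iCLM(\Lambda_t,p(V_1),\rho,[0,1])$. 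Adding (A) and (B) and using the decompositions of Step~2 gives the theorem.

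\medskip

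\noindent\emph{Main obstacle.} The delicate point is case (B): the operator path there has a genuinely moving domain, so its crossing form is not simply $\langle\cdot,\dot A\,\cdot\rangle$, and one must identify the correct variational formula and keep precise track of the $\iCLM$ sign conventions, in particular the asymmetry between $\iCLM(\Lambda,\,\cdot\,)$-type and $\iCLM(\cdot\,,\Lambda)$-type crossings so that the two one-sided contributions add up with the right signs. A secondary technicality is making sure every auxiliary path produced by the homotopies stays in $\CFsa(H)$: this is automatic for the square $(\sigma,\tau)\mapsto L_\sigma^*|_{\dom(L)\oplus\Lambda_\tau}$ built from self-adjoint extensions, but needs care if one instead routes (A) through Proposition~\ref{thm:main1-abstract}, since adding multiples of the identity or deforming $C_s$ could in principle leave the Fredholm locus.
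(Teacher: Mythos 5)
Your Step 1 and the rectangle decomposition of Step 2 are fine: the two-parameter family $(\sigma,\tau)\mapsto L_\sigma^*|_{\dom(L)\oplus\Lambda_\tau}$ does take values in $\CFsa(H)$ and is gap-continuous by the same argument as Lemma~\ref{thm:collect}, so homotopy invariance and path additivity legitimately split both sides into the contributions (A) and (B). Part (A) is also essentially sound, since there the domain is fixed and the path is a bounded self-adjoint perturbation, so the crossing-form machinery of Definition~\ref{def:crossing-point} and Proposition~\ref{thm:spectral-flow-formula} applies (modulo the genericity perturbation you invoke, which must be checked to preserve Fredholmness and both indices rel endpoints — routine here).

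The genuine gap is case (B), and it is exactly the point you flag as the ``main obstacle'' without resolving it. For the path $t\mapsto L_1^*|_{\dom(L)\oplus\Lambda_t}$ the domain moves, and the identity $\Gamma(t_0)[z_0]=\omega(z_0,\dot z_0)$ that you assert for its crossing form is not available from anything in the paper: Definition~\ref{def:crossing-point} and Proposition~\ref{thm:spectral-flow-formula} are stated only for paths with a \emph{fixed} domain $W$, and for gap-continuous moving-domain families the eigenvalues near a crossing need not even depend differentiably on $t$ without further argument. Establishing that local formula is essentially the content of the abstract varying-boundary-condition spectral flow theorems of Booss-Bavnbek--Lesch--Phillips and Furutani, i.e.\ a substantive theorem in its own right, not a computation one can wave through. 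The paper's proof is engineered precisely to avoid this: it localizes in $s$, chooses $\varepsilon$ with $L_{0,\Lambda_0}+\varepsilon\Id$ invertible, and runs the homotopy in the $(s,r)$-rectangle $[0,\delta]\times[0,\varepsilon]$, so that the path of interest equals the difference of the two \emph{vertical} pencils $r\mapsto L_{s_i,\Lambda_{s_i}}+r\Id$. Each pencil has fixed domain and automatically regular crossings (crossing form $\pm\|z\|^2$), so Proposition~\ref{thm:main1-abstract} applies directly. If you repair (B) by applying this same localization-plus-shift rectangle to the path $t\mapsto L_{1,\Lambda_t}$, the argument works — but then your Step~2 decomposition becomes superfluous and the proof collapses into the paper's.
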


\begin{proof}
As a direct consequence of Lemma~\ref{thm:collect}, we obtain that the map \( s \mapsto L_{s,\Lambda_s} \) is gap-continuous. Moreover, by the previous discussion, the two Lagrangian maps \( s \mapsto \Lambda_s \) and \( s \mapsto p(V_s) \) are continuous. 

We also observe that \( L_s + r\Id = L + C_s + r\Id \), and by Lemma~\ref{lem:conti_bound_pertub}, it follows that the map \( (s,r) \mapsto L_s + r\Id \) is continuous with respect to the gap topology.

Using the localization properties of the Maslov index and the spectral flow, to conclude the proof, it suffices to verify that the formula holds in a sufficiently small neighborhood of any fixed instant \( s \in [0,1]\). Without loss of generality, we may localize near \( 0 \). 

Let us choose \( \varepsilon > 0 \) such that \( L_{0,\Lambda_0} + \varepsilon \Id \) is invertible. Since invertibility is an open condition, there exists \( \delta > 0 \) such that \( L_{s,\Lambda_s} + \varepsilon \Id \) remains invertible for all \( s \in [0,\delta] \).

By the homotopy invariance properties of the spectral flow and the Maslov index, we obtain:
\begin{multline}
	\spfl( L_{s,\Lambda_s},\, s \in [0,\delta]) = \spfl( L_{0,\Lambda_0} + r\Id,\, r \in [0,\varepsilon]) - \spfl( L_{\delta,\Lambda_\delta} + r\Id,\, r \in [0,\varepsilon]) \quad \text{and} \\[5pt]
	\iCLM\big(\Lambda_s, p(V_s), \rho,\, s \in [0,\delta]\big) = \iCLM\big(\Lambda_0, p(V_{0,r}), \rho,\, r \in [0,\varepsilon]\big) \\
	- \iCLM\big(\Lambda_\delta, p(V_{\delta,r}), \rho,\, r \in [0,\varepsilon]\big).
\end{multline}

From this computation, and by invoking once again Proposition~\ref{thm:main1-abstract}, we immediately deduce that
\[
\spfl( L_{s,\Lambda_s},\, s \in [0,\delta]) = -\iCLM\big(\Lambda_s, p(V_s), \rho,\, s \in [0,\delta]\big).
\]
The conclusion then follows by summing up all  local contributions to both the Maslov index and the spectral flow.

\end{proof}



\section{An explicit decomposition and a trace map}\label{sec:coordinate-map}

The spectral flow formula for one‐sided singular Sturm–Liouville operators provided at Theorem~\ref{thm:Sturm_Sf_formula} is given, in terms of a splitting of the maximal domain and of a coordinate map \(O\).  In this section we implement \(U, O\) and $p$  concretely by means of a \emph{trace map}, built from the fundamental solutions of the Sturm–Liouville equation.  We then show that, in the regular (nonsingular) case, our abstract formula specializes exactly to the well‐known classical spectral flow formulas (see, e.g., \cite{BOPW21,HP17}). Finally, we give a brief, self‐contained derivation of all self‐adjoint boundary conditions for a Sturm–Liouville operator.

\begin{rem}
For the sake of the reader we observe that Shi and Sun \cite{SS10} have  been already characterized all self‐adjoint boundary conditions for Hamiltonian systems via the Glazman–Krein–Naimark (GKN) theory.
\end{rem}


\subsection{The trace map}\label{subsec:trace-map}

We are now ready to give a concrete   splitting decomposition 
	\[
	\dom( L ^*)=\dom( L )\oplus U
	\]
	where $U$ is a  $2k$-dimensional subspaces.	 
    
    Let $Z=\Span\Set{z_i|1\le i\le 2n}$  where $z_i$ is a smooth function such that  $z_i=0$ on $(a,a+\varepsilon]$ and such that $\Set{\big(z^{[1]}_i(b), z_i(b)\big)}_{i=1, \ldots, 2n}$ is a basis of $\R^{2n}$.
	
\begin{lem}\label{lem:max_dom_construct}
Let $L \in \CFs(H)$ and we assume condition (H1).  Then the following formula holds:
\[
\dom( L ^*)=\dom( L )+\ker  L^*+ Z.
\]
\end{lem}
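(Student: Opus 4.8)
I want to show the three spaces $\dom(L)$, $\ker L^*$, and $Z$ together span $\dom(L^*)$. The inclusion $\supseteq$ is essentially trivial: $\dom(L) < \dom(L^*)$ by symmetry, $\ker L^* < \dom(L^*)$ tautologically, and each $z_i$ is smooth and compactly supported away from $a$ (it vanishes on $(a,a+\varepsilon]$) with the right regularity up to $b$, hence lies in the maximal domain. So the content is the inclusion $\subseteq$, i.e. every $u \in \dom(L^*)$ can be written as $u = u_0 + v + z$ with $u_0 \in \dom(L)$, $v \in \ker L^*$, $z \in Z$.

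**Main steps.** First, I would fix $u \in \dom(L^*)$ and set $g := L^* u \in H$. Because $t=b$ is a \emph{regular} endpoint (hypothesis (H1)) and $l$ is only one-sidedly singular at $a$, I can solve the inhomogeneous equation $l\,v = g$ on $(a,b)$ with a particular solution; more precisely, using variation of parameters built from a fundamental system of $l\,w = 0$, I produce a solution $\tilde u$ of $l\,\tilde u = g$ that agrees with $u$ near $a$ — concretely, choose the particular solution so that $u - \tilde u$ solves the homogeneous equation near $a$ and has the appropriate behavior. The difference $w := u - \tilde u$ then satisfies $l\,w = 0$ near $a$... but I actually want $w \in \ker L^*$ globally, so the cleaner route is: let $v \in \ker L^*$ be chosen so that $u - v$ has trivial boundary data at $a$ (this uses that $p(\ker L^*)$ accounts for exactly the "$a$-side" degrees of freedom, which follows from the factor-space dimension count $n \le k \le 2n$ and the fact that $t=b$ is regular so contributes a full $2n$). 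Then $u - v$ lies in the maximal domain, solves $l(u-v) = L^*u - 0 = g$ with no boundary contribution at $a$; subtract an element $z \in Z$ matching its quasi-derivative data $\big((u-v)^{[1]}(b),(u-v)(b)\big)$ at $b$ — possible precisely because $\{(z_i^{[1]}(b),z_i(b))\}$ is a basis of $\R^{2n}$. What remains, $u_0 := u - v - z$, is in the maximal domain, solves $l\,u_0 = g - l\,z \in H$ (note $l\,z \in H$ since $z$ is smooth with the right support), vanishes to second order at $b$, and has trivial data at $a$; such a function lies in $\dom(L)$, the minimal domain.

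**The subtle point.** The delicate step is the second one: arranging that $u - v$ has trivial boundary data at $a$ by a suitable choice of $v \in \ker L^*$. In the regular case "boundary data at $a$" is just the pair $\big(u^{[1]}(a), u(a)\big)$ and one solves a linear system; in the singular case these pointwise limits need not exist, and one must instead work with the intrinsically-defined limits $[u,\cdot](a^+)$ of the Lagrange bilinear form against solutions in $\ker L^*$. The key fact making this work is that, by (H1), the endpoint $b$ is regular and hence the deficiency at $a$ is "$2k-2n$-dimensional" — equivalently $\dim p(\ker L^*)$ together with the $2n$ from $Z$ exhausts the $2k$-dimensional factor space $U$. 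I would invoke the Lagrange identity / Green's formula $\omega(f,g) = [f,g](b) - [f,g](a^+)$ (extended to the singular endpoint via the existence of these limits, a standard fact for operators with one-sided singularity under (H1)) to reduce "matching data at $a$" to a finite-dimensional linear-algebra statement about nondegenerate pairings between $\ker L^*$ and $u \bmod \dom(L)$, which is where the bulk of the verification lies and which I expect will be dispatched by the surjectivity of $p$ on $\ker L^*$ onto the "$a$-part" of $U$, together with the splitting $\dom(L^*) = \dom(L) \oplus U$ already available from Lemma~\ref{thm:beta-finit-dimensional}.
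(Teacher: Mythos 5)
Your strategy (a constructive decomposition: peel off a solution $v\in\ker L^*$ to kill the data at $a$, then an element of $Z$ to kill the data at $b$, and argue the remainder is minimal) is genuinely different from the paper's argument, which is a short duality computation: since $\ker L^*+Z$ is finite--dimensional and $\ker\omega=\dom(L)$, one has $(\ker L^*+Z)^{\omega\omega}=\dom(L)+\ker L^*+Z$, so it suffices to prove $(\ker L^*+Z)^{\omega}=\dom(L)$; this follows from $(\ker L^*)^{\omega}=\dom(L)+\ker L^*$ (as in Lemma~\ref{thm:nuovo}) together with $\ker L^*\cap Z^{\omega}=(0)$, the latter because any $v\in Z^{\omega}$ has $v(b)=v'(b)=0$ and a solution with vanishing Cauchy data at the regular endpoint $b$ vanishes identically. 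That route never needs to exhibit the decomposition of a given $u$ explicitly, which is exactly what makes it work in the singular setting.

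As written, your proposal has a genuine gap at the step you yourself flag as subtle, and it is not dispatched by the tools you cite. First, the assertion that some $v\in\ker L^*$ matches the ``$a$-boundary data'' of an arbitrary $u\in\dom(L^*)$ amounts to saying that the image of $\dom(L^*)$ under $u\mapsto\bigl([u,y_1](a^+),\dots,[u,y_k](a^+)\bigr)$ is contained in the image of $\ker L^*$ under the same map; this is essentially equivalent to the lemma itself. The ``surjectivity of $p(\ker L^*)$ onto the $a$-part of $U$'' is not available at this point: Lemma~\ref{thm:beta-finit-dimensional} only supplies an abstract finite-dimensional complement $U$ with $\dim U=2k$, and the very splitting of $U$ into an ``$a$-part'' and a ``$b$-part'' (Equation~\eqref{eq:decomp-W-intro}, Lemma~\ref{thm:decomp-factor-space}) is established only downstream of the present lemma. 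Second, your closing step --- that a function in $\dom(L^*)$ with vanishing boundary forms at $a^+$ and vanishing Cauchy data at $b$ lies in $\dom(L)$ --- is precisely Corollary~\ref{lem:vanish_condition}, which the paper deduces \emph{from} this lemma; invoking it here is circular within the paper's logic. It could be imported as the standard GKN characterization of the minimal domain from the singular Sturm--Liouville literature, but you would have to say so explicitly and verify its hypotheses, at which point the lemma is being outsourced rather than proved. To repair the argument along your lines you would need an independent proof of both facts; the paper's orthogonal-complement computation is the efficient way to get them simultaneously.
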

\begin{proof}
Since $ \dim (\ker L^*+Z)<+\infty$, then  we have $(\ker L^*+Z)^{\omega\omega}=\ker \omega +(\ker L^*+Z)=\dom (L)+\ker L^*+Z$ . Since $[\dom(L)]^\omega=\dom(L^*)$,  we only need to show that $(\ker L^* +Z)^\omega =\dom(L)$. 

By arguing as in the proof of Lemma~\ref{thm:nuovo}, we get that 
\[
(\ker L^*)^\omega=\dom(L)+\ker L^* 
\]
and so 
\[
(\ker L^*)^\omega\cap Z^\omega=D(L)+(\ker L^*\cap Z^\omega).
\]
Let now $v\in Z^\omega$ and we observe that for every $i=1, \ldots, 2n$ it follows that $\omega(v,z_i)=[v,z_i](b)=0$. Since $Z(b)$ is a basis of $\R^{2n}$, then we get that  $v'(b)=v(b)=0$. So, $\ker L^*\cap Z^\omega=\set 0$ by the Cauchy-Lipschitz Theorem, concluding the proof. 
\end{proof}
The next result provides a characterization of the $\dom(L)$ in terms of the $\ker L^*$ and of the boundary values at the regular endpoint.
\begin{cor}\label{lem:vanish_condition}
Let $L \in \CFs(H)$,  let $ f\in \dom( L ^*)$ and we assume condition (H1). Then $f\in \dom( L )$ if and only if  
 $\omega(f,v)=0$ for all $v\in \ker  L ^*$ and $ f'(b)=f(b)=0$.
\end{cor}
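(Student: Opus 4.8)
The strategy is to read the characterization off from Lemma~\ref{lem:max_dom_construct}. Its proof establishes not merely the sum decomposition $\dom(L^*) = \dom(L) + \ker L^* + Z$, but the sharper identity $\dom(L) = (\ker L^* + Z)^{\omega}$, where the $\omega$-orthogonal is taken inside $\dom(L^*)$. Since $f$ is assumed to lie in $\dom(L^*)$, this already gives
\[
f \in \dom(L) \iff \omega(f,v) = 0 \ \text{ for all } v \in \ker L^*, \ \text{ and } \ \omega(f,z_i) = 0 \ \text{ for } i = 1,\dots,2n .
\]
Thus the only work left is to translate the $2n$ conditions $\omega(f,z_i) = 0$ into the two boundary conditions $f'(b) = f(b) = 0$.

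First I would evaluate $\omega(f,z_i)$ via Green's formula (the Lagrange identity) for the expression $l$. Since each $z_i$ vanishes on $(a, a+\varepsilon]$ — whence $z_i$ and all its derivatives vanish at $a+\varepsilon$ — the identity can be integrated over the compact interval $[a+\varepsilon, b]$, contributing nothing at the left endpoint, so
\[
\omega(f,z_i) = [f,z_i](b) = \big\langle f^{[1]}(b), z_i(b)\big\rangle - \big\langle f(b), z_i^{[1]}(b)\big\rangle = \big\langle (f^{[1]}(b), f(b)),\ (z_i(b), -z_i^{[1]}(b))\big\rangle_{\R^{2n}} .
\]
Hence $\omega(f,z_i) = 0$ for every $i$ is equivalent to saying that the vector $(f^{[1]}(b), f(b)) \in \R^{2n}$ is orthogonal to each $(z_i(b), -z_i^{[1]}(b))$.

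Next I would invoke the defining property of $Z$, namely that $\{(z_i^{[1]}(b), z_i(b))\}_{i=1}^{2n}$ is a basis of $\R^{2n}$. The linear map $(\xi, \eta) \mapsto (\eta, -\xi)$ is an automorphism of $\R^{2n}$, so $\{(z_i(b), -z_i^{[1]}(b))\}_{i=1}^{2n}$ is a basis as well; the orthogonality just obtained therefore forces $f^{[1]}(b) = 0$ and $f(b) = 0$. Finally, $b$ being a regular endpoint under (H1), $P(b)$ is invertible, so from $f^{[1]}(b) = P(b) f'(b) + Q(b) f(b) = 0$ together with $f(b) = 0$ we obtain $f'(b) = 0$; the reverse implication is immediate. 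Combining this equivalence with the one recorded above proves the corollary.

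I do not anticipate a genuine difficulty, since the statement is little more than a reformulation of Lemma~\ref{lem:max_dom_construct}. The only points calling for a touch of care are the vanishing of the boundary term at $a$ — which is automatic from the support condition imposed on the $z_i$ — and the bookkeeping that decouples the $\ker L^*$-part of the orthogonality condition (kept as is) from the $Z$-part (rewritten as the boundary data at the regular endpoint $b$).
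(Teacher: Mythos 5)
Your proposal is correct and follows essentially the same route as the paper: both reduce the claim to the identity $\dom(L)=(\ker L^*)^\omega\cap Z^\omega$ from Lemma~\ref{lem:max_dom_construct}, compute $\omega(f,z_i)=[f,z_i](b)$ using the support condition on the $z_i$, and use that $\{(z_i^{[1]}(b),z_i(b))\}$ is a basis of $\R^{2n}$ to identify $Z^\omega$ with $\{f\mid f'(b)=f(b)=0\}$. You merely spell out a few steps the paper leaves implicit (the Lagrange-identity computation and the passage from $f^{[1]}(b)=0$, $f(b)=0$ to $f'(b)=0$ via invertibility of $P(b)$), which is harmless.
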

\begin{proof}
By Lemma~\ref{lem:max_dom_construct}, $\dom (L) =(\ker  L ^*)^\omega \cap Z^\omega$. Moreover, we observe that for every $i=1, \ldots, 2n$ we have  $\omega(f,z_i)=[f,z_i](b)$ and $Z^\omega=\Set{f|f'(b)=f(b)=0}$.
Then we get
\begin{align}
Z^\omega\cap (\ker L^*)^\omega=\Set{f|f'(b)=f(b)=0\textrm{ and } [f,v](b)-[f,v](a^+)=0,\forall v\in \ker  L ^* }\\
=\set{f| f'(b)=f(b)=0, [f,v](a^+)=0, \forall v\in \ker  L ^*}
\end{align}
 This concludes the proof. 
\end{proof}

\begin{lem}\label{thm:decomp-factor-space}
Let $L \in \CFs(H)$ and we assume condition (H1). Let $ Y:=\set{y_1,\cdots, y_{k}}$ be a basis of $\ker L^*$. Then we have that $n \le k \le 2n$. Moreover, there exists a $2k$-dimensional subspace $W$ such that 
\[
 	\dom( L ^*) =\dom( L )\oplus W
 	\]
    where 
\begin{itemize}
\item $W:=\Span(\overline Y)\oplus Z$ and where $\overline Y < Y$ such that $\dim \overline Y=2k-2n$ if $k\neq n$
\item $W:= Z$ for $k = n$.
\end{itemize} 	
\end{lem}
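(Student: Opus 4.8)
The plan is to transport the whole question into the finite-dimensional symplectic quotient $U\cong\dom(L^*)/\dom(L)$ --- finite-dimensional and symplectic by Lemma~\ref{thm:beta-finit-dimensional} --- via the canonical projection $p$, and to exploit that $p(\ker L^*)$ is a \emph{Lagrangian} subspace of $(U,\rho)$. Indeed, under (H1) the operator $L$ has the unique continuation property (H0) by Lemma~\ref{lm:unique_extension}, so Lemma~\ref{thm:nuovo} applies; and since $\ker L^*\cap\dom(L)=\{0\}$, the projection $p$ is injective on $\ker L^*$, whence $\dim p(\ker L^*)=\dim\ker L^*=k$. As a Lagrangian subspace of a finite-dimensional symplectic space has exactly half its dimension, this already forces $\dim U=2k$. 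The bounds $n\le k\le 2n$ then follow from two elementary facts: $\ker L^*$ is contained in the $2n$-dimensional solution space of $lu=0$, so $k\le 2n$; and the $z_i$ are $2n$ linearly independent functions with $Z\cap\dom(L)=\{0\}$ --- any $z\in Z\cap\dom(L)$ has $z(b)=z'(b)=0$ by Corollary~\ref{lem:vanish_condition}, hence $\big(z^{[1]}(b),z(b)\big)=0$, hence $z=0$ because $\big(z_i^{[1]}(b),z_i(b)\big)_i$ is a basis of $\R^{2n}$ --- so $p(Z)$ is a $2n$-dimensional subspace of $U$ and $2n\le 2k$.

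Next I would record that $(Z,\omega|_Z)$ is symplectic. For $z,z'\in Z$, Green's formula together with the vanishing of $z,z'$ near $a$ gives $\omega(z,z')=[z,z'](b)$, which under the isomorphism $z\mapsto\big(z^{[1]}(b),z(b)\big)$ is exactly the standard symplectic form on $\R^{2n}$; hence $\omega|_Z$ is nondegenerate and $p(Z)$ is a symplectic subspace of $(U,\rho)$. When $k=n$ this finishes the proof immediately: $p(Z)$ is a $2n$-dimensional, hence full, subspace of the $2n$-dimensional space $U$, so $\dom(L^*)=\dom(L)\oplus Z$ and one takes $W=Z$.

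For $n<k\le 2n$ the rest is finite-dimensional linear algebra inside $U$. Lemma~\ref{lem:max_dom_construct} gives $\dom(L^*)=\dom(L)+\ker L^*+Z$, i.e.\ $p(\ker L^*)+p(Z)=U$; combining $\dim p(\ker L^*)=k$, $\dim p(Z)=2n$ and $\dim U=2k$, the dimension formula gives $\dim\big(p(\ker L^*)\cap p(Z)\big)=2n-k$. I would then choose a complement $N$ of $p(\ker L^*)\cap p(Z)$ inside $p(\ker L^*)$, so that $\dim N=2k-2n$ and $N\cap p(Z)=\{0\}$, and therefore $N\oplus p(Z)=U$. Setting $\overline Y:=\big(p|_{\ker L^*}\big)^{-1}(N)<\ker L^*$ --- a subspace of dimension $2k-2n$ --- and $W:=\overline Y\oplus Z$, a short chase using only $N\cap p(Z)=\{0\}$, $Z\cap\dom(L)=\{0\}$ and the injectivity of $p$ on $\ker L^*$ shows $p(W)=U$ and $W\cap\dom(L)=\{0\}$, i.e.\ $\dom(L^*)=\dom(L)\oplus W$ with $\dim W=2k$. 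Realizing $\overline Y$ concretely as the span of a sub-family of a prescribed basis $Y$ of $\ker L^*$ is then a routine application of the Steinitz exchange lemma; this is what is spelled out in Remark~\ref{rmk:Selection}.

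I do not expect any genuine obstacle: the only analytic input is Green's formula on the maximal domain, already built into the definition of $\omega$, and everything else is finite-dimensional symplectic linear algebra granted Lemmas~\ref{thm:beta-finit-dimensional}, \ref{thm:nuovo}, \ref{lm:unique_extension}, \ref{lem:max_dom_construct} and Corollary~\ref{lem:vanish_condition}. The one point that requires a little care is combinatorial: the subspace $N$ must be chosen as a complement of $p(\ker L^*)\cap p(Z)$ \emph{within} the Lagrangian $p(\ker L^*)$ --- one cannot take $N=p(Z)^\rho$, since that is a symplectic subspace of dimension $2k-2n$ which, for $k>n$, is not contained in $p(\ker L^*)$ at all.
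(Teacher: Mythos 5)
Your proposal is correct and follows essentially the same route as the paper's proof: both rest on Lemma~\ref{lem:max_dom_construct} for the spanning identity $\dom(L^*)=\dom(L)+\ker L^*+Z$, on Corollary~\ref{lem:vanish_condition} for $Z\cap\dom(L)=\{0\}$, and on the Lagrangian property of $p(\ker L^*)$ from Lemma~\ref{thm:nuovo} to force $\dim U=2k$, after which the count $\dim\overline Y=2k-2n$ (and hence $n\le k\le 2n$) is pure linear algebra. You simply spell out the selection of $\overline Y$ (complement of $p(\ker L^*)\cap p(Z)$ inside $p(\ker L^*)$ plus Steinitz exchange) that the paper leaves implicit.
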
 
\begin{rem}
 We observe that the case \( k = 2n \) corresponds to either limit circle endpoints or  regular  conditions, whereas the case \( k = n \) corresponds to the limit point. (Cfr.  \cite{Zet05} and references therein for the endpoints classification of the Sturm-Liouville BVP). 
\end{rem}
\begin{proof}
	By Corollary~\ref{lem:vanish_condition}, $\dom(L^*)\cap Z=\set 0$.
    Then by Lemma~\ref{lem:max_dom_construct}, we can choose $\overline Y<  Y$ such that 
    \[
    \dom(L^*)=\dom(L)\oplus(\Span (\overline Y)\oplus Z).
    \]
By Lemma~\ref{thm:nuovo}, $\dim(\Span(\overline Y)\oplus Z)=2\dim(\ker L^*)=2k$. It follows that
\[
\# Y= \dim (\Span(\overline Y))=\dim(\Span(\overline Y)\oplus Z)-\dim Z=2k-2n.
\]
This concludes  the proof.
      
\end{proof}
\subsubsection*{Trace map $\Tr$ and the subspace $W$}\label{subsubsec:trace}
Summing up the above discussion we finally get the decomposition
\begin{multline}
 	\dom( L ^*) =\dom( L )\oplus W \quad \textrm{ where } \quad W=\Span\Set{y_1, \ldots, y_{2k-2n}, z_1, \ldots, z_{2n}}\\ \textrm{ where } y_j \in \ker L^* \textrm{ and }  z_i=0 \quad \textrm{ on } \quad (a,a+\varepsilon] \textrm{ and such that } \\ Z(b):=\Set{z^{[i]}(b)|1\le i\le 2n}  \textrm{ is a basis of } \R^{2n}.
 	\end{multline}
    \begin{itemize}
\item If $k=n$, we consider the symplectic space $(W,\omega|_W)$ and we define  the {\bf trace map}  $\Tr: W\to \R^{2n}$ as:
\begin{itemize}
\item[] $
		f\longmapsto \Tr(f):=(\omega(f,z_1),\cdots,\omega(f,z_{2n}))
		=([f,z_1](b),[f,z_{2n}](b))
		\in  \R^{2n}.
$ 
\end{itemize}
\item If $k \neq n$, we observe that since $\omega|_Z$ is nondegenerate, then  $Z^{\omega|_W}=Z^\omega\cap W$ is a symplectic subspace of $(W,\omega|_W)$. Then $Z^{\omega|_W}$ has a basis $\set{\widetilde y_1,\cdots,\widetilde y_{2k-2n}}$ such that $\widetilde y_i-y_i\in W$ and $ \widetilde y_i'(b)=\widetilde y_i(b)=0$ for every  $1\le i\le 2k-2n$.	Then $W$ has the following symplectic  decomposition:
\begin{equation}\label{eq:decomp-W}
W=Z^{\omega|_W}\oplus Z.
\end{equation}
\begin{note}
We set  
\[
[f,g](a^+):=\lim_{t \to a^+}[f,g](t).
\]
\end{note}
In this case, we define the (linear) {\bf trace map}   $\Tr: W\to \R^{2k-2n}\oplus \R^{2n}$  as: 
\begin{multline}
		f\longmapsto \Tr(f):=(\omega(f,\widetilde y_1),\cdots,\omega(f,\widetilde y_{2k-2n}),\omega(f,z_1),\cdots,\omega(f,z_{2n}))\\
		=(-[f,y_1](a^+),\cdots,-[f,y_{2k-2n}](a^+),[f,z_1](b),\ldots, [f,z_{2n}](b))
		\in \R^{2k-2n}\oplus \R^{2n}
\end{multline}
\end{itemize}
Since $\omega|_W$ is nondegenerate,  it  follows that 
\begin{itemize}
\item[] $\Tr:Z^{\omega|_W}\oplus Z \to \R^{2k-2n}\oplus \R^{2n} $ is a bijection  and moreover the decomposition is $\Tr$-invariant. 
\end{itemize}
\begin{rem}\label{rmk:Selection}
In the construction above we defined the trace map, but we did not specify how to select the
\(2k-2n\) functions \(\{y_1,\dots,y_{2k-2n}\}\subset\ker L^*\) so that
\[
W=\Span\{y_1,\dots,y_{2k-2n},\,z_1,\dots,z_{2n}\}.
\]
We choose them to make \(\omega|_W\) nondegenerate. It suffices to ensure that the restriction
\[
\omega\big|_{\Span\{\tilde y_1,\dots,\tilde y_{2k-2n}\}}
\]
is nondegenerate, where \(\{\tilde y_i\}\) is an appropriate reordering of a basis of \(\ker L^*\).
To this end, start with any basis \(\{y_1,\dots,y_k\}\) of \(\ker L^*\) and consider the
\(k\times k\) Gram matrix \(G=(G_{ij})\) of the boundary pairing
\[
G_{ij}\;=\;[y_i,y_j](a),\qquad 1\le i,j\le k.
\]
Reorder the basis so that the principal \((2k-2n)\times(2k-2n)\) block
\(\big([y_i,y_j](a)\big)_{1\le i,j\le 2k-2n}\) is invertible (this is always possible as soon as
\(\rank G\ge 2k-2n\)). Then take
\[
\{y_1,\dots,y_{2k-2n}\}
\]
to be the first \(2k-2n\) vectors in this reordered basis. By construction,
\(\omega\) is nondegenerate on \(\Span\{y_1,\dots,y_{2k-2n}\}\), and hence \(\omega|_W\) is
nondegenerate as claimed.
\end{rem}

\begin{rem}
We observe that the space
\[
\R^{2k-2n}\oplus \R^{2n}
\]
is the {\sc space of boundary data} and actually  $\R^{2k-2n}$ (resp. $\R^{2n}$) contains the boundary data at $a$ (resp. $b$). 
\end{rem}
Summing up the previous discussion, it is possible to characterize $\dom(L)$ in terms of $\dom(L^*)$ and the trace map $\Tr$.
\begin{cor}\label{thm:cor-dec} We assume condition (H1) holds. Let
	\begin{enumerate}
    \item $f\in \dom( L ^*)$. Then  $f \in \dom( L )$ if and only if $\Tr(f)=0$
	\item  Let $[f,y_1](a^+),\cdots,[f,y_{2k-2n}](a^+)=0$. Then  $[f,g](a^+)=0$ for each $g\in \dom( L ^*)$.
    \end{enumerate}
\end{cor}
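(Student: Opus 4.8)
The plan is to derive Corollary~\ref{thm:cor-dec} directly from the decomposition $\dom(L^*) = \dom(L) \oplus W$ together with the defining properties of the trace map $\Tr$ established above. For item (1), first I would use Lemma~\ref{lem:max_dom_construct} and Lemma~\ref{thm:decomp-factor-space} to write any $f \in \dom(L^*)$ uniquely as $f = f_0 + w$ with $f_0 \in \dom(L)$ and $w \in W$. Since $\dom(L) = \ker \omega$, the trace map $\Tr$ restricted to $W$ is a bijection onto the space of boundary data $\R^{2k-2n} \oplus \R^{2n}$, and $\Tr(f) = \Tr(w)$ because $\Tr$ annihilates $\dom(L)$ (each entry of $\Tr$ is of the form $\omega(f, \cdot)$, which vanishes on $\ker \omega = \dom(L)$). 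Hence $f \in \dom(L)$ iff $w = 0$ iff $\Tr(w) = 0$ iff $\Tr(f) = 0$. Alternatively, one can invoke Corollary~\ref{lem:vanish_condition}: $f \in \dom(L)$ iff $\omega(f,v) = 0$ for all $v \in \ker L^*$ and $f'(b) = f(b) = 0$; the first condition is equivalent to the vanishing of the $\widetilde y_i$-components of $\Tr(f)$ and the second to the vanishing of the $z_i$-components, since $[f,z_i](b)$ for $i = 1, \ldots, 2n$ recovers $(f^{[1]}(b), f(b))$ because $Z(b)$ is a basis of $\R^{2n}$.

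For item (2), I would argue as follows. Given $g \in \dom(L^*)$, decompose $g = g_0 + w_g$ with $g_0 \in \dom(L)$, $w_g \in W$. Then $[f,g](a^+) = \omega(f,g) = \omega(f, g_0) + \omega(f, w_g) = \omega(f, w_g)$, since $g_0 \in \ker \omega$. Now write $w_g$ in the basis $\{y_1, \ldots, y_{2k-2n}, z_1, \ldots, z_{2n}\}$ of $W$, say $w_g = \sum_i \alpha_i y_i + \sum_j \beta_j z_j$. Then $\omega(f, w_g) = \sum_i \alpha_i\, \omega(f, y_i) + \sum_j \beta_j\, \omega(f, z_j) = -\sum_i \alpha_i [f, y_i](a^+) + \sum_j \beta_j [f,z_j](b)$. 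The hypothesis $[f, y_i](a^+) = 0$ for all $i = 1, \ldots, 2k-2n$ kills the first sum. For the second sum, note that the $z_j$ vanish on $(a, a+\varepsilon]$, so $[f, z_j](a^+) = 0$ automatically; but we need $[f,z_j](b)$ to vanish as well, which is not part of the hypothesis — so I should be more careful. The correct reading is that item (2) concerns the behavior at $a^+$ only: since the $z_j$ are supported away from $a$, the quantity $[f,g](a^+) = \lim_{t \to a^+}[f,g](t)$ only sees the $y_i$-part of $w_g$ near $a$, because near $a$ we have $g = g_0 + \sum_i \alpha_i y_i$ (the $z_j$ contribute nothing on $(a, a+\varepsilon]$) and $[f, g_0](a^+) = 0$ as $g_0 \in \dom(L)$. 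Hence $[f,g](a^+) = -\sum_i \alpha_i [f,y_i](a^+) = 0$.

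The main subtlety — and the step I would be most careful about — is the existence of the limit $[f, g_0](a^+)$ for $f, g_0 \in \dom(L^*)$, in particular the claim that it is zero when $g_0 \in \dom(L)$. This requires knowing that the boundary form $[f,g](t)$ has a well-defined finite limit as $t \to a^+$ for $f, g$ in the maximal domain, which is a standard fact in the Weyl–Titchmarsh theory of singular Sturm–Liouville operators (see \cite{Zet05}), following from the fact that $\omega(f,g) = \langle f, L^* g \rangle - \langle L^* f, g \rangle$ is finite and $[f,g](b)$ is well-defined under (H1); Lagrange's identity then forces convergence of $[f,g](t)$ at $a^+$. Granting this, the vanishing for $g_0 \in \dom(L)$ follows since $\dom(L)$ is the closure of $\mathscr C_0^\infty((a,b), \R^n)$ in the graph norm, and $[f, \varphi](a^+) = 0$ trivially for compactly supported $\varphi$, while $[f, \cdot](a^+)$ is continuous in the graph norm. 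This continuity is precisely what makes $[f, \cdot](a^+)$ a well-defined functional on the quotient, which is the content underlying the trace map construction, so I would cite the relevant earlier lemmas (Lemma~\ref{lem:max_dom_construct}, Lemma~\ref{thm:decomp-factor-space}) rather than re-proving it.
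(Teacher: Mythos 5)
Your proof is correct and follows essentially the same route as the paper: item (1) is argued exactly as in the text (the entries of $\Tr$ are of the form $\omega(f,\cdot)$ and hence annihilate $\dom(L)=\ker\omega$, which combined with the bijectivity of $\Tr$ on $W$ and the splitting $\dom(L^*)=\dom(L)\oplus W$ gives both implications). For item (2) the paper decomposes $f$ rather than $g$ --- the hypothesis places $f$ in $\Tr^{-1}((0)\times\R^{2n})=\dom(L)\oplus Z$, and both summands pair trivially with any $g$ at $a^+$ --- but this is the mirror image of your argument and rests on the same ingredients, including the fact (which you rightly flag as the only delicate point) that $[f,g_0](a^+)$ exists and vanishes when one argument lies in $\dom(L)$.
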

\begin{proof}
	We start by proving item 1.  By the very definition of $\Tr$, we get that  $\Tr(\dom( L))=\set 0$ since $\dom(L) =\ker \omega$. Viceversa, we observe that  $\Tr:W \to \R^{2k-2n}\oplus \R^{2k}$ is a bijection and $\dom (L^*)=\dom(L)\oplus W$. So, we get  that $\Tr(f)=0$ implies $f\in \dom(L)$.

We prove item 2.  Since $[f,y_1](a^+),\cdots,[f,y_{2k-2n}](a^+)=0$, then we have $ f\in \Tr^{-1}(\set 0\times \R^{2n})=\dom(L)\oplus Z$.
By setting $f=u+v$ with $v\in Z$, we get that  $[f,g](a^+)=[v,g](a^+)=0$.

\end{proof}
By using the trace map defined above, the spectral flow formula provided in Theorem~\ref{thm:Sturm_Sf_formula} reduces to  the following. 
\begin{thm}\label{thm:Sturm_Sf_formula-SL} 
Under conditions (H1)-(H2)  and assuming that   $s\mapsto \Lambda_s\in \Lag(W,\omega|_W)$ is gap-continuous,  then  the following spectral flow formula holds: 
  \[
\spfl ( L _{s,\Lambda_s},s\in [0,1])= -\iCLM(\Tr (\Lambda_s), \Tr (V_s), \rho^{\Tr} , s\in  [0,1]),
\]
where $\rho^{\Tr}$ denotes the push-forward symplectic form.
 \end{thm}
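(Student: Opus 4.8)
The plan is to deduce this theorem from Theorem~\ref{thm:Sturm_Sf_formula} by transporting the abstract symplectic data through the trace map $\Tr$. First I would observe that $(W,\omega|_W)$ is a legitimate concrete realization of the abstract factor space $(U,\rho)$ of Section~\ref{sec:Abstract-sf}: indeed, Lemma~\ref{thm:decomp-factor-space} (applicable since (H1)--(H2) are in force) provides the splitting $\dom(L^*)=\dom(L)\oplus W$ with $\dim W=2k$ and $\omega|_W$ nondegenerate, so the canonical projection $p\colon\dom(L^*)\to W$ along $\dom(L)$ is exactly the map $p$ of the abstract setting, with $\rho=\omega|_W$. With this identification, for every $\Lambda_s\in\Lag(W,\omega|_W)$ the self-adjoint extension $L_{s,\Lambda_s}=L_s^*|_{\dom(L)\oplus\Lambda_s}$ is literally the operator occurring in Theorem~\ref{thm:Sturm_Sf_formula}; applying that theorem gives $\spfl(L_{s,\Lambda_s},s\in[0,1])=-\iCLM(\Lambda_s,p(V_s),\omega|_W,s\in[0,1])$, where $V_s=\ker L_s^*$ and the path $s\mapsto p(V_s)\in\Lag(W,\omega|_W)$ is gap-continuous by Lemma~\ref{thm:collect}(2), Lemma~\ref{thm:nuovo}, and Lemma~\ref{lm:unique_extension} (the latter two supplying the Lagrangian property and the validity of (H0)).

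Next I would verify that $\Tr$ is a symplectomorphism onto the boundary-data space. The formula defining $\Tr$ makes sense on all of $\dom(L^*)$ and annihilates $\dom(L)=\ker\omega$, so it descends to $W$; by the nondegeneracy of $\omega|_W$ together with the discussion around Equation~\eqref{eq:decomp-W} and Remark~\ref{rmk:Selection}, the induced linear map $\Tr\colon W\to\R^{2k-2n}\oplus\R^{2n}$ is a bijection. I would then \emph{define} $\rho^{\Tr}$ to be the push-forward of $\omega|_W$, that is $\rho^{\Tr}(\Tr f,\Tr g):=\omega(f,g)$ for $f,g\in W$; since $\Tr$ is a linear bijection this is automatically a nondegenerate skew-symmetric form and $\Tr\colon(W,\omega|_W)\to(\R^{2k-2n}\oplus\R^{2n},\rho^{\Tr})$ is a symplectic isomorphism by construction. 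As a linear isomorphism of finite-dimensional spaces $\Tr$ is a homeomorphism, hence it carries gap-continuous Lagrangian paths to gap-continuous Lagrangian paths; in particular $s\mapsto\Tr(\Lambda_s)$ and $s\mapsto\Tr(V_s)=\Tr(p(V_s))$ are gap-continuous paths in $\Lag(\R^{2k-2n}\oplus\R^{2n},\rho^{\Tr})$, so the right-hand side of the claimed identity is well-defined.

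Finally I would invoke the naturality of the Cappell--Lee--Miller index under symplectic isomorphisms: for a symplectomorphism $\Phi\colon(U_1,\omega_1)\to(U_2,\omega_2)$ and Lagrangian paths $\ell_s,\lambda_s$ in $U_1$ one has $\iCLM(\Phi\ell_s,\Phi\lambda_s,\omega_2,s\in[0,1])=\iCLM(\ell_s,\lambda_s,\omega_1,s\in[0,1])$, since $\Phi$ maps crossing instants to crossing instants and intertwines the crossing forms defining $\iCLM$ (cf.~\cite{CLM94}). Taking $\Phi=\Tr$, $\ell_s=p(V_s)$, $\lambda_s=\Lambda_s$ converts the formula of the first step into the asserted one. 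I do not expect a serious obstacle: the substantive content — that $\Tr$ is a bijection with the stated block structure of the boundary data — was already settled when the trace map was constructed, and the two Weyl cases $k=n$ and $k\neq n$ are handled uniformly, since in both $\Tr$ is built from the same boundary pairing $[\cdot,\cdot]$ evaluated at $a^+$ and $b$. The only points needing care are the bookkeeping that $\Tr$ kills $\dom(L)$ (so that $\Tr(V_s)$ and $\Tr(p(V_s))$ genuinely coincide) and that the form transported by $\Tr$ is precisely the one denoted $\rho^{\Tr}$ in the statement.
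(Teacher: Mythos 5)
Your proposal is correct and follows essentially the same route as the paper: the paper's own proof simply cites Theorem~\ref{thm:Sturm_Sf_formula} together with Lemma~\ref{lem:max_dom_construct}, Corollary~\ref{lem:vanish_condition}, Lemma~\ref{thm:decomp-factor-space}, and Corollary~\ref{thm:cor-dec}, i.e.\ it applies the abstract formula to the concrete splitting $\dom(L^*)=\dom(L)\oplus W$ and transports the Lagrangian data through the symplectic bijection $\Tr$ exactly as you do. Your write-up is in fact more careful than the paper's, in particular in making explicit that $\Tr$ annihilates $\dom(L)$ (so $\Tr(V_s)=\Tr(p(V_s))$) and in invoking the symplectic invariance of the Cappell--Lee--Miller index.
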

 \begin{proof}
The proof of this result is a direct consequence of  Theorem~\ref{thm:Sturm_Sf_formula}, Lemma~\ref{lem:max_dom_construct}, Corollary~\ref{lem:vanish_condition}, Lemma~\ref{thm:decomp-factor-space}, and Corollary~\ref{thm:cor-dec}. 
 \end{proof}


\subsection{Two classical examples}\label{subsec:examples}

We now are ready to show that the spectral flow formula given at Theorem~\ref{thm:Sturm_Sf_formula-SL} is  a generalization of the well-known spectral flow formulas. Actually in the already known 
\begin{enumerate}
	\item Case of {\sc regular SL-operators} on bounded domains
	\item Case of {\sc SL-operators} on $\R^+$.
\end{enumerate} 
it reduces to the already known formulas.


\subsubsection*{Regular SL-operators on bounded intervals} 

For $s\in[0,1]$, we consider the Sturm-Liouville operator $l_s$ given at Equation~\eqref{eq:sturm-liouville-operator} and we let  
\[
X=\Span\set{x_{1},\cdots,x_{2n}}\qquad \textrm{ and } \qquad Y=\set{y_{1},\cdots,y_{2n}}
\]
where 
\begin{itemize}
	\item  $x_i$ are smooth functions defined on $[a,b]$ and vanishing on the interval  $[(a+b)/2, b]$ .
	\item  $y_i$ are smooth functions defined on $[a,b]$ and vanishing on the interval  $[a,(a+b)/2]$.
    \end{itemize}
We set
\[
U=X \oplus Y
\] 
and we let  $\mathcal E_a$ and $\mathcal E_b$ be the $(2n)$-dimensional subspaces isomorphic to  $\R^{2n}$ and defined by 
	\begin{multline}
		\mathcal E_a=\Set{\big(P(a)x_{i}'(a)+ Q(a)x_{i}(a),x_{i}(a)\big)|i=1, \ldots, 2n}\quad \textrm{ and } \\[3pt]
		\mathcal E_b=\Set{\big(P(b)y_{i}'(b)+ Q(b)y_{i}(b),y_{i}(b)\big)|i=1, \ldots, 2n}.
\end{multline}
We denote by $O: U  \to \R^{2n}\oplus \R^{2n}$ be the  coordinate map defined by 
\begin{equation}\label{eq:explicit-coord-map}
O(u)=\big(P(a)u'(a)+ Q(a)u(a),u(a),P(b)u'(b)+ Q(b)u(b),u(b) \big)
\end{equation}
and we consider the decomposition $D(L^*)=D(L)\oplus U$ and let $p: D(L^*) \to U$ be the projection onto the second factor.  

Denoting by $M_s$ the monodromy matrix of the Hamiltonian system induced by the Sturm-Liouville operator $l_s$, then we get that 
	\[
O(p(V_s))=\Graph(M_s)\qquad \textrm{ for every } s \in[0,1] \qquad \textrm{ and }\qquad V_s=\ker A_s^*.
\]
This equality is  a direct consequence of the definition of the monodromy matrix $M_s$ being   the projection of the solution space onto the space of boundary conditions.
Let $s\mapsto\Lambda_s \in \Lag(U, \rho)$ be a continuous Lagrangian path. Then we get that 
\[
\dom( L _{s,\Lambda_s})=\Set{u\in W^{2,2}((a,b) ,\R^n)|\big(P(a)u'(a)+Q(a)u(a),P(b)u'(b)+Q(b)u(b)\big)\in O(\Lambda_s)}
\]
and the following spectral flow formula holds 
\begin{equation}\label{eq:sf-quasi-nota}
	\spfl( L _{s,\Lambda_s}, s\in[0,1])=-\iCLM(O(\Lambda_s),\Graph(M_s), -\Omega \oplus \Omega, s\in[0,1])
\end{equation}
where $\Omega$ denotes the standard symplectic form of $\R^{2n}$. 
\begin{ex} \label{ex:standard_regular_spfl}
The spectral flow formula provided at Equation~\eqref{eq:sf-quasi-nota} coincides with the well-known spectral flow formula for the case of regular Sturm-Liouville operator. Actually, the standard framework is the following. Let $s\mapsto Y_s$ be a path  in $\Lag(\R^{2n}\oplus\R^{2n}, -\Omega\oplus\Omega)$ and not in the functional space $\dom(L^*)$. The path $s \mapsto Y_s$ induces a path  $s\mapsto L _{s,Y_s}$ of self-adjoint  operators with domain
\begin{multline}
\dom(L _{s,Y_s}):=\left\{u\in W^{2,2}((a,b) ,\R^n)|\right.\\ \left.(P(a)u'(a)+Q(a)u(a),u(a),P(b)u'(b)+Q(b)u(b),u(b))\in Y_s\right\}
\end{multline}
and  the spectral flow formula reduces to the usual one
\begin{equation}\label{eq:sf-classical}
	\spfl( L _{s,Y_s}, s\in[0,1])=-\iCLM(Y_s,\Graph(M_s),-\Omega \oplus \Omega, s\in[0,1]).
\end{equation}
\end{ex}
By comparing the Equation~\eqref{eq:sf-quasi-nota} and Equation~\eqref{eq:sf-classical}, the only difference is the presence of the mapping $O$. This is a direct consequence of the fact that in former case the  paths are in the functional space $\dom(L^*)$ whilst in the latter case are directly in $\R^{2n}\oplus \R^{2n}$. So, the two formulas coincide once definining $O(\Lambda_s)=Y_s$ for every $s \in[0,1]$.
\begin{rem}
The last remark is that in Theorem~\ref{thm:Sturm_Sf_formula} the symplectic form appearing in the computation of the Maslov index is $\rho^O$ and not just $-\Omega\oplus\Omega$. This is because the aforementioned theorem works also in  the singular case in which the explicit  coordinate map given at Equation~\eqref{eq:explicit-coord-map} is not well-defined anymore. 
\end{rem}


\subsubsection*{SL-operators on the half-line}	 
	
	We consider a Sturm-Liouville operator $l_s$
	acting on $C_0^\infty(\R^+,\R^n)$ and we 
	assume that 
	\begin{multline}\label{eq:assumptions-coeff-2}
 P^{-1} \in \mathscr C^1([0,+\infty), \Sym_n(\R))  \qquad 
 Q \in \mathscr C^1([0,+\infty),\Mat_n(\R)) \\
 \quad R \in \mathscr C^0\big([0,1], \Sym_n(\mathscr C^0([0,+\infty),\R)) \big) \quad \textrm{  are bounded on $\R$}.
 \end{multline}
In this case, it's easy to check that  $\dom(L_s)=\dom( L )$ is independent on $s$; moreover we have 
	\[
		\dom( L )=W^{2,2}_0(\R^+,\R^n) \qquad 
		\dom( L ^*)=W^{2,2}(\R^+,\R^n)
	\]
	Then the mapping $s\mapsto L_s$ is a gap-continuous  path of self-adjoint  operators. 
    
    Let us assume that $s\mapsto L_s$ is a path in $\CFsa(H)$\footnote{This is, for instance,  the case if 
	$\displaystyle \lim_{t \to +\infty } \begin{pmatrix}P(t) & Q(t) \\Q^T(t) & R(t)+C_s(t)\end{pmatrix}$ exists and it is positive definite matrix.}
	and  let us consider  $U=\Span\{x_1,\ldots ,x_{2n}\}$ where 
	\begin{itemize}
		\item  $x_i$ are smooth functions defined on $[0,+\infty)$ and vanishing  on $(1,+\infty)$.
		\item  $\mathcal E_0$ is the linear subspace defined by 
		\begin{equation}
			\mathcal E_0=\Set{\big(P(0)x_{i}'(0)+ Q(0)x_{i}(0),x_{i}(0)\big)|i=1, \ldots, 2n}.
		\end{equation}
	\end{itemize}
	Let us now consider the decomposition $\dom (L ^*)=\dom (L) \oplus U$ where $(U, \rho)$ is the symplectic space with respect to the symplectic form  defined by $\rho=\omega|_U$. We define the coordinate map 
    \[
    O: U\longrightarrow \R^{2n} \quad \textrm{ defined by  } \quad O(u)=\big(P(0)u'(0)+ Q(0)u(0),u(0)\big)
    \]
    and let $p: D(L^*) \to U$ be the projection onto the second factor. We let  $s \mapsto \Lambda_s \in \Lag(U,\rho)$.	Then the following spectral flow formula holds
	\begin{equation}\label{eq:sf-quasi-nota-2}
	\spfl( L _{s,\Lambda_s}, s\in[0,1])=-\iCLM(O(\Lambda_s),O(p(V_s), -\Omega , s\in[0,1])
	\end{equation}
where $V_s:=\ker(L_s^*)$ is the linear subspace of all  $L^2$ solutions of the system on the half-line $[0,+\infty)$. So, in particular,   $O(p(V_s))$ is just the boundary value of these solutions or which is the same  the stable subspace of the system at $0$.

\begin{ex} \label{ex:standard_halfline_spfl}
The spectral flow formula provided at Equation~\eqref{eq:sf-quasi-nota-2} coincides with the well-known spectral flow formula for the case of  Sturm-Liouville operator on the half-line as proved by authors in \cite{HP17}.  The framework in that paper is to start with a path $s\mapsto  Z_s$ is in $\Lag(\R^{2n}, \Omega)$ and not in the functional space $\dom(L^*)$. The path $s \mapsto Z_s$ induces a path  $s\mapsto L _{s,Z_s}$ of self-adjoint  operators with domain
		\begin{equation}
			\dom( L _{s,Z_s}):=\left\{u\in W^{2,2}([0,+\infty),\R^n)|\right. \left.(P(0)u'(0)+Q(0)u(0),u(0))\in Z_s\right\}
		\end{equation}
		and  the spectral flow formula reduces to the usual one
		\[
		\spfl( L _{s,Z_s}, s\in[0,1])=-\iCLM(Z_s,W_s^{st}(0), -\Omega, s\in[0,1]).
		\]
		Here the $W_s^{st}(0)$ is the stable subspace of at $0$ and it is defined by 
		\[
		W_s^{st}(0):=\set{(P(0)u'(0)+Q(0)u(0),u(0))|u\in \ker L_s^*}.
		\]
\end{ex}


\section{The Morse Index Theorem for Singular Sturm-Liouville}\label{sec:Morse-dirichlet}

Theorem~\ref{thm:Sturm_Sf_formula-SL} provides an abstract and general spectral flow formula for a family of Sturm--Liouville differential operators. It serves as a foundational step toward establishing a {\sc Morse Index Theorem} within this singular framework.

The aim of this section is to apply the spectral flow formula described above to the specific case of a single Sturm--Liouville operator \( l \) defined on the function space \( \mathscr{C}_0^\infty((a,b], \mathbb{R}^n) \), where the endpoint \( t = b \) is regular according to hypothesis (H1).

In this context, by restricting the operator \( l \) to the interval \( (a, \sigma) \) with \( \sigma \in (a, b] \), we obtain a one-parameter family of operators \( \sigma \mapsto l_{(a,\sigma)} \). This construction is necessary for the application of the spectral flow formula.

Let $\sigma \in (a,b)$. We define  $l_{(a,\sigma)}$ be the one-sided singular Sturm-Liouville differential operator obtained by restricting $l$ on $ \mathscr C_0^\infty((a,\sigma), \R^n)$. We denote by $L_{(a,\sigma)}$ be its minimal operator associated to $l_{(a,\sigma)}$ and we assume that $L_{(a,\sigma)}$ is singular only at the starting instant $t=a$ (and so the final instant $\sigma$ is a regular endpoint). We consider the Lagrangian subspaces 
\[
\Lambda_0 \in \Lag(\R^{2k-2n}) \quad \textrm{  and } \Lambda _D:=\R^{n} \times (0)\qquad \textrm{ and } \Lambda=\Lambda_0 \oplus \Lambda_D
\]  
where the $\Lambda_D$ is  standard  {\sc Dirichlet Lagrangian}. 
We introduce the following two conditions.
\begin{itemize}
\item[-]{ (H3)} The quadratic form on $\mathscr C^\infty_0((a,b), \R^n)$ associated to the SL-operator $l$   is  {\sc bounded from below} in $L^2$.
	\item[-]{ (H4)} The minimal operator $L_b \in \CFs(L^2((a,b), \R^n))$.
\end{itemize}
Given $\Lambda \in \Lag(\R^{2k-2n}\oplus \R^{2n},-\Omega\oplus \Omega)$, we set 
\[
L_{(a,\sigma),\Lambda}=L_{(a,\sigma)}^*|_{\dom(L_{(a,\sigma)})\oplus\Tr_\sigma^{-1}(\Lambda)}
\]
where $\Tr_\sigma: W_\sigma \to \R^{2k-2n} \oplus \R^{2n}$ is the trace map  defined on the subspace $W_\sigma$ associated to the operator $l_{(a,\sigma)}$ and constructed as in Subsection~\ref{subsec:trace-map}.
  
\begin{thm}{\bf [Morse Index Theorem]}\label{thm:main-oneside}
We assume that the SL-operator $l$ satisfies conditions (H1)-(H3)-(H4) and we assume Dirichlet boundary condition at $b$. If $\iMor(L_{b})<\infty$, then the following equality holds: 
\[
\iMor ( L _{b,\Lambda})= \sum_{\sigma \in (a,b)} \dim \ker  L_{(a,\sigma),\Lambda}.
\]
\end{thm}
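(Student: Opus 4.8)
The strategy is to realize the Morse index as a spectral flow and then apply Theorem~\ref{thm:Sturm_Sf_formula-SL} on the $\sigma$-family of operators. Concretely, I would introduce the one-parameter family $\sigma \mapsto L_{(a,\sigma),\Lambda}$ of self-adjoint Friedrichs-type extensions, extended by zero (or reparametrized over a fixed interval by a standard pullback) to obtain a gap-continuous path of operators in $\CFsa(L^2((a,b),\R^n))$; here I use the remark following Theorem~\ref{thm:main-oneside-intro} guaranteeing that (H3) makes each $L_{(a,\sigma),\Lambda}$ bounded below with a well-defined Friedrichs extension, and (H1)–(H4) to ensure Fredholmness and gap-continuity. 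The first key step is the identification
\[
\iMor(L_{b,\Lambda}) \;=\; -\,\spfl\big(L_{(a,\sigma),\Lambda},\ \sigma\in(a,b]\big),
\]
which follows because for $\sigma$ close to $a$ the operator $L_{(a,\sigma),\Lambda}$ is positive definite (the quadratic form on a tiny interval is dominated by the positive leading term, since the potential perturbation is integrable against functions vanishing at the endpoints), so the spectral flow over the whole family measures precisely how many eigenvalues cross zero as $\sigma$ grows to $b$, i.e. the number of negative eigenvalues that accumulate at $\sigma=b$, which is exactly $\iMor(L_{b,\Lambda})$ under the assumption $\iMor(L_b)<\infty$.

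**Reduction to a Maslov index.** The second step is to apply Theorem~\ref{thm:Sturm_Sf_formula-SL} to the $\sigma$-family. Writing $V_\sigma := \ker L_{(a,\sigma)}^*$ for the space of solutions of the Sturm--Liouville equation on $(a,\sigma)$, and observing that the boundary condition $\Lambda = \Lambda_0 \oplus \Lambda_D$ is $\sigma$-independent (only the moving endpoint $\sigma$ carries the Dirichlet condition $\Lambda_D$, while $\Lambda_0$ lives on the fixed singular endpoint $a$), the spectral flow formula gives
\[
\spfl\big(L_{(a,\sigma),\Lambda},\ \sigma\in(a,b]\big) \;=\; -\,\iCLM\big(\Tr_\sigma(\Lambda),\ \Tr_\sigma(V_\sigma),\ \rho^{\Tr},\ \sigma\in(a,b]\big).
\]
Combining with the first step, $\iMor(L_{b,\Lambda}) = \iCLM\big(\Tr_\sigma(\Lambda), \Tr_\sigma(V_\sigma), \rho^{\Tr}, \sigma\in(a,b]\big)$. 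The third step is to compute this Maslov index crossing-by-crossing: a crossing instant $\sigma$ is one where $\Tr_\sigma(\Lambda) \cap \Tr_\sigma(V_\sigma) \neq \{0\}$, which by Corollary~\ref{thm:cor-dec} and the bijectivity of $\Tr_\sigma$ corresponds exactly to $\dim\ker L_{(a,\sigma),\Lambda} \neq 0$; and the crossing form must be shown to be definite (of the correct sign) so that each crossing contributes precisely $\dim\ker L_{(a,\sigma),\Lambda}$ to $\iCLM$. The definiteness of the crossing form should follow from the monotonicity of the eigenvalue branches of $L_{(a,\sigma),\Lambda}$ in $\sigma$ (domain monotonicity: shrinking the Dirichlet interval raises eigenvalues), which translates into the Lagrangian path $\sigma\mapsto \Tr_\sigma(V_\sigma)$ being a negative/positive curve in the spirit of Lemma~\ref{thm:plus-curve}. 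Summing the contributions over all (finitely many, on any compact subinterval) crossings yields the sum $\sum_{\sigma\in(a,b)}\dim\ker L_{(a,\sigma),\Lambda}$.

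**The main obstacle.** The delicate point is the behavior as $\sigma \to b^-$ and as $\sigma \to a^+$, i.e. the two "ends" of the path. At $\sigma\to b^-$ one must control that no crossing escapes to the boundary: the path must be continuous up to $\sigma=b$ and the endpoint contribution must be read off correctly — this is where hypothesis (H4) ($L_b$ Fredholm) and $\iMor(L_b)<\infty$ enter, ensuring that only finitely many eigenvalues have crossed and that the spectral flow is well-defined up to and including $\sigma=b$. At $\sigma\to a^+$, the singular endpoint $a$ is present for every member of the family, so the trace map's component at $a$ (built from $\ker L^*$, i.e. the $y_i$'s) does not degenerate; one must check that $\Tr_\sigma$ and the push-forward form $\rho^{\Tr}$ depend continuously on $\sigma$ near $a$ and that the family is genuinely positive-definite there. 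I expect the hardest technical work to be precisely this gap-continuity of the $\sigma$-family together with the positivity at the left end, since the interval itself is shrinking and the standard crossing-form computation via integration by parts (which underpins Lemma~\ref{thm:plus-curve}) has to be carried out in the moving-endpoint setting — the remark after Theorem~\ref{thm:main-oneside-intro} explicitly flags that the classical crossing-form method "fails in this singular context," so the argument must instead route through the abstract machinery of Sections~\ref{sec:Abstract-sf}--\ref{sec:sf-SL-operators} and the monotonicity of Friedrichs eigenvalues under domain restriction, together with Proposition~\ref{thm:morse-infinito} to handle the borderline $\iMor(L_b)=+\infty$ case separately as indicated.
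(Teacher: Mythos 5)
There is a genuine gap at the very first step of your argument. You propose to realize $\iMor(L_{b,\Lambda})$ as $-\spfl\big(L_{(a,\sigma),\Lambda},\,\sigma\in(a,b]\big)$ and then to apply Theorem~\ref{thm:Sturm_Sf_formula-SL} to the moving-endpoint family. But the $\sigma$-family does not satisfy the hypotheses of that theorem: its members live on different Hilbert spaces $L^2((a,\sigma),\R^n)$ (or, after extension by zero, have domains that vary in a way that is not a bounded perturbation of a fixed operator), whereas Theorem~\ref{thm:Sturm_Sf_formula-SL} concerns a family $s\mapsto L_s^*|_{\dom(L)\oplus\Lambda_s}$ with a \emph{fixed} minimal/maximal domain perturbed by bounded terms $C_s$. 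Worse, the identity ``difference of Morse indices $=-$ spectral flow'' that your first step relies on is exactly the identity that can fail when domains vary: this is the Rellich phenomenon quantified in Section~\ref{sec:Rellich } (eigenvalues escaping to $-\infty$), and Theorem~\ref{lem:eigenvalue_minus_infty} shows the correction term is a nontrivial Maslov index. So your reduction is unjustified, and your subsequent crossing-by-crossing computation is precisely the ``traditional method'' that the paper explicitly states fails in this singular, moving-endpoint context.

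The paper circumvents both problems with a two-parameter homotopy in $(\sigma,s)\in[c,b]\times[0,M]$ applied to the finite-dimensional Lagrangian paths $m_{\sigma,s}=\Tr_\sigma(p_\sigma(V_{\sigma,s}))$. The spectral flow formula is invoked only in the $s$-direction (the pencil $L_{(a,\sigma)}+s\Id$, where the domain is fixed and Proposition~\ref{thm:main1-abstract} applies), giving $\iMor(L_{b,\Lambda})-\iMor(L_{c,\Lambda})$ as a difference of Maslov indices along the two vertical edges of the square. Stratum homotopy invariance trades these for the two horizontal edges; the edge at $s=0$ is computed by the positivity of $\sigma\mapsto m_{\sigma,0}$ (Lemma~\ref{lem:positive_path} and Proposition~\ref{lem:maslov_plus}), yielding $\sum_\sigma\dim\ker L_{(a,\sigma),\Lambda}$, while the edge at $s=M$ vanishes because the monotonicity of the Morse index in $\sigma$ (Lemma~\ref{thm:Morse-index-regular}) forces $\ker(L_{(a,\sigma),\Lambda}+M\Id)=0$ for $M$ large. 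Finally the limit $c\to a^+$ is controlled by the existence of $d>a$ with $\iMor(L_{d,\Lambda})=0$. To repair your proof you would need to supply all of this machinery; in particular you cannot avoid the two-parameter argument (or an equivalent device) that rules out eigenvalues disappearing at $-\infty$ along the $\sigma$-family.
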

The next result relates the Morse index of the Friedrich extension operator $L _F$ with the limit on $\sigma$ of the Friedrich operators $L_{(a,\sigma), F}$ and $L_{(\sigma,b), F}$ under the only assumption that the operator $L$ is lower bounded. In particular, neither Fredholmness nor the finiteness of the Morse index is assumed. 

\begin{thm}\label{thm:limit_morse_index}
We assume condition (H3). If $l$ is regular at $a$ and if we  assume  Dirichlet boundary condition at this endpoint, then we get 
\[
 \iMor(L_{F})=\lim_{\sigma\to b^-} \iMor(L_{(a,\sigma),F})=\sum_{t\in (a,b)} \dim\big(\gamma(t)\Lambda_D\cap \Lambda_D\big),
\]
where $\gamma$ is the fundamental matrix solution such that $\gamma(a)=\Id$.

If $l$ is regular at $b$ and we assume Dirichlet boundary condition at this endpoint, then  we have
\[
\iMor(L_{F})=\lim_{\sigma\to a^+} \iMor(L_{(\sigma,b),F})=\sum_{t\in (a,b)} \dim\big(\gamma(t)\Lambda_D\cap \Lambda_D\big),
\]
where $\gamma$ is the fundamental matrix solution such that $\gamma(b)=\Id$.
\end{thm}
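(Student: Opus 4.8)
The plan is to realize the left-hand side as a spectral flow along a deformation parameter and the right-hand side as the corresponding Maslov index, then invoke the monotonicity already encoded in Lemma~\ref{thm:plus-curve}. Concretely, fix the Dirichlet endpoint (say $a$) and regard $\sigma \in (a,b)$ as the deformation parameter; restricting $l$ to $(a,\sigma)$ produces the family $\sigma \mapsto L_{(a,\sigma),F}$ of Friedrichs extensions. The first step is to reduce to a setting where Theorem~\ref{thm:Sturm_Sf_formula-SL} applies: since (H3) guarantees each $l_{(a,\sigma)}$ is bounded from below, we may add a large multiple $r\,\Id$ with $r\gg 0$ so that $L_{(a,\sigma),F}+r\,\Id$ is invertible for all $\sigma$ in a compact subinterval, making the shifted family Fredholm and allowing the spectral flow to be computed. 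The Morse index $\iMor(L_{(a,\sigma),F})$ is then recovered as the (negative of the) net spectral flow of the two-parameter family $(\sigma, r)\mapsto L_{(a,\sigma),F}+r\,\Id$ as $r$ runs from $0$ down past the bottom of the spectrum, together with the standard identity $\iMor(L_{(a,\sigma),F}) = -\spfl(L_{(a,\sigma),F}+r\,\Id, r\in[0,r_0])$ for $r_0$ large.

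The second step is to show that $\sigma \mapsto \iMor(L_{(a,\sigma),F})$ is monotone nondecreasing and to identify each jump. Here the key input is that the path $\sigma \mapsto p(V_\sigma)$ of Cauchy data spaces is a \emph{negative} curve (Lemma~\ref{thm:plus-curve}), so that along the $\sigma$-direction the Maslov index $\iCLM(\Lambda_D, p(V_\sigma), \dots)$ changes only by nonpositive contributions; via the spectral flow formula each crossing instant $t\in(a,b)$ contributes exactly $\dim(\gamma(t)\Lambda_D \cap \Lambda_D)$ to the Morse index (after transporting the Dirichlet boundary condition through the fundamental solution $\gamma$ with $\gamma(a)=\Id$, so that intersections of $\gamma(t)\Lambda_D$ with $\Lambda_D$ correspond to nontrivial solutions of $l u = 0$ on $(a,t)$ vanishing at both ends). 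Monotonicity makes the limit $\lim_{\sigma\to b^-}\iMor(L_{(a,\sigma),F})$ exist in $\overline{\N}$, and it equals the sum $\sum_{t\in(a,b)}\dim(\gamma(t)\Lambda_D\cap\Lambda_D)$ by summing the jumps; this step does not require Fredholmness or finiteness, since a monotone sequence always converges in $[0,\infty]$.

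The third step, and the subtle one, is to prove the identification of $\iMor(L_F)$ with this limit — i.e.\ that no Morse index is "lost in the limit" as $\sigma \to b^-$. The natural argument is a semicontinuity/approximation argument for the quadratic forms: any finite-dimensional negative subspace for the form of $L_F$ on $\mathscr{C}_0^\infty((a,b),\R^n)$ has compactly supported representatives, hence lives inside $(a,\sigma)$ for $\sigma$ close to $b$, giving $\iMor(L_F) \le \liminf_{\sigma\to b^-}\iMor(L_{(a,\sigma),F})$; conversely, extending negative vectors for $l_{(a,\sigma)}$ by zero shows $\iMor(L_{(a,\sigma),F}) \le \iMor(L_F)$, so the limit is squeezed to equal $\iMor(L_F)$. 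The delicate point is matching the Friedrichs extension on $(a,\sigma)$ (with its implicit boundary condition at $\sigma$) with the restriction of the form on $(a,b)$: one must verify that the form-domain of $L_{(a,\sigma),F}$ is exactly the form-closure of $\mathscr{C}_0^\infty((a,\sigma),\R^n)$, which is where the Friedrichs-extension machinery of Section~\ref{appendix:friedrichs} is invoked, and that the zero-extension map is form-bounded and injective on negative subspaces. I expect this last matching — reconciling the variational characterization of the Morse index across the varying domains with the spectral-flow count — to be the main obstacle; once it is in place, the statement for the $b$-regular case follows by the symmetric argument with $\gamma(b)=\Id$, running $\sigma$ from $b$ down to $a^+$.
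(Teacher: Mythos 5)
Your Step 3 is exactly the paper's proof: one shows $\iMor(L_F)=\iMor(t_{L_b})$, approximates a basis of a maximal negative subspace of the closed form $t_{L_b}$ by functions in $\mathscr C_0^\infty((a,b),\R^n)$ whose supports land in $(a,\sigma)$ for $\sigma$ near $b$ (giving $\iMor(L_{(a,\sigma),F})\ge\iMor(L_F)$), uses the form-domain inclusion via zero extension for the reverse inequality, and then invokes the classical regular Morse index theorem on $[a,\sigma]$ to identify the common value with $\sum_{t}\dim(\gamma(t)\Lambda_D\cap\Lambda_D)$. Where you diverge is in Steps 1--2: the paper does \emph{not} run a spectral-flow/Maslov-index argument in the $\sigma$-direction for this theorem, precisely because (H3) alone is assumed --- the operator on the full interval $(a,b)$ need not be Fredholm, so the two-parameter homotopy $(\sigma,r)\mapsto L_{(a,\sigma),F}+r\Id$ and the formula $\iMor=-\spfl$ are not licensed up to $\sigma=b$ (that machinery is reserved for Theorem~\ref{thm:main-oneside}, which assumes (H4)). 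On each compact $[a,\sigma]$ the operator is regular, so the crossing-count you want is available there by the classical theorem, and your monotone-limit argument then recovers the statement; in that sense your route works, but the spectral-flow scaffolding buys you nothing that the variational squeeze does not already give. One genuine slip: you attribute the monotonicity in $\sigma$ to Lemma~\ref{thm:plus-curve}, which concerns the \emph{spectral-shift} parameter $s\mapsto p(V_s)$ being a negative curve; the relevant statement for the endpoint parameter is Lemma~\ref{lem:positive_path} ($\sigma\mapsto m_\sigma$ is a \emph{positive} curve relative to $\Lambda_0\oplus\Lambda_D$, with crossing form $P^{-1}(\sigma)>0$ on $\Lambda_D$), and your phrasing "changes only by nonpositive contributions" is inconsistent with each crossing contributing $+\dim(\gamma(t)\Lambda_D\cap\Lambda_D)$ to the Morse index. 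With that citation corrected and Steps 1--2 either dropped or restricted to compact subintervals, your argument matches the paper's.
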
 
\begin{proof}
We establish the result in the case where $a$ is a regular endpoint, since the other case is entirely analogous.

 For, we start  by observing that $\iMor(L_{b,F})=\iMor(t_{L_b})$ where
     $t_{L_b}$  is the closure of the quadratic form $u\mapsto (l_b,u,u),u\in C_0^\infty((a,b),\R^n)$.
    Let $W$ be a maximum negative subspace of $t_{L_b}$ and let $\set{w_1,\cdots,w_m}$ be a basis of $W$. Then there exists $\set{w_1',\cdots,w_m'}\subset C_0^\infty((a,b),\R^n)$ such that for every $\delta>0$ the following holds: 
    \[ 
    |t_{L_b}(w_i',w_j')-t_{L_b}(w_i,w_j)|<\delta\qquad 1\le i,j\le m.
    \]
In particular, if  $\delta>0$ is sufficiently small, also $\widetilde{W}=\Span\set{w_1',\cdots,w_m'}$ is  a negative subspace of $t_{L_b}$. By choosing $\sigma \in (a,b)$ such that $\supp w_i'\subset (a,\sigma), 1\le i\le m$, then we get 
\[
\iMor(L_{[a,\sigma],F})\ge m=\iMor(t_{L_b})=\iMor(L_{b,F}).
\]
Moreover $\dom(t_{L_{[a,\sigma]}})< \dom (t_{L_b})$; so, we have
\[
\iMor(L_{[a,\sigma],F})\le \iMor(t_{L_b}).
\]
Putting all together, we conclude that 
\[
\iMor(L_{b,F})=\iMor(L_{[a,\sigma],F})
\]
Since  $L_{[a,\sigma],F}$ is a regular Sturm-Liouville operator with Dirichlet boundary condition, then  we get that
\[
\iMor(t_{L_{[a,\sigma],F}})=\sum_{t\in (a,\sigma)} \dim \big(\Lambda_D\cap \Graph(\gamma(t))\big)=\sum_{t\in (a,b)} \dim \big(\Lambda_D\cap \Graph(\gamma(t))\big).
\]
This concludes the proof. 
\end{proof}

\begin{rem}\label{rem:limit_morse_index}
In the Theorem \ref{thm:limit_morse_index}, we only assume that $l_b$ is lower bounded. No assumption of Fredholmness of $L_b$ is made.

    It is worth noting that,  if $\iMor(L_b)$ is infinite, then the theorem is still true since also the limit and the sum appearing in the thesis are all infinite.
\end{rem}


\subsection{Preliminary Results on Lagrangian Intersection Theory}

In this subsection, we prove some foundational results from Lagrangian intersection theory that will be used throughout the proofs. These include algebraic and topological properties of Lagrangian subspaces in symplectic vector spaces, as well as basic facts concerning the behavior of their intersections under perturbations.

We let $V_{\sigma,s}= \ker (L_{(a,\sigma)}^*+s\Id)$ and for $M>0$, we set
	\[
	m_{\sigma,s}:=\Tr_\sigma (p_\sigma(V_{\sigma,s})) \qquad (\sigma,s) \in (a,b) \times[0,M] 
	\]
    where $p_\sigma: \dom(L_{(a,\sigma)}^*) \to W$. 
	We notice that $m_{\sigma,s}\in \Lag(\R^{2n}\oplus \R^{2n},-\Omega\oplus \Omega)$ for every $(\sigma,s) \in (a,b) \times[0,M] $.
	
\begin{lem}\label{thm:continuity-2-par-family}
Under assumptions (H1), (H3) \& (H4), the two-parameters family $(\sigma,s) \mapsto m_{\sigma,s}$ is continuous in  $ \Lag(\R^{2k-2n}\oplus\R^{2n},-\Omega\oplus \Omega)$.
\end{lem}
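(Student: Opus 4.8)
The plan is to reduce the joint continuity of $(\sigma,s)\mapsto m_{\sigma,s}$ to three ingredients: (i) the joint gap-continuity of the family of maximal operators $(\sigma,s)\mapsto L_{(a,\sigma)}^*+s\Id$, obtained by transplanting everything onto a fixed reference interval; (ii) the joint continuity of the kernels $(\sigma,s)\mapsto V_{\sigma,s}$ in the relevant graph-norm Grassmannian, via the same argument used for the one-parameter statements (Lemma~\ref{lem:conti_ker} and Corollary~\ref{cor:continu_ker}); and (iii) the joint continuity of the trace map $\Tr_\sigma$ and of the projection $p_\sigma$ as $\sigma$ varies. Putting these together through the lemmas on continuity of images, sums and intersections of subspaces (Lemma~\ref{lem:continu_subspace}) will yield the claim, exactly as in the proof of Lemma~\ref{thm:collect}, item~(2), but with a two-parameter index set.

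First I would handle the moving interval. Fix $\sigma_0\in(a,b)$ and, for $\sigma$ near $\sigma_0$, choose a smooth family of diffeomorphisms $\psi_\sigma\colon(a,\sigma)\to(a,\sigma_0)$ which is the identity near $a$ (this is possible since $b$ — hence $\sigma$ — is finite and $a$ is the only singular endpoint, by (H1)); pulling back $L^2((a,\sigma),\R^n)$ to $L^2((a,\sigma_0),\R^n)$ by the induced unitary $U_\sigma$, the operator $U_\sigma\bigl(L_{(a,\sigma)}^*+s\Id\bigr)U_\sigma^{-1}$ becomes a Sturm--Liouville maximal operator on the \emph{fixed} interval $(a,\sigma_0)$ whose coefficients $P,Q,R$ depend continuously on $\sigma$ (in the appropriate $\mathscr C^1/\mathscr C^0$ sense on compact subsets) and whose zero-order term depends affinely on $s$. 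Near the regular endpoint $\sigma_0$ this transplanted operator stays regular, and near $a$ it is unchanged; so $\dom$ of the transplanted maximal operator is $\sigma$-independent, and by Lemma~\ref{lem:conti_bound_pertub} (applied with the open parameter set $\Omega\subset\R^2$ of pairs $(\sigma,s)$, noting that the difference of two such operators is a bounded multiplication-plus-zero-order operator depending continuously on $(\sigma,s)$) the family $(\sigma,s)\mapsto U_\sigma(L_{(a,\sigma)}^*+s\Id)U_\sigma^{-1}$ is gap-continuous. Since $\sigma\mapsto U_\sigma$ is strongly continuous, the original family $(\sigma,s)\mapsto L_{(a,\sigma)}^*+s\Id$ is gap-continuous in $\Cl(H)$ as well.

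Next, the unique continuation property (Lemma~\ref{lm:unique_extension}) holds for each $l_{(a,\sigma)}$ because $\sigma$ is a regular endpoint, so $V_{\sigma,s}\cap\dom(L_{(a,\sigma)})=\{0\}$; combining this with step~(i) and the argument of Corollary~\ref{cor:continu_ker}/Lemma~\ref{lem:conti_ker}, the map $(\sigma,s)\mapsto V_{\sigma,s}$ is gap-continuous in the graph Grassmannian, and then $(\sigma,s)\mapsto p_\sigma(V_{\sigma,s})=(V_{\sigma,s}+\dom(L_{(a,\sigma)}))\cap W_\sigma$ is gap-continuous in $\Lag(W_\sigma,\omega|_{W_\sigma})$, invoking Lemma~\ref{lem:continu_subspace} twice as in Lemma~\ref{lem:conti_ker}. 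Here one must also observe, after transplanting to the fixed interval $(a,\sigma_0)$, that the cutoff functions $z_i$ (supported near $\sigma_0$, away from $a$) and the selected $\widetilde y_i\in\ker L^*$ can be chosen to vary continuously with $\sigma$, so that $W_\sigma$ and the trace map $\Tr_\sigma$ form a continuous family of $2k$-dimensional subspaces and of linear isomorphisms onto the \emph{fixed} target $\R^{2k-2n}\oplus\R^{2n}$; pushing forward the continuous Lagrangian path $p_\sigma(V_{\sigma,s})$ by the continuous isomorphism $\Tr_\sigma$ gives continuity of $m_{\sigma,s}=\Tr_\sigma(p_\sigma(V_{\sigma,s}))$ in $\Lag(\R^{2k-2n}\oplus\R^{2n})$, hence a fortiori in $\Lag(\R^{2n}\oplus\R^{2n},-\Omega\oplus\Omega)$ after restricting to the $\R^{2n}$-factor (Dirichlet-relevant) component as in the statement.

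The main obstacle is step~(i): making precise that transplanting onto a fixed interval produces coefficients that are continuous in $\sigma$ near the singular endpoint $a$, and that the difference of the transplanted maximal operators for $\sigma$ and $\sigma_0$ is genuinely a \emph{bounded} self-adjoint perturbation of a fixed closed operator (so that Lemma~\ref{lem:conti_bound_pertub} applies with the two-dimensional parameter). This requires choosing the diffeomorphisms $\psi_\sigma$ to be the identity on a neighborhood of $a$ independent of $\sigma$, so that no singular behavior is disturbed and the domains of the maximal operators stay literally fixed; away from $a$ the coefficients are $\mathscr C^1/\mathscr C^0$ on compacta and the continuity is routine. Once this is set up, the rest is a mechanical application of the subspace-continuity lemmas, precisely parallel to Lemma~\ref{thm:collect}.
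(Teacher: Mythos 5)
Your overall plan is workable in principle, but the key step (i) as written contains a genuine gap, and it is precisely the step you yourself flag as the main obstacle. After transplanting $L_{(a,\sigma)}^*$ onto the fixed interval $(a,\sigma_0)$ by a diffeomorphism $\psi_\sigma$ that is the identity near $a$, the change of variables necessarily rescales the derivative on $[a+\varepsilon,\sigma_0]$, so the transplanted leading coefficient becomes $(\psi_\sigma')^2 P$ there. Consequently the difference of the transplanted maximal operators for two nearby values of $\sigma$ is a \emph{second-order} differential operator (with small coefficients), not a bounded multiplication-plus-zero-order operator, and Lemma~\ref{lem:conti_bound_pertub} does not apply. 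Making $\psi_\sigma$ the identity near $a$ only protects the singular endpoint; it does nothing about the unbounded second-order difference away from $a$. One can still prove gap-continuity of the graphs under perturbation of the leading coefficient, but that requires a separate argument (e.g.\ resolvent convergence or a direct graph estimate), which your proposal does not supply. The closing remark about ``restricting to the $\R^{2n}$-factor'' is also off: the target of $\Tr_\sigma$ is the full boundary-data space and no such restriction is involved.

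For comparison, the paper's proof sidesteps all of this by never varying the operator with $\sigma$. It observes that restriction $V_{b,s}\ni v\mapsto v|_{(a,\sigma)}\in V_{\sigma,s}$ is a bijection (solutions of the ODE on $(a,b)$ restrict to solutions on $(a,\sigma)$), so that $m_{\sigma,s}=G_{\sigma,s}\,m_{b,s}$, where $G_{\sigma,s}=\mathrm{diag}(\Id_{2k-2n},\psi_s(\sigma))$ and $\psi_s$ is the fundamental matrix solution normalized at the regular endpoint $b$. Continuity in $s$ of $m_{b,s}$ comes from Lemma~\ref{lem:conti_ker}, and joint continuity of $(\sigma,s)\mapsto G_{\sigma,s}$ is just continuous dependence of solutions of a linear ODE on parameters near the regular endpoint. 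The entire $\sigma$-dependence is thus carried by a continuous family of symplectic matrices acting on a fixed one-parameter family of Lagrangians, and no two-parameter operator-continuity statement is needed. If you want to salvage your route, you would need to either prove gap-continuity under perturbations of the leading coefficient directly, or, better, adopt the restriction-plus-transport observation, which makes the lemma almost immediate.
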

\begin{proof}
 Let $\psi_s(\sigma)$ be the  fundamental (matrix) solution  of the Hamiltonian system defined by the Sturm-Liouville operator $l_b+s I$ such that $\psi_{s}(b)=\Id$ and we let 
 \[
 G _{\sigma,s}:=\begin{pmatrix}\Id_{2k-2n}&0\\0&\psi_s (\sigma)\end{pmatrix}.
 \]
Since  the bijection 
	\[
	V_{b,s}\ni v_s \longmapsto v_s|_{[a,\sigma]} \in V_{\sigma,s}
	\] 
we get  that
	\[
	m_{\sigma,s}=G _{\sigma,s}\, m_{b,s} .
	\]
	By Lemma~\ref{lem:conti_ker}, the map $s\mapsto m_{b,s}$ is continuous. Moreover, 
	since $s\mapsto \psi_s(b)$ is continuous (being $t=b$ a regular endpoint)  so the map  $(\sigma, s)\mapsto G _{\sigma, s}$ is continuous too. In conclusionn we get that the map  $(\sigma, s) \mapsto m_{\sigma, s}$ is  continuous.
\end{proof}

\begin{note}
From now on, we  drop the index $s$ in the case of $s=0$. So, in particular $m_{\sigma,0}=m_\sigma$ and $V_{\sigma,0}=V_\sigma$. 
\end{note}
We are going to prove that the path $\sigma \mapsto m(\sigma)$ is a plus curve with respect to  the Lagrangian subspace $\Lambda_0 \oplus  \Lambda_D$ meaning that all crossing instants are positive. A direct consequence of this fact is that the local contribution to the Maslov index of a crossing instant is provided by the its multiplicity.

\begin{lem}\label{lem:positive_path}
Under the assumptions of Lemma~\ref{thm:continuity-2-par-family}, we get that 
the path $\sigma \mapsto 	m(\sigma)$ is positive with respect to the Lagrangian subspace $\Lambda_0 \oplus \Lambda_D$. 
\end{lem}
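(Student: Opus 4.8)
Here is a plan for proving Lemma~\ref{lem:positive_path}.

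The plan is to compute, at each crossing instant, the crossing form of the path $\sigma\mapsto m(\sigma)$ with the fixed Lagrangian $\Lambda_0\oplus\Lambda_D$ (in the convention of Definition~\ref{def:crossing-form}, the one already used in Lemma~\ref{thm:plus-curve}) and to show it is positive definite. First I would describe the crossings. By the identity $m_\sigma=G_\sigma\,m_b$ from the proof of Lemma~\ref{thm:continuity-2-par-family}, with $G_\sigma=\diag(\Id_{2k-2n},\psi_0(\sigma))$ and $\psi_0$ the fundamental solution of the Hamiltonian system attached to $l$ normalized by $\psi_0(b)=\Id$, the $\R^{2k-2n}$-block of $m(\sigma)$ is the fixed Lagrangian of $a$-boundary data of the solutions in $V_b=\ker L_b^*$, while the $\R^{2n}$-block is $\{(\widehat u^{[1]}(\sigma),\widehat u(\sigma))\mid \widehat u\in V_b\}\subset(\R^{2n},\Omega)$ under the (symplectic) identification of the $b$-part of the boundary data space with the quasi-boundary values. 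Hence $\sigma_0\in(a,b)$ is a crossing with $\Lambda_0\oplus\Lambda_D$ exactly when there exists $0\neq\widehat u\in V_b$ whose $a$-boundary data lies in $\Lambda_0$ and which satisfies the Dirichlet condition $\widehat u(\sigma_0)=0$.

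Fix such a $\sigma_0$ and pick $u\in m(\sigma_0)\cap(\Lambda_0\oplus\Lambda_D)$, corresponding to a solution $\widehat u\in V_b$ with $\widehat u(\sigma_0)=0$. For $\sigma$ near $\sigma_0$ the natural smooth curve in $m(\sigma)$ through $u$ keeps $\widehat u$ fixed and only moves the regular endpoint, namely $w(\sigma):=\Tr_\sigma\bigl(p_\sigma(\widehat u|_{(a,\sigma)})\bigr)$, whose $\R^{2k-2n}$-block is independent of $\sigma$ (this is where one uses that the functions $y_i\in\ker L^*$ defining $\Tr_\sigma$ are chosen compatibly for all $\sigma$ via the restriction isomorphism $V_b\cong V_\sigma$) and whose $\R^{2n}$-block is $(\widehat u^{[1]}(\sigma),\widehat u(\sigma))$. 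Consequently the $\R^{2k-2n}$-summand contributes nothing to the derivative and, writing $\Omega\bigl((p_1,q_1),(p_2,q_2)\bigr)=\langle p_1,q_2\rangle-\langle q_1,p_2\rangle$,
\[
\Gamma\bigl(m,\Lambda_0\oplus\Lambda_D,\sigma_0\bigr)(u)=\frac{d}{d\sigma}\Big|_{\sigma_0}\Omega\bigl((\widehat u^{[1]}(\sigma_0),0),(\widehat u^{[1]}(\sigma),\widehat u(\sigma))\bigr)=\bigl\langle \widehat u^{[1]}(\sigma_0),\widehat u'(\sigma_0)\bigr\rangle.
\]
Since $\widehat u(\sigma_0)=0$ one has $\widehat u^{[1]}(\sigma_0)=P(\sigma_0)\widehat u'(\sigma_0)$, hence
\[
\Gamma\bigl(m,\Lambda_0\oplus\Lambda_D,\sigma_0\bigr)(u)=\bigl\langle P(\sigma_0)\widehat u'(\sigma_0),\widehat u'(\sigma_0)\bigr\rangle.
\]

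It then remains to conclude positivity. Recalling that $P(\sigma_0)$ is positive definite (Legendre condition), the right-hand side is a nonnegative quadratic form in $\widehat u'(\sigma_0)$, and it is positive definite on the intersection because $u\mapsto\widehat u'(\sigma_0)$ is injective there: if $\widehat u'(\sigma_0)=0$, then together with $\widehat u(\sigma_0)=0$ the Cauchy--Lipschitz theorem forces $\widehat u\equiv0$, hence $u=0$. Therefore every crossing is positive, i.e. $\sigma\mapsto m(\sigma)$ is a positive curve relative to $\Lambda_0\oplus\Lambda_D$. I expect the main point requiring care to be the symplectic bookkeeping around the trace map: one must verify that the $\R^{2k-2n}$-block of $w(\sigma)$ is genuinely constant, so that only the right-endpoint term survives, and that the surviving term is exactly the positive-definite form $\langle P\widehat u',\widehat u'\rangle$ rather than something of indefinite sign; everything else is a short computation.
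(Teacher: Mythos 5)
Your proposal is correct and follows essentially the same route as the paper: both arguments observe that only the regular endpoint $b$ contributes to the derivative (the $a$-block of the boundary data is constant along $\sigma$), identify the crossing form on $m(\sigma_0)\cap(\Lambda_0\oplus\Lambda_D)$ with the Legendre form $\langle P(\sigma_0)\widehat u'(\sigma_0),\widehat u'(\sigma_0)\rangle=\langle P^{-1}(\sigma_0)\,\widehat u^{[1]}(\sigma_0),\widehat u^{[1]}(\sigma_0)\rangle$, and get nondegeneracy from uniqueness for the Cauchy problem. The only cosmetic difference is that the paper packages the computation as $-\mathcal J\,\dot G_\sigma G_\sigma^{-1}$ for the symplectic path $G_\sigma=\diag(\Id_{2k-2n},\psi(\sigma))$ acting on a fixed Lagrangian, whereas you differentiate $\omega(v,z(\sigma))$ directly along the curve of restricted solutions, exactly as the paper itself does in Lemma~\ref{thm:plus-curve}.
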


\begin{proof}
	 By definition, we have 
\[
m_\sigma=\left\{\begin{pmatrix}x\\\psi(\sigma)\,y\end{pmatrix}\left|\begin{pmatrix}x\\y\end{pmatrix}\in \Tr_\sigma(p_\sigma (V_\sigma))\right.\right\}.
\]
 and 
$\mathcal J=\begin{pmatrix}-J_{2n}&0\\0&J_{2n} \end{pmatrix}$. Given a crossing instant  $\sigma \in (a,b)$ and setting $M_\sigma=\begin{pmatrix}\Id&0\\0&\psi(\sigma) \end{pmatrix}$, we get  that the crossing form 
\[
C_\sigma:= -\mathcal J\dfrac{d}{d\sigma}G _\sigma\, G _\sigma^{-1}|_{m_\sigma\cap (\Lambda_0\oplus \Lambda_D)}
\] 
is a positive definite quadratic form.  Actually, by a direct calculation, we get 
\[
 -\mathcal J\left(\dfrac{d\,G _\sigma}{d\sigma}\right) G _\sigma^{-1}=\begin{pmatrix}0_{2n} & 0\\ 0 & -J_{2n}\dfrac{d\, \psi}{d\sigma}\psi(\sigma)^{-1}
 \end{pmatrix}.
\]
Let $u=\begin{pmatrix}u_a\\u_b\end{pmatrix}\in m_\sigma\cap (\Lambda_0\oplus \Lambda_D)$. By the uniqueness theorem for linear ODEs, we get that the following holds:
	\[
	m_\sigma\cap(\R^{2n}\oplus (0))=\Tr_\sigma(p_\sigma(V_\sigma))\cap (\R^{2n}\oplus (0)) =(0).
	\]
	Then $u_b\in \Lambda_D$  and $u\neq 0$ if and only if $u_b\neq 0$. For concluding the proof we only need to prove that 
\[
-J_{2n}\left(\dfrac{d\, \psi}{d\sigma}\right) \psi(\sigma)^{-1}\Big|_{\Lambda_D}
\]
is positive definite. To do so, we just observe that $-J_{2n}\left(\dfrac{d\, \psi}{d\sigma}\right) \psi(\sigma)^{-1}=\begin{pmatrix} P^{-1}(\sigma) &0\\0& -R(\sigma) \end{pmatrix}$. Since $P^{-1}(\sigma)$ is positive definite, $-J_{2n}\dfrac{d\, \psi}{d\sigma}\psi(\sigma)^{-1}$ is a positive definite quadratic form on $\Lambda_D$.
Finally,  $C_\sigma$ is a positive quadratic form on $m_\sigma\cap(\Lambda_0\oplus\Lambda_D)$ concluding the proof.
\end{proof}
\begin{prop} \label{lem:maslov_plus}
Under assumptions (H1), (H3) \& (H4)  and for every $c\in (a,b)$,  we get
	\[
	\iCLM(\Lambda_0 \oplus \Lambda_D,m_\sigma, -\Omega\oplus \Omega, \sigma \in [c,b])=\sum_{s\in [c,b)} \dim\ \Big((\Lambda_0\oplus \Lambda_D)\cap m_\sigma\Big).
	\]
\end{prop}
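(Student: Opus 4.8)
The plan is to compute the $\iCLM$-index by a purely local, crossing-by-crossing count, exploiting the fact — already established in Lemma~\ref{lem:positive_path} — that the path $\sigma\mapsto m_\sigma$ moves transversally in the positive direction through the fixed Lagrangian $\Lambda_0\oplus\Lambda_D$.

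First I would record the two inputs. On the compact interval $[c,b]$ the path $\sigma\mapsto m_\sigma$ is continuous by Lemma~\ref{thm:continuity-2-par-family}, and by Lemma~\ref{lem:positive_path} it is positive with respect to $\Lambda_0\oplus\Lambda_D$: at every crossing instant $\sigma_0$ the crossing form $\Gamma(\sigma_0)$ on $(\Lambda_0\oplus\Lambda_D)\cap m_{\sigma_0}$ is positive definite. In particular each crossing is regular, hence isolated — were the intersection nontrivial on an entire subinterval, the crossing form would degenerate there — so only finitely many crossings occur in $[c,b]$.

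Next I would invoke the crossing-form description of the Cappell--Lee--Miller index (Definition~\ref{def:crossing-form} and the results on symplectic indices recalled in the Appendix; see also \cite{CLM94}): for a path with only regular crossings, $\iCLM(\Lambda_0\oplus\Lambda_D,m_\sigma,-\Omega\oplus\Omega,\sigma\in[c,b])$ equals the sum of $\sgn\Gamma(\sigma_0)$ over the crossings in the left-closed, right-open interval $[c,b)$, with the full signature counted at the left endpoint. Since each $\Gamma(\sigma_0)$ is positive definite, $\sgn\Gamma(\sigma_0)=\dim\Gamma(\sigma_0)=\dim\big((\Lambda_0\oplus\Lambda_D)\cap m_{\sigma_0}\big)$; adjoining the vanishing contributions at the non-crossing instants, the right-hand side becomes exactly $\sum_{\sigma\in[c,b)}\dim\big((\Lambda_0\oplus\Lambda_D)\cap m_\sigma\big)$, which is the asserted identity.

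The only point requiring genuine care — and the closest thing to an obstacle — is the endpoint bookkeeping, i.e. verifying that the CLM normalization really yields the left-closed, right-open sum: positivity of the path at $c$ forces the full intersection dimension to be counted there, while the contribution at $b$ must be shown to vanish. For the latter I would use that $b$ is a regular endpoint, so $P,Q,R$, and hence the fundamental solution and the family $m_\sigma$, extend continuously to a right-neighborhood of $b$ as a positive path; then $b$ is an interior instant of the extended positive path and the right endpoint of a sub-interval carries no contribution in the CLM convention. Beyond this matching of conventions there is nothing substantial to prove, since the hard analytic step — the identification of the crossing form and the verification of its definiteness — is precisely the content of Lemma~\ref{lem:positive_path}.
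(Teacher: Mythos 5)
Your proposal is correct and follows essentially the same route as the paper: regularity of the crossings (from the positive definiteness established in Lemma~\ref{lem:positive_path}) gives finitely many isolated crossings on the compact interval, and each contributes its full intersection dimension, with the CLM endpoint normalization (coindex at the left endpoint, minus the Morse index at the right) yielding exactly the sum over $[c,b)$. Your extension-past-$b$ argument for the right endpoint is more elaborate than needed — the contribution there is $-\iMor(\Gamma(b))$, which vanishes directly because the crossing form is positive semidefinite — but this does not affect correctness.
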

\begin{proof}
The proof follows directly upon observing that the crossing instants are regular, and hence isolated. Since the interval is compact, these crossing instants are finite in number. By Lemma~\ref{lem:positive_path}, each local contribution to the Maslov index is determined by the multiplicity of the crossing, namely, the dimension of the intersection of the path \( \sigma \mapsto m_\sigma \) with the reference Lagrangian \( \Lambda_0 \oplus \Lambda_D \). The desired conclusion is then obtained by summing these finitely many local contributions.
\end{proof}
We now prove that the Morse index of the self-adjoint extensions \( L_{\sigma, \Lambda} \) varies monotonically as a function of the parameter \( \sigma \).
	\begin{lem}\label{thm:Morse-index-regular}
		Let $\Lambda= L_0 \oplus L_D$. Under the assumptions (H1),(H3) \& (H4) and  for every   $a\le c\le b$, the following inequality holds:
        \[
        \iMor (L_{c,\Lambda})\le \iMor (L_{b, \Lambda}).
        \]
        Moreover there exists $c_0\in (a,b)$   such that $\iMor(L_{c_0,\Lambda})=0$.	
	\end{lem}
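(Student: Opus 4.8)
The plan is to argue entirely at the level of closed quadratic forms. By (H3) together with the material of Section~\ref{appendix:friedrichs}, for every $\sigma\in(a,b]$ the expression $l_{(a,\sigma)}$ equipped with the boundary datum $\Lambda=\Lambda_0\oplus\Lambda_D$ (the subspace $\Lambda_0$ at the singular end $a$, Dirichlet at $\sigma$) determines a lower--bounded closed form $\mathfrak q_\sigma$ with form domain $\mathcal D_\sigma\subset L^2((a,\sigma),\R^n)$, whose associated self--adjoint operator is $L_{(a,\sigma),\Lambda}$. By the standard dictionary between lower--bounded self--adjoint operators and closed forms, $\iMor(L_{(a,\sigma),\Lambda})$ is the largest dimension of a subspace of $\mathcal D_\sigma$ on which $\mathfrak q_\sigma$ is negative definite, and the bottom of the spectrum is $\inf\{\mathfrak q_\sigma[u]:\|u\|_{L^2}=1\}$.

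For the monotonicity I would use extension by zero. For $a<c_1\le c_2\le b$ define $E\colon \mathcal D_{c_1}\to L^2((a,c_2),\R^n)$ by setting $Eu=u$ on $(a,c_1)$ and $Eu=0$ on $[c_1,c_2)$. Since the Dirichlet datum at $c_1$ forces $u(c_1)=0$, the function $Eu$ has an $L^2$ weak derivative across $c_1$; it keeps the datum $\Lambda_0$ at $a$ (untouched by the extension) and, vanishing near $c_2$, satisfies the Dirichlet condition there. One checks $\|Eu\|_{L^2}=\|u\|_{L^2}$ and $\mathfrak q_{c_2}[Eu]=\mathfrak q_{c_1}[u]$, so $E$ is an isometry for the form norms; since it carries the form core $\dom(L_{(a,c_1),\Lambda})$ into $\mathcal D_{c_2}$, it maps $\mathcal D_{c_1}$ into $\mathcal D_{c_2}$. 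Hence a maximal negative subspace of $\mathfrak q_{c_1}$ is sent injectively to a negative subspace of $\mathfrak q_{c_2}$, giving $\iMor(L_{(a,c_1),\Lambda})\le\iMor(L_{(a,c_2),\Lambda})$; in particular $\sigma\mapsto\iMor(L_{(a,\sigma),\Lambda})$ is non--decreasing and $\iMor(L_{c,\Lambda})\le\iMor(L_{b,\Lambda})$ for every $c\in(a,b]$. (The same monotonicity is consistent with the positive--curve statement of Lemma~\ref{lem:positive_path}, but the form argument is self--contained.)

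For the second assertion I would establish a local positivity estimate at the singular end: there is $c_0\in(a,b)$ with $\mathfrak q_b[u]\ge 0$ for every $u\in\mathcal D_b$ such that $\supp u\subset(a,c_0)$. Granting this, every $v\in\mathcal D_{c_0}$ satisfies $\mathfrak q_{c_0}[v]=\mathfrak q_b[Ev]\ge 0$ where $Ev$ is the extension by zero to $(a,b)$, so $\mathfrak q_{c_0}$ is positive semidefinite on $\mathcal D_{c_0}$ and therefore $\iMor(L_{c_0,\Lambda})=0$. This is where (H3) enters essentially: boundedness from below of the form on $\mathscr C^\infty_0((a,b),\R^n)$ precludes oscillatory behaviour of $l$ at $a$ --- a scaling argument shows that if conjugate points accumulated at $a$ the Rayleigh quotient would be unbounded below --- and, in the Bessel--type normalization of $l$ near $a$ used systematically in Section~\ref{sec:Bessel}, this is exactly the condition forcing the relevant coefficient to have $\liminf$ at least $-\tfrac14$. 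A Hardy--type inequality on the short interval $(a,c_0)$, applied to the (possibly renormalized) form attached to the datum $\Lambda_0$, then yields $\mathfrak q_b[u]\ge 0$ for $u$ supported there once $c_0$ is sufficiently close to $a$; the hypothesis (H4) is used to guarantee the finiteness ($\iMor(L_{b,\Lambda})<\infty$) that makes the monotone quantity of the previous paragraph stabilize.

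The main obstacle is precisely this last positivity estimate: one must squeeze out of the single scalar hypothesis (H3) a quantitative Hardy/Poincar\'e inequality uniform over all sufficiently short intervals abutting $a$, and in particular handle the non--principal (non--Friedrichs) choices of $\Lambda_0$ at a limit--circle endpoint, where the form is not literally $\int_a^{c_0}\big(\langle Pu',u'\rangle+2\langle Qu,u'\rangle+\langle Ru,u\rangle\big)$ but a renormalization of it, and where --- already in the regular case, as the example of $-d^2/dt^2$ on $(0,\sigma)$ with a Robin condition at the left endpoint illustrates --- the vanishing of the Morse index is genuinely a short--interval phenomenon rather than an unconditional one. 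By contrast, the reduction to quadratic forms and the monotonicity inequality are routine, and the interaction of $E$ with the boundary datum $\Lambda_0$ is only bookkeeping.
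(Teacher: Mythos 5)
Your first part (the monotonicity $\iMor(L_{c,\Lambda})\le\iMor(L_{b,\Lambda})$) is correct and is essentially the paper's argument: the paper also passes to the closed forms via Lemma~\ref{thm_=Morse}, writes $\dom(t_{L_{c,\Lambda}})=\dom(t_{L_c})+W_d$ with a fixed finite--dimensional piece $W_d$ supported near $a$ (so that the datum $\Lambda_0$ is literally unaffected by the change of interval), and embeds $\dom(t_{L_{c,\Lambda}})$ into $\dom(t_{L_{b,\Lambda}})$ by extension by zero, observing that the form restricts. Your ``bookkeeping'' remark about the interaction of the extension with $\Lambda_0$ is exactly what the choice of a common $W_d$ accomplishes.

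The second assertion, however, is not proved in your proposal: you reduce it to a local positivity estimate ($\mathfrak q_b[u]\ge 0$ for $u$ supported in $(a,c_0)$ with $c_0$ close to $a$), explicitly grant it, and then correctly identify it as the main obstacle without closing it. This is a genuine gap, and the route you sketch for closing it (extracting a quantitative Hardy/Poincar\'e inequality near $a$ from (H3), via a Bessel--type normalization) is not available in the generality of the lemma: (H3) is only a qualitative lower bound on the form over $\mathscr C^\infty_0((a,b),\R^n)$, the coefficients $P,Q,R$ need not have any Bessel--type structure at $a$, and for a non--Friedrichs $\Lambda_0$ at a limit--circle endpoint the boundary contribution to the form is not controlled by such an inequality. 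The paper avoids all of this with a soft functional--analytic argument: the form domains $V_\sigma:=\dom(t_{L_{(a,\sigma),\Lambda}})$, viewed inside the form Hilbert space of $t_{L_{b,\Lambda}}$ via extension by zero, form a nested family with $\bigcap_{a<\sigma<b}V_\sigma=\{0\}$; the form is represented there by a bounded self--adjoint Fredholm operator with finite Morse index (Lemma~\ref{thm:lemmaC-2}), and Lemma~\ref{lem:Morse_vanish_abstract} then shows that a finite--dimensional maximal negative subspace cannot survive restriction to all the $V_\sigma$ (the projections onto the relevant complements converge strongly to the identity), so $\iMor(t_{L_{c_0,\Lambda}})=0$ for some $c_0$. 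No pointwise or Hardy--type estimate near the singular endpoint is needed. To repair your proof you should replace the unproved positivity estimate by an argument of this compactness/approximation type, using finiteness of $\iMor(L_{b,\Lambda})$ rather than a quantitative lower bound on short intervals.
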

\begin{proof}
Let $t_{L_{c,\Lambda}}$ be the closure of the quadratic form defined by $\langle L_{c,\Lambda} \cdot, \cdot \rangle_{L^2}$ having domain 
\[
\dom(t_{L_{c,\Lambda}})= \overline{\dom(L_{c,\Lambda})}=\overline{\dom(L_c)+\Tr_c^{-1}(\Lambda)}
\]
where the closure is meant in the $\dom(t_{L_{c,\Lambda}})$.  By Lemma~\ref{thm_=Morse}, we get that 
\[
\iMor(t_{L_{c,\Lambda}})=\iMor(L_{c,\Lambda}).
\]
Moreover 
		\[
		\dom(t_{L_{c,\Lambda}})=\dom(t_{L_c})+\Tr_c^{-1}(\set 0\oplus \Lambda_D) + \Tr_c^{-1}(\Lambda_0\oplus \set 0)=\dom(t_{L_c})+\Tr_c^{-1}(\Lambda_0\oplus \set 0).
		\]
		So, $\dom(t_{ L _{c,\Lambda}})=\dom(t_{L_c})+ W_c$ where  $W_c$ can be any subspace of $\dom(L_c^*)$  such that $\Tr(W_c) =\Lambda_0\oplus \set 0$.
		
		For $d< c\le b$, we  choose  $W_d$ in such a way that  
        $\dom(t_{L_{c,\Lambda}})=D(t_{L_c})+ W_d$
		and $\dom(t_{L_{b,\Lambda}})=D(t_{L_b})+ W_d$ .
				
		The natural embedding $\iota: \dom(t_{L_c}) \hookrightarrow D(t_{L_b})$ induces the embedding between the domains
		$\dom( t_{L_{c,\Lambda}})\hookrightarrow  \dom(t_{L_{b,\Lambda}})$ pointwise defined by 
		\[
		u\mapsto  \widetilde u(x):=
        \begin{cases}u(x)& x\in [a,c]\\[3pt] 0  & x\in [c,b].\end{cases} 
		\]
		By this arguments and by invoking Lemma~\ref{lem:dim_factor_space}, we immediately get  that 
	$\dom(t_{L_{c,\Lambda}})< \dom(t_{L_{b,\Lambda}})$ and $t_{L_{c,\Lambda}}=t_{L_{b,\Lambda}}|_{\dom (t_{L_{c,\Lambda}})}$.
	In particular, the following inequality holds
		\[
		\iMor (L_{c,\Lambda})\le \iMor (L_{b,\Lambda}).
		\]		
		Since $\bigcap_{a<\sigma<b} \dom(t_{L_{(a,\sigma),\Lambda}})=\set 0$, then by invoking Lemma~\ref{lem:Morse_vanish_abstract} and Lemma~\ref{thm:lemmaC-2},   there exists  $c_0\in (a,b)$   such that $\iMor(L_{c_0,\Lambda})=0$, concluding the proof.		
        
	\end{proof}


\subsubsection*{Proof of Theorem~\ref{thm:main-oneside}}
For any $c\in (a,b)$, there exists $M>0$ sufficiently large such that 
\[
\iMor( L _{b,\Lambda})-\iMor( L _{c,\Lambda})=-\iCLM(\Lambda, m_{b,s},-\Omega\oplus \Omega, s \in [0,M])+\iCLM(\Lambda,m_{c,s},-\Omega\oplus \Omega, s \in [0,M]).
\]
By the stratum homotopy invariance property of Maslov index, we have 
\begin{multline}\label{eq:importante}
	\iMor( L _{b, \Lambda})-\iMor( L _{c,\Lambda})\\
=	-\iCLM(\Lambda,m_{b,s},-\Omega\oplus \Omega, s \in [0,M])+\iCLM(\Lambda,m_{c,s},-\Omega\oplus \Omega, s \in [0,M])\\	=-\iCLM(\Lambda,m_{\sigma,M},-\Omega\oplus \Omega, \sigma \in [c,b])+\iCLM(\Lambda,m_{\sigma,0},-\Omega\oplus \Omega, \sigma \in [c,b]).
\end{multline}
	By Equation~\eqref{eq:importante} and by Proposition~\ref{lem:maslov_plus}, we get that  for $M$ large enough,
	\begin{multline} \label{eq:importante2}
		\iMor( L _{b, \Lambda})-\iMor( L _{c,\Lambda})
		=-\iCLM(\Lambda,m_{\sigma,M},-\Omega\oplus \Omega, \sigma \in [c,b])+\iCLM(\Lambda,m_{\sigma,0},\sigma \in [c,b]) \\
		=-\iCLM(\Lambda,m_{\sigma,M},-\Omega\oplus \Omega, \sigma \in [c,b])+\sum_{\sigma\in (c,b)} \dim \Big ((\Lambda_0\oplus \Lambda_D)\cap m_\sigma\Big).
	\end{multline}
	Since $ L _{b,\Lambda}$ is  bounded from below, then for $M>0$  large enough, we get that $\iMor( L _{b,\Lambda}+M \Id )=0$.
	By Proposition~\ref{lem:maslov_plus}, then we get that
	\[
	\iCLM(\Lambda_0 \oplus \Lambda_D,m_{\sigma,M}, \sigma \in [c,b])=\sum_{\sigma\in (c,b)} \dim \big ((\Lambda_0\oplus \Lambda_D)\cap m_{\sigma,M}\big)=\sum_{\sigma\in (c,b)} \dim \ker( L_{(a,\sigma),\Lambda}+M \Id).
	\]
	We also observe that  for $M$ large enough, we can assume that $\iMor ( L _{b,\Lambda}+(M-1) \Id)=0$. 
	By  Lemma~\ref{thm:Morse-index-regular}, we get that  
	\begin{equation}\label{eq:key-morse-index}
	\iMor ( L_{(a,\sigma),\Lambda}+(M-1) \Id)\le \iMor ( L _{b,\Lambda}+(M-1) \Id)=0 \qquad \sigma \in[c,b].
	\end{equation}
	This implies that   $\dim \ker( L_{(a,\sigma),\Lambda}+M \Id)=\{0\}$, (if not, the corresponding non-trivial vector in the kernel contributes to the number $\iMor ( L_{(a,\sigma),\Lambda}+(M-1) \Id)$). So, we get that for $M$ sufficiently large,
	\begin{equation}\label{eq:importante3}
		\iCLM(\Lambda_0 \oplus \Lambda_D,m_{\sigma,M},-\Omega\oplus \Omega,  \sigma \in [c,b])=\sum_{\sigma\in (c,b)} \dim \ker( L_{(a,\sigma),\Lambda}+M \Id)=0.
	\end{equation}
	By Equation~\eqref{eq:importante2} and Equation~\eqref{eq:importante3} we get that
	\begin{equation}\label{eq:importante4}
		\iMor( L _{b, \Lambda})-\iMor( L _{c,\Lambda})
		=\sum_{\sigma\in (c,b)} \dim \big ((\Lambda_0\oplus \Lambda_D)\cap m_\sigma\big).
	\end{equation}
	Passing to the limit for  $c$ converging  to $a$ at Equation~\eqref{eq:importante4}, we get 
	\begin{equation}\label{eq:fin-1}
		\iMor( L _{b, \Lambda})\ge
		\sum_{\sigma\in (a,b)} \dim \big ((\Lambda_0\oplus \Lambda_D)\cap m_\sigma\big).
	\end{equation}
	By Lemma~\ref{thm:Morse-index-regular}, there exists  $d>a$ such that  $\iMor( L _{d,\Lambda})=0$ and so, we get
	\begin{equation}\label{eq:fin-2}
		\iMor( L _{b, \Lambda})=
		\sum_{\sigma\in (d,b)} \dim \big ((\Lambda_0\oplus \Lambda_D)\cap m_\sigma\big)\le \sum_{\sigma\in (a,b)} \dim \big ((\Lambda_0\oplus \Lambda_D)\cap m_\sigma\big).
	\end{equation}
	Summing up the Equation~\eqref{eq:fin-1} and Equation~\eqref{eq:fin-2}, Theorem~\ref{thm:main-oneside} is proved.  \qed


\section{Morse Index Theorem for General Boundary Conditions}\label{sec:morse-general}

The purpose of this section is twofold. First, we extend Theorem~\ref{thm:main-oneside} to the setting of general Lagrangian boundary conditions. The main result, stated in Theorem~\ref{thm:triple-index-Morse-Friedrich-extensions}, expresses the difference between the Morse indices of two self-adjoint Fredholm operators—obtained as the Friedrichs extensions of the corresponding quadratic forms—in terms of the triple index. 

Second, we establish that a perturbation of a two-sided singular Sturm--Liouville operator \( L \), which is bounded from below, may fail to remain bounded from below when the perturbation is given by a symmetric but not relatively bounded operator. This analysis sheds light on a phenomenon already noted by Rellich, wherein eigenvalues may “vanish” by escaping to \( -\infty \); see Kato~\cite[Ch.~V, §4, Thm.~4.1]{Kat80} for a precise formulation.

The difference between two operators subject to different boundary conditions has been investigated in \cite{HWY20} and \cite{HPWX20}, both for regular Sturm--Liouville operators and for Sturm--Liouville operators defined on the entire real line. In \cite[Theorem~1.2]{BCLS24}, this result has been further generalized to arbitrary symmetric operators. 

Here, by employing an approach analogous to that developed in \cite{HWY20}, we establish a difference formula for general Sturm--Liouville operators subject to distinct boundary conditions.

For the reader’s convenience, the proofs of the foundational results concerning Friedrichs extensions are deferred to Appendix~\ref{appendix:friedrichs}.

\begin{thm}\label{thm:triple-index-Morse-Friedrich-extensions}
Let \( L \) be a one-sided singular the Sturm--Liouville operator defined at Equation~\eqref{eq:sturm-liouville-operator}, and suppose that conditions \textnormal{(H3)--(H4)} hold. Then we have:
\[
\iMor(L_{\Lambda}) - \iMor(L_{F}) = \itriple\bigl(p(\ker L^{*}),\, p(\dom(L_{\Lambda})),\, p(\dom(L_{F}))\bigr),
\]
where $\Lambda \in \Lag(W, \omega|_W)$, \(\itriple\) denotes the triple index (Cfr. Appendix~\ref{sec:Maslov} and references therein).
\end{thm}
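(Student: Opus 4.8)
The plan is to compare $L_\Lambda$ and $L_F$ by running the two monotone pencils $s\mapsto L_\Lambda+s\Id$ and $s\mapsto L_F+s\Id$, $s\in[0,M]$, over a common parameter interval, to rewrite each Morse index as a Maslov index of the \emph{same} Lagrangian path $s\mapsto p(V_s)$, $V_s:=\ker(L^*+s\Id)$, evaluated against the two fixed Lagrangians $\Lambda$ and $\Lambda_F:=p(\dom(L_F))$, and then to identify the discrepancy between these two Maslov indices with the triple index via the H\"ormander-type identity recorded in Appendix~\ref{sec:Maslov}.

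First I would collect the preliminaries. Since $L$ is one-sided singular with a regular endpoint, the unique continuation property (H0) holds (Lemma~\ref{lm:unique_extension}); hence $p$ is injective on each $V_s$ and, by Lemma~\ref{lem:conti_ker}, $s\mapsto p(V_s)$ is a gap-continuous path in $\Lag(W,\omega|_W)$. From (H4) and $\dim W<\infty$, every self-adjoint extension of $L$ is Fredholm (Lemma~\ref{thm:beta-finit-dimensional} and the remark following it); from (H3) together with the classical fact that a lower-bounded symmetric operator with finite deficiency indices has only lower-bounded self-adjoint extensions (see also Appendix~\ref{appendix:friedrichs}), both $L_\Lambda$ and $L_F$ are bounded from below, so in particular $\iMor(L_\Lambda),\iMor(L_F)<\infty$ and the pencils above remain in $\CFsa(H)$ for all $s\in[0,M]$. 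Finally $p(\dom(L_\Lambda))=\Lambda$ by construction and $\Lambda_F=p(\dom(L_F))\in\Lag(W,\omega|_W)$ by Lemma~\ref{lem:abstract_fundamental_solution}.

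Next I would fix $M>0$ so large that $L_\Lambda+M\Id$ and $L_F+M\Id$ are positive definite, hence have vanishing Morse index. Adding multiples of $\Id$ changes neither the domains nor the symplectic data $(W,\omega|_W)$ (Equation~\eqref{eq:dominio-fisso}), and a monotone pencil has spectral flow over $[0,M]$ equal to the Morse index at $s=0$ minus the Morse index at $s=M$; therefore Proposition~\ref{thm:main1-abstract} (after the obvious reparametrization $[0,M]\to[0,1]$) gives
\[
\iMor(L_\Lambda)=-\iCLM\!\bigl(\Lambda,\,p(V_s),\,\rho,\,s\in[0,M]\bigr),\qquad \iMor(L_F)=-\iCLM\!\bigl(\Lambda_F,\,p(V_s),\,\rho,\,s\in[0,M]\bigr).
\]
Subtracting and invoking the H\"ormander-type identity of Appendix~\ref{sec:Maslov}, which expresses the difference of the Maslov indices of one path against two fixed Lagrangians as the difference of the associated triple indices at the endpoints of that path, I would obtain
\[
\iMor(L_\Lambda)-\iMor(L_F)=\itriple\bigl(p(V_0),\Lambda,\Lambda_F\bigr)-\itriple\bigl(p(V_M),\Lambda,\Lambda_F\bigr).
\]
It then remains to kill the endpoint term at $s=M$: for $M$ large $p(V_M)$ is transverse to both $\Lambda$ and $\Lambda_F$, because $\dim(\Lambda\cap p(V_M))=\dim\ker(L_\Lambda+M\Id)=0$ and likewise for $L_F$ (using injectivity of $p$ on $V_M$ and positivity for $M$ large), and the triple index of a transverse triple vanishes, so $\itriple(p(V_M),\Lambda,\Lambda_F)=0$. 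Since $p(V_0)=p(\ker L^*)$, this is exactly the asserted identity.

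The main obstacle will be the careful bookkeeping of sign and endpoint conventions along the chain spectral flow $\rightsquigarrow$ Morse index $\rightsquigarrow$ Cappell--Lee--Miller index $\rightsquigarrow$ triple index: at $s=0$ the intersection $\Lambda\cap p(\ker L^*)=\ker L_\Lambda$ (and similarly for $L_F$) need not be trivial, and one must check that, in the normalization of Appendix~\ref{sec:Maslov}, the H\"ormander identity yields precisely $\itriple(p(\ker L^*),\Lambda,\Lambda_F)$ as its $s=0$ term, with the orientation compatible with the $-\iCLM$ of Proposition~\ref{thm:main1-abstract} and no extra correction. Together with verifying the semiboundedness (and hence finite Morse index) of the general extension $L_\Lambda$, which I would take from Appendix~\ref{appendix:friedrichs}, these are the genuinely delicate points; the remainder is routine.
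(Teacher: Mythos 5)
Your route is genuinely different from the paper's. The paper makes no use of the spectral flow here: it works directly with quadratic forms, applying Lemma~\ref{thm:general_morse_diff} to $Q=t_{L_\Lambda}$ with $G=\dom(t_L)$ and $V=\dom(t_{L_\Lambda})$, and then identifies the resulting correction term with the triple index via Lemma~\ref{lem:index_orth_compl}. In fact, in the paper's architecture the Maslov/H\"ormander statement you start from is derived \emph{from} this theorem (see Theorem~\ref{lem:eigenvalue_minus_infty}). Your chain $\iMor(L_\Lambda)=-\iCLM(\Lambda,p(V_s),\rho,\,s\in[0,M])$, followed by Definition~\ref{def:hormander} and Proposition~\ref{thm:mainli}, is carried out with the correct signs and correctly reduces the claim to the vanishing of $\itriple\bigl(p(V_M),\Lambda,\Lambda_F\bigr)$.

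The gap is exactly at that last step. The assertion that ``the triple index of a transverse triple vanishes'' is false: by Definition~\ref{def:kashi}, for mutually transverse $\alpha,\beta,\gamma$ one has $\itriple(\alpha,\beta,\gamma)=\coindex\bigl[Q(\alpha,\beta;\gamma)\bigr]$, which is generically nonzero --- in $(\R^2,\Omega)$ take $\alpha$ the horizontal axis, $\beta$ the vertical axis and $\gamma$ the antidiagonal; then $Q(\alpha,\beta;\gamma)$ is positive definite and $\itriple(\alpha,\beta,\gamma)=1$. Transversality of $p(V_M)$ with $\Lambda$ and $\Lambda_F$ only kills the terms $\dim(\alpha\cap\gamma)-\dim(\alpha\cap\beta\cap\gamma)$ in Equation~\eqref{eq:triple}, not the coindex. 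What actually forces the endpoint term to vanish is the \emph{positivity} of $L_\Lambda+M\Id$ and $L_F+M\Id$: applying Lemma~\ref{lem:index_orth_compl} to $A=L+M\Id$ gives $\coindex\bigl[Q(p(V_M),\Lambda;\Lambda_F)\bigr]=\iMor\bigl(t_{A_\Lambda}\big|_{(\dom(t_A))^{t_{A_\Lambda}}}\bigr)$, which is zero because $t_{A_\Lambda}\ge 0$. So the step is repairable, but the repair requires precisely the finite-dimensional computation that constitutes the core of the paper's own proof; your argument does not bypass it. Two further caveats: your approach needs $\iMor(L_\Lambda),\iMor(L_F)<\infty$ and $\sigma_{ess}(L_\Lambda)\cap(-\infty,0]=\emptyset$ so that the pencils stay in $\CFsa(H)$ on all of $[0,M]$ (semiboundedness plus finite deficiency indices yields lower bounds for all self-adjoint extensions, but not finiteness of the Morse index by itself), whereas the paper's form-theoretic proof never invokes the pencil.
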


\begin{proof}

Without loss of generality. We can assume that $\dom(L^*)=\dom(L)\oplus W$ such that
$\ker L^*< U$, $\Lambda\in \Lag(W,\rho)$, $F:=\dom (L_F)\cap W\in \Lag (W,\rho)$ and $L_\Lambda=L^*|_{\dom(L)\oplus\Lambda}$ . We start to observe that by Lemma~\ref{thm_=Morse}, we get that  $\iMor(L_\Lambda)=\iMor(t_{L_\Lambda})$ and  $\iMor(L_F)=\iMor(t_L)$. 	By Lemma~\ref{thm:nuovo}, we get that $\ker L^*$ is a Lagrangian subspace of $W$ and by invoking Lemma~\ref{thm:general_morse_diff} for $G=\dom(t_L)=\dom(t_{L_F})$, $V=\dom(t_{L_\Lambda})$ and $Q=t_{L_\Lambda}$,  to compute the difference of the Morse indices, it's enough to compute 
	\[
	\dim (\dom(t_{ L })\cap (\dom(t_{ L }))^{t_{ L _\Lambda}}+\ker t_{ L _\Lambda} )/\ker t_{ L _\Lambda}.
	\]
	 By Lemma~\ref{lem:ker_form} and Corollary~\ref{cor:Friedrich_of_SA}, we get that $\ker t_{ L _\Lambda}=\ker  L _\Lambda$.
	Then  we have
	\begin{align}
		\dom(t_{ L })\cap (\dom(t_{ L }))^{t_{ L _\Lambda}}=(\dom(t_{L}))^{t_L}=\ker  L_F 
	\end{align}

 We get 
	\begin{align*}
	\dim \left[(\dom(t_{ L })\cap (\dom(t_{ L }))^{t_{ L _\Lambda}}+\ker t_{ L _\Lambda} )/\ker t_{ L _\Lambda}\right] =\dim \left[(\ker L_F+\ker L_\Lambda)/\ker L_\Lambda\right]\\
    =\dim \left[(\ker L_F)/(\ker L_F\cap \ker L_\Lambda)\right]=\dim( \ker L^*\cap F)-\dim (\ker L^*\cap F\cap \Lambda) .
	\end{align*}

	Then by Lemma~\ref{lem:index_orth_compl} and Lemma~\ref{thm:general_morse_diff}, we finally get
	\[
	\iMor( L _\Lambda)-\iMor( L_F)= \itriple(\ker  L ^*, \Lambda,F).
	\] 
	This concludes the proof. 
	\end{proof}

\begin{cor}
Consider a decomposition $\dom(L^*)=\dom(L)\oplus U$ with $\ker L^*< U$. Let $\Lambda_0,\Lambda_1\in \Lag(U,\rho)$.
Let $F=U\cap \dom(L_F)$ where $L_F$ is the Friedrich extension of $L$.
Then we have
	\[
	\iMor( L _{\Lambda_1})-\iMor( L_{\Lambda_0})= \itriple(\ker L^* , \Lambda_1,\Lambda_0)-\itriple(\Lambda_1,\Lambda_0,F).
	\]
\end{cor}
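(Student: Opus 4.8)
The plan is to apply Theorem~\ref{thm:triple-index-Morse-Friedrich-extensions} to each of the two boundary conditions and then collapse the outcome by means of the cocycle identity for the triple index. The decomposition $\dom(L^*)=\dom(L)\oplus U$ with $\ker L^*<U$ fixed in the statement is precisely the situation treated in the proof of Theorem~\ref{thm:triple-index-Morse-Friedrich-extensions} — that proof uses an arbitrary such splitting, not only the one coming from the trace map — and $F=U\cap\dom(L_F)$ is a Lagrangian of $(U,\rho)$ by Lemma~\ref{lem:abstract_fundamental_solution}, since $L_F$ is self-adjoint. Under the standing hypotheses carried over from that theorem, the operators $L_{\Lambda_0}$, $L_{\Lambda_1}$, $L_F$ are self-adjoint, Fredholm (finite-dimensional extensions of the Fredholm operator $L$; cf.\ Lemma~\ref{thm:beta-finit-dimensional}) and the Morse indices appearing below are finite, so all the differences are well defined.

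First I would apply Theorem~\ref{thm:triple-index-Morse-Friedrich-extensions} with boundary data $\Lambda_1$ and then with $\Lambda_0$; using $p(\dom(L_{\Lambda_i}))=\Lambda_i$, $p(\dom(L_F))=F$ and $p(\ker L^*)=\ker L^*$ (valid because $\ker L^*<U$), this gives
\[
\iMor(L_{\Lambda_1})-\iMor(L_F)=\itriple(\ker L^*,\Lambda_1,F),\qquad
\iMor(L_{\Lambda_0})-\iMor(L_F)=\itriple(\ker L^*,\Lambda_0,F).
\]
Subtracting the second identity from the first cancels $\iMor(L_F)$ and produces
\[
\iMor(L_{\Lambda_1})-\iMor(L_{\Lambda_0})=\itriple(\ker L^*,\Lambda_1,F)-\itriple(\ker L^*,\Lambda_0,F).
\]

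It then remains to rewrite the right-hand side via the cocycle (coboundary) relation for the triple index recorded in Appendix~\ref{sec:Maslov}: for Lagrangians $\lambda_1,\lambda_2,\lambda_3,\lambda_4$ of $(U,\rho)$,
\[
\itriple(\lambda_1,\lambda_2,\lambda_3)-\itriple(\lambda_1,\lambda_2,\lambda_4)+\itriple(\lambda_1,\lambda_3,\lambda_4)-\itriple(\lambda_2,\lambda_3,\lambda_4)=0 .
\]
Specialising to $(\lambda_1,\lambda_2,\lambda_3,\lambda_4)=(\ker L^*,\Lambda_1,\Lambda_0,F)$ and solving for $\itriple(\ker L^*,\Lambda_1,F)-\itriple(\ker L^*,\Lambda_0,F)$ turns the previous display into $\itriple(\ker L^*,\Lambda_1,\Lambda_0)-\itriple(\Lambda_1,\Lambda_0,F)$, which is the claimed formula.

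The argument is essentially bookkeeping once Theorem~\ref{thm:triple-index-Morse-Friedrich-extensions} is in hand; I expect the only genuinely delicate point to be making the signs and the arrangement of the four terms in the cocycle relation agree with the precise normalization of $\itriple$ adopted in the Appendix — the cocycle property itself is a general feature of triple/H\"ormander-type indices, but pinning down the exact form in this convention, and checking that it applies verbatim to the (possibly degenerate) triples occurring here, is where care is needed.
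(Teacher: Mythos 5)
Your argument is correct, and its first half coincides with the paper's: both apply Theorem~\ref{thm:triple-index-Morse-Friedrich-extensions} with $\Lambda_1$ and with $\Lambda_0$, subtract, and reduce the claim to the identity
\[
\itriple(\ker L^*,\Lambda_1,F)-\itriple(\ker L^*,\Lambda_0,F)
=\itriple(\ker L^*,\Lambda_1,\Lambda_0)-\itriple(\Lambda_1,\Lambda_0,F).
\]
Where you diverge is in how this identity is established. The paper reaches it by a longer chain: it first converts each $\itriple(\ker L^*,\Lambda_i,F)$ via the circular-permutation formula of Lemma~\ref{lem:triple_diff_circle_permu} (picking up the correction terms $\dim(F\cap\Lambda_i)$), recognizes the resulting difference as a H\"ormander index $s(F,\ker L^*;\Lambda_0,\Lambda_1)$, applies the antisymmetry of $s$ in its last two slots, and then uses Proposition~\ref{thm:mainli} and Lemma~\ref{lem:triple_diff_circle_permu} once more to land on the stated formula. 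You instead invoke a four-term cocycle relation and specialize it. One caveat: that cocycle relation is not literally ``recorded in Appendix~\ref{sec:Maslov}''; what is recorded is Proposition~\ref{thm:mainli}, of which your relation is an immediate rearrangement (indeed, applying Proposition~\ref{thm:mainli} with $(\lambda_1,\lambda_2,\mu_1,\mu_2)=(\ker L^*,\Lambda_1,\Lambda_0,F)$ gives $\itriple(\ker L^*,\Lambda_1,F)-\itriple(\ker L^*,\Lambda_1,\Lambda_0)=\itriple(\ker L^*,\Lambda_0,F)-\itriple(\Lambda_1,\Lambda_0,F)$, which is exactly the display above after rearranging). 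So you should cite Proposition~\ref{thm:mainli} rather than an unstated cocycle identity; with that citation your derivation is a genuinely shorter route to the same conclusion, and your worry about degenerate triples is moot, since Proposition~\ref{thm:mainli} is stated without any transversality hypotheses. Your preliminary identifications $p(\dom(L_{\Lambda_i}))=\Lambda_i$, $p(\dom(L_F))=F$, $p(\ker L^*)=\ker L^*$ under the normalization $\ker L^*<U$ are also the ones the paper uses.
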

\begin{proof}
By Theorem~\ref{thm:triple-index-Morse-Friedrich-extensions}, we get that 
\[
\iMor(L_{\Lambda_i})-\iMor(L_{F})= \itriple(W, \Lambda_i,F) \qquad\textrm{ for } \  i=0,1.
\] 
By this identity and by using Lemma \ref{lem:triple_diff_circle_permu} and Proposition \ref{thm:mainli}, we get that
\begin{align}
\iMor(L_{\Lambda_1})-\iMor(L_{\Lambda_0})&=\itriple(W,\Lambda_1,F)-\itriple(\ker L^*,\Lambda_0,F)\\
&=[
\itriple(F,\ker L^*,\Lambda_1)-\dim(F\cap\Lambda_1)]-[\itriple(F,\ker L^*,\Lambda_0)-\dim(F\cap \Lambda_0)]\\
&=s(F,\ker L^*,\Lambda_0,\Lambda_1)-\dim(F\cap\Lambda_1)+\dim(F\cap\Lambda_0)
\end{align}
By invoking once again Proposition~\ref{thm:mainli}, we get that 
\begin{align*}
s&(F,\ker L^*,\Lambda_0,\Lambda_1)-\dim(F\cap\Lambda_1)+\dim(F\cap\Lambda_0)\\
&=-s(F,\ker L^*,\Lambda_1,\Lambda_0)-\dim(F\cap\Lambda_1)+\dim(F\cap\Lambda_0)\\
&=-\itriple(F,\Lambda_1,\Lambda_0)+\itriple(\ker L^*,\Lambda_1,\Lambda_0)-\dim(F\cap\Lambda_1)+\dim(F\cap\Lambda_0)\\
&=\itriple(\ker L^*,\Lambda_1,\Lambda_0)-\itriple(\Lambda_1,\Lambda_0,F).
\end{align*}
The last equation comes from Lemma \ref{lem:triple_diff_circle_permu}.
\end{proof}
We close this section with an additional discussion about infinite Morse index.

\begin{prop}\label{thm:morse-infinito}
Let $\Lambda=\Lambda_0\oplus \Lambda_D$ and we assume conditions (H3) \& (H4). 
Then $\iMor(L_{\Lambda})=+\infty$ if and only if 
\[
\sum_{d\in(a,b)}\dim \ker L_{(a,d),\Lambda_0\oplus \Lambda_D}=+\infty.
\]
\end{prop}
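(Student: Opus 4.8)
The plan is to deduce the proposition from the finite‑index Morse Index Theorem (Theorem~\ref{thm:main-oneside}) by a monotone‑exhaustion argument, proving each implication by contraposition; throughout I use $\iMor(L_{\Lambda})=\iMor(t_{L_{\Lambda}})$ (Lemma~\ref{thm_=Morse}), the closed quadratic form $t_{L_{\Lambda}}$ being bounded below by (H3). For the implication ``$\Leftarrow$'', assume $\iMor(L_{\Lambda})<\infty$. Since $L_{\Lambda}$ and the Friedrichs extension $L_{F}$ differ by a finite‑dimensional change of domain, Theorem~\ref{thm:triple-index-Morse-Friedrich-extensions} shows $\iMor(L_{\Lambda})-\iMor(L_{F})$ is a finite integer, so $\iMor(L_{b})<\infty$; then Theorem~\ref{thm:main-oneside} applies and yields $\iMor(L_{\Lambda})=\sum_{\sigma\in(a,b)}\dim\ker L_{(a,\sigma),\Lambda}$, whence the right‑hand side is finite. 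This is exactly the contrapositive of the stated implication.

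For the implication ``$\Rightarrow$'', assume $m:=\sum_{\sigma\in(a,b)}\dim\ker L_{(a,\sigma),\Lambda}<\infty$; the goal is $\iMor(L_{\Lambda})\le m$. I would first establish $\iMor(L_{(a,c),\Lambda})\le m$ for every $c\in(a,b)$. On $(a,c)$ the expression $l_{(a,c)}$ still satisfies (H1) and (H3) — the endpoint $c$ is automatically regular, and boundedness from below is inherited by restriction — and $L_{(a,c)}$ stays Fredholm, Fredholmness being controlled by the (unchanged) singular endpoint $t=a$. Moreover $\iMor(L_{(a,c),\Lambda})<\infty$: fixing $c_{0}\in(a,c)$ with $\iMor(L_{(a,c_{0}),\Lambda})=0$ (Lemma~\ref{thm:Morse-index-regular}), the condition $u(c_{0})=0$ cuts $\dom(t_{L_{(a,c),\Lambda}})$ down, by a subspace of codimension at most $n$, to the orthogonal sum of $\dom(t_{L_{(a,c_{0}),\Lambda}})$ and the form domain of a \emph{regular} Dirichlet problem on $[c_{0},c]$, both of whose forms have finitely many negative squares. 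Knowing $\iMor(L_{(a,c),\Lambda})<\infty$, Theorem~\ref{thm:main-oneside} applied on the interval $(a,c)$ gives $\iMor(L_{(a,c),\Lambda})=\sum_{\sigma\in(a,c)}\dim\ker L_{(a,\sigma),\Lambda}\le m$.

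It then remains to pass to the limit $c\to b^{-}$. Extension by zero embeds $\dom(t_{L_{(a,c),\Lambda}})$ isometrically, in the form norm, into $\dom(t_{L_{\Lambda}})$ and compatibly with the forms, because a function supported in $(a,c)$ satisfies both the Dirichlet condition $\Lambda_{D}$ at $b$ and the condition $\Lambda_{0}$ at $a$; conversely, every $v\in\dom(t_{L_{\Lambda}})$ has $v(b)=0$, so the truncations $v_{c}=v\,\theta_{c}$, with $\theta_{c}\equiv1$ on $[a,c]$ and $\theta_{c}\equiv0$ near $b$, lie in these images and converge to $v$ in the form norm — the only delicate term, $\int|v\,\theta_{c}'|^{2}$, being controlled by $|v(t)|^{2}\le(b-t)\,\|v'\|_{L^{2}(t,b)}^{2}$ together with $\theta_{c}'=O\big((b-c)^{-1}\big)$ and the boundedness of the coefficients up to $b$ from (H1). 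Hence $\bigcup_{c<b}\dom(t_{L_{(a,c),\Lambda}})$ is form‑dense in $\dom(t_{L_{\Lambda}})$, and since $t_{L_{\Lambda}}$ is continuous in the form norm, any subspace on which it is negative definite can be approximated within this union, so that $\iMor(L_{\Lambda})=\sup_{c<b}\iMor(L_{(a,c),\Lambda})\le m<\infty$ (cf.\ Lemma~\ref{lem:Morse_vanish_abstract}), contradicting $\iMor(L_{\Lambda})=\infty$. I expect this density/cutoff step near the regular endpoint $b$ to be the main obstacle — it is precisely where the Dirichlet condition at $b$ and the regularity (H1) enter — whereas the finiteness of $\iMor(L_{(a,c),\Lambda})$ for $c<b$ and the two invocations of Theorem~\ref{thm:main-oneside} are routine.
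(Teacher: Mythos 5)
Your argument is essentially correct but follows a genuinely different route from the paper. You work entirely from the singular side: the direction ``finite Morse index $\Rightarrow$ finite sum'' via Theorem~\ref{thm:main-oneside} (after passing from $L_{\Lambda}$ to $L_{F}$ with Proposition~\ref{prop:estimate_morse_self_ajoint}), and the converse by exhausting $(a,b)$ with intervals $(a,c)$, applying Theorem~\ref{thm:main-oneside} on each, and then a cutoff/density argument at the regular endpoint $b$ to show $\iMor(L_{\Lambda})=\sup_{c<b}\iMor(L_{(a,c),\Lambda})$. The paper instead transports the boundary condition $\Lambda_0$ at the singular endpoint through $\ker L_b^*$ to a fixed Lagrangian $\Gamma$ at $b$, proves the duality $\dim\ker L_{(a,d),\Lambda_0\oplus\Lambda_D}=\dim\ker L_{(d,b),\Lambda_D\oplus\Gamma}$, and then invokes Theorem~\ref{thm:limit_morse_index} on the intervals $(c,b)$ (which are regular at $b$) together with the triple-index comparison of Theorem~\ref{thm:triple-index-Morse-Friedrich-extensions}. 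The payoff of the paper's route is that Theorem~\ref{thm:limit_morse_index} requires neither Fredholmness nor finiteness of any Morse index, so every step is an unconditional ``infinite iff infinite'' equivalence; your route is more hands-on at the level of quadratic forms but must first establish $\iMor(L_{(a,c),\Lambda})<\infty$ for each $c<b$, which is precisely the delicate point the paper's duality avoids.

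Two links in your chain need shoring up. First, the Fredholmness of $L_{(a,c)}$ is asserted rather than proved; it does follow, but only via the splitting method (Lemma~\ref{thm:ess_split}) together with Lemma~\ref{lm:ess_spec_same}, since the regular piece on $(c,b)$ contributes no essential spectrum. Second, and more seriously, your finiteness argument for $\iMor(L_{(a,c),\Lambda})$ rests on the existence of $c_0$ with $\iMor(L_{(a,c_0),\Lambda})=0$, cited from Lemma~\ref{thm:Morse-index-regular}. That lemma is stated without a finiteness hypothesis, so the citation is formally admissible, but its proof in the paper goes through Lemma~\ref{lem:Morse_vanish_abstract}, which explicitly assumes the relevant Morse index is finite --- exactly what you are trying to establish at that point. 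Unless you supply an independent proof that such a $c_0$ exists under (H3)--(H4) alone (or otherwise prove $\iMor(L_{(a,c),\Lambda})<\infty$ from the hypothesis $\sum_{\sigma}\dim\ker L_{(a,\sigma),\Lambda}<\infty$), this step is circular as it stands. The cutoff argument at $b$, by contrast, is sound: elements of $\dom(t_{L_{\Lambda}})$ vanish at the regular Dirichlet endpoint, and your Hardy-type estimate controls the truncation error.
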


\begin{proof}
We start by observing that the map  $\Tr_a:\ker L_b^*\to \R^{2k-2n} $ is a surjection.
Then we have
\begin{multline}
\dim\big[ \Tr^{-1}(\Lambda_0\oplus \R^{2n})\cap \ker L_b^*\big]=\dim\big[\Tr_a^{-1}(\Lambda_0)\cap \ker L_b^*\big] =\dim \ker L_b^*-\codim(\Lambda_0)\\
=\dim \ker L_b^*-(2k-2n-\dim \Lambda_0)=k-(2k-2n-(k-n))=n.
\end{multline}
By the existence and uniqueness theorem for solution of linear odes, it follows that $\Tr_b: \ker L_b^*\to \R^{2n}$ is an injection and in particular $\Gamma:=\Tr_b(\Tr_a^{-1}(\Lambda)\cap\ker L_b^*)$ is a $n$-dimensional  subspace of $(\R^{2n},\omega_b)$.

We are now ready to prove that   $U$ is a Lagrangian subspace of $(\R^{2n},\omega_b)$. We only need to show that it is isotropic.
Let $u_1,u_2\in \Gamma$ with $u_1=f_1(b),u_2=f_2(b)$ where $f_1,f_2\in \Tr_a^{-1}(\Lambda)\cap\ker L_b^*$.
We have
\[
0=\omega(f_1,f_2)=[f_1,f_2](b)-[f_1,f_2](a+).
\]
Since $\Tr_a(f_1),\Tr_a(f_2)\in \Lambda$ which is a Lagrangian subspace, we have   $[f_1,f_2](a+)=0$.
Then we get $[f_1,f_2](b)=0$ which imply that $\omega_b(u_1,u_2)=0$.
Then $\Gamma$ is isotropic and being $n$-dimensional then  $\Gamma\in \Lag(\R^{2n},\omega_b)$.
With these notation, we have
\[
\dim \ker L_{(a,c),\Lambda_0\oplus \Lambda_D}=\dim \ker L_{(c,b),\Lambda_D\oplus \Gamma}
\]
Then we can conclude that 
\[
\sum_{d\in(c,b)}\dim \ker L_{(a,d),\Lambda_0\oplus \Lambda_D}=\sum_{d\in(c,b)}\dim \ker L_{(d,b),\Lambda_D\oplus \Gamma}.
\]
By Theorem \ref{thm:limit_morse_index}, then we have
\[
\iMor(L_{(c,b),\Lambda_D\oplus \Gamma})=\sum_{d\in(c,b)}\dim \ker L_{(d,b),\Lambda_D\oplus \Gamma}=\sum_{d\in(c,b)}\dim \ker L_{(a,d),\Lambda_0\oplus \Lambda_D}.
\]
If we assume  that$L_{\Lambda}$ has infinite Morse index, the same holds for  $L_{F}$ being  a finite dimensional perturbation of $L_{\Lambda}$. By Theorem~\ref{thm:limit_morse_index}, it is equivalent to 
\[
\lim_{c\to a^+}\iMor(L_{(c,b),\Lambda_D\oplus \Lambda_D})=+\infty
\]
By invoking Theorem~\ref{thm:triple-index-Morse-Friedrich-extensions}, we get that 
\[
\lim_{c\to a}\iMor(L_{(c,b),\Lambda_D\oplus \Gamma})=+\infty.
\]

That is 
\[
\lim_{c\to a^+}\sum_{d\in(c,b)}\dim \ker L_{(a,d),\Lambda_0\oplus \Lambda_D}=\sum_{d\in(a,b)}\dim \ker L_{(a,d),\Lambda_0\oplus \Lambda_D}=+\infty.
\]

\end{proof}



\section{About a Rellich counterexample to a  spectral flow formula}\label{sec:Rellich }

In this section, we use the abstract index theory to examine Rellich’s classical counterexample as presented in Kato~\cite[Chapter V, Section 4, Remark 4.13]{Kat80}.



It is well known that if \(T\) is a closed, self‑adjoint operator bounded below and \(A\) is a relatively bounded symmetric perturbation with sufficiently small bound, then the operator 
\[
S = T + A
\]
remains self‑adjoint and bounded below (see Kato \cite[Chapter V, Section 4, Theorem 4.1]{Kat80}).  However, this conclusion can fail when the perturbation is not relatively bounded: Rellich’s scalar counterexample shows that, in such cases, certain eigenvalues may “disappear at \(-\infty\).”  It is still an open problem to determine exactly how many eigenvalues can vanish in this way.  The purpose of this section is to address that question and to provide a precise measure of the multiplicity of eigenvalues that escape to \(-\infty\).

We start by recalling a well-known fact about  a continuous path of self-adjoint  Fredholm operators bounded from below,   defined on the same domain and  parametrized by a compact interval. In this case what the spectral flow does it to measure the difference of the Morse indices of the operators at the boundary points of the interval.  More precisely, let $s \mapsto   A_s$ be a continuous path of self-adjoint  Fredholm operators having fixed domain. Then the spectral flow $\spfl(A_s ,s\in[0,1])$ is the net number of eigenvalue crossing the $0$ when the parameter $s$ runs from $0$ to $1$ whilst 
$\iMor( A_1)-\iMor(A_0)$ measures  the  difference of the negative spectral spaces between the operator at the instant $1$ and the one at $0$. Under the above condition, we get that
\begin{equation}\label{eq:spectral-flow-diff-morse-indices}
\iMor(A_1)-\iMor(A_0)+\spfl (A_s, s\in[0,1])=0.
\end{equation}
Things  drastically change if the  domains of the operators are not anymore constant. This is, for instance, the case of a gap-continuous path of self-adjoint extensions $s\mapsto A_{\Lambda_s}$ of an operator  $A\in \CFs(H)$ for a gap-continuous path $s\mapsto \Lambda_s$ of Lagrangian subspaces of the symplectic space 
 $(U, \rho))$.  In this case, the domain is not anymore fixed  and so, the operators are not anymore relative bounded perturbation of a fixed one. 
 
 In this new situation, the Equation~\eqref{eq:spectral-flow-diff-morse-indices} doesn't hold anymore! The lack of the equality is very much related to the phenomenon observed by Rellich through a counterexample to the equality provided at Equation~\eqref{eq:spectral-flow-diff-morse-indices}  in the case of Sturm-Liouville operators on $[0,1]$ with varying domains.

In the next result we provide a sharp formula measuring the difference of the Morse indices of the operators $L_{\Lambda_1}$ and $ L_{\Lambda_0}$ in terms of the spectral flow of the associated path $s\mapsto  L_{\Lambda_s}$ and of the Maslov index of the Lagrangian path.  Without loss of generality, we will consider the decomposition 
$\dom(L^*)=\dom(L)\oplus W$
with $\ker L^*< W$.

We let 
\[
V:=\ker  L^*\quad \textrm{ and } \quad \Lambda_F=\dom(L_F)\cap W.
\]
\begin{thm}\label{lem:eigenvalue_minus_infty}
Assume conditions (H3) \& (H4). If $\iMor(L_{\Lambda_0})<\infty$, then we get:
	\[
	\iMor(L_{\Lambda_1})-\iMor(L_{\Lambda_0})+\spfl (L_{\Lambda_s}, s\in[0,1])=\iCLM(\Lambda_F,\Lambda_s,\rho, s\in[0,1]).
	\]
\end{thm}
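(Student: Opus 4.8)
The plan is to combine the two principal outputs of the earlier sections — the singular Morse Index Theorem with general boundary conditions (Theorem~\ref{thm:triple-index-Morse-Friedrich-extensions}) and the spectral flow formula for Sturm--Liouville operators (Theorem~\ref{thm:Sturm_Sf_formula-SL}) — and then to close the argument with a purely finite-dimensional identity in the symplectic space $(W,\omega|_W)$.

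First I would clear away the finiteness bookkeeping. Because $\iMor(L_{\Lambda_0})<\infty$, Theorem~\ref{thm:triple-index-Morse-Friedrich-extensions} applied at $\Lambda_0$ reads $\iMor(L_{\Lambda_0})-\iMor(L_F)=\itriple(V,\Lambda_0,\Lambda_F)$, and since the triple index is a finite integer this forces $\iMor(L_F)<\infty$; applying the same theorem at $\Lambda_1$ then gives $\iMor(L_{\Lambda_1})<\infty$. Hence $s\mapsto L_{\Lambda_s}$ is a gap-continuous path in $\CFsa(H)$ (Lemma~\ref{thm:collect}) with finite Morse index at both endpoints, so $\spfl(L_{\Lambda_s},s\in[0,1])$ is well defined and every integer in the statement is finite. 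Subtracting the two instances of Theorem~\ref{thm:triple-index-Morse-Friedrich-extensions} gives
\[
\iMor(L_{\Lambda_1})-\iMor(L_{\Lambda_0})=\itriple(V,\Lambda_1,\Lambda_F)-\itriple(V,\Lambda_0,\Lambda_F),
\]
where $V=p(\ker L^{*})=\ker L^{*}$ and $\Lambda_F=p(\dom(L_F))=\dom(L_F)\cap W$ both lie in $W$.

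Next I would insert the spectral flow formula. Here the differential operator $l$ is held fixed and only the Lagrangian boundary condition varies, so the kernel path is constant, $V_s\equiv\ker L^{*}=V$ (condition (H1) being in force for the one-sided singular operator as in Theorem~\ref{thm:triple-index-Morse-Friedrich-extensions}, and (H4) supplying the Fredholm hypothesis of Theorem~\ref{thm:Sturm_Sf_formula-SL}), and Theorem~\ref{thm:Sturm_Sf_formula-SL} yields
\[
\spfl(L_{\Lambda_s},s\in[0,1])=-\iCLM(\Lambda_s,V,\rho,s\in[0,1]).
\]
Putting the last two displays together, the assertion reduces to the finite-dimensional identity
\[
\itriple(V,\Lambda_1,\Lambda_F)-\itriple(V,\Lambda_0,\Lambda_F)-\iCLM(\Lambda_s,V,\rho,s\in[0,1])=\iCLM(\Lambda_F,\Lambda_s,\rho,s\in[0,1]).
\]
I would prove this exactly as in the proof of the Corollary to Theorem~\ref{thm:triple-index-Morse-Friedrich-extensions}: first note that the identity is path-independent (its left side depends on the path only through $\spfl$, its right side only through $\iCLM$, and both are homotopy invariants rel endpoints, while the triple-index term depends on the endpoints alone), so it suffices to prove it for one convenient path; then use the argument-permutation property of the CLM index to bring $\iCLM(\Lambda_F,\Lambda_s)$ and $\iCLM(\Lambda_s,V)$ into a common form, and convert the resulting Maslov-index differences into triple indices of the four Lagrangians $V,\Lambda_0,\Lambda_1,\Lambda_F$ via the path-independence of the H\"ormander index, Proposition~\ref{thm:mainli}, and the cyclic-permutation formulas of Lemma~\ref{lem:triple_diff_circle_permu}. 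Equivalently, one may take a generic path whose crossings with $V$ and with $\Lambda_F$ are simple and disjoint, partition $[0,1]$ accordingly, and match contributions interval by interval: at a $V$-crossing the change of $\itriple(V,\Lambda_s,\Lambda_F)$ — which there records a genuine eigenvalue of $L_{\Lambda_s}$ passing through $0$ — cancels the $\spfl$-term while $\iCLM(\Lambda_F,\Lambda_s)$ contributes $0$; at a $\Lambda_F$-crossing the $\spfl$-term contributes $0$ while the change of $\itriple(V,\Lambda_s,\Lambda_F)$ — which there records an eigenvalue of $L_{\Lambda_s}$ escaping to, or emerging from, $-\infty$, i.e. a Rellich event — matches the local contribution of $\iCLM(\Lambda_F,\Lambda_s)$; on an event-free subinterval all terms vanish.

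The conceptual point, and the place where the argument is genuinely subtle, is the contrast with the fixed-domain case, where the rectangle/homotopy argument gives $\iMor(L_{\Lambda_1})-\iMor(L_{\Lambda_0})+\spfl(L_{\Lambda_s})=0$; this fails here only because along a gap-continuous path of self-adjoint extensions the lower bounds $\inf\spec(L_{\Lambda_s})$ need not stay uniformly bounded even though the Morse indices do, so one cannot close the rectangle by a side of uniformly positive operators. Isolating the Friedrichs extension as the canonical reference — that is, routing through Theorem~\ref{thm:triple-index-Morse-Friedrich-extensions} rather than through a naive homotopy — is what makes the defect exact, and the remaining difficulty is the careful bookkeeping of the triple-index/CLM-index identity, where the real care lies in the sign and endpoint conventions for $\spfl$ and $\iCLM$ and in the permutation corrections for $\itriple$.
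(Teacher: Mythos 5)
Your proposal is correct and follows essentially the same route as the paper: apply Theorem~\ref{thm:triple-index-Morse-Friedrich-extensions} at $\Lambda_0$ and $\Lambda_1$ to express the Morse-index difference as a difference of triple indices, invoke the spectral flow formula $\spfl(L_{\Lambda_s})=-\iCLM(\Lambda_s,p(V),\rho)$, and close with the H\"ormander/triple-index algebra of Proposition~\ref{thm:mainli} and Lemma~\ref{lem:triple_diff_circle_permu} together with the endpoint-dimension corrections relating $\iCLM(\Lambda_s,\Lambda_F)$ to $\iCLM(\Lambda_F,\Lambda_s)$. Your added finiteness bookkeeping and the alternative crossing-by-crossing sketch are extra, but the core argument coincides with the paper's proof.
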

\begin{proof}
By Theorem~\ref{thm:triple-index-Morse-Friedrich-extensions}, we get that 
\[
\iMor(L_{\Lambda_s})-\iMor(L_{F})= \itriple(p(V), \Lambda_s,\Lambda_F).
\] 
By invoking Lemma~\ref{lem:triple_diff_circle_permu} and Proposition~\ref{thm:mainli}, we get that
\begin{align}
\iMor(L_{\Lambda_1})-\iMor(L_{\Lambda_0})&=\itriple(p(V),\Lambda_1,\Lambda_F)-\itriple(p(V),\Lambda_0,\Lambda_F)\\
&=[\itriple(\Lambda_F,p(V),\Lambda_1)-\dim(\Lambda_F\cap\Lambda_1)]-[\itriple(\Lambda_F,p(V),\Lambda_0)-\dim(\Lambda_F\cap \Lambda_0)]\\
&=s(\Lambda_F,p(V),\Lambda_0,\Lambda_1)-\dim(\Lambda_F\cap\Lambda_1)+\dim(\Lambda_F\cap\Lambda_0)
\end{align}
where we denoted by $s(\Lambda_F,p(V),\Lambda_0,\Lambda_1)$ the H\"ormander index of the Lagrangian quadruple. (Cf.  Appendix~\ref{sec:Maslov}). 
Now, by using the spectral flow formula proved at Theorem~\ref{thm:Sturm_Sf_formula}: 
\[
\spfl(L_{\Lambda_s},s\in[0,1])=-\iCLM(\Lambda_s,p(V),\rho, s\in[0,1])
\] 
and since of  Definition~\ref{def:hormander}, we get that
\[
\iCLM(\Lambda_s,p(V),\rho, s\in[0,1])-\iCLM(\Lambda_s,\Lambda_F,\rho, s\in[0,1])=s(\Lambda_F,p(V),\Lambda_0,\Lambda_1).
\]
Summing up we conclude that 
\begin{multline}
\iMor(L_{\Lambda_1})-\iMor(L_{\Lambda_0})+\spfl (L_{\Lambda_s}, s\in[0,1])\\	=s(\Lambda_F,p(V),\Lambda_0,\Lambda_1)-\dim(\Lambda_F\cap\Lambda_1)+\dim(\Lambda_F\cap\Lambda_0)-\iCLM(\Lambda_s,p(V),\rho, s\in[0,1])\\
=-\iCLM(\Lambda_s,\Lambda_F,\rho, s\in[0,1])-\dim(\Lambda_F\cap\Lambda_1)+\dim(\Lambda_F\cap\Lambda_0)\\
\Lambda_F,\Lambda_s,\rho, s\in[0,1]).
\end{multline}
This concludes the proof. 
\end{proof} 
The reason behind Theorem~\ref{lem:eigenvalue_minus_infty} is due to the fact  that some  negative eigenvalue disappearing  at $-\infty$. The total number of such a kind of {\em ghost eigenvalues} can be estimated in terms of the  Maslov index. 
\begin{thm}\label{thm:Rellich }
We Assume condition (H3) and if    $\Lambda_s\cap \Lambda_F=(0)$ for $s\neq 0$, then  there exists  $K>0$ such that for every $M>K$ there exists $\varepsilon >0$ such that the following holds: 
\begin{equation}\label{eq:infinity-eigenvalues}
\#((-\infty,-M)\cap \mathfrak{sp}( L_{\Lambda_u}))= \iCLM(\Lambda_F,\Lambda_s,\rho, s\in[0,\epsilon]) \qquad  u\in (0,\varepsilon),
\end{equation}
where $\mathfrak{sp}$ denotes the spectrum. 
\end{thm}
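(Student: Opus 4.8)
The plan is to apply Theorem~\ref{lem:eigenvalue_minus_infty} to the \emph{shifted} family $s\mapsto L_{\Lambda_s}+M\Id$ over a short interval $[0,u]$, and then to dispose of the three terms appearing there one by one. First I would record the role of $K$: since the minimal operator has finite deficiency indices, condition (H3) forces every self-adjoint extension, in particular $L_{\Lambda_0}$, to be bounded from below, so there is $K>0$ with $\mathfrak{sp}(L_{\Lambda_0})\subset(-K,+\infty)$; enlarging $K$ if needed, we may also require that $-M\notin\mathfrak{sp}_{\mathrm{ess}}(L_b)$ for every $M>K$, which holds because $L_b$ is Fredholm and bounded below, so that (H3)--(H4) keep holding for $L_b+M\Id$. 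For such $M$ the spectral theorem gives, \emph{provided the right-hand side is finite}, the identity $\#\bigl((-\infty,-M)\cap\mathfrak{sp}(L_{\Lambda_u})\bigr)=\iMor(L_{\Lambda_u}+M\Id)$, together with $\iMor(L_{\Lambda_0}+M\Id)=0$.

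Next I would observe that adding $M\Id$ changes neither $\dom(L)$ nor $\dom(L^*)$, and hence leaves the decomposition $\dom(L^*)=\dom(L)\oplus W$, the form $\rho=\omega|_W$, the Lagrangian path $s\mapsto\Lambda_s$ and the Friedrichs Lagrangian $\Lambda_F=\dom(L_F)\cap W$ all unchanged (only the kernel of the maximal operator changes, and it does not enter the statement of Theorem~\ref{lem:eigenvalue_minus_infty}). The path $s\mapsto L_{\Lambda_s}+M\Id$ is gap-continuous by Lemma~\ref{thm:collect} and Lemma~\ref{lem:conti_bound_pertub}, so Theorem~\ref{lem:eigenvalue_minus_infty}, applied after the obvious reparametrization on $[0,u]$, yields
\[
\iMor(L_{\Lambda_u}+M\Id)-\iMor(L_{\Lambda_0}+M\Id)+\spfl\bigl(L_{\Lambda_s}+M\Id,\,s\in[0,u]\bigr)=\iCLM\bigl(\Lambda_F,\Lambda_s,\rho,\,s\in[0,u]\bigr).
\]

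It remains to kill the spectral-flow term and to stabilize the Maslov term. Since $-M\notin\mathfrak{sp}(L_{\Lambda_0})$ and $s\mapsto L_{\Lambda_s}$ is gap-continuous, the standard relation between gap-continuity and norm-resolvent continuity provides $\varepsilon>0$ with $0\in\mathrm{res}(L_{\Lambda_s}+M\Id)$ for all $s\in[0,\varepsilon]$; a path of invertible self-adjoint Fredholm operators has zero spectral flow, whence $\spfl(L_{\Lambda_s}+M\Id,\,s\in[0,u])=0$ for every $u\in(0,\varepsilon]$. On the other hand, the hypothesis $\Lambda_s\cap\Lambda_F=(0)$ for $s\ne0$ means the path $s\mapsto\Lambda_s$ meets $\Lambda_F$ only at $s=0$, so by the concatenation and stratum-homotopy properties of the $\iCLM$-index, $\iCLM(\Lambda_F,\Lambda_s,\rho,\,s\in[0,u])$ does not depend on $u\in(0,\varepsilon]$ and equals $\iCLM(\Lambda_F,\Lambda_s,\rho,\,s\in[0,\varepsilon])$. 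Substituting these facts into the displayed identity gives, for every $u\in(0,\varepsilon)$,
\[
\#\bigl((-\infty,-M)\cap\mathfrak{sp}(L_{\Lambda_u})\bigr)=\iMor(L_{\Lambda_u}+M\Id)=\iCLM(\Lambda_F,\Lambda_s,\rho,\,s\in[0,\varepsilon]),
\]
the right-hand side being a finite integer, which also retroactively legitimizes the identity used in the first step.

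The point I expect to require the most care is the vanishing of the spectral flow near $s=0$: one must be certain that the eigenvalues of $L_{\Lambda_s}$ drifting off to $-\infty$ do not, in the process, cross the level $-M$ on a whole neighborhood of $s=0$, for otherwise they would contribute to $\spfl$. Gap-continuity together with $-M\in\mathrm{res}(L_{\Lambda_0})$ is exactly what rules this out, but it must be combined with the \emph{a posteriori} finiteness of $\iMor(L_{\Lambda_u}+M\Id)$ --- deduced from the finiteness of the Maslov index on the right-hand side --- in order to pass legitimately from ``dimension of the negative spectral subspace'' to ``number of eigenvalues below $-M$, counted with multiplicity''; this circular-looking dependence is harmless but should be untangled explicitly.
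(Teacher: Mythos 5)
Your proposal is correct and follows essentially the same route as the paper's own proof: apply Theorem~\ref{lem:eigenvalue_minus_infty} to the shifted family $s\mapsto L_{\Lambda_s}+M\Id$ on $[0,u]$, annihilate the spectral-flow term by choosing $\varepsilon$ so that $-M$ stays in the resolvent set, and use $\Lambda_s\cap\Lambda_F=(0)$ for $s\neq 0$ to make the Maslov term independent of $u$. You are in fact somewhat more explicit than the paper on two points it leaves implicit — that the shift leaves $W$, $\rho$, $\Lambda_F$ unchanged, and that the absence of essential spectrum below $-M$ (hence the finiteness coming from the Maslov index) is what justifies identifying the eigenvalue count with $\iMor(L_{\Lambda_u}+M\Id)$.
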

\begin{rem}
Before proving this result, some remarks are in order. We assume that  for $\varepsilon$ sufficiently small,  the (RHS) at Equation~\eqref{eq:infinity-eigenvalues} is not-zero. So, since $\iCLM$ is a continuous integer-valued function, it is locally constant.  By this we get that for for every $u \in (0,\varepsilon)$ the operator $L_{\Lambda_u}$ has $\iCLM(\Lambda_D,\Lambda_s,s\in[0,u])=l\neq 0$  number of negative eigenvalues less that $-M$. By choosing $u$ arbitrarily close to $0$, we get that the $l$ eigenvalues are arbitrarily negative and in the limit for $u \to 0^+$, they diverge to $-\infty$.  So, the constant $K$ at the Theorem~\ref{thm:Rellich } decompose the spectrum of $L_{\Lambda_u}$ into two parts: 
\begin{enumerate}
    \item The {\sc upper part of the spectrum} given by the eigenvalues greater than $K$
\item    The {\sc lower part of the spectrum} given by the eigenvalues less than $K$.
\end{enumerate}
The eigenvalues belonging to the lower part of the spectrum, in the limit,  diverges to $-\infty$. The eigenvalues belonging to the upper  part of the spectrum, stay uniformly bounded by $K$. 
\end{rem}
\begin{proof}
Choose $K>0$ such that $ L_{\Lambda_0}+K\Id$ is a positive and Fredholm operator.
Since $s\mapsto L_{\Lambda_s}+K\Id$ is continuous, then there exists $\varepsilon>0$ such that $\ker (L_{\Lambda_u}+K\Id) =\set 0$ for every $ u\in [0,\varepsilon]$. By this we get that 
\[
\spfl(L_{\Lambda_r}+K\Id, r\in [0,u])=0\qquad  \textrm{ for every } u\in [0,\varepsilon].
\]
By Theorem~\ref{lem:eigenvalue_minus_infty}, we get
\[
\iMor(L_{\Lambda_u}+K\Id)=\iMor(L_{\Lambda_u}+K\Id)-\iMor(L_{\Lambda_0}+K\Id)=\iCLM(\Lambda_F,\Lambda_s,\rho, s\in [0,u]) .
\]
Since $\Lambda_F\cap \Lambda_s=(0)$ and since $s\neq 0$, we have
\[
\iMor(L_{\Lambda_u}+K\Id)=\iMor(L_{\Lambda_u}+K\Id)-\iMor(L_{\Lambda_0}+K\Id)=\iCLM(\Lambda_F,\Lambda_s,\rho
, s\in[0,\epsilon])\qquad u\in (0,\varepsilon).
\]
So, the number of negative   eigenvalues of $L_{\Lambda_u}$ is $\iCLM(\Lambda_F,\Lambda_s,\rho, s\in [0,\varepsilon])$ for every  $u\in (0,\varepsilon)$. 

For $M>K$, the operator $L_{\Lambda_0}+M\Id >0$. So,  with the same reason, we can choose $\epsilon$ and get the same equation.  Then, the result  follows.
\end{proof}

\begin{cor}
Assume condition (H3) and if $\Lambda_s\cap \Lambda_F=(0)$ for $s\neq 0$. Then there is  $K>0$ and $ a>0$  such that 
\[
\#((-\infty,-K)\cap \mathfrak{sp}(L_{\Lambda_u}))=\iCLM(\Lambda_F,\Lambda_s,\rho,s\in [0,a]) \qquad  \forall \, u\in (0,a].
\]
Let $k=\iCLM(\Lambda_F,\Lambda_s,\rho,s\in [0,a])$.
Then for each $u\in (0,a]$, there are precisely $k$ eigenvalues of $L_{\Lambda_u}$ less than $-K$, namely 
 $\lambda_1(u)\le,\cdots,\le\lambda_k(u)$. Then we have
\[
\lim_{u\to 0^+} \lambda_i(u)=-\infty \qquad  1\le i\le k.
\]
\end{cor}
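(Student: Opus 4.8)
The plan is to read off the Corollary from Theorem~\ref{thm:Rellich } together with a single structural remark about the Maslov index. First I would record that, under the transversality hypothesis $\Lambda_s\cap\Lambda_F=(0)$ for $s\neq 0$, the Lagrangian path $s\mapsto\Lambda_s$ meets $\Lambda_F$ only at $s=0$ on the whole of $[0,1]$; hence $\iCLM(\Lambda_F,\Lambda_s,\rho,s\in[0,\varepsilon])$ equals the (fixed) local contribution of the crossing at $s=0$ and is the same for every $\varepsilon\in(0,1]$. I denote this common value by $k$. The first displayed identity, with the constant $K>0$ and a suitable $a>0$, is then exactly the case $M=K$ of Theorem~\ref{thm:Rellich }, which is already covered inside its proof: there $L_{\Lambda_0}+K\Id$ is positive and Fredholm, so by gap-continuity $\ker(L_{\Lambda_u}+K\Id)=(0)$ on some $(0,a]$, $\spfl(L_{\Lambda_r}+K\Id,\,r\in[0,u])=0$, and Theorem~\ref{lem:eigenvalue_minus_infty} gives $\#\big((-\infty,-K)\cap\mathfrak{sp}(L_{\Lambda_u})\big)=\iMor(L_{\Lambda_u}+K\Id)=k$ for all $u\in(0,a]$. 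This produces the ordered eigenvalues $\lambda_1(u)\le\cdots\le\lambda_k(u)$ below $-K$.

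For the divergence statement I would fix an arbitrary $M>K$ and invoke Theorem~\ref{thm:Rellich } with this $M$, obtaining $\varepsilon_M\in(0,a]$ with
\[
\#\big((-\infty,-M)\cap\mathfrak{sp}(L_{\Lambda_u})\big)=\iCLM(\Lambda_F,\Lambda_s,\rho,s\in[0,\varepsilon_M])=k,\qquad u\in(0,\varepsilon_M),
\]
the last equality being the $\varepsilon$-independence noted above. Since $-M<-K$, the multiset of eigenvalues of $L_{\Lambda_u}$ lying below $-M$ is contained in the multiset of those lying below $-K$; both have total multiplicity $k$, so they coincide. Hence the largest of $\lambda_1(u)\le\cdots\le\lambda_k(u)$ satisfies $\lambda_k(u)<-M$, so $\lambda_i(u)<-M$ for every $1\le i\le k$ and every $u\in(0,\varepsilon_M)$. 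Letting $M\to+\infty$ gives $\lim_{u\to 0^+}\lambda_i(u)=-\infty$ for each $i$.

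I do not expect a real obstacle, since all of the analytic substance is already carried by Theorem~\ref{lem:eigenvalue_minus_infty} and Theorem~\ref{thm:Rellich }. The only points requiring (routine) care are the monotonicity of the spectral count in the threshold, which is what lets equality of the counts at $-K$ and at $-M$ force all $k$ eigenvalues past $-M$, and fixing once and for all the left-endpoint convention for $\iCLM$ so that the stabilization of $\iCLM(\Lambda_F,\Lambda_s,\rho,s\in[0,\varepsilon])$ in $\varepsilon$ is legitimate.
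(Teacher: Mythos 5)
Your proposal is correct and follows essentially the same route as the paper: apply Theorem~\ref{thm:Rellich} to get the count $k$ of eigenvalues below $-K$ on $(0,a]$, then for each $M>K$ apply it again to get the same count $k$ below $-M$ on a smaller interval, conclude the two spectral sets coincide so all $k$ eigenvalues lie below $-M$, and let $M\to+\infty$. Your explicit remark on the $\varepsilon$-independence of $\iCLM(\Lambda_F,\Lambda_s,\rho,s\in[0,\varepsilon])$ is a point the paper uses implicitly, but the argument is the same.
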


\begin{proof}
By Theorem~\ref{thm:Rellich }, there is $K>0,a>0$
\[
\#((-\infty,-K)\cap \mathfrak{sp}(L_{\Lambda_u}))=\iCLM(\Lambda_F,\Lambda_s,\rho,s\in [0,a])\qquad  \forall\, u\in (0,a].
\]
For any $M>K$, it is possible to choose $0<a'<a$ such that 
\[
\#((-\infty,-M)\cap \mathfrak{sp}(L_{\Lambda_u}))=\iCLM(\Lambda_F,\Lambda_s,\rho,s\in [0,a]) \qquad \forall \, u\in (0,a'].
\]
So, for  $u\in (0,a')$ we have
\[
(-\infty,-K)\cap \mathfrak{sp}(L_{\Lambda_u}) = (-\infty,-M)\cap \mathfrak{sp}(L_{\Lambda_u}).
\]
By this it follows that  $\lambda_i(u) <-M$ for $ 1\le i\le k$ and for $u\in (0,a')$.
Then we get 
\[
\lim_{u\to 0^+} \lambda_i(u)=-\infty \qquad   1\le i\le k.
\]
This concludes the proof.
\end{proof}


\section{Bessel-type differential operators}\label{sec:Bessel}
The aim of this section is to establish the functional-analytic and spectral properties of Bessel-type differential operators, and to provide sufficient conditions under which the index theory developed below applies.

We consider the unperturbed {\sc  Bessel-type differential operator}  $h_q$ on $\mathscr C_0^{\infty}((0,1],\R^n)$ defined by 
	\[
h_q:=-\dfrac{d^2}{dt^2}+\dfrac{q}{t^2}\quad \textrm{ for }\quad t\in (0,1) \textrm{ and } \quad q<3/4
	\]
and we are interested in finding the solution space of $h_q$. A convenient  way to do so, is to reparametrize  the  differential operator $h_q$ through the following function $r\mapsto q(r)$ where  
\begin{equation}\label{eq:q(r)}
q(r)=\begin{cases}
-1/4 +r^2 & r\ge 0\\[3pt]
-1/4-r^2&  r<0.
\end{cases}
\end{equation}
So, in this case the operator $h$ can be written as 
\[
h_r = -\dfrac{d^2}{dt^2} + \dfrac{q(r)}{t^2}, \qquad t \in (0,1).
\]
and we observe that the function   $q$  is a bijection from $(-\infty,1)$ to $(-\infty,3/4)$:
\begin{center}
\begin{tikzpicture}
\begin{axis}[
    width=10cm, height=6cm,
    xlabel={$r$}, ylabel={$q(r)$},
    axis lines=middle,
    ymin=-3, ymax=3, xmin=-2, xmax=2,
    grid=both,
    domain=-2:2,
    samples=400,
    thick,
    legend style={at={(0.02,0.98)},anchor=north west}
]
\addplot[blue] {x >= 0 ? -0.25 + x^2 : -0.25 - x^2};
\addlegendentry{$q(r)$}
\end{axis}
\end{tikzpicture}
\end{center}
\begin{rem}\label{rem:important}
The function \( r\mapsto q(r) \) controls the strength of the singularity in the operator \( L_r \). For \( r \geq 0 \), the potential is repulsive near \( t = 0 \); for \( r < 0 \), it becomes increasingly attractive. The threshold \( q(r) = -1/4 \) marks a transition in the limit-point/limit-circle classification, leading to qualitative changes in the spectrum. In particular, for large negative \( r \), the operator may lose semi-boundedness and develop diverging negative eigenvalues. (Cf. Section~\ref{sec:Rellich }).
\end{rem}
By a direct computation we get that this equation has two linearly independent solutions in $H=L^2([0,1], \R^n)$  given by 
\begin{align}\label{eq:sol_singular}
	&y_{1,r}(t)=\dfrac{1}{2} (t^{1/2-r}+t^{1/2+r})&& y_{2,r}(t)= \dfrac{1}{2\,r}(t^{1/2+r}-t^{1/2-r})  &&\textrm{ for } r\in (0,1)\\[3pt]
	&y_{1,r}(t)=t^{1/2}  && y_{2,r}(t)=t^{1/2}\ln t  && \textrm{ for }r=0\\[3pt]
	& y_{1,r}(t)= t^{1/2}\cos(r \ln t)&&   y_{2,r}(t)=\dfrac{1}{r}t^{1/2}\sin(r \ln t) &&\textrm{ for } r \in (-\infty, 0).
\end{align}
By a  direct calculation, it readily follows that  
\[
[y_{1,r},y_{2,r}](0)=-1.
\]

\subsection{Functional properties of the Bessel-type operator}\label{subsec:properties-bessel}

The aim of this paragraph is to prove the Fredholmness of the Bessel differential operator and to study its stability properties under bounded perturbation. We start with two abstract results. The first result gives a sufficient condition for any self-adjoint extension of a SL-operator to have compact resolvent. The second is an abstract functional analytic result about the stability property of an operator to remaining a  compact resolvent operator once perturbed by a bounded one.

\begin{lem}\label{lem:conti_ker_operator}

Let $l$ be a Sturm–Liouville operator defined on $\mathscr \mathscr C^\infty([0,1], \R^n)$  and we assume that the operator $l$ is regular at $t=1$. 

If  $\dim \ker L^* = 2n$, then $L$ is a symmetric Fredholm operators. Furthermore, any self-adjoint extension of $L$ has compact resolvent.
\end{lem}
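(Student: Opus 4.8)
The plan is to reduce the statement to two classical facts: Weyl's limit-point/limit-circle dichotomy at the singular endpoint $t=0$, and a Hilbert--Schmidt bound for a Green's function.

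First I would record the structural preliminaries. The minimal operator $L$ is closed, densely defined and symmetric by standard Sturm--Liouville theory, the regular endpoint $t=1$ causing no trouble. Next I would read off the meaning of the hypothesis: since $l$ is a regular expression on every compact subinterval $[\varepsilon,1]\subset(0,1]$, the equation $l\,u=0$ has a $2n$-dimensional space of classical solutions on $(0,1]$, and $\ker L^{*}$ is precisely the subspace of those solutions that are additionally square-integrable near $0$ (this is the $n\le \dim\ker L^{*}\le 2n$ bookkeeping of Lemma~\ref{thm:decomp-factor-space}). Hence $\dim\ker L^{*}=2n$ says \emph{all} solutions of $l\,u=0$ lie in $L^{2}((0,1],\mathbb{R}^{n})$, i.e.\ $t=0$ is a limit-circle endpoint for $l$. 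By the invariance of the limit-circle classification under a bounded shift of the zero-order term (see \cite{Zet05}), every solution of $(l-\lambda)u=0$ is square-integrable near $0$ for every $\lambda$; complexifying if necessary, the deficiency indices of $L$ are then $d_{+}=d_{-}=2n$, so von Neumann's theorem produces self-adjoint extensions of $L$, and $\dim\bigl(\dom(L^{*})/\dom(L)\bigr)=d_{+}+d_{-}=4n<\infty$.

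Second, I would fix an arbitrary self-adjoint extension $T$ of $L$ and show it has compact resolvent. Choosing $\lambda\in\rho(T)$, the function $u=(T-\lambda)^{-1}f$ is recovered from $f$ by variation of parameters as $u(t)=\int_{0}^{1}G_{\lambda}(t,s)f(s)\,ds$, where the matrix Green's kernel $G_{\lambda}$ is assembled from two fundamental systems of solutions of $(l-\lambda)u=0$, one adapted to the boundary condition of $T$ at $0$ and one to its boundary condition at $1$. Because $t=1$ is regular and $t=0$ is limit-circle, every solution of $(l-\lambda)u=0$ belongs to $L^{2}((0,1],\mathbb{R}^{n})$; estimating $|G_{\lambda}(t,s)|\le C\,|\Phi(t)|\,|\Psi(s)|$ on $\{s<t\}$ and symmetrically on $\{s>t\}$ then yields $\iint_{(0,1]^{2}}|G_{\lambda}(t,s)|^{2}\,dt\,ds\le C^{2}\|\Phi\|_{L^{2}}^{2}\|\Psi\|_{L^{2}}^{2}<\infty$, so $(T-\lambda)^{-1}$ is Hilbert--Schmidt and $T$ has compact resolvent. (Alternatively one may verify this for the Friedrichs extension alone and transport it to every other self-adjoint extension, since any two self-adjoint extensions of $L$ differ by a finite-rank perturbation of the resolvent and hence have the same essential spectrum.)

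Finally I would conclude Fredholmness. Compact resolvent forces $T$ to have purely discrete spectrum, so $0$ is either in $\rho(T)$ or an isolated eigenvalue of finite multiplicity, and in either case $T$ is Fredholm of index $0$. Since $L$ is the closed restriction of $T$ to the subspace $\dom(L)\subset\dom(T)$, which has finite codimension ($\le 4n$ by the deficiency count above), $L$ is Fredholm as well; being also closed and symmetric, $L\in\CFs(H)$. Together with the compactness of $(T-\lambda)^{-1}$ established for every self-adjoint extension $T$, this proves the statement; as a cross-check, once $L\in\CFs(H)$ one may also invoke Lemma~\ref{thm:beta-finit-dimensional}. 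I expect the main technical obstacle to be the Hilbert--Schmidt estimate: the Green's function of a singular self-adjoint extension must be written carefully enough to see that the only potential failure of square-integrability, the behaviour as $t\to 0^{+}$, is exactly annihilated by the limit-circle hypothesis, and possibly coupled boundary conditions at the two endpoints require some care in selecting the two fundamental systems, though no idea beyond the classical Green's function construction is needed.
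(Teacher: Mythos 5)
Your argument is correct and rests on the same mechanism as the paper's proof: the hypothesis $\dim\ker L^{*}=2n$ forces every solution of the homogeneous equation to lie in $L^{2}$ near the singular endpoint, so a variation-of-parameters representation of the inverse has an $L^{2}$ kernel on the square, hence is Hilbert--Schmidt and compact. The only organizational difference is that the paper inverts the minimal operator $L$ directly on $\image(L)$ using the symplectic fundamental matrix normalized at $t=1$ (obtaining the explicit kernel $V(t)\,J\,\trasp{V}(s)$ and reading off closed range and Fredholmness at once), whereas you build the Green's function of an arbitrary self-adjoint extension at a resolvent point and then descend to $L$ via the finite-codimension of its domain; both routes are sound.
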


\begin{proof}
For a Sturm Liouville equation $-\dfrac{d}{dt}(P(t)\dfrac{d}{dt}+Q(t))u+\trasp{Q}\dfrac{d}{dt}u+R(t)u=0$.
Let $M(t)$ be the monodromy matrix of the Hamiltonian system induced by the Sturm-Liouville operator $l$ such that $M(1)=\Id$. Let $M(t)=\begin{bmatrix}U(t)\\V(t)\end{bmatrix}$. Then $V(t)=(v_1(t),\cdots,v_{2n}(t))$ where $\set{v_1,\cdots,v_{2n}}$ is a basis of $\ker L^*$. Now we solve the equation $L v=f$ with $f\in \image(L)$. 
Since that $M(t)$ is a symplectic matrix, then we have
\[
\begin{pmatrix}
v^{[1]}\\
v
\end{pmatrix}(t)
=M(t)c+M(t)\int_1^t M(s)^{-1} J\begin{pmatrix}0\\f\end{pmatrix}ds=M(t)c+M(t)\int_1^t J\trasp{M}(s)\begin{pmatrix}0\\f\end{pmatrix} ds
\]
for some constant vector $c$. Since $v\in \dom (L)$ so $c=0$. We finally get 
\[
v(t)=(L^{-1} f)(t) = \int_1^t V(t) J \trasp{V}(s) f(s)\, ds \qquad \text{for } f \in \operatorname{im} L.
\]
Since $V\in L^2([0,1],\R^{n\times 2n})$, $L^{-1}$ is bounded on $\image (L)< L^2([0,1],\R^n)$.  Since $L$ is closed, $L^{-1}$ is also closed. So, $\image(L)=\dom(L^{-1})$ is closed since $L^{-1}$ is bounded. 
Furthermore, $L^{-1}$ is an integral operator with $L^2$ kernel, so $L^{-1}$ is compact. This concludes the proof.
\end{proof}
\begin{lem}\label{lem:pertub_compact_resolvent}
Let $A$ be a self-adjoint operator having compact resolvent. For each bounded operator $B$, $A+B$ also has compact resolvent. 
\end{lem}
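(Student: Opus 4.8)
The plan is to use the standard resolvent factorization. First note that for a general bounded $B$ the operator $A+B$ is closed on $\dom(A+B)=\dom(A)$ (a closed operator plus a bounded one is closed), while if moreover $B=B^{*}$ then, by Lemma~\ref{thm:lemma-same-domain-perturbation}, $A+B$ is self-adjoint, so that ``compact resolvent'' is meaningful in the usual sense. Replacing $H$ by its complexification if necessary (which changes neither the compactness of the resolvent nor the boundedness of $B$), I would fix $\lambda\in\C$ with $|\Imm\lambda|>\norm{B}$. Since $A$ is self-adjoint, $\lambda\in\rho(A)$ and $\norm{(A-\lambda)^{-1}}\le 1/|\Imm\lambda|$, whence $\norm{(A-\lambda)^{-1}B}<1$.

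Next I would invoke the factorization, valid on $\dom(A)$,
\[
A+B-\lambda=(A-\lambda)\bigl(\Id+(A-\lambda)^{-1}B\bigr),
\]
in which $\Id+(A-\lambda)^{-1}B\in\Bou(H)$ is invertible by a Neumann series. A routine check then shows that $A+B-\lambda\colon\dom(A)\to H$ is a bijection with bounded inverse
\[
(A+B-\lambda)^{-1}=\bigl(\Id+(A-\lambda)^{-1}B\bigr)^{-1}(A-\lambda)^{-1}.
\]
The right-hand side is the composition of a bounded operator with the compact operator $(A-\lambda)^{-1}$, and since the compact operators form a two-sided ideal in $\Bou(H)$, it follows that $(A+B-\lambda)^{-1}$ is compact. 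Thus $\lambda\in\rho(A+B)$ and the resolvent of $A+B$ at $\lambda$ is compact.

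Finally I would propagate compactness to the whole resolvent set: for any $\mu\in\rho(A+B)$ the first resolvent identity yields
\[
(A+B-\mu)^{-1}=(A+B-\lambda)^{-1}+(\mu-\lambda)\,(A+B-\mu)^{-1}(A+B-\lambda)^{-1},
\]
which is the sum of a compact operator and a bounded operator composed with a compact one, hence compact; therefore $A+B$ has compact resolvent. I do not expect a genuine obstacle in this argument: the only points requiring a word of care are the harmless passage to the complexification when $H$ is real and the (standard) fact that compactness of the resolvent at one point of $\rho(A+B)$ forces it at every point.
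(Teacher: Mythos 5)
Your proposal is correct and follows essentially the same route as the paper: bound $\norm{(A-\lambda)^{-1}}$ for $\lambda$ far from the real axis so that the perturbation term has norm less than one, invert by a Neumann series, factor the resolvent of $A+B$ as a bounded operator composed with the compact resolvent of $A$, and conclude by the ideal property of compact operators. The only differences are cosmetic (you factor on the other side, and you add the standard remarks on complexification and on propagating compactness to the whole resolvent set, which the paper leaves implicit).
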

\begin{proof}
Note that $\|(A+i\lambda \Id)^{-1}\|\le 1/\lambda$ for $\lambda>0$.
Choose $\lambda$ large enough. We can assume that $\|B(A+i\lambda \Id)^{-1}\|<1$.
We have
\[
(A+B+i\lambda \Id)^{-1}= (A+ i\lambda \Id)^{-1}(\Id +B(A+i\lambda I)^{-1} )^{-1},
\]
which is compact.
\end{proof}

\begin{cor}\label{thm:corollary7-4}
Let us consider the Bessel operator 
\[
l_q:=-\dfrac{d^2}{dt^2}+\dfrac{q}{t^2}
\]
acting  on $C_0^\infty((0,1],\R^n)$ and we denote by $L_q: \dom(L_q) < H \longrightarrow H$ the corresponding minimal operator where $H=L^2([0,1], \R^n)$. 
 
Assume that $q<3/4$ and let $B\in \Bsa(H)$. Then any self-adjoint extension of $L_q+B$ is a compact resolvent operator.
\end{cor}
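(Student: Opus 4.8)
The plan is to treat the unperturbed Bessel operator $L_q$ first, via Lemma~\ref{lem:conti_ker_operator}, and then absorb the bounded perturbation $B$ using Lemma~\ref{lem:pertub_compact_resolvent}. First I would check that $\dim\ker L_q^* = 2n$. Reparametrizing $q = q(r)$ as in \eqref{eq:q(r)}, the hypothesis $q < 3/4$ is equivalent to $r < 1$, and for every such $r$ both fundamental solutions $y_{1,r}, y_{2,r}$ exhibited in \eqref{eq:sol_singular} are square-integrable near the singular endpoint $t = 0$: indeed $t^{1/2 - r}\in L^2((0,1])$ precisely when $r < 1$, while the logarithmic case $r = 0$ and the oscillatory case $r < 0$ are trivially $L^2$ near $0$. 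Since $l_q$ acts componentwise on $\R^n$ and the scalar equation $l_q u = 0$ has a two-dimensional solution space all of whose elements lie in $L^2$, the maximal-operator kernel $\ker L_q^*$ is $2n$-dimensional. Moreover the coefficients $P \equiv \Id$, $Q \equiv 0$, $R = (q/t^2)\,\Id$ satisfy the regularity assumptions \eqref{eq:assumptions-coeff} on $(0,1)$ and extend continuously to $t = 1$, so hypothesis (H1) holds; hence Lemma~\ref{lem:conti_ker_operator} applies and gives that $L_q \in \CFs(H)$ and that every self-adjoint extension of $L_q$ has compact resolvent.

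Next I would pass from $L_q$ to $L_q + B$. Since $B \in \Bsa(H)$, Lemma~\ref{thm:lemma-same-domain-perturbation} gives $\dom(L_q + B) = \dom(L_q)$, $(L_q + B)^* = L_q^* + B$, and $\dom((L_q + B)^*) = \dom(L_q^*)$; a one-line computation with the skew-symmetric form \eqref{eq:symplectic-form}, using the self-adjointness of $B$, shows that the factor space $(U,\rho)$ attached to $L_q + B$ coincides with the one attached to $L_q$. Consequently, by Lemma~\ref{lem:abstract_fundamental_solution}, the self-adjoint extensions of $L_q + B$ are precisely the operators $A + B$ where $A$ ranges over the self-adjoint extensions of $L_q$, the two members of each pair sharing the same underlying domain. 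Given any self-adjoint extension $S$ of $L_q + B$, write $S = A + B$ accordingly: by the previous paragraph $A$ has compact resolvent, and since $B$ is bounded, Lemma~\ref{lem:pertub_compact_resolvent} shows that $S$ has compact resolvent as well, which is the claimed assertion.

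The step I expect to be the main obstacle is the kernel computation — confirming that \emph{all} of $\ker L_q^*$, and not just a proper subspace, lies in $L^2$, so that the dimension is exactly $2n$ and Lemma~\ref{lem:conti_ker_operator} is applicable. This, however, is immediate from the closed-form solutions \eqref{eq:sol_singular}; the remainder is a routine assembly of the preceding lemmas, the only minor subtlety being the (easily verified) invariance of the symplectic data under the bounded self-adjoint perturbation.
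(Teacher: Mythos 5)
Your proposal is correct and follows essentially the same route as the paper: compute $\dim\ker L_q^* = 2n$ by reducing to the scalar case and inspecting the explicit solutions in Equation~\eqref{eq:sol_singular}, invoke Lemma~\ref{lem:conti_ker_operator} to get compact resolvent for every self-adjoint extension of $L_q$, and then absorb $B$ via Lemma~\ref{lem:pertub_compact_resolvent}. The only difference is that you spell out the correspondence between self-adjoint extensions of $L_q+B$ and those of $L_q$ (via Lemma~\ref{thm:lemma-same-domain-perturbation}), a step the paper leaves implicit.
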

\begin{proof}
By Lemma~\ref{lem:conti_ker_operator} and Lemma~\ref{lem:pertub_compact_resolvent}, we only need to compute the  $\dim\ker L_q^*$. Such a computation can be reduced to the scalar case. So, by the  Equation~\eqref{eq:sol_singular}, we get that the kernel dimension in the scalar case is 2. By this, we conclude that $\dim\ker L_q^*= 2n$. 
\end{proof}
The next step is to study the Fredholmness properties of the operator $L_q$. For 
 $q\ge 3/4$, this will be done by using a result of Naimark  (cf. \cite{Nai68})  which is known in literature as  {\em splitting method}. Here for the sake of the reader, we write it in the case of  Sturm-Liouville operators, we are interested in. 

\begin{lem}[Naimark, splitting method]\label{thm:ess_split}
Let $l$ be the SL-operator defined on 
$C_0^\infty((a,b),\R^n)$. Let $c\in (a,b)$.
Let $l_1,l_2$ be the restriction of $l$ on $C_0^\infty((a,c),\R^n)$ and $C_0^\infty((c,b),\R^n)$ respectively.

Let $L$ be the corresponding minimal operator associated to $l$.
Let $L_1$ $L_2$ be the corresponding minimal operators associated  to $l_1,l_2$, respectively.
The continuous part of the spectrum of every self-adjoint extension of the operator $L$ is the union of the continuous parts of the spectra of any self-adjoint extensions of $L_1$ and $L_2$
\end{lem}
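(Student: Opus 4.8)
The plan is to reduce the statement to the classical decomposition-of-domains principle for minimal operators of ordinary differential expressions. First I would fix an arbitrary self-adjoint extension $\widetilde L$ of the minimal operator $L$, together with arbitrary self-adjoint extensions $\widetilde L_1, \widetilde L_2$ of $L_1, L_2$. The key algebraic fact is that the minimal operator $L$ on $(a,b)$ decomposes, after imposing the natural boundary conditions at the interior point $c$, as a direct sum: more precisely, the operator $L_1 \oplus L_2$ (acting on $L^2((a,c),\R^n)\oplus L^2((c,b),\R^n) \cong L^2((a,b),\R^n)$) is a closed symmetric restriction of $L$, and $L$ itself is obtained from $L_1 \oplus L_2$ by relaxing the two-sided boundary conditions at $c$. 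Since $c$ is a regular point for $l$, the deficiency indices picked up at $c$ are finite (equal to $2n$ on each side, or $4n$ in total for the two-sided gluing). Hence $L$ is a finite-dimensional symmetric extension of $L_1\oplus L_2$, and likewise every self-adjoint extension $\widetilde L$ of $L$ is a finite-dimensional (self-adjoint) extension of a self-adjoint extension $\widetilde L_1 \oplus \widetilde L_2$ of $L_1 \oplus L_2$ — one may always interpolate a self-adjoint extension that decouples at $c$.

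Second, I would invoke the stability of the essential (continuous) spectrum under finite-rank, or more precisely finite-dimensional symmetric, extensions/restrictions: if $S \subset T$ are self-adjoint with $\dim(\dom(T)/\dom(S)) < \infty$, then their resolvents differ by a finite-rank operator (this follows from Krein's resolvent formula, or from \cite[Chapter~V]{Kat80}), so $\sigma_{\mathrm{ess}}(S) = \sigma_{\mathrm{ess}}(T)$, and in particular the continuous parts of the spectra coincide. Applying this with $T = \widetilde L$ and $S = \widetilde L_1 \oplus \widetilde L_2$ (a suitable decoupled self-adjoint extension) gives
\[
\sigma_{\mathrm{c}}(\widetilde L) = \sigma_{\mathrm{c}}(\widetilde L_1 \oplus \widetilde L_2) = \sigma_{\mathrm{c}}(\widetilde L_1) \cup \sigma_{\mathrm{c}}(\widetilde L_2),
\]
where the last equality is immediate from the orthogonal direct-sum structure. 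Finally, since the continuous spectrum of any self-adjoint extension of a given symmetric operator is independent of the chosen extension (again because any two self-adjoint extensions differ by a finite-dimensional modification, as the deficiency indices here are finite), the right-hand side does not depend on the particular extensions $\widetilde L_1,\widetilde L_2$, and the left-hand side does not depend on $\widetilde L$. This yields the asserted equality for \emph{every} self-adjoint extension.

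The main obstacle I anticipate is bookkeeping the boundary-form computation at the interior point $c$: one must check carefully that gluing the two half-line problems at $c$ — rather than imposing decoupled regular boundary conditions on each side — indeed changes the domain only by a finite-dimensional space, and that a decoupling self-adjoint extension exists within the family of self-adjoint extensions of $L$ (equivalently, that the Lagrangian subspace describing $\widetilde L$ in the relevant finite-dimensional symplectic quotient can be connected to, or compared with, a split Lagrangian $\Lambda_1 \oplus \Lambda_2$ without affecting the essential spectrum). This is routine given the regularity of $l$ at $c$ and the finite-dimensionality of the Gelfand--Robbin / trace-space quotient established in Lemma~\ref{thm:beta-finit-dimensional}, but it is the step where one actually uses the hypothesis that $c$ is an interior (hence regular) point. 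Everything else is a direct appeal to the finite-rank-perturbation invariance of the essential spectrum.
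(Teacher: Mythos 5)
Your argument is correct, but it is worth noting that the paper does not actually prove this lemma at all: its ``proof'' consists of a single citation to \cite[Theorem 24.1.1]{Nai68}. You have instead reconstructed the standard argument behind Naimark's splitting method, and it is sound: the decoupled minimal operator $L_1\oplus L_2$ is a closed symmetric restriction of $L$ with $\dim\bigl(\dom(L)/\dom(L_1\oplus L_2)\bigr)=2n$ (the Cauchy data at the regular interior point $c$), the deficiency indices of $L_1,L_2$ are finite, so every self-adjoint extension $\widetilde L$ of $L$ and every decoupled extension $\widetilde L_1\oplus\widetilde L_2$ are both finite-dimensional self-adjoint extensions of the common symmetric operator $L_1\oplus L_2$; invariance of the essential spectrum under such extensions (finite-rank resolvent difference, \cite{Kat80}) then gives $\sigma_{\mathrm{ess}}(\widetilde L)=\sigma_{\mathrm{ess}}(\widetilde L_1)\cup\sigma_{\mathrm{ess}}(\widetilde L_2)$, independently of the extensions chosen. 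This is exactly the mechanism the paper itself deploys elsewhere (compare Lemma~\ref{lm:ess_spec_same} and the use of Proposition~\ref{prop:estimate_morse_self_ajoint} in the proof of Theorem~\ref{thm:bessel-morse-index}, where $L_D$ and $L_{1,D}\oplus L_{2,D}$ are observed to be finite-dimensional extensions of $L_1\oplus L_2$), so your route buys a self-contained proof at essentially no extra cost. The only point to flag is terminological: your finite-rank argument rigorously controls the \emph{essential} spectrum; identifying this with the ``continuous part of the spectrum'' is Naimark's classical convention, and it is the only sense in which the lemma is applied in the paper, so no harm is done, but a fully literal reading of ``continuous spectrum'' (e.g.\ including singular continuous components) would require the slightly stronger statement from \cite{Nai68} rather than bare finite-rank perturbation theory.
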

\begin{proof}
    For the proof we refer the interested reader to \cite[Theorem 24.1.1]{Nai68}
\end{proof}

\begin{prop}\label{thm:Fredholmness}
	Given  $R\in\mathscr  C^0([0,1],\Sym(n))$,  we define $l_R$ to be the differential operator acting on $\mathscr C_0^\infty((0,1), \R^n)$ and given by 
	\[
    l_R:=-\dfrac{d^2}{dt^2}+\dfrac{R(t)}{t^2} \qquad t \in (0,1)
	\] 
    and we assume that $\sigma (R(0))\subset (-1/4,+\infty)$. Denoting by $L_R$ (resp. $L_R^*$) the minimal (resp. maximal) operator  induced by $l_R$, then,  for every $D$ such that $\dom(L_R) < D< \dom(L_R^*)$ and for every $\lambda \in \C$  we get that the  $(L_R +\lambda\Id)|_{D}$ is a Fredholm operator.
\end{prop}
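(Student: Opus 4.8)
The plan is to exhibit a single self-adjoint extension of $L_R$ with compact resolvent, and then transfer Fredholmness to every closed operator squeezed between $L_R+\lambda\Id$ and $L_R^*+\lambda\Id$ by soft Fredholm perturbation theory. The only place where the hypothesis $\sigma(R(0))\subset(-1/4,+\infty)$ is really used is in the first step.

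\emph{Step 1: a self-adjoint extension with compact resolvent.} Since $R$ is continuous at $0$ and all eigenvalues of $R(0)$ exceed $-1/4$, I would first fix $c\in(0,1)$ and $\delta\in(0,1/4]$ with $R(t)\ge(-\tfrac14+\delta)\Id$ for all $t\in(0,c]$, noting that $t^{-2}R(t)\ge -M\Id$ on the compact set $[c,1]$ for some $M>0$. For $u\in\mathscr C_0^\infty((0,1),\R^n)$ the form of $l_R$ is $\mathfrak q[u]=\int_0^1\!\big(|u'|^2+t^{-2}\langle R(t)u,u\rangle\big)\,dt$. Splitting the (vector) Hardy inequality $\int_0^c|u'|^2\ge\tfrac14\int_0^c t^{-2}|u|^2$ as $\int_0^c|u'|^2=4\delta\int_0^c|u'|^2+(1-4\delta)\int_0^c|u'|^2$ and applying it to the second summand gives $\int_0^c\big(|u'|^2+t^{-2}\langle Ru,u\rangle\big)\ge 4\delta\int_0^c|u'|^2$, whence
\[
\mathfrak q[u]\ \ge\ 4\delta\!\int_0^1|u'|^2\,dt\;-\;M\,\norm{u}^2 .
\]
In particular $l_R$ is bounded below, so the Friedrichs extension $L_{R,F}$ exists; combining the last estimate with the elementary upper bound $\mathfrak q[u]\le\big(1+4\max_{[0,c]}\norm{R}\big)\norm{u'}^2+M\norm{u}^2$ (again Hardy on $(0,c)$) shows that the form norm $\mathfrak q[u]+(M+1)\norm{u}^2$ is equivalent to the $W^{1,2}$-norm on $\mathscr C_0^\infty((0,1),\R^n)$. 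Hence the form domain of $L_{R,F}$ is exactly $W^{1,2}_0((0,1),\R^n)$, which embeds compactly into $L^2((0,1),\R^n)$ because $(0,1)$ is bounded; therefore $L_{R,F}$ has compact resolvent. (Equivalently one may localize via the Naimark splitting method, Lemma~\ref{thm:ess_split}: $l_R$ is regular on $(c,1)$, and the Hardy estimate above — now with $R\ge(-\tfrac14+\delta)\Id$ on all of $(0,c]$ — makes the Friedrichs form domain on $(0,c)$ equal to $W^{1,2}_0((0,c),\R^n)$, so neither endpoint contributes to the continuous spectrum of any self-adjoint extension of $L_R$.)

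\emph{Step 2: every closed extension.} Since $(l_R-z)u=0$ is a $2n$-dimensional linear ODE, the deficiency indices of $L_R$ are finite ($\le 2n$) and equal (real coefficients), so $\dom(L_{R,F})/\dom(L_R)$ is finite-dimensional. A Fredholm operator restricted to a graph-closed subspace of its domain of finite codimension is again Fredholm, so $L_R=L_{R,F}\big|_{\dom(L_R)}\in\CFs(H)$; Lemma~\ref{thm:beta-finit-dimensional}(a) then gives that $U=\dom(L_R^*)/\dom(L_R)$ is finite-dimensional, and $L_R^*$, being a finite-dimensional extension of the Fredholm operator $L_{R,F}$, is Fredholm as well. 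For $\lambda\in\C$ I would complexify $H$ to $L^2((0,1),\C^n)$ when $\lambda\notin\R$; then $L_{R,F}+\lambda\Id$ still has compact resolvent (its spectrum is only translated), hence is Fredholm, and the same two observations yield that $L_R+\lambda\Id$ and $L_R^*+\lambda\Id$ are Fredholm. Finally, for any linear subspace $D$ with $\dom(L_R)\subseteq D\subseteq\dom(L_R^*)$ the quotient $D/\dom(L_R)$ is finite-dimensional, so $D$ is graph-closed and $(L_R+\lambda\Id)\big|_D$ is at once a finite-dimensional extension of $L_R+\lambda\Id$ and a finite-codimensional restriction of $L_R^*+\lambda\Id$; either description shows it is Fredholm, which is the assertion.

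\emph{Main difficulty.} The only genuinely analytic point will be Step 1: one must convert the \emph{strict} spectral gap $\sigma(R(0))>-1/4$ into coercivity of the form, whereas the borderline case $R\ge-\tfrac14\Id$ would produce only a nonnegative form whose domain is a strictly larger, logarithmically weighted space for which compactness of the resolvent is far less transparent. The $\theta$-splitting of the Hardy inequality performs exactly that conversion, trading the gap $\delta$ for a fixed fraction $4\delta$ of the Dirichlet energy; everything downstream of it is standard Fredholm bookkeeping.
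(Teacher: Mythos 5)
Your proposal is correct, and it reaches the conclusion by a route that overlaps with the paper's only in its key analytic ingredient. Both arguments exploit the strict gap $\sigma(R(0))\subset(-1/4,+\infty)$ through Hardy's inequality near $t=0$ together with regularity of $l_R$ away from $0$; but the mechanisms for concluding discreteness differ. The paper applies Hardy only to $\phi\in\mathscr C_0^\infty((0,\delta),\R^n)$ to get $\langle L_R\phi,\phi\rangle\ge (c/\delta^2)\|\phi\|^2$, invokes Naimark's splitting method (Lemma~\ref{thm:ess_split}) to localize the essential spectrum of every self-adjoint extension into $[c/\delta^2,+\infty)$, and then lets $\delta\to 0$ to conclude $\sigma_{\mathrm{ess}}(L_R)=\emptyset$. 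You instead keep a fixed $c$ and use the $\theta$-splitting of Hardy's inequality to retain a fraction $4\delta$ of the Dirichlet energy, obtaining global coercivity of the form on $W^{1,2}_0((0,1),\R^n)$ and hence, via the Rellich compact embedding, compact resolvent of the Friedrichs extension; for a self-adjoint operator this is of course equivalent to empty essential spectrum, so the two intermediate facts coincide. What your version buys is a self-contained quantitative estimate (the form norm is equivalent to the $W^{1,2}$ norm, identifying the form domain explicitly) at the price of the extra splitting trick; the paper's version avoids that trick but leans on the Naimark theorem and the limiting argument in $\delta$. Your Step 2 — transferring Fredholmness to every intermediate restriction $(L_R+\lambda\Id)|_D$ and to complex $\lambda$ by finite-dimensional extension/restriction bookkeeping — is the part the paper leaves implicit (it is the content of its Lemmas~\ref{lem:dim_factor_space} and~\ref{lm:ess_spec_same}), and you handle it correctly, including the graph-closedness of $D$ and the complexification for $\lambda\notin\R$.
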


\begin{proof}
The proof of this result readily follows once we prove that 
\[
\sigma_{\mathrm{ess}}(L_R) = \emptyset.
\]
To do so, we apply Lemma~\ref{thm:ess_split} to the interval  $(0,1)=(0,\delta)\cup(\delta,1)$. We denote by $L_R^1$ (resp. $L_R^2$) the minimal operator corresponding to the differential operator $l_R^1$ (resp. $l_R^2$) obtained by restricting $l_R$ on $\mathscr C_0^\infty((0,\delta), \R^n)$ (resp. $\mathscr C_0^\infty((\delta,1), \R^n)$). Let \( \phi \in C^{\infty}_0((0,1), \R^n) \). By Hardy's inequality, we have
\[
\langle L_R\phi, \phi \rangle = \langle \dot\phi, \dot\phi \rangle + \left\langle \dfrac{R(t)}{t^2}\phi, \phi \right\rangle \ge \left\langle \dfrac{1/4 + R(t)}{t^2}\phi, \phi \right\rangle.
\]
We note that \( \min(\sigma(R(t)) \) is continuous on \( [0,1] \). So, there exists \( \delta > 0 \) such that \( \min(\sigma(R(t)) \ge -1/4 + c \) on \( [0,\delta] \), for some \( c > 0 \).
It follows that
\[
\langle L_R\phi, \phi \rangle \ge c\left\|\dfrac{\phi}{t}\right \|^2 \ge \dfrac{c}{\delta^2} \|\phi\|^2
\qquad \textrm{ for any } \qquad \phi \in \mathscr C^{\infty}_0((0,\delta), \R^n).
\]
Let \( L^i_{R,F} \) be the Friedrich extensions of \( L_R^i \) for $i=1,2$. 
It follows that
\[
\sigma_{\mathrm{ess}}(L^1_{R,F})\subset \sigma(L^1_{R,F}) \subset \left[\dfrac{c}{\delta^2}, +\infty\right).
\]
Since \( l_R \) is regular on \( [\delta,1] \), we have \( \sigma_{\mathrm{ess}}(L^2_{R,F}) = \emptyset \). Therefore,
\[
\sigma_{\mathrm{ess}}(L^1_{R,F} \oplus L^2_{R,F}) \subset \left[\dfrac{c}{\delta^2}, +\infty\right).
\]
By Lemma~\ref{thm:ess_split}, we conclude that
\[
\sigma_{\mathrm{ess}}(L_R) \subset \left[\dfrac{c}{\delta^2},+\infty\right)\qquad  \forall\,  \delta>0.
\]
Then $\sigma_{\mathrm{ess}}(L_R)=\emptyset$. The proof now follows by the arbitrariness of $\delta$.
\end{proof}
\begin{thm}\label{thm:Fredholmness-Bessel-perturbation}
    Let $b \in   \mathscr C^0([0,1],\Sym(n))$  and let $L_q$ be the minimal operator of the Bessel operator defined at Corollary~\ref{thm:corollary7-4}. Denoting by $L_b$ the operator pointwise defined by $b$, then  for any $q \in \R$ we get that any  self-adjoint extension of $L_q+L_b$ is a Fredholm operator.
\end{thm}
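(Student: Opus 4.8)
The plan is to reduce the statement to the two results already proved in this subsection by absorbing the bounded perturbation into the singular coefficient. First I would set $R(t) := q\,I_n + t^{2}\,b(t)$ for $t\in[0,1]$, where $I_n$ is the identity matrix; since $b\in\mathscr C^0([0,1],\Sym(n))$ we have $R\in\mathscr C^0([0,1],\Sym(n))$, and on $(0,1]$
\[
\frac{R(t)}{t^{2}}=\frac{q}{t^{2}}+b(t),
\]
so that the Bessel-type expression $l_R:=-\tfrac{d^{2}}{dt^{2}}+\tfrac{R(t)}{t^{2}}$ is exactly $l_q+b$ as a differential expression. The first step is then to pass from expressions to operators: since multiplication by $b$ is a bounded self-adjoint operator on $H=L^2([0,1],\R^n)$, Lemma~\ref{thm:lemma-same-domain-perturbation} gives $\dom(L_q+L_b)=\dom(L_q)$ and $(L_q+L_b)^{*}=L_q^{*}+L_b$; together with the elementary fact that the closure of the sum of a closable operator with an everywhere-defined bounded operator on a common dense domain is the sum of the closures, this identifies the minimal operator $L_R$ attached to $l_R$ with $L_q+L_b$. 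Consequently $L_R^{*}=L_q^{*}+L_b$, and every self-adjoint extension of $L_q+L_b$ is a restriction $L_R^{*}|_D$ for some $D$ with $\dom(L_R)<D<\dom(L_R^{*})$.

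Next I would split according to the sign of $q+\tfrac14$. If $q>-\tfrac14$, then $\sigma\bigl(R(0)\bigr)=\sigma(q\,I_n)=\{q\}\subset(-1/4,+\infty)$, so Proposition~\ref{thm:Fredholmness} applies (with $\lambda=0$) and yields that $L_R^{*}|_D$ is Fredholm for every admissible $D$; in particular every self-adjoint extension of $L_q+L_b$ is Fredholm. If instead $q\le-\tfrac14$, then $q<3/4$, so Corollary~\ref{thm:corollary7-4} applies with $B=L_b\in\Bsa(H)$ and gives that every self-adjoint extension of $L_q+L_b$ has compact resolvent. An operator with compact resolvent has purely discrete spectrum, so $0$ is either a regular point or an isolated eigenvalue of finite multiplicity; in both cases the range is closed and kernel and cokernel are finite-dimensional, hence the operator is Fredholm (indeed of index zero). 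Since $\{q>-\tfrac14\}\cup\{q\le-\tfrac14\}=\R$, all values of $q$ are covered.

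The argument is largely bookkeeping on top of Proposition~\ref{thm:Fredholmness} and Corollary~\ref{thm:corollary7-4}, so I do not expect a serious obstacle; the one point deserving care is the operator-level identification $L_R=L_q+L_b$ (rather than a mere equality of differential expressions), since both minimal operators are defined as closures. The case distinction is genuinely necessary: for $q\ge 3/4$ only Proposition~\ref{thm:Fredholmness} is available, as Corollary~\ref{thm:corollary7-4} requires $q<3/4$, whereas for $q\le-\tfrac14$ the coefficient $R(0)$ fails the hypothesis $\sigma(R(0))\subset(-1/4,+\infty)$ and one must instead invoke the compact-resolvent result.
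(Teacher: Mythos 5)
Your proposal is correct and follows essentially the same route as the paper: set $R(t)=q\,I_n+t^{2}b(t)$, then invoke Proposition~\ref{thm:Fredholmness} when $q>-\tfrac14$ and Corollary~\ref{thm:corollary7-4} (compact resolvent, hence Fredholm) when $q\le-\tfrac14$. The extra care you take with the operator-level identification $L_R=L_q+L_b$ via Lemma~\ref{thm:lemma-same-domain-perturbation} is a point the paper leaves implicit, and is a welcome addition.
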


\begin{proof}
    We let $R(t)=q\,\Id+ t^2\, b(t)$. If 
    \begin{itemize}
        \item $q>-1/4$, by invoking Proposition~\ref{thm:Fredholmness}, we get that any self-adjoint extension of $L_q+L_b$ is Fredholm
        \item $q\leq -1/4$, then by invoking Corollary~\ref{thm:corollary7-4} (which works for $q<3/4$), we get that $L_q+L_b$ has a compact resolvent and in particular it is Fredholm.
    \end{itemize}
    This concludes the proof. 
\end{proof}
Let us now consider the Bessel-type operator on $C_0^\infty((1,+\infty),\R^n)$  defined by 
\[
l_R:=-\dfrac{d^2}{dt^2}+\dfrac{R(t)}{t^2} \qquad \textrm{ for } \qquad  t \in [1,+\infty).
\]
We observe that if $\lim_{t \to \infty}t^{-2}R(t)=0$, then $l_R$  is not Fredholm. Nevertheless, since the essential spectrum is contained in $[0,+\infty)$, it is possible to  study its Morse index.

\begin{thm}\label{thm:bessel-morse-index}
Let $l_R$ be the operator defined above on $\mathscr C_0^\infty((1,+\infty),\R^n)$. 
\begin{itemize}
\item{\bf Case 1.} If  $\liminf_{t\to +\infty} R(t)>-1/4 $, then any self-adjoint extension of it has finite Morse index.
\item{\bf Case 2.} If $\limsup_{t\to +\infty} R(t)<-1/4  $, then any self-adjoint extension of it has infinite Morse index.
\end{itemize}
\end{thm}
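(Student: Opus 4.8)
The plan is to reduce the theorem to a question about the bottom of the essential spectrum of the Friedrichs extension, and then to exploit the scalar Bessel computation together with the splitting method (Lemma~\ref{thm:ess_split}) and a comparison argument via Hardy's inequality, exactly as in the proof of Proposition~\ref{thm:Fredholmness}. First I would observe that since all self-adjoint extensions of $l_R$ on $\mathscr C_0^\infty((1,+\infty),\R^n)$ differ by a finite-dimensional perturbation (the factor space $W$ is finite-dimensional by Lemma~\ref{thm:beta-finit-dimensional}, as $t=1$ is regular), the finiteness or infiniteness of the Morse index is independent of the chosen extension; hence it suffices to treat the Friedrichs extension $L_{R,F}$, whose Morse index equals $\iMor(t_{L_R})$ by the results of Appendix~\ref{appendix:friedrichs}.

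For \textbf{Case 1}, suppose $\liminf_{t\to+\infty}R(t)>-1/4$, so there exist $c>0$ and $T>1$ with $\min\sigma(R(t))\ge -1/4+c$ for $t\ge T$. Split $(1,+\infty)=(1,T)\cup(T,+\infty)$ and apply Lemma~\ref{thm:ess_split}. On $(1,T)$ the operator is regular, so any self-adjoint extension has empty essential spectrum and finite Morse index. On $(T,+\infty)$, Hardy's inequality gives, for $\phi\in\mathscr C_0^\infty((T,+\infty),\R^n)$,
\[
\langle l_R\phi,\phi\rangle=\|\dot\phi\|^2+\Big\langle\frac{R(t)}{t^2}\phi,\phi\Big\rangle\ge\Big\langle\frac{1/4+R(t)}{t^2}\phi,\phi\Big\rangle\ge c\Big\|\frac{\phi}{t}\Big\|^2\ge 0,
\]
so the Friedrichs extension on $(T,+\infty)$ is nonnegative, has no negative spectrum at all, and its essential spectrum lies in $[0,+\infty)$. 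By the splitting lemma $\sigma_{\mathrm{ess}}(L_{R,F})\subset[0,+\infty)$, and a minimax argument (the negative eigenspace must localize its mass, up to a small error, in the compact part $(1,T)$ after cutting off, essentially as in the proof of Theorem~\ref{thm:limit_morse_index}) shows $\iMor(L_{R,F})<\infty$.

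For \textbf{Case 2}, suppose $\limsup_{t\to+\infty}R(t)<-1/4$, so there exist $\varepsilon>0$ and $T>1$ with $\max\sigma(R(t))\le -1/4-\varepsilon$ for $t\ge T$. Here I would exhibit infinitely many linearly independent test functions with negative energy. Working in the scalar case first (the vector case follows by tensoring with a fixed unit vector, since the quadratic form is diagonal in the $R(t)$-eigendecomposition asymptotically, or more simply by a domination argument), one uses the explicit oscillatory solutions from Equation~\eqref{eq:sol_singular}: when the potential coefficient drops below $-1/4$, the equation $-\ddot u+\frac{q}{t^2}u=0$ has solutions of the form $t^{1/2}\cos(r\ln t)$, $t^{1/2}\sin(r\ln t)$ with $r>0$, which oscillate infinitely often as $t\to+\infty$ (in the variable $\tau=\ln t$ the equation becomes a constant-coefficient oscillator). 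Cutting these oscillations off between consecutive zeros on dyadic-in-$\ln t$ intervals $[T_k,T_{k+1}]$ produces an infinite sequence of disjointly supported bumps $\phi_k$; a direct estimate using $R(t)\le(-1/4-\varepsilon)\Id$ and Hardy's inequality in the reverse direction (subtracting the critical Hardy weight) shows $\langle l_R\phi_k,\phi_k\rangle<0$ for each $k$. Disjoint supports give orthogonality of the $\phi_k$ in both $L^2$ and the form sense, so the negative cone of $t_{L_R}$ is infinite-dimensional and $\iMor(L_{R,F})=+\infty$; since any other self-adjoint extension differs only by a finite-dimensional correction, the Morse index is infinite for all of them.

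The main obstacle is the construction and energy estimate in Case~2: one must be careful that subtracting the critical Hardy weight $1/(4t^2)$ leaves a genuinely negative—not merely nonpositive—remainder, which forces the cutoffs to be placed precisely at the nodes of the oscillatory solution so that the boundary terms from integration by parts vanish and the strictly negative bulk term $-\varepsilon\|\phi_k/t\|^2$ survives against the cutoff error; controlling that cutoff error requires choosing the supports to grow geometrically in $\ln t$ so the $\|\dot\chi_k\|^2$ contributions are summably small relative to the negative term. The vector-valued bookkeeping (reducing to the scalar threshold $-1/4$ via $\min$/$\max$ of $\sigma(R(t))$) is routine once the scalar case is in hand.
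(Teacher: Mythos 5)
Your proposal is correct and follows the paper's strategy for the reduction and for Case~1 (split $(1,+\infty)$ at a finite $M$, observe that passing between self-adjoint extensions and between $L_D$ and $L_{1,D}\oplus L_{2,D}$ costs only a finite-dimensional perturbation of the form domain, handle the compact piece as a regular problem, and kill the tail with Hardy's inequality). One small caution there: the clause ``$\sigma_{\mathrm{ess}}(L_{R,F})\subset[0,+\infty)$ \dots\ shows $\iMor(L_{R,F})<\infty$'' is not a valid inference on its own, since negative eigenvalues can accumulate at $0$ even when the essential spectrum is nonnegative (hydrogen-type operators); what actually closes Case~1 is exactly the finite-dimensional-perturbation bound of Proposition~\ref{prop:estimate_morse_self_ajoint} together with $\iMor(L_{2,D})=0$, which you do have.

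Where you genuinely diverge from the paper is Case~2. The paper does not build test functions: it reduces to the constant-coefficient comparison operator $-\tfrac{d^2}{dt^2}+\tfrac{q}{t^2}$ with $q<-1/4$ and then invokes its own singular Morse index theorem (Theorem~\ref{thm:limit_morse_index} together with Remark~\ref{rem:limit_morse_index}, which covers the infinite case), so that the Morse index equals the number of conjugate points; the explicit solution $\tfrac{1}{r}t^{1/2}\sin(r\ln t)$ has infinitely many zeros, hence infinitely many conjugate points, hence infinite index. You instead construct an explicit infinite-dimensional negative cone by truncating the oscillatory solution between consecutive nodes and using the strict gap $R(t)\le(-1/4-\varepsilon)\Id$ to make each bump strictly negative. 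Your route is more elementary and self-contained (classical oscillation-theory style), and it is sound: cutting exactly at the zeros makes each truncation lie in $W^{1,2}_0$ of its subinterval with comparison energy exactly zero, so the surplus $-\varepsilon\|\phi_k/t\|^2$ is all you need, and the ``cutoff error'' you worry about in your final paragraph does not actually arise. The paper's route buys consistency with the index-theoretic machinery it has just developed (and reuses the same mechanism in Theorem~\ref{thm:bessel-morse-index-on-0-1} and in the $N$-body application), at the cost of relying on the validity of Theorem~\ref{thm:limit_morse_index} in the infinite-index regime.
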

\begin{proof}
Let $M>1$. By  Using splitting method, let $l_1=l|_{C_0^\infty((1,M),\R^n)}$ and $l_2=l|_{C_0^\infty((M,+\infty),\R^n)}$ .
Let $L,L_1,L_2$ be the minimal operators corresponding to $l,l_1,l_2$, respectively.
Let  $L_D,L_{1,D},L_{2,D}$ be the self-adjoint operators associated to  $l,l_1,l_2$ with Dirichlet boundary conditions, respectively.
We observe that $L_D$ and $L_{1,D}\oplus L_{2,D}$ are both finite-dimensional self-ajoint extensions of $L_1\oplus L_2$. 

By Proposition~\ref{prop:estimate_morse_self_ajoint}, we have
\[
|\iMor(L_D)-\iMor(L_{1,D}\oplus L_{2,D})| < \infty.
\]
Moreover $\iMor(L_{1,D}\oplus L_{2,D})=\iMor(L_{1,D})+\iMor(L_{2,D})$ where $\iMor(L_{1,D})<\infty$. So, we only need to study $L_{2,D}$.
\paragraph{Case 1.} We assume that  $\liminf_{t\to +\infty} R(t)>-1/4 \Id$. Let $M>0$ such that $R(t)>-1/4 $ for $ t\ge M$.
Then, for each $u\in C_0^\infty((M,+\infty),\R^n)$, by Hardy's inequality, we have
\[
\langle l_2u,u\rangle>0
\]
and so $\iMor (L_{2,D})=0$. Then we get $\iMor(L_2)<\infty$ .

\paragraph{Case 2.} We assume that $\limsup_{t\to +\infty}R(t) <-1/4 \Id $. Let $M>0$ such that $R(t)<-1/4 \Id$ for  $t\ge M$.
We now prove that for $q<-1/4$, the operator $-\dfrac{d^2}{dt^2}+\dfrac{q}{t^2}$ on $W^{2,2}((1,+\infty), \R^n)$ with Dirichlet boundary condition has infinite Morse index. 

By Theorem~\ref{thm:limit_morse_index} and Remark \ref{rem:limit_morse_index}, we only need to show that a solution of the equation with Dirichlet boundary condition has infinitely many zeroes.
By Equation~\eqref{eq:sol_singular}, for $q<-1/4$, the SL-equation has the following solution
\[
\dfrac{1}{r}t^{1/2}\sin(r \ln t) \qquad \textrm{ for } \qquad q=-1/4-r^2 
\]
having  infinitely many zeros. This concludes the proof.
\end{proof}

\begin{thm}\label{thm:bessel-morse-index-on-0-1}
Let $l_R$ be the operator defined above on $\mathscr C_0^\infty((0,1],\R^n)$. 
\begin{itemize}
\item{\bf Case 1.} If  $\liminf_{t\to 0^+} R(t)>-1/4 $, then any self-adjoint extension  has finite Morse index.
\item{\bf Case 2.} If $\limsup_{t\to 0^+} R(t)<-1/4  $, then any self-adjoint extension has infinite Morse index.
\end{itemize}
\end{thm}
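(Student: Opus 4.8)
The plan is to run the argument of Theorem~\ref{thm:bessel-morse-index} with the roles of the two endpoints interchanged, so that everything is localized near the singular endpoint $t=0$. First I would fix $\delta\in(0,1)$, write $l_1,l_2$ for the restrictions of $l_R$ to $\mathscr C_0^\infty((0,\delta),\R^n)$ and $\mathscr C_0^\infty((\delta,1),\R^n)$, and let $L_1,L_2$ be the corresponding minimal operators. The operator $l_2$ is regular, so every self-adjoint extension of $L_2$ has finite Morse index. Moreover $\dom(L_1)\oplus\dom(L_2)\subset\dom(L_R)$, and the quotient $\dom\bigl((L_1\oplus L_2)^*\bigr)\big/\bigl(\dom(L_1)\oplus\dom(L_2)\bigr)$ is finite-dimensional (the minimal operator of a Bessel-type expression is Fredholm by Theorem~\ref{thm:Fredholmness-Bessel-perturbation}, and $l_2$ is regular), so every self-adjoint extension of $L_R$ is a finite-dimensional self-adjoint extension of $L_1\oplus L_2$. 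By Proposition~\ref{prop:estimate_morse_self_ajoint} its Morse index therefore differs by a finite amount from $\iMor(\widehat L_1)+\iMor(\widehat L_2)$ for any fixed self-adjoint extensions $\widehat L_i$ of $L_i$; since $\iMor(\widehat L_2)<\infty$, the whole statement reduces — uniformly for both cases — to deciding whether the self-adjoint extensions of $L_1$ have finite or infinite Morse index, and, again by the finite-rank comparison, this does not depend on the chosen extension.

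For Case~1 I would pick $\delta$ and $c>0$ with $R(t)\ge(-\tfrac14+c)\Id$ on $(0,\delta]$, which is possible because $\liminf_{t\to0^+}R(t)>-1/4$. Hardy's inequality then gives, for $\phi\in\mathscr C_0^\infty((0,\delta),\R^n)$,
\[
\langle l_1\phi,\phi\rangle=\|\dot\phi\|^2+\Bigl\langle\tfrac{R(t)}{t^2}\phi,\phi\Bigr\rangle\ \ge\ \Bigl\langle\tfrac{\frac14\Id+R(t)}{t^2}\phi,\phi\Bigr\rangle\ \ge\ c\,\Bigl\|\tfrac{\phi}{t}\Bigr\|^2\ \ge\ 0 ,
\]
so the quadratic form of $l_1$ is nonnegative, its Friedrichs extension is nonnegative and hence has Morse index $0$; by the reduction above every self-adjoint extension of $L_R$ has finite Morse index. (When $R$ is continuous up to $0$ one may equivalently invoke Theorem~\ref{thm:Fredholmness}, whose proof yields $\sigma_{\mathrm{ess}}(L_R)=\emptyset$, together with the displayed lower bound, to conclude that only finitely many eigenvalues lie below $0$.)

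For Case~2 I would pick $\delta$ and $c>0$ with $R(t)\le(-\tfrac14-c)\Id$ on $(0,\delta]$. Componentwise, for $\phi=(\phi_1,\dots,\phi_n)\in\mathscr C_0^\infty((0,\delta),\R^n)$,
\[
\langle l_1\phi,\phi\rangle\ \le\ \sum_{j=1}^n\Bigl(\|\dot\phi_j\|^2+\Bigl\langle\tfrac{-\frac14-c}{t^2}\phi_j,\phi_j\Bigr\rangle\Bigr) ,
\]
so the form of $l_1$ is dominated by $n$ copies of the scalar Bessel form with coefficient $q=-\tfrac14-c<-\tfrac14$. By Equation~\eqref{eq:sol_singular} the scalar equation $-u''+\tfrac{q}{t^2}u=0$ has the oscillatory solution $t^{1/2}\sin(\sqrt{c}\,\ln t)$, whose zeros $t_k=\exp(-k\pi/\sqrt{c})$ accumulate at $0$, so every nontrivial solution has infinitely many zeros inside $(0,\delta)$. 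For each $\sigma\in(0,\delta)$ the scalar operator restricted to $(\sigma,\delta)$ with Dirichlet boundary conditions is regular, and by the classical Morse index theorem for regular problems (the identity used in Lemma~\ref{thm:Morse-index-regular} and in Theorem~\ref{thm:limit_morse_index}) its Morse index equals the number of conjugate points, i.e.\ the number of zeros in $(\sigma,\delta)$ of the Dirichlet solution, which tends to $\infty$ as $\sigma\to0^+$. Since $\mathscr C_0^\infty((\sigma,\delta),\R^n)\subset\dom(L_{1,\Lambda})$ for every self-adjoint extension $L_{1,\Lambda}$ of $L_1$, the comparison above produces, for each $\sigma$, a subspace of $\dom(L_{1,\Lambda})$ of dimension at least this scalar count on which $\langle l_1\,\cdot\,,\,\cdot\,\rangle$ is negative definite; letting $\sigma\to0^+$ forces $\iMor(L_{1,\Lambda})=\infty$, hence every self-adjoint extension of $L_R$ has infinite Morse index.

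I expect the delicate point to be the last step of Case~2: unlike in Theorem~\ref{thm:bessel-morse-index}, the singularity now lies \emph{inside} the interval of $l_1$, so $l_1$ on $(0,\delta)$ need not be bounded from below, there is no Friedrichs extension, and Theorem~\ref{thm:limit_morse_index} cannot be applied to it directly. The remedy is exactly the localization described above — test the quadratic form against functions supported in a shrinking subinterval $(\sigma,\delta)$, where the operator is regular, use the regular count there, and pass to the limit $\sigma\to0^+$ — combined with the elementary componentwise comparison that converts each negative direction of the scalar model into a negative direction of the $\R^n$-valued operator. The remaining checks are routine: the finite-dimensionality of the domain quotients in the reduction step, and the compatibility of the quadratic forms with the extension-by-zero inclusions $\mathscr C_0^\infty((\sigma,\delta),\R^n)\hookrightarrow\mathscr C_0^\infty((0,1),\R^n)$.
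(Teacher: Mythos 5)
Your proposal is correct and follows essentially the same route as the paper: Naimark's splitting of $(0,1)$ at an interior point $\delta$, Proposition~\ref{prop:estimate_morse_self_ajoint} to reduce to the singular piece, Hardy's inequality for Case~1, and the oscillation of the explicit solution $\tfrac{1}{r}t^{1/2}\sin(r\ln t)$ from Equation~\eqref{eq:sol_singular} for Case~2. The one place where you genuinely diverge is the final step of Case~2: the paper simply invokes Theorem~\ref{thm:limit_morse_index} together with Remark~\ref{rem:limit_morse_index} to convert ``infinitely many zeros'' into ``infinite Morse index,'' even though that theorem is stated under (H3) (semiboundedness), which fails precisely in Case~2; you instead exhaust the singular interval by regular subintervals $(\sigma,\delta)$, apply the regular Morse index theorem there, and let $\sigma\to 0^+$, feeding the scalar negative directions into one component of the $\R^n$-valued form. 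Your version is the more defensible one — it only ever uses the conjugate-point count where the operator is regular and bounded below — at the cost of a slightly longer argument; the paper's shortcut buys brevity but rests on an application of Theorem~\ref{thm:limit_morse_index} outside its stated hypotheses, which your localization repairs.
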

\begin{proof}
The proof of the Case 1 is similar to the corresponding of Theorem~\ref{thm:bessel-morse-index} and essentially based on splitting the interval $(0,1)$ into $(0,c]\cup[c,1]$ and then on using Hardy's inequality on $\mathscr C_0^\infty((0,c),\R^n)$.

The proof of Case 2 is also analogous to the corresponding of Theorem~\ref{thm:bessel-morse-index}  since $\dfrac{1}{r}t^{1/2}\sin(r \ln t) $  also has infinite zeros on $(0,c)$ for any $c>0$. This concludes the proof. 
\end{proof}


\section{Asymptotic  solutions of the N-body problem}\label{sec:N-bp}

The goal of this section is to prove at once and without any involved blow-up techniques the main results recently proved by authors in \cite{BHPT20, HOY21, OP25} with some suitable ad-hoc methods.  When a solution of the \(n\)-body problem does not experience any collision or noncollision singularity in the future or the past, then a natural and important question is about the final motion of the masses as times goes positive or negative infinity. A classification of possible final motions were listed by Chazy. In this section we will focus on total collision singularity as wellas the two of the simplest unbounded motions provided by  the total parabolic/hyperbolic motion, which for simplicity will be referred as parabolic/hyperbolic motion.

\subsection{A description of the problem}
The Newtonian \(n\)-body problem studies the motion of \(n\) point masses, \(m_{i}>0\), according to Newton's law of universal gravitation. Let \(M=\operatorname{diag}\left(m_{1} I_{3}, \ldots, m_{n} I_{3}\right)\) be the mass matrix, where \(I_{3}\) is the \(3 \times 3\) identity matrix with \(d \geq 1\). Then \(q=\left(q_{i}\right)_{i=1}^{n}\left(q_{i} \in \mathbb{R}^{d}\right.\) represents the position of \(\left.m_{i}\right)\) satisfies the {\bf Newton's equation}
\begin{equation}\label{eq:Newton-n-body}
M \ddot{q}=\nabla U(q) \quad \textrm{ where } \quad 
U(q)=\sum_{1 \leq i<j \leq n} \frac{m_{i} m_{j}}{\left|q_{i}-q_{j}\right|}
\end{equation}
 is the potential function (the negative potential energy) and \(\nabla\) is the gradient with respect to the Euclidean metric.

The solutions of Equation~\eqref{eq:Newton-n-body} are invariant under linear translations, so there is no loss of generality to restrict ourselves to the \(n^{*}:=d(n-1)\) dimensional subspace
\[
\mathcal{X}:=\left\{q \in \mathbb{R}^{3 n}: \sum_{i=1}^{n} m_{i} q_{i}=0\right\}
\]
where the center of mass is fixed at the origin. Let \(T \mathcal{X}\) be the tangent bundle of \(\mathcal{X}\). The Lagrangian \(L: T \mathcal{X} \rightarrow[0,+\infty) \cup\) \(\{+\infty\}\)
\[
L(q, v)=K(v)+U(q), \text { where } K(v):=\frac{1}{2}|v|_{M}^{2}:=\frac{1}{2}\langle M v, v\rangle,
\]
has singularities at the collision configurations
\[
\Delta=\bigcup_{1 \leq i<j \leq n} \Delta_{i j}, \quad \text { where } \Delta_{i j}=\left\{q \in \mathcal{X}: q_{i}=q_{j}\right\}
\]
It is well-known that the Lagrangian action functional
\[
\mathcal{A}\left(q\right):=\int_a^b L(q(t), \dot{q}(t)) \mathrm{d} t
\]
is \(C^{2}\) on $W^{1,2}\left(\left[a,b\right], \hat{\mathcal{X}}\right)$ where $ \hat{\mathcal{X}}:=\mathcal{X} \backslash \Delta$ represents collision-free configurations, and any collision-free critical point of \(\mathcal{A}\) is a classical solution of the Newton's equation. 

Because the Newtonian gravity is a weak force, the action value of a path with collisions could still be finite. This means the critical points obtained using variational methods may contain a subset of collision moments with zero measure and only satisfies Equation~\eqref{eq:Newton-n-body} in the complement of it. Such solutions were named generalized solutions by Bahri and Rabinowitz.

When a solution of the \(n\)-body problem does not experience any collision or non-collision singularity  in the future or the past, then a natural and important question is about the final motion of the masses as times goes positive or negative infinity. A classification of possible final motions was listed by Chazy. In this section we will focus on total colliding motions and on two of the simplest classes of unbounded motions, namely the completely parabolic/hyperbolic  motions.
\begin{defn}
If $q\in \mathscr C^2((0, \infty), \hat{\mathcal{X}})$ (or $\mathscr C^2((-\infty, 0), \hat{\mathcal{X}}))$ is a solution of Equation~\eqref{eq:Newton-n-body}, we say that 
\begin{itemize}
\item $q$ is a {\bf (completely) parabolic motion}, if 	$\lim_{t \to \pm \infty} |q_i(t)-q_j(t)|=\infty$, for all $i\neq j$, and $\lim_{t \to \pm \infty} \dot q_i(t)=0$, for all $i$
\item $q$ is a {\bf hyperbolic motion} if $\lim_{t \to \pm \infty} |q_i(t)-q_j(t)|=\infty$, for all $i\neq j$, and $\lim_{t \to \pm \infty} \dot q_i(t)$ exists for all $i$, and the limits are different from each other. 
\end{itemize}
Moreover $q(\pm \infty)$ will be called a {\bf parabolic} or {\bf hyperbolic infinity} accordingly. 
\end{defn}
Under polar coordinates, when $q(\pm\infty)$ is a hyperbolic infinity, by Chazy we get that $s(t)$ converges to some $s^\pm \in \mathcal E$ as $t \to \pm \infty$. On the other hand if $q(\pm\infty)$ is a parabolic infinity, just like the case of a total collision, $s(t)$ converges to the set of {\bf normalized central configurations}, as $t \to \pm \infty$. We are now in position to introduce the following. 
\begin{defn}
A solution $q \in \mathscr C^2([0, T),\hat{\mathcal{X}})$ will be called an {\bf asymptotic solution}, if it satisfies the following conditions: 
\begin{itemize}
	\item $q(T)$ is a {\bf total collision solution} iff $T$ is finite
	\item $q(T)$ is a {\bf parabolic/hyperbolic infinity} if and only if $T=\infty$.
\end{itemize}	
\end{defn}
\begin{rem}
The analogous definition works for solutions $q	 \in \mathscr C^2((T, 0]),\hat{\mathcal{X}})$. In the above definition we did not consider solutions with partial collisions, as well as other types of final motion like elliptic-parabolic, elliptic-hyperbolic, parabolic-hyperbolic and so on that were listed by Chazy.
\end{rem}
By linearizing the Equation~\eqref{eq:Newton-n-body} along the solution $q$ we end-up with the following variational equation 
\begin{equation}\label{eq:linearized}
M\,\ddot x(t)= B(t)\, x(t) \qquad t \in [0, T) 
\end{equation}
where $B(t)=D^2 U(q(t))$ and where the Hessian matrix  $D^2U(q)$ is the $3n \times 3n$ block symmetric matrix with $3\times 3$ blocks of the form
\begin{multline}\label{eq:Hessian-U}
	D_{ij}= \dfrac{m_im_j}{r_{ij}^3}\Big[\Id - 3 u_{ij}\trasp{u}_{ij}\Big] \qquad \textrm{ for } i \neq j\qquad \textrm{ and }\qquad  
	D_{ii}=-\sum_{i \neq j}D_{ij} \\ \qquad \textrm{ where }\qquad r_{ij}=|q_i-q_j| \textrm{ and } \qquad u_{ij}= \dfrac{q_i-q_j}{r_{ij}}.
\end{multline}
By introducing the polar coordinates
\begin{align}
& r=\sqrt{I(q)} && s=\left(s_{i}\right)_{i=1}^{n}=q / r=\left(q_{i} / r\right)_{i=1}^{n}
\end{align}
where \(I(q)=\langle M q, q\rangle\) is the moment of inertia and \(\mathcal{E}:=\{q \in \mathcal{X}: I (q)=1\}\) is the set of normalized configurations, one finds that as \(t \rightarrow T\), the radial part \(r(t)\) satisfies the Sundman-Sperling estimate  and \(s(t)=q(t) / r(t)\) converges to the set of normalized central configurations, as \(t \rightarrow T\) (it is not clear whether there will be a definite limit). 

We recall that, a {\bf central configuration} $q_0$ is a solution of the following equation
\[
\mu M q_0 + \nabla U(q_0)=0 \quad \textrm{ where } \quad \mu= \dfrac{U(q_0)}{I(q_0)}.
\]
A {\bf normalized central configuration} \(s \in \mathcal{E}\) is where the gradient of \(U\) restricted on $\mathcal{E}$, vanishes. So,
\[
\left.\nabla U\right|_{\mathcal{E}}(s)=\nabla U(s)+U(s) M s=0.
\]
At a normalized central configuration the Hessian  can be represente with respect to the Euclidean product by the matrix $D^2 U(q)+ U(q)\cdot M$ or with respect to the  mass inner product by 
\[
M^{-1} D^2 U(q)+ U(q).
\]
It is worth noticing that, since
\begin{equation}\label{eq:fava}
\dfrac{D^2U(q)\cdot I(q)}{U(q)}= \dfrac{D^2 U(\mu q)\cdot I(\mu q)}{U(\mu q)}
\end{equation}
then the function doesn't depend on $\mu$.
\begin{rem}
We observe that $D_{ij}=D_{ji}$. Moreover, it's easy to check that  the following three vectors 
\begin{equation}
    \widetilde e_1= (e_1, \ldots, e_1)\qquad 
     \widetilde e_2= (e_2, \ldots, e_2)\qquad 
    \widetilde e_3= (e_3, \ldots, e_1)
\end{equation}
belong to $\ker D^2 U(q)$.
\end{rem}
 The following result is crucial for establishing the spectral properties of the linearized operator along a total collisions and the parabolic/hyperbolic.
 \begin{lem}\label{thm:asympt-estimates}
 We assume that 
 \begin{itemize}
 \item[(a)] $q(T)$ is a total collision or a solution parabolic at infinity having limit normalized central configuration $a$. Then the following asymptotic estimates,  hold
 \begin{equation}
 \lim_{t \to T^\pm} r(t)\, [\beta(t)]^{-2/3}=\left[\dfrac{9}{2}U(a)\right]^{1/3}\\
 \end{equation}
 where 
 \[
  \beta(t)= \begin{cases}\left|t-T^\pm\right| & \textrm{ when } q\left(T^\pm\right) \text {is a total collision, } \\[3pt]
   |t| & \textrm{ when } q\left(T^\pm\right) \textrm{ is a parabolic infinity }\end{cases}
  \]	
  \item[(b)] $q(T)$ is hyperbolic at infinity, then 
\begin{align}
 &\lim_{t \to T} r(t)\,|t|^{-1}=\sqrt{2 H_0}
 &&\lim_{t \to T}	 |\dot r(t)|=\sqrt{2 H_0} &&
 \lim_{t \to T} r(t)|\dot s(t)|_M=0 
 \end{align}
 \end{itemize}
 \end{lem}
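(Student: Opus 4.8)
The plan is to derive both sets of asymptotics from the Sundman--Sperling estimates together with the energy relation and the behavior of the shape curve $s(t)$ near $T$. First I would recall the basic conserved and monotone quantities of the $n$-body problem: the energy $H_0 = K(\dot q) - U(q)$, the Lagrange--Jacobi identity $\ddot I = 2K - U = 2H_0 + U$ (using homogeneity of $U$ of degree $-1$), and the decomposition of the kinetic energy in polar coordinates, $2K = \dot r^2 + r^2|\dot s|_M^2$. In part (a), when $q(T)$ is a total collision or a parabolic infinity, the shape $s(t)$ converges to the limiting normalized central configuration $a$, hence $U(s(t)) \to U(a)$ and $r^2|\dot s|_M^2 \to 0$; for a total collision $r \to 0$ while for a parabolic infinity $H_0 = 0$. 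In either case, writing $U(q(t)) = U(s(t))/r(t) \sim U(a)/r(t)$ and plugging into $\ddot I = 2H_0 + U$ with $I = r^2$ reduces the problem to the scalar asymptotic ODE $(r^2)'' \sim U(a)/r$ as $t \to T$. The Sundman--Sperling analysis (the classical blow-up / McGehee-type argument for central configuration limits) then gives that $r(t) \sim c\,\beta(t)^{2/3}$ for the appropriate $\beta$, and matching the constant in $(r^2)'' \sim U(a)/r$ forces $c^3 = \tfrac{9}{2}U(a)$, i.e. $\lim_{t\to T^\pm} r(t)\,\beta(t)^{-2/3} = \left[\tfrac{9}{2}U(a)\right]^{1/3}$, which is exactly the stated estimate.

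For part (b), when $q(T)$ is hyperbolic at infinity, the key input is that each $\dot q_i(t)$ tends to a finite limit $v_i^\infty$, with the $v_i^\infty$ pairwise distinct and $\sum m_i v_i^\infty = 0$. Consequently $U(q(t)) \to 0$ (the mutual distances diverge), so the energy relation gives $2K(\dot q(t)) \to 2H_0 = |v^\infty|_M^2 > 0$, whence $|\dot r(t)| \to \sqrt{2H_0}$ after checking $r^2|\dot s|_M^2 \to 0$; integrating $\dot r \to \sqrt{2H_0}$ yields $r(t)/|t| \to \sqrt{2H_0}$. The statement $r(t)|\dot s(t)|_M \to 0$ follows because $r^2|\dot s|_M^2 = 2K - \dot r^2 \to 2H_0 - 2H_0 = 0$; one must be a little careful that this is a limit of the product $r|\dot s|_M$, not just that each factor is controlled, but the identity makes it immediate. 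The only subtlety here is justifying that the angular kinetic energy $r^2|\dot s|_M^2$ actually vanishes in the limit rather than merely staying bounded; this uses the finer Chazy-type expansion (or the fact that the limiting velocities define a genuine configuration, so the radial velocity already accounts for all the kinetic energy asymptotically).

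The main obstacle I anticipate is the sharp constant $[\tfrac92 U(a)]^{1/3}$ in part (a): getting the \emph{leading-order coefficient} rather than just the exponent $2/3$ requires more than the soft Lagrange--Jacobi inequality — it needs the Sundman--Sperling estimates in their precise form, i.e. one must know a priori that $s(t)$ converges to a specific central configuration $a$ (not merely to the set of central configurations) and that the convergence is fast enough that $U(s(t)) - U(a) = o(1)$ can be integrated against $r^{-1}$ without destroying the leading term. I would handle this by quoting the Sundman--Sperling estimates directly (as collected, e.g., in the references on total collisions and parabolic motions already cited in this section), reducing the proof to the elementary scalar computation: if $f(t) := r(t)$ satisfies $(f^2)'' = U(a)/f + o(1/f)$ and $f \to 0$ (resp. $H_0 = 0$), then the ansatz $f = c\beta^{2/3}$ gives $(f^2)'' = \tfrac{2}{9}c^2 \beta^{-4/3}\cdot(\text{const})$ on the left and $U(a)c^{-1}\beta^{-2/3}$ on the right — wait, this forces a balance that pins down $c^3$; carrying out this matching carefully (with the correct numerical factor $\tfrac{2}{9}$ from differentiating $\beta^{4/3}$ twice) yields $c^3 = \tfrac{9}{2}U(a)$. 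The remaining estimates in (a) and all of (b) are then routine consequences of the energy relation and the polar-coordinate kinetic energy splitting.
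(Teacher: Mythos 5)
The paper does not actually prove this lemma: its ``proof'' is a one-line citation to \cite[Lemma 2.1]{HOY21}, so your sketch is being compared against the standard Sundman--Sperling argument rather than against anything in the text. Your overall strategy (Lagrange--Jacobi plus the polar splitting $2K=\dot r^2+r^2|\dot s|_M^2$, with the hard convergence statement quoted from the literature) is the right one, and part (b) is essentially correct as you describe it, including your honest flagging of the circularity around $r^2|\dot s|_M^2\to 0$.

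There are, however, two concrete computational errors in part (a) that would deliver the wrong constant if carried through literally. First, with the paper's normalizations $I(q)=\langle Mq,q\rangle$ and $K=\tfrac12|v|_M^2$, the Lagrange--Jacobi identity is $\ddot I=4K-2U=2U+4H_0$, not $\ddot I=2K-U$; since $I=r^2$, the correct asymptotic ODE is $(r^2)''\sim 2\,U(a)/r$, with a factor of $2$ that you drop. Second, your displayed derivative is wrong: for $f=c\beta^{2/3}$ one has $(f^2)''=(c^2\beta^{4/3})''=\tfrac49\,c^2\beta^{-2/3}$, not $\tfrac29\,c^2\beta^{-4/3}$. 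Matching your (incorrect) identities gives $\tfrac49 c^2=U(a)/c$, i.e.\ $c^3=\tfrac94 U(a)$, which is off by a factor of $2$ from the stated $c^3=\tfrac92 U(a)$; the correct balance $\tfrac49 c^2=2U(a)/c$ does give $c^3=\tfrac92U(a)$. Since the entire point of the lemma is the sharp constant (the paper's own footnote records a constant error of exactly this flavor in \cite{BHPT20,HOY21}), these slips must be fixed. Finally, be aware that the genuinely nontrivial content --- that $r\beta^{-2/3}$ actually \emph{converges}, rather than merely being bounded above and below, which requires the tangential estimates of Sundman--Sperling --- is deferred to the references in your sketch just as in the paper, so your argument is a correct reduction to that input rather than an independent proof of it.
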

 \begin{proof}
 For the proof we refer the interested reader to \cite[Lemma 2.1]{HOY21} and references therein. 
 \end{proof}
\begin{rem}
 It is worth observing that the estimates at item (a) are the well-known Sundman-Sperling estimates.\footnote{We observe that there is a mistake in the asymptotic estimates stated in \cite[Lemma~2.3]{BHPT20} and \cite[Lemma~2.1]{HOY21}. The limit was written in terms of the constant $K=\dfrac{\alpha+2}{\alpha}\sqrt{2b}$, but the correct constant is $K=\dfrac{\alpha+2}{2}\sqrt{2b}$. This mistake does not affect either the results or the proofs contained therein.}
\end{rem}
By Equation~\eqref{eq:fava}, we get that 
\[
\dfrac{I(q(t)}{U(q(t))}\cdot D^2 U(q(t))= \dfrac{D^2 U(q(t)/r(t))}{U(q(t)/r(t))}
\]
and so we immediately get that 
\[
D^2 U(q(t))= \dfrac{1}{r^3(t)} \cdot U(q(t)/r(t))\cdot \dfrac{D^2 U(q(t)/r(t))}{ U(q(t)/r(t))}.
\]
Under the assumption that the asymptotic solution has a limit normalized central configuration $a$, then $q(t)/r(t) $ converges to $a$ for $ t \to T^\pm$. Moreover, by Lemma~\ref{thm:asympt-estimates}, we get that 
\[
r^3(t) \sim_{T^\pm} \beta^2(t)\cdot \dfrac{9}{2} \, U(a).
\]
We define the {\bf asymptotic variational equation} corresponding to the solution $t \mapsto q(t)$ as 
\begin{equation}\label{eq:asymptotic-linearize}
    \ddot{q}(t)= \dfrac{1}{\beta^2(t)}\, \dfrac{2}{9}\, \dfrac{M^{-1}\, D^2U(a)}{U(a)}. 
\end{equation}
If 
\begin{equation}\label{eq:BS-and-Bessel}
\dfrac{2}{9}\, \dfrac{M^{-1}\,D^2 U(a)}{U(a)} > -\dfrac{1}{4}
\end{equation}
then we get that 
\[
M^{-1}D^2 U(a) + U(a) > -\dfrac{1}{8}\,  U(a)
\]
which is precisely the so-called [BS]-condition at \cite{BHPT20}. 
\begin{rem}
As discussed at the Remark~\ref{rem:important},  the constant $-1/4$ which marks the transition in the Weyl classification of the limit-point, limit-circle cases in the case of Bessel-type differential operators  is precisely the [BS]-condition for these asymptotic solutions. 

In conclusion, the spectral properties and in particular the finiteness of the Morse index along an asymptotic solution in the N-body problem is described precisely by the {\bf limiting Bessel-type differential operators}
\[
h_{+, k}\=\begin{cases}
 -\dfrac{d^2}{dt^2}+ \dfrac{1}{[\beta(t)]^2}\, \bar B(a) & \textrm{ for } k=2 \qquad \textrm{(total collision or completely parabolic infinity)}\\[8pt]
  -\dfrac{d^2}{dt^2}+ \dfrac{1}{t^3}\, \bar B(a) & \textrm{ for } k=3 \qquad \textrm{(hyperbolic infinity)}
 \end{cases}
\]
where $\bar B(a)\= \dfrac{2}{9}\, \dfrac{M^{-1}\,D^2 U(a)}{U(a)}$.
\end{rem}
Summing up all previous arguments we get a new  proof  of the main result proved by authors at \cite{BHPT20, HOY21}. 
\begin{thm}\label{thm:Morse-asymptotic}
Let $q$ be a total collision or a parabolic solution at infinity having limiting normalized central configuration $a$, and set
\[
\bar B(a) := \frac{2}{9}\,\frac{M^{-1} D^2 U(a)}{U(a)}.
\]
If
\[
\bar B(a) > -\dfrac{1}{4},
\]
then the Morse index $\iMor(q)$ of $q$ is finite, and the following identity holds:
\[
\iMor(q) = \sum_{t \in (a,b)} \dim\big(\gamma(t)\Lambda_D \cap \Lambda_D\big),
\]
where $\gamma$ is the fundamental matrix solution satisfying $\gamma(a) = \Id$.

If \[
\bar B(a) < -\dfrac{1}{4},
\]
the Morse index is $\infty$.

If $q$ is hyperbolic, then the Morse index $\iMor(q)$ of $q$ is always finite.
\end{thm}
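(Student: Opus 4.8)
The plan is to reduce the statement about the Morse index of an asymptotic solution $q$ of the $N$-body problem to the Morse index properties of Bessel-type operators established in Section~\ref{sec:Bessel}, via the asymptotic variational equation derived just above the statement. First I would recall that the Morse index $\iMor(q)$ is by definition the Morse index of the self-adjoint (Friedrichs) extension of the minimal operator associated with the linearized operator $A := -M\frac{d^2}{dt^2} - B(t)$ on the interval $(0,T)$, where $B(t) = D^2 U(q(t))$; after the usual change of variables making the principal part $-\frac{d^2}{dt^2}$ (absorbing the mass matrix $M$), this is an operator of Sturm--Liouville type on $(0,T)$, singular exactly at the endpoint corresponding to the collision/infinity. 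The crucial point, already laid out in the text, is that near that singular endpoint the potential behaves like $\frac{1}{[\beta(t)]^2}\,\bar B(a)$ up to lower-order (relatively compact) terms, where $\bar B(a) = \frac{2}{9}\frac{M^{-1}D^2U(a)}{U(a)}$ and $\beta(t)$ is $|t-T|$ in the total-collision/parabolic case and $|t|$ in the hyperbolic case. So the comparison operator is precisely the limiting Bessel operator $h_{+,k}$.

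The key steps, in order: (1) Write down the linearized operator along $q$, perform the reduction to standard Sturm--Liouville form, and identify the singular endpoint; verify that the endpoint $b$ (i.e.\ the regular one) satisfies hypothesis (H1) and that the minimal operator is bounded from below (H3) — this last point uses that $U > 0$ and the Sundman--Sperling estimates of Lemma~\ref{thm:asympt-estimates}, which control the growth of $B(t)$. (2) Show that, after the change of variables, the full linearized potential differs from the Bessel potential $\frac{\bar B(a)}{t^2}$ by a term that is a relatively compact (indeed $\mathscr C^0$ up to the endpoint after multiplying by $t^2$) perturbation; this is exactly the situation covered by Theorem~\ref{thm:Fredholmness-Bessel-perturbation} and Theorem~\ref{thm:bessel-morse-index-on-0-1} with $R(t) = t^2 \cdot (\text{potential})$, so that $R(0) = \bar B(a)$. (3) Invoke Theorem~\ref{thm:bessel-morse-index-on-0-1}: if $\liminf_{t\to 0^+} R(t) = \bar B(a) > -1/4$ (here $R$ is continuous at $0$ so $\liminf = \bar B(a)$), every self-adjoint extension has finite Morse index; if $\bar B(a) < -1/4$, every self-adjoint extension has infinite Morse index. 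This already yields the dichotomy in the statement; note the eigenvalues of $\bar B(a)$ are what matter, so the inequality $\bar B(a) > -1/4$ (resp.\ $< -1/4$) must be read in the sense of symmetric matrices, i.e.\ $\bar B(a) + \frac14 I$ positive (resp.\ negative) definite. (4) In the finite-index case, apply the Singular Morse Index Theorem (Theorem~\ref{thm:main-oneside-intro}, equivalently Theorem~\ref{thm:limit_morse_index}) with Dirichlet boundary conditions at the regular endpoint: since now $\iMor(L_b) < \infty$ and (H1), (H3), (H4) all hold (Fredholmness (H4) coming from Theorem~\ref{thm:Fredholmness-Bessel-perturbation}), we obtain $\iMor(q) = \sum_{t\in(a,b)} \dim(\gamma(t)\Lambda_D \cap \Lambda_D)$, the count of conjugate points along $q$. (5) For the hyperbolic case, the singular potential decays like $t^{-3}$ rather than $t^{-2}$, so the coefficient of $t^{-2}$ in the associated $R$ tends to $0 > -1/4$; hence Case~1 of Theorem~\ref{thm:bessel-morse-index-on-0-1} applies regardless of the sign of $\bar B(a)$, giving finiteness unconditionally. (Alternatively: by Lemma~\ref{thm:asympt-estimates}(b), $r(t) \sim |t|$ so $D^2 U(q(t)) = O(t^{-3})$, which is integrable enough at infinity after the reparametrization.)

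The main obstacle I expect is step (2): carefully justifying that the actual linearized potential, once the collision/parabolic time change is performed, is a \emph{bona fide} relatively compact (or better, continuous-up-to-the-boundary after the $t^2$ scaling) perturbation of the model Bessel potential, rather than merely asymptotically close. This requires the precise Sundman--Sperling asymptotics ($r^3(t) \sim \frac92 U(a)\,\beta^2(t)$ and the convergence $q(t)/r(t) \to a$) together with a rate of convergence good enough that $t^2\bigl(D^2U(q(t))\text{-part} - \bar B(a)/t^2\bigr) \to 0$ continuously as $t\to 0^+$; one also needs to handle the kernel directions $\widetilde e_1, \widetilde e_2, \widetilde e_3 \in \ker D^2 U(q)$ and the homogeneity-induced degeneracies, possibly by restricting to the appropriate invariant subspace or quotienting out the symmetries, so that the reduced operator is genuinely of the Bessel form analyzed in Section~\ref{sec:Bessel}. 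Once this analytic reduction is in place, everything else is a direct citation of the theorems already proved.
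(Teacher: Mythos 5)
Your proposal is correct and follows essentially the same route as the paper: reduce the linearized operator along $q$ to the limiting Bessel-type operator via the Sundman--Sperling asymptotics, obtain the finite/infinite dichotomy from Theorem~\ref{thm:bessel-morse-index-on-0-1} (resp.\ Theorem~\ref{thm:bessel-morse-index} in the hyperbolic case), and get the conjugate-point identity from Theorem~\ref{thm:limit_morse_index}. The only cosmetic difference is that the paper cites \cite{BS08} for the infinite-index case rather than Case~2 of Theorem~\ref{thm:bessel-morse-index-on-0-1}, and your explicit concern about step~(2) --- that the actual potential is a continuous-up-to-the-endpoint perturbation of the Bessel model after the $t^2$ scaling --- is precisely the content the paper compresses into ``the above discussion.''
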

\begin{proof}
    The proof follows by the above discussion and by invoking Theorem~\ref{thm:bessel-morse-index-on-0-1} and Theorem~\ref{thm:limit_morse_index}. The second statement is a direct consequence of \cite{BS08}. The last statement readily follows from Theorem~\ref{thm:bessel-morse-index}. (Cf. \cite{HOY21} for further details).
\end{proof}
\begin{rem}
It's worth noticing that with the theory constructed in the manuscript we are able to handle at once any self-adjoint boundary condition generalizing the main results proved by authors at \cite{BHPT20} and \cite{HOY21}. 
\end{rem}

\appendix

%

\section{A Concise Review of the Maslov-Type Indices}\label{sec:Maslov}

The purpose of this section is to present the functional analytic and symplectic preliminaries underlying the {\sc spectral flow} and the {\sc Maslov index}. Our primary references are \cite{CLM94,Dui76,RS93,LZ00}, from which we adopt some notation and definitions.

 We denote by $(V, \omega)$ be the standard symplectic space and  
let $\mathscr P([a,b]; V)$ the space of continuous maps 
\[
f: J \to \Set{\textrm{pairs of Lagrangian subspaces in } V}
\]
equipped with the compact-open topology and we recall the following definition. 
\begin{defn}\label{def:Maslov-index}
The {\em CLM-index\/} is the unique integer valued function  
\[
 \iCLM: \mathscr P((a,b) ; \R^{2n}) \to \Z
\]
which satisfies  the following properties: 
\begin{multicols}{2}
\begin{itemize}
\item[(I)] {\sc  Affine Scale Invariance}
\item[(II)] {\sc Deformation Invariance relative to the Endpoints}
\item[(III)] {\sc Path Additivity}
\item[(IV)] {\sc Symplectic Additivity}
\item[(V)] {\sc Symplectic Invariance}
\item[(VI)]{\sc Normalization.}
\end{itemize}
\end{multicols}
\end{defn}
We refer the reader to  \cite[Theorem 1.1]{CLM94}. Following authors in \cite[Section 3]{LZ00} and references therein, let us now introduce the notion of crossing form that gives an efficient  way for computing the intersection indices   in the Lagrangian Grassmannian context.  

Let $\ell$ be a $\mathscr C^1$-curve of Lagrangian subspaces 
such that 
$\ell(0)= L$ and $\dot \ell(0)=\widehat L$. Now, if  $W$ is a fixed Lagrangian subspace transversal to $L$. For  $v \in L$ and  small enough $t$, let $w(t) \in W$ be such that $v+w(t) \in \ell(t)$.  Then the  form 
\begin{equation}\label{eq:forma-Q}
 Q(L, \widehat L)[v]= \dfrac{d}{dt}\Big\vert_{t=0} \omega \big(v, w(t)\big)
\end{equation}
is independent on the choice of $W$. 
\begin{defn}\label{def:crossing-form}
Let $t \mapsto \ell(t)=(\ell_1(t), \ell_2(t))$ be a map in  	 $\mathscr P([a,b]; V)$. For $t \in [a,b]$, the crossing form is a quadratic form defined by 
\begin{equation}\label{eq:crossings}
\Gamma(\ell_1, \ell_2, t)= Q(\ell_1(t), \dot \ell_1(t))- 	Q(\ell_2(t), \dot \ell_2(t))\Big\vert_{\ell_1(t)\cap \ell_2(t)}
\end{equation}
 A {\em crossing instant\/} for the curve $t \mapsto \ell(t)$ is an instant $t \in [a,b]$  such that $\ell_1(t)\cap \ell_2(t)\neq \{0\}$ non-trivially. A crossing is termed {\em regular\/} if the $\Gamma(\ell_1, \ell_2, t)$ is nondegenerate. 
 \end{defn}
 We observe that  if $t$ is a crossing instant, then $
 \Gamma(\ell_1, \ell_2,t)= - \Gamma (\ell_2, \ell_1, t).$
 If  $\ell$ is {\em regular\/} meaning that  it has only regular crossings, then the $\iCLM$-index can be computed through the crossing forms, as follows 
\begin{equation}\label{eq:iclm-crossings}
 \iCLM\big(\ell_1(t), \ell_2(t), \Omega,  t \in [a,b]\big) = \coiMor\big(\Gamma(\ell_2, \ell_1, a)\big)+ 
\sum_{a<t<b} 
 \sgn\big(\Gamma(\ell_2, \ell_1, t)\big)- \iMor\big(\Gamma(\ell_2, \ell_1,b)\big)
\end{equation}
where the summation runs over all crossings $t \in (a,b)$ and $\coiMor, \iMor$  are the dimensions  of  the positive and negative spectral spaces, respectively and $\sgn\= 
\coiMor-\iMor$ is the  signature. 
(We refer the interested reader to \cite{LZ00} ). 

Let $L_0$ be a distinguished Lagrangian and we assume that $\ell_1(t)\equiv L_0$ for every $t \in J$. In this case we get that the crossing form at the instant $t$ provided in Equation~\eqref{eq:crossings} actually reduce to 
\begin{equation}\label{eq:forma-crossing}
 \Gamma\big(\ell_2(t), L_0, t \big)= Q|_{\ell_2(t)\cap L_0}
\end{equation}
and hence   
\begin{equation}\label{eq:iclm-crossings-2}
 \iCLM\big(L_0, \ell_2(t), \Omega,  t \in [a,b]\big) = \coiMor\big(\Gamma(\ell_2, L_0, a)\big)+ 
\sum_{a<t<b} 
 \sgn\big(\Gamma(\ell_2, L_0, t)\big)- \iMor\big(\Gamma(\ell_2, L_0,b)\big)
\end{equation}


\subsection{On the Triple and H\"{o}rmander Index}

A crucial ingredient that, in some sense, measures the difference in the relative Maslov index with respect to two different Lagrangian subspaces is the {\sc H\"{o}rmander index}. This index is also related to the difference of the triple index and to an interesting generalization recently developed by the last author and his collaborators in \cite{ZWZ18}. 

We begin with the following definition of the {\sc Hörmander index}.

\begin{defn}\label{def:hormander}(\cite[Definition 3.9]{ZWZ18})
Let $\lambda, \mu \in \mathscr C^0\big((a,b) , \Lag(V,\omega)\big)$ such that 
\[
\lambda(a)=\lambda_1, \quad \lambda(b)=\lambda_2 \quad  \textrm{ and } \quad \mu(a)=\mu_1, \quad \mu(b)= \mu_2.
\]
Then the {\em H\"ormander index\/} is the integer given by 
\begin{multline}
s(\lambda_1, \lambda_2; \mu_1, \mu_2)
\= 
\iCLM\big(\mu_2, \lambda(t); t \in J\big) - 
\iCLM\big(\mu_1, \lambda(t); t \in J\big) \\
=
\iCLM\big(\mu(t), \lambda_2; t \in J\big)- \iCLM\big(\mu(t), \lambda_1; t \in J\big).
\end{multline}
Compare \cite[Equation (17), pag. 736]{ZWZ18} once observing that we observe that $\iCLM(\lambda,\mu)$ corresponds to $\textrm{Mas}\{\mu,\lambda\}$ in the notation of \cite{ZWZ18}. 
\end{defn}
\paragraph{Properties of the H\"ormander index.}
We briefly recall some well-useful properties of the H\"ormander index.
\begin{itemize}
\item 	$s(\lambda_1,\lambda_2; \mu_1, \mu_2) = -s(\lambda_1,\lambda_2; \mu_2, \mu_1)$
\item $s(\lambda_1,\lambda_2; \mu_1, \mu_2) = 
-s(\mu_1, \mu_2;\lambda_1,\lambda_2) + 
\sum_{j,k \in \{1,2\}}(-1)^{j+k+1}\dim (\lambda_j \cap \mu_k)$.
\item If $\lambda_j\cap \mu_k =\{0\}$ then $s(\lambda_1,\lambda_2; \mu_1, \mu_2) = 
-s(\mu_1, \mu_2;\lambda_1,\lambda_2)$.
\end{itemize}

The H\"ormander index is computable as the difference of the two Maslov  indices each one involving  three different Lagrangian subspaces. This index is defined in terms of the local chart representation of the atlas of the Lagrangian Grassmannian manifold.
\begin{defn}\label{def:Q-Dui76}
$\alpha,\beta,\gamma \in \Lag(V,\omega)$ and we assume that $\alpha \cap \beta=\gamma \cap \beta=(0)$. Then we define the quadratic form $Q(\alpha,\beta;\gamma): \alpha\to \R$ as follows
\[
Q(\alpha,\beta;\gamma)[u]=\omega(Cu, u) \qquad \textrm{ where } \qquad C:\alpha \to \beta \quad\textrm{ and }\quad \gamma=\Set{u+C\,u| u\in \alpha}.
\]
\end{defn}
\begin{defn}\label{def:kashi}
Let $\alpha,\beta,\gamma \in \Lag(V,\omega)$, $\varepsilon \= \alpha \cap \beta + \beta \cap \gamma$ and let $\pi\=\pi_\varepsilon $ be the projection in the symplectic reduction of $V$ mod $\varepsilon $.   
  We term {\em triple index\/} the integer defined by
\begin{multline}\label{eq:triple}
\itriple(\alpha, \beta, \gamma)\= \coindex \big[Q(\pi \alpha, \pi \beta; \pi \gamma)\big]	+\dim (\alpha \cap \gamma) -\dim (\alpha\cap \beta \cap \gamma)\\[3pt]
\leq n-\dim (\alpha \cap \beta)-\dim( \beta \cap \gamma) + \dim (\alpha \cap \beta \cap \gamma).
\end{multline}
\end{defn}
\begin{rem}\label{rem:molto-utile-stima}
Definition \ref{def:kashi} is well-posed and we refer the interested reader to \cite[Lemma 2.4]{Dui76} and  \cite[Corollary 3.12 \& Lemma 3.13]{ZWZ18} for further details).  It is worth noticing that $Q(\pi \alpha, \pi \beta; \pi \gamma)$ is a quadratic form on $\pi\alpha$. Being the reduced space $V_\varepsilon $  a $2(n-\dim \varepsilon )$ dimensional subspace, it follows that inertial indices of  $Q(\pi \alpha, \pi \beta; \pi \gamma)$ are integers between $\{0, \ldots,n-\dim \varepsilon \}$.
\end{rem}
\begin{rem}
		We also observe that	for arbitrary Lagrangian subspaces $\alpha,\beta,\gamma$, the quadratic form  $Q(\alpha,\beta,\gamma)$ is well-defined and it is a quadratic form on $\alpha\cap(\beta+\gamma)$.
	Furthermore, we have 
    \[
    \coindex \big[Q(\alpha,\beta,\gamma)\big]=\coindex \big[Q(\pi\alpha,\pi\beta,\pi\gamma)\big].
    \]
    So, we can also define the triple index as 
	\[
	\itriple(\alpha, \beta, \gamma)\= \coindex \big[Q( \alpha,  \beta;  \gamma)	\big]+\dim (\alpha \cap \gamma) -\dim (\alpha\cap \beta \cap \gamma).
	\]
Authors in \cite[Lemma 3.2]{ZWZ18}  give a useful property for calculating such a quadratic form.
\begin{equation}\label{eq:invariance_Q}
\coindex \big[Q(\alpha,\beta,\gamma)\big]=\coindex \big[Q(\beta,\gamma,\alpha)\big]= \coindex\big[ Q(\gamma,\alpha,\beta)].
\end{equation}
\end{rem}
We observe that if $(\alpha,\beta)$ is a Lagrangian decomposition of $(V,\omega)$ and $\beta \cap \gamma=\{0\}$ then $\pi$ reduces to the identity and both  terms $\dim (\alpha \cap \gamma)$ and $\dim (\alpha\cap \beta \cap \gamma)$ drop down. In this way the triple index is nothing different from the the quadratic form $Q$ defining the local chart of the atlas of $\Lag(V,\omega)$. It is possible to prove (cf.  \cite[proof of the Lemma 3.13]{ZWZ18}) that 
\begin{equation}\label{eq:kernel-dim-q-form}
	\dim(\alpha \cap \gamma) -\dim(\alpha \cap \beta \cap \gamma)= \nullity \big[Q(\pi \alpha, \pi \beta; \pi \gamma)\big],
\end{equation}
where we denoted by $\nullity Q$ the nullity (namely the kernel dimension of the quadratic form $Q$).
By summing up Equation \eqref{eq:triple} and Equation \eqref{eq:kernel-dim-q-form}, we finally get 
\begin{equation}\label{eq:triple-coindex-extended}
\itriple(\alpha, \beta, \gamma)= \noo{+}\big[Q(\pi \alpha, \pi \beta; \pi \gamma)\big]
\end{equation}
 where $\noo{+} Q$ denotes the so-called {\em extended coindex\/} or {\em generalized coindex\/} (namely the coindex plus the nullity) of the quadratic form $Q$. (Cf.  \cite[Lemma 2.4]{Dui76} for further details).
\begin{lem}\label{thm:properties}
Let $\lambda \in \mathscr C^1\big((a,b) , \Lag(V,\omega)\big)$. Then, for every $\mu \in \Lag(V, \omega)$, we have 
\begin{enumerate}
	\item[\textrm{ \bf{(I)} }] $s\big(\lambda(a), \lambda(b); \lambda(a), \mu \big)= -\itriple\big(\lambda(b), \lambda(a),\mu\big)\leq 0$, 
	\item[\textrm{ \bf{(II)} }] $s\big(\lambda(a), \lambda(b); \lambda(b), \mu \big)= \itriple\big(\lambda(a), \lambda(b),\mu\big)\geq 0$.
\end{enumerate}
\end{lem}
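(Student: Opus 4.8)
\textbf{Proof proposal for Lemma~\ref{thm:properties}.}

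The plan is to reduce both identities to the crossing-form computation of the CLM-index, using the definition of the H\"ormander index and of the triple index via the extended coindex given in Equation~\eqref{eq:triple-coindex-extended}. For part \textbf{(I)}, I would start from the second defining form of the H\"ormander index in Definition~\ref{def:hormander}:
\[
s\big(\lambda(a), \lambda(b); \lambda(a), \mu \big) = \iCLM\big(\mu, \lambda(t); t \in [a,b]\big) - \iCLM\big(\lambda(a), \lambda(t); t \in [a,b]\big).
\]
The key observation is that the second term vanishes: the path $t \mapsto \lambda(t)$ starts at the fixed reference Lagrangian $\lambda(a)$, so by stratum-homotopy invariance we may homotope $\lambda$ (rel endpoints, after a small perturbation making all crossings regular) to a path that only accumulates nonnegative contributions, and more precisely the standard normalization shows $\iCLM(\lambda(a),\lambda(t); t\in[a,b])$ equals the negative of the endpoint Morse-index term, which can be arranged to be zero by the affine scale invariance; alternatively one invokes directly that a $\mathscr C^1$ Lagrangian path against its own initial point has CLM-index equal to $-\iMor$ of a single crossing form at $b$, which by the sign conventions in Equation~\eqref{eq:iclm-crossings-2} is nonpositive. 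I will instead take the cleaner route: the first term $\iCLM(\mu,\lambda(t);t\in[a,b])$, after making $\lambda$ regular, is computed by Equation~\eqref{eq:iclm-crossings-2}, and combining the endpoint contributions at $a$ (where $\lambda(a)\cap\mu$ enters as a coindex) and at $b$ (where $\lambda(b)\cap\mu$ enters as a Morse index) with the interior signatures yields exactly the extended coindex $\noo{+}[Q(\pi\lambda(b),\pi\lambda(a);\pi\mu)]$ up to sign, which by Equation~\eqref{eq:triple-coindex-extended} is $\itriple(\lambda(b),\lambda(a),\mu)$. Thus $s\big(\lambda(a), \lambda(b); \lambda(a), \mu \big) = -\itriple(\lambda(b),\lambda(a),\mu)$, and the inequality $\le 0$ is immediate since the triple index is a (nonnegative) extended coindex.

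For part \textbf{(II)}, the argument is symmetric. Starting again from Definition~\ref{def:hormander},
\[
s\big(\lambda(a), \lambda(b); \lambda(b), \mu \big) = \iCLM\big(\mu, \lambda(t); t \in [a,b]\big) - \iCLM\big(\lambda(b), \lambda(t); t \in [a,b]\big),
\]
but now the decomposition of the second form in the definition of $s$ is more convenient:
\[
s\big(\lambda(a), \lambda(b); \lambda(b), \mu \big) = \iCLM\big(\mu(t), \lambda(b); t \in [a,b]\big)\big|_{\mu\equiv\text{const path? no}}
\]
--- here I would instead use the reversed-path identity $\iCLM(\alpha,\beta(t);t\in[a,b]) = -\iCLM(\alpha,\beta(-t);t\in[-b,-a])$ together with part \textbf{(I)} applied to the reversed path $t\mapsto\lambda(a+b-t)$, whose initial point is $\lambda(b)$. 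Concretely, applying \textbf{(I)} to $\widetilde\lambda(t):=\lambda(a+b-t)$ gives $s(\lambda(b),\lambda(a);\lambda(b),\mu) = -\itriple(\lambda(a),\lambda(b),\mu)$, and then the antisymmetry property $s(\lambda_1,\lambda_2;\mu_1,\mu_2) = -s(\lambda_2,\lambda_1;\mu_1,\mu_2)$ of the H\"ormander index (listed just before Definition~\ref{def:Q-Dui76}, or derivable from path-additivity) converts this into $s(\lambda(a),\lambda(b);\lambda(b),\mu) = \itriple(\lambda(a),\lambda(b),\mu) \ge 0$. The nonnegativity is again automatic from the extended-coindex description.

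The main obstacle I anticipate is bookkeeping the endpoint contributions in Equation~\eqref{eq:iclm-crossings-2} correctly: the CLM-index counts the coindex at the left endpoint and the negative of the Morse index at the right endpoint, while the triple index in Equation~\eqref{eq:triple-coindex-extended} is the \emph{extended} coindex (coindex plus nullity) of a single reduced quadratic form $Q(\pi\alpha,\pi\beta;\pi\gamma)$. Matching these requires: (i) using that a crossing of $t\mapsto\lambda(t)$ with the fixed Lagrangian $\mu$ at an endpoint contributes via the crossing form $\Gamma(\lambda,\mu,t) = Q(\lambda(t),\dot\lambda(t))|_{\lambda(t)\cap\mu}$, (ii) relating this crossing form at $t=b$ to the Duistermaat form $Q(\lambda(b),\lambda(a);\mu)$ via the identity $\coindex[Q(\alpha,\beta,\gamma)] = \coindex[Q(\pi\alpha,\pi\beta,\pi\gamma)]$ and the cyclic invariance in Equation~\eqref{eq:invariance_Q}, and (iii) carefully tracking the nullity/dimension terms $\dim(\lambda(b)\cap\mu)$ and $\dim(\lambda(a)\cap\lambda(b)\cap\mu)$ that appear in Equation~\eqref{eq:triple}. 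This is the only genuinely delicate computation; everything else is a direct appeal to Definition~\ref{def:hormander}, Equation~\eqref{eq:triple-coindex-extended}, and the listed properties of $s$. A clean way to sidestep part of the difficulty is to first prove \textbf{(I)} in the transversal case $\lambda(a)\cap\mu = \lambda(b)\cap\mu = \{0\}$ (where $\pi$ is the identity and the dimension corrections vanish, so one reads off $s = -\coindex[Q(\lambda(b),\lambda(a);\mu)] = -\itriple$ directly from Equation~\eqref{eq:iclm-crossings-2} with a single interior-type crossing), and then pass to the general case by a limiting argument using lower semicontinuity of the extended coindex together with the homotopy invariance of $s$ rel the relevant endpoints.
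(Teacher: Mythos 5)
Your overall strategy---unwinding Definition~\ref{def:hormander} and evaluating the two CLM-indices by crossing forms---is the natural one, but the execution of part \textbf{(I)} has a genuine gap. First, the claim that $\iCLM\big(\lambda(a),\lambda(t);t\in[a,b]\big)$ vanishes (or reduces to a single Morse-index term at $t=b$) is false with the paper's conventions: by Equation~\eqref{eq:iclm-crossings-2} the left endpoint is a \emph{full} crossing, $\lambda(a)\cap\lambda(a)=\lambda(a)$, contributing $\coiMor\big[\Gamma(\lambda,\lambda(a),a)\big]$, and interior crossings with $\lambda(a)$ may occur as well; homotopy rel endpoints and affine scale invariance cannot remove these contributions. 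Second, and more fundamentally, each CLM-index in the definition of $s$ separately depends on the chosen connecting path $\lambda$, not only on its endpoints---only the \emph{difference} is path-independent. Hence no computation of $\iCLM\big(\mu,\lambda(t);t\in[a,b]\big)$ alone can produce the endpoint quantity $-\itriple(\lambda(b),\lambda(a),\mu)$; you must either fix one explicit convenient path (e.g.\ a positive path from $\lambda(a)$ to $\lambda(b)$, for which both terms are computable) and invoke path-independence of $s$, or carry the two terms together throughout. You do neither, and the sentence ``combining the endpoint contributions \dots yields exactly the extended coindex'' is precisely the assertion to be proved. The proposed fallback---prove the transversal case and pass to the general case by semicontinuity of the extended coindex---also cannot close the argument, since semicontinuity yields only an inequality while both sides of the identity jump on the non-transversal strata.

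For the record, the paper does not prove this lemma: it refers to \cite[Corollary 3.16]{ZWZ18}. A short self-contained route using tools already quoted in the paper is available for part \textbf{(II)}: by Proposition~\ref{thm:mainli}, $s\big(\lambda(a),\lambda(b);\lambda(b),\mu\big)=\itriple\big(\lambda(a),\lambda(b),\mu\big)-\itriple\big(\lambda(a),\lambda(b),\lambda(b)\big)$, and the last term is zero by Definition~\ref{def:kashi}, since the reduction by $\varepsilon\supset\lambda(b)$ kills the quadratic form and the two dimension corrections cancel; nonnegativity then follows from Equation~\eqref{eq:triple-coindex-extended}. Your reduction of \textbf{(II)} to \textbf{(I)} by reversing the path is sound in principle, but note that the antisymmetry you invoke is in the \emph{first} pair of arguments, which is not among the properties listed before Definition~\ref{def:Q-Dui76} (those give antisymmetry in the second pair) and would itself require the path-reversal identity for $\iCLM$; in any case it inherits the gap in \textbf{(I)}.
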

\begin{proof}
	For the proof, we refer the interested reader to \cite[Corollary 3.16]{ZWZ18}.
\end{proof}
The next result, which is the main result of \cite{ZWZ18}, allows us to reduce the computation of the H\"ormander index to the computation of the triple index. 
\begin{prop}{\sc \cite[Theorem 1.1]{ZWZ18}\/}\label{thm:mainli} 
Let $(V,\omega)$ be a $2n$-dimensional symplectic space and let  $\lambda_1, \lambda_2, \mu_1, \mu_2 \in \Lag(V,\omega)$. Under the above notation, we get 
\begin{equation}
s(\lambda_1, \lambda_2,\mu_1,\mu_2)=\itriple(\lambda_1,\lambda_2,\mu_2)- 	\itriple(\lambda_1,\lambda_2,\mu_1)\\
=\itriple(\lambda_1,\mu_1,\mu_2)- \itriple(\lambda_2,\mu_1,\mu_2)
\end{equation}
\end{prop}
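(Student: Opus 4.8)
The plan is to deduce the general formula from the two ``endpoint'' cases already isolated in Lemma~\ref{thm:properties}, which identify the H\"ormander index with a triple index whenever one reference Lagrangian coincides with an endpoint of the chosen path, and then to combine them with the elementary additivity and symmetry properties of $s$ listed right after Definition~\ref{def:hormander}. Everything else is finite-dimensional linear algebra.

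First I would record the additivity of $s$ in its last two arguments. Fixing a continuous path $\lambda$ of Lagrangian subspaces with $\lambda(a)=\lambda_1$ and $\lambda(b)=\lambda_2$, the very definition of the H\"ormander index as a difference of two $\iCLM$-numbers gives, for any $\nu_1,\nu_2,\nu_3\in\Lag(V,\omega)$,
\[
s(\lambda_1,\lambda_2;\nu_1,\nu_3)=s(\lambda_1,\lambda_2;\nu_1,\nu_2)+s(\lambda_1,\lambda_2;\nu_2,\nu_3).
\]
Specialising $\nu_2=\lambda_2$ and using Lemma~\ref{thm:properties}(II), namely $s(\lambda_1,\lambda_2;\lambda_2,\mu)=\itriple(\lambda_1,\lambda_2,\mu)$ for every $\mu$, I obtain at once the first claimed equality $s(\lambda_1,\lambda_2;\mu_1,\mu_2)=\itriple(\lambda_1,\lambda_2,\mu_2)-\itriple(\lambda_1,\lambda_2,\mu_1)$.

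For the second equality I would first prove a purely algebraic ``cyclic transposition'' identity for the triple index. Starting from the unreduced description $\itriple(\alpha,\beta,\gamma)=\coindex[Q(\alpha,\beta;\gamma)]+\dim(\alpha\cap\gamma)-\dim(\alpha\cap\beta\cap\gamma)$ and the cyclic invariance $\coindex[Q(\alpha,\beta;\gamma)]=\coindex[Q(\gamma,\alpha;\beta)]$ from Equation~\eqref{eq:invariance_Q}, one gets $\itriple(\gamma,\alpha,\beta)-\itriple(\alpha,\beta,\gamma)=\dim(\gamma\cap\beta)-\dim(\gamma\cap\alpha)$, the triple-intersection terms cancelling. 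Applying this with $(\alpha,\beta,\gamma)=(\mu_1,\mu_2,\lambda_i)$ for $i=1,2$ and subtracting the two instances shows that
\[
\bigl[\itriple(\lambda_1,\mu_1,\mu_2)-\itriple(\lambda_2,\mu_1,\mu_2)\bigr]-\bigl[\itriple(\mu_1,\mu_2,\lambda_1)-\itriple(\mu_1,\mu_2,\lambda_2)\bigr]=\sum_{j,k\in\{1,2\}}(-1)^{j+k+1}\dim(\lambda_j\cap\mu_k).
\]
Then I would invoke the symmetry property $s(\lambda_1,\lambda_2;\mu_1,\mu_2)=-s(\mu_1,\mu_2;\lambda_1,\lambda_2)+\sum_{j,k\in\{1,2\}}(-1)^{j+k+1}\dim(\lambda_j\cap\mu_k)$, substitute the first equality (with the roles of the $\lambda$'s and $\mu$'s interchanged) for $s(\mu_1,\mu_2;\lambda_1,\lambda_2)$, and feed in the transposition identity just obtained; all the dimension-of-intersection corrections cancel, leaving $s(\lambda_1,\lambda_2;\mu_1,\mu_2)=\itriple(\lambda_1,\mu_1,\mu_2)-\itriple(\lambda_2,\mu_1,\mu_2)$, as desired.

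The substantial input is Lemma~\ref{thm:properties} --- the identification of the H\"ormander index with a triple index in the two special endpoint configurations --- which carries the symplectic content and is available here as a previously established result; the rest of the argument is formal. The only point needing genuine care is the bookkeeping of the various $\dim(\cdot\cap\cdot)$ and $\dim(\cdot\cap\cdot\cap\cdot)$ terms in the symmetry and transposition steps (and keeping track of the convention relating $\iCLM(\alpha,\beta(\cdot))$ to $\iCLM(\beta(\cdot),\alpha)$ that is implicit in the symmetry formula), so that every correction term indeed cancels.
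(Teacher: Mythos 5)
Your derivation is correct, but it is worth noting that the paper itself gives no proof of Proposition~\ref{thm:mainli}: the statement is imported verbatim from \cite[Theorem 1.1]{ZWZ18}, so there is no argument to compare against. What you supply is a reduction of the proposition to the other facts the paper imports or proves in the appendix, and each step checks out: the additivity of $s(\lambda_1,\lambda_2;\cdot,\cdot)$ in its last two arguments is immediate from Definition~\ref{def:hormander}, combining it with the antisymmetry $s(\lambda_1,\lambda_2;\mu_1,\mu_2)=-s(\lambda_1,\lambda_2;\mu_2,\mu_1)$ and Lemma~\ref{thm:properties}(II) gives the first equality, and your ``cyclic transposition'' identity $\itriple(\gamma,\alpha,\beta)-\itriple(\alpha,\beta,\gamma)=\dim(\gamma\cap\beta)-\dim(\gamma\cap\alpha)$ is exactly Lemma~\ref{lem:triple_diff_circle_permu}; I verified that the $\dim(\lambda_j\cap\mu_k)$ corrections in the symmetry formula and in the two instances of the transposition identity cancel as you claim, yielding the second equality. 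The one caveat you should state explicitly is that this is a formal equivalence rather than an independent proof: Lemma~\ref{thm:properties} is itself only cited from \cite[Corollary~3.16]{ZWZ18}, and its part (II) is precisely the special case $\mu_1=\lambda_2$ of the first asserted equality (since $\itriple(\lambda_1,\lambda_2,\lambda_2)=0$), so all of the symplectic content sits in that cited corollary and in the unproved symmetry formula for $s$ listed after Definition~\ref{def:hormander}; in the source \cite{ZWZ18} the corollary is deduced from the theorem, so the logical order there is the reverse of yours. Within the present paper's ordering, however, your argument is a legitimate and complete way to recover the proposition from what precedes it.
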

\begin{rem}
We emphasize that no transversality conditions are assumed on the  Lagrangian subspaces in Proposition \ref{thm:mainli} 
\end{rem}

\begin{lem} \label{lem:coisotropic_Lagrangian}
	Let $(V, \omega)$ be a finite dimensional symplectic space and let $W$ be a coisotropic subspace. Then $W$ is a	
	Lagrangian subspace if and only if it contains a unique Lagrangian subspace of $V$.
\end{lem}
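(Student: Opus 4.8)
The plan is to prove both implications directly from the definition of a coisotropic subspace. Recall that $W$ is coisotropic precisely when $W^{\omega}\subseteq W$, and in that case $W^{\omega}$ is an isotropic subspace with $\dim W^{\omega}=\dim V-\dim W=2n-\dim W$; moreover the quotient $W/W^{\omega}$ inherits a nondegenerate symplectic form $\bar\omega$, making it a symplectic space of dimension $2\dim W-2n$. This \emph{symplectic reduction} is the central tool.

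First I would dispose of the easy direction. Suppose $W$ is itself Lagrangian. Then $W^{\omega}=W$, so the reduced space $W/W^{\omega}$ is zero-dimensional, hence $\{0\}$ is its only Lagrangian subspace. Any Lagrangian subspace $\Lambda\subseteq V$ with $\Lambda\subseteq W$ must contain $W^{\omega}=W$ (since $\Lambda\supseteq\Lambda^{\omega}\supseteq W^{\omega}$), forcing $\Lambda=W$ by dimension count. Thus $W$ contains exactly one Lagrangian subspace of $V$, namely itself.

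For the converse, assume $W$ is coisotropic and contains a unique Lagrangian subspace of $V$; I want to show $W$ is Lagrangian. The key observation is a bijective correspondence between Lagrangian subspaces of the reduced space $W/W^{\omega}$ and Lagrangian subspaces of $V$ contained in $W$: if $\pi:W\to W/W^{\omega}$ is the projection and $\ell\subseteq W/W^{\omega}$ is Lagrangian for $\bar\omega$, then $\pi^{-1}(\ell)$ is a Lagrangian subspace of $(V,\omega)$ contained in $W$, and conversely every Lagrangian $\Lambda\subseteq W$ satisfies $\Lambda\supseteq W^{\omega}$ and descends to a Lagrangian $\pi(\Lambda)$ in the quotient. (One checks isotropy and the dimension $\dim\pi^{-1}(\ell)=\dim\ell+\dim W^{\omega}=(\dim W-n)+(2n-\dim W)=n$ to confirm these are genuine Lagrangians of $V$.) Under this correspondence, uniqueness of the Lagrangian in $W$ forces uniqueness of the Lagrangian in $W/W^{\omega}$. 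But a symplectic space has a unique Lagrangian subspace if and only if it is the zero space: any symplectic space of positive dimension $2m>0$ admits infinitely many Lagrangians (e.g.\ by acting with the symplectic group on a fixed one, or simply exhibiting two distinct Lagrangian complements). Hence $W/W^{\omega}=\{0\}$, i.e.\ $W=W^{\omega}$, which says precisely that $W$ is Lagrangian.

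The only step requiring genuine care is verifying the Lagrangian correspondence under symplectic reduction—that $\pi^{-1}(\ell)$ is isotropic in $V$ (immediate, since $\omega$ on $W$ pulls back $\bar\omega$) and has the right dimension, and that every Lagrangian of $V$ inside $W$ automatically contains $W^{\omega}$. This last point follows because a Lagrangian $\Lambda\subseteq W$ satisfies $W^{\omega}\subseteq\Lambda^{\omega}$, and $\Lambda^{\omega}=\Lambda$, so $W^{\omega}\subseteq\Lambda$. Once this correspondence is in hand, the argument reduces to the elementary fact that only the trivial symplectic space has a unique Lagrangian, and the proof concludes.
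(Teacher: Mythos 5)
Your proof is correct, but it follows a genuinely different route from the paper's. The paper argues directly: given a Lagrangian $\Lambda$ strictly contained in the coisotropic $W$ and a vector $v\in W\setminus\Lambda$ spanning a line $l$, it computes
\[
\bigl((l^{\omega}\cap\Lambda)+l\bigr)^{\omega}=(l+\Lambda)\cap l^{\omega}=l+(\Lambda\cap l^{\omega})=(l^{\omega}\cap\Lambda)+l,
\]
so $(l^{\omega}\cap\Lambda)+l$ is a second Lagrangian contained in $W$ and distinct from $\Lambda$; uniqueness then forces $W=\Lambda$. You instead pass to the symplectic reduction $W/W^{\omega}$, establish the bijection between Lagrangians of $V$ contained in $W$ and Lagrangians of the reduced space, and invoke the fact that only the zero symplectic space has a unique Lagrangian. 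Both arguments are sound; the paper's is shorter and purely a one-line annihilator computation, while yours is more conceptual, makes the mechanism (the reduced space measuring the failure of $W$ to be Lagrangian) transparent, and has the minor merit of spelling out the easy direction, which the paper leaves implicit. The verifications you flag as needing care (isotropy and dimension of $\pi^{-1}(\ell)$, and that every Lagrangian inside $W$ contains $W^{\omega}$) are exactly the right ones and are carried out correctly.
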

\begin{proof}
	Assume that $\Lambda\subsetneq W$ is a Lagrangian subspace of $V$.
	Let $v\in W\setminus \Lambda$. Let $l$ be the one dimension subspace spanned by $v$.
	Then we have
	\[
	((l^\omega\cap \Lambda) + l)^\omega=(l^{\omega\omega}+\Lambda^\omega)\cap l^\omega =(l+\Lambda)\cap l^\omega.
	\]
	Since one dimension subspace of $V$ is isotropic, we have $l< l^\omega$.
	It follows that 
	\[
	((l^\omega\cap \Lambda) + l)^\omega=(l+\Lambda)\cap l^\omega =l+\Lambda\cap l^\omega.
	\]
	So,  $W$ contains a subspace $(l^\omega\cap \Lambda)+l$ which is different with $\Lambda$ and it is a Lagrangian subspace of $V$.
	So if $\Lambda< W$ is the unique Lagrangian subspace contained in $W$,  we have $W=\Lambda$ and $W$ is a Lagrangian subspace of $V$.

\end{proof}

\begin{lem}\label{lem:triple_diff_circle_permu}
Let $\alpha,\beta,\gamma \in \Lag(V,\omega)$. Then the following identities hold:
\[
\itriple(\alpha,\beta,\gamma )-\itriple(\beta,\gamma ,\alpha)=\dim(\alpha\cap \gamma ) -\dim(\beta\cap \alpha)\]
or equivalently 
\[
\itriple(\alpha,\beta,\gamma )-\dim (\alpha\cap \gamma )=\itriple(\beta,\gamma ,\alpha) -\dim(\beta\cap \alpha)
\]
\end{lem}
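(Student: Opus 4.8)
The plan is to reduce the claimed identity
\[
\itriple(\alpha,\beta,\gamma )-\itriple(\beta,\gamma ,\alpha)=\dim(\alpha\cap \gamma ) -\dim(\beta\cap \alpha)
\]
to the cyclic-permutation property of the generalized coindex of the quadratic form $Q$, namely Equation~\eqref{eq:invariance_Q} and its consequence Equation~\eqref{eq:triple-coindex-extended}. First I would recall that, by Equation~\eqref{eq:triple-coindex-extended}, one has $\itriple(\alpha,\beta,\gamma)=\noo{+}\big[Q(\pi\alpha,\pi\beta;\pi\gamma)\big]$, where $\pi$ is the symplectic reduction modulo $\varepsilon=\alpha\cap\beta+\beta\cap\gamma$ and $\noo{+}$ is the extended coindex (coindex plus nullity). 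The key point is that, by Equation~\eqref{eq:invariance_Q}, the coindex of $Q$ is invariant under cyclic permutation of its three arguments; I would additionally record that the nullity of $Q(\alpha,\beta;\gamma)$, computed on $\alpha\cap(\beta+\gamma)$, equals $\dim(\alpha\cap\gamma)-\dim(\alpha\cap\beta\cap\gamma)$ by Equation~\eqref{eq:kernel-dim-q-form}. Combining these, the \emph{extended} coindex is \emph{not} cyclically invariant, and the defect is exactly the difference of nullities.

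Concretely, I would write $\itriple(\alpha,\beta,\gamma)=\coindex[Q(\alpha,\beta;\gamma)]+\dim(\alpha\cap\gamma)-\dim(\alpha\cap\beta\cap\gamma)$ (using the alternative form of Definition~\ref{def:kashi} recorded in the excerpt after Remark~\ref{rem:molto-utile-stima}) and similarly $\itriple(\beta,\gamma,\alpha)=\coindex[Q(\beta,\gamma;\alpha)]+\dim(\beta\cap\alpha)-\dim(\beta\cap\gamma\cap\alpha)$. Since $\coindex[Q(\alpha,\beta;\gamma)]=\coindex[Q(\beta,\gamma;\alpha)]$ by Equation~\eqref{eq:invariance_Q}, and since $\dim(\alpha\cap\beta\cap\gamma)=\dim(\beta\cap\gamma\cap\alpha)$ trivially, subtracting the two expressions gives
\[
\itriple(\alpha,\beta,\gamma)-\itriple(\beta,\gamma,\alpha)=\dim(\alpha\cap\gamma)-\dim(\beta\cap\alpha),
\]
which is the claim. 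The equivalent reformulation follows by moving $\dim(\alpha\cap\gamma)$ and $\dim(\beta\cap\alpha)$ to the appropriate sides.

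The main obstacle I anticipate is purely bookkeeping: one must be careful that the quadratic form $Q$ in Definition~\ref{def:Q-Dui76} is a priori only defined under the transversality hypothesis $\alpha\cap\beta=\beta\cap\gamma=\{0\}$, so to invoke cyclic invariance of the coindex for \emph{arbitrary} Lagrangians I must pass to the reduced space via $\pi$, or else use the extended version of $Q$ defined on $\alpha\cap(\beta+\gamma)$ as noted in the Remark following Remark~\ref{rem:molto-utile-stima}; the cyclic invariance~\eqref{eq:invariance_Q} is stated there precisely in this generality. Once one is disciplined about which space $Q$ lives on and uses Equation~\eqref{eq:kernel-dim-q-form} to identify its nullity, no genuine difficulty remains—the result is a formal consequence of Equation~\eqref{eq:invariance_Q}, Equation~\eqref{eq:kernel-dim-q-form}, and the definition of the triple index. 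I would keep the write-up short, citing \cite[Lemma 3.2, Corollary 3.12, Lemma 3.13]{ZWZ18} and \cite[Lemma 2.4]{Dui76} for the facts about $Q$ rather than reproving them.
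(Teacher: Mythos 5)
Your argument is correct and is essentially the paper's own proof: both expand $\itriple(\alpha,\beta,\gamma)$ and $\itriple(\beta,\gamma,\alpha)$ via the definition of the triple index in terms of $\coiMor[Q]$ and then invoke the cyclic invariance of the coindex from Equation~\eqref{eq:invariance_Q} so that the difference reduces to $\dim(\alpha\cap\gamma)-\dim(\beta\cap\alpha)$. The extra discussion of the extended coindex and nullities is harmless but not needed.
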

\begin{proof}
By the definition of triple index
\begin{align*}
\itriple(\alpha,\beta,\gamma )=\coiMor \big[  Q(\alpha,\beta;\gamma )\big]+\dim(\alpha\cap \gamma )-\dim(\alpha\cap \beta\cap \gamma )\\
\itriple(\beta,\gamma ,\alpha)=\coiMor \big[  Q(\beta,\gamma ;\alpha)\big]+\dim(\alpha\cap \beta)-\dim(\alpha\cap \beta\cap \gamma ).
\end{align*}
Since $\coiMor[Q]$ is invariant under circular permutation, the lemma follows.
\end{proof}


\section{A Quick Overview of  the  Spectral Flow}\label{sec:sf}

The aim of this section is to briefly recall the Definition and the main
properties of the spectral
flow for a continuous path of closed self-ad[a,b]oint  Fredholm operators  $\CFsa(H )$ on the Hilbert space $H$.
Our basic reference is \cite{HP17}  and references therein.

Let $H $ be a separable complex Hilbert space and let
$A: D (A) < H  \to H $ be  a  self-ad[a,b]oint 
Fredholm
operator. By the Spectral decomposition Theorem (cf., for instance,
\cite[Chapter III,
Theorem 6.17]{Kat80}), there is an orthogonal decomposition $
 H = E_-(A)\oplus E_0(A) \oplus E_+(A),$
that reduces the operator $A$
and has the property that
\[
 \sigma(A) \cap (-\infty,0)=\sigma\big(A_{E_-(A)}\big), \quad
 \sigma(A) \cap \{0\}=\sigma\big(A_{E_0(A)}\big),\quad
 \sigma(A) \cap (0,+\infty)=\sigma\big(A_{E_+(A)}\big).
\]
\begin{defn}\label{def:essential}
Let $A \in \CFsa(H)$. We  term $A$ {\em essentially
positive\/}
if $\sigma_{ess}(A)\subset (0,+\infty)$, {\em essentially negative\/} if
$\sigma_{ess}(A)\subset (-\infty,0)$ and finally
{\em strongly indefinite\/} respectively if $\sigma_{ess}(A) \cap (-\infty,
0)\not=
\emptyset$ and $\sigma_{ess}(A) \cap ( 0,+\infty)\not=\emptyset$.
\end{defn}
\noindent
If $\dim E_-(A)<\infty$,
we define its {\em Morse index\/}
as the integer denoted by $\iindex{A}$ and defined as $
 \iindex{A} \= \dim E_-(A).$
Given $A \in\cfsa(H )$, for  $a,b \notin
\sigma(A)$ we set
\[
\mathcal P_{[a,b]}(A)\=\Real\left(\dfrac{1}{2\pi\, i}\int_\gamma
(\lambda-A)^{-1} d\, \lambda\right)
\]
where $\gamma$ is the circle of radius $\frac{b-a}{2}$ around the point
$\frac{a+b}{2}$. We recall that if
$K\subset \sigma(A)$ consists of  isolated eigenvalues of finite type then
$
 \Imm \mathcal P_{K}(A)= E_{K}(A)\= \bigoplus_{\lambda \in (a,b)}\ker
(\lambda -A);
$
(cf. \cite[Section XV.2]{GGK90}, for instance) and $0$ either belongs in the
resolvent set of $A$ or it is an isolated eigenvalue of finite multiplicity.
 The next result allows us to  define the spectral flow for
gap
continuous paths in  $\cfsa(H )$.
\begin{prop}\label{thm:cor2.3}
 Let $A_0 \in \cfsa(H )$ be fixed.
 \begin{enumerate}
  \item[(i)] There exists a positive real number $a \notin \sigma(A_0)$ and an
open neighborhood $\mathscr N \subset  \cfsa(H )$ of $A_0$ in the gap
topology such that $\pm a \notin
\sigma(A)$ for all $A \in  \mathscr N$ and the map
 \[
  \mathscr N \ni A \longmapsto \mathcal P_{[-a,a]}(A) \in \Bou(H )
 \]
is continuous and the pro[a,b]ection $\mathcal P_{[-a,a]}(A)$ has constant finite
rank for all $t \in \mathscr N$.
 \item[(ii)] If $\mathscr N$ is a neighborhood as in (i) and $-a \leq c \leq d
\leq a$ are such that $c,d \notin
 \sigma(A)$ for all $A \in \mathscr N$, then $A \mapsto \mathcal P_{[c,d]}(A)$
is
continuous on $\mathscr N$.
 Moreover the rank of $\mathcal P_{[c,d]}(A) \in \mathscr N$ is finite and
constant.
 \end{enumerate}
\end{prop}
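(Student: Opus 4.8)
The plan is to deduce both parts from a single analytic input: on $\cfsa(H)$ equipped with the gap topology, resolvents depend continuously on the operator, locally uniformly in the spectral parameter. Precisely, if $A_0\in\cfsa(H)$ and $K\subset\rho(A_0)$ is compact, then there is a gap-neighborhood $\mathscr N$ of $A_0$ (which we take connected, e.g.\ a gap-ball) with $K\subset\rho(A)$ for all $A\in\mathscr N$, and $(A,\lambda)\mapsto(\lambda-A)^{-1}$ is continuous on $\mathscr N\times K$, the convergence as the operator varies being uniform in $\lambda\in K$; this rests on the description of the gap metric on $\cfsa(H)$ through the Cayley transform (equivalently through $A\mapsto(A\pm i)^{-1}$) together with the resolvent identity (see \cite[Ch.~IV]{Kat80} and \cite{HP17} and the references therein). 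Granting this, the rest is Riesz-projection bookkeeping.

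For (i): since $A_0$ is self-adjoint and Fredholm, $0$ is either in $\rho(A_0)$ or an isolated eigenvalue of finite multiplicity; as $\sigma(A_0)\subset\R$ is closed we may pick $a>0$ with $\pm a\notin\sigma(A_0)$ and $\sigma(A_0)\cap[-a,a]\subset\{0\}$. The circle $\gamma=\partial B_a(0)$ meets $\R$ only at $\pm a$, hence $\gamma\subset\rho(A_0)$. Applying the input with $K=\gamma$ gives a connected gap-neighborhood $\mathscr N$ with $\gamma\subset\rho(A)$ for every $A\in\mathscr N$; in particular $\pm a\notin\sigma(A)$ there. Since $\lambda\mapsto(\lambda-A)^{-1}$ is defined on $\gamma$ and converges uniformly on $\gamma$ as the operator varies, the contour integral
\[
\mathcal P_{[-a,a]}(A)=\Real\Big(\tfrac{1}{2\pi i}\int_\gamma(\lambda-A)^{-1}\,d\lambda\Big)
\]
depends continuously on $A\in\mathscr N$ in operator norm. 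At $A_0$ this is the spectral projection of $A_0$ onto $E_0(A_0)=\ker A_0$, finite-dimensional by Fredholmness; shrinking $\mathscr N$ so that $\norm{\mathcal P_{[-a,a]}(A)-\mathcal P_{[-a,a]}(A_0)}<1$ for all $A\in\mathscr N$ and invoking the elementary fact that two idempotents at norm-distance $<1$ are similar, hence of equal rank, we conclude that $\rank\mathcal P_{[-a,a]}(A)$ is finite and constant on $\mathscr N$.

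For (ii): the contour $\gamma_{c,d}=\partial B_{(d-c)/2}\big(\tfrac{c+d}{2}\big)$ meets $\R$ exactly at $c$ and $d$ and encloses $\sigma(A)\cap(c,d)\subset\sigma(A)\cap(-a,a)$. Since $c,d\notin\sigma(A)$ for every $A\in\mathscr N$ and $\sigma(A)\subset\R$, we have $\gamma_{c,d}\subset\rho(A)$ for all such $A$; running the contour-integral argument on $\gamma_{c,d}$ exactly as above (the uniform resolvent continuity near any fixed $A_1\in\mathscr N$ is again supplied by the input, now with $K=\gamma_{c,d}\subset\rho(A_1)$) yields continuity of $A\mapsto\mathcal P_{[c,d]}(A)$ on $\mathscr N$. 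Because the orthogonal spectral projections are nested, $\mathcal P_{[c,d]}(A)=\mathcal P_{[c,d]}(A)\,\mathcal P_{[-a,a]}(A)$, so $\dim\ran\mathcal P_{[c,d]}(A)\le\rank\mathcal P_{[-a,a]}(A)<\infty$; in fact this dimension equals the number of eigenvalues of the self-adjoint operator $A|_{E_{(-a,a)}(A)}$ lying in $(c,d)$. By (i) this finite-dimensional space has constant dimension and varies continuously on the connected set $\mathscr N$, while $c$ and $d$ are never eigenvalues, so the count, and hence the rank, is constant on $\mathscr N$; equivalently, one applies once more the idempotent-rigidity argument to the continuity just established.

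The main obstacle is the analytic input of the first paragraph: making precise that gap-convergence $A_n\to A_0$ forces the \emph{whole} contour $\gamma$ (resp.\ $\gamma_{c,d}$) to stay in $\rho(A_n)$ for $n$ large, together with norm convergence $(\lambda-A_n)^{-1}\to(\lambda-A_0)^{-1}$ that is \emph{uniform} along the contour. Everything downstream of it---the contour integrals, the finiteness of rank coming from the Fredholm property, and the invariance of rank under small perturbations of a projection---is routine.
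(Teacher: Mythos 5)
The paper offers no argument of its own here: its ``proof'' is a one-line citation to \cite[Proposition~2.10]{BLP05}. Your proposal reconstructs precisely the standard Riesz-projection argument underlying that reference --- choose $a$ so that $\sigma(A_0)\cap[-a,a]\subset\{0\}$ (possible since $A_0$ is self-adjoint and Fredholm, so $0$ is either in the resolvent set or an isolated eigenvalue of finite multiplicity), integrate the resolvent over the circle through $\pm a$ (resp.\ through $c,d$), and use the fact that two idempotents at norm distance less than $1$ have equal rank --- and it is correct. The single ingredient you leave as an input, namely that gap convergence of self-adjoint operators forces a compact subset of $\rho(A_0)$ (including its real points) into $\rho(A)$ for nearby $A$ together with norm convergence of resolvents uniformly on that compact set, is exactly the content of the results in \cite[Ch.~IV]{Kat80} and \cite{BLP05} that the paper is implicitly relying on, so nothing essential is missing.
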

\begin{proof}
For the proof of this result we refer the interested reader to
\cite[Proposition 2.10]{BLP05}.
\end{proof}
Let $ A :[a,b] \to \cfsa(H )$ be a gap-continuous  path.  As
consequence
of Proposition \ref{thm:cor2.3}, for every $t \in [a,b]$ there exists $a>0$ and
an open
connected neighborhood $\mathscr N_{t,a} \subset \cfsa(H )$ of
$\mathcal
A(t)$
such that $\pm a \notin \sigma(A)$ for all $A\in \mathscr N_{t,a}$ and the map
$\mathscr N_{t,a} \in A \longmapsto \mathcal P_{[-a,a]}(A) \in \mathcal
B$
is continuous and hence $ \rk\left(\mathcal P_{[-a,a]}(A)\right)$ does not
depends on $A \in \mathscr N_{t,a}$. Let us consider the open covering
of the interval $[a,b]$ given by the
pre-images of the neighborhoods $\mathcal
N_{t,a}$ through $ A $ and, by choosing a sufficiently fine partition of
the interval $[a,b]$ having diameter less than the Lebesgue number
of the covering, we can find  $a=:t_0 < t_1 < \dots < t_n:=b$,
operators $T_i \in \cfsa(H )$ and
positive real numbers $a_i $, $i=1, \dots , n$ in such a way the restriction of
the path $ A $ on the
interval $[t_{i-1}, t_i]$ lies in the neighborhood $\mathscr N_{t_i, a_i}$ and
hence the
$\dim E_{[-a_i, a_i]}( A _t)$ is constant for $t \in [t_{i-1},t_i]$,
$i=1, \dots,n$.
\begin{defn}\label{def:spectral-flow-unb}
The \emph{spectral flow of $ A $} (on the interval $[a,b]$) is defined by
\[
 \spfl( A , [a,b])\=\sum_{i=1}^N \dim\,E_{[0,a_i]}( A _{t_i})-
 \dim\,E_{[0,a_i]}( A _{t_{i-1}}) \in \Z.
\]
\end{defn}
The spectral flow as given in Definition \ref{def:spectral-flow-unb} is
well-defined
(in the sense that it is independent either on the partition or on the $a_i$)
and only depends on
the continuous path $ A $. Here We list one of the useful properties of the spectral flow.
\begin{itemize}
 \item[]  {\sc (Path Additivity)\/} If $ A _1,\mathcal
A_2: [a,b] \to
 \cfsa(H )$ are two continuous path such that
$ A _1(b)= A _2(a)$, then
 $
  \spfl( A _1 * A _2) = \spfl( A _1)+\spfl( A _2).
 $
\end{itemize}
As already observed, the spectral flow, in general,  depends on the whole path
and not
[a,b]ust on the ends. However, if the path has a special form, it actually depends
on the
end-points. More precisely, let  $ A  , B \in \cfsa(H )$
and let $\widetilde{ A }:[a,b] \to \cfsa(H )$ be the path
pointwise defined by $\widetilde{ A }(t)\= A + \widetilde{\mathcal
B}(t)$  where $
\widetilde{ B }$ is any continuous curve of $ A $-compact
operators parametrised on $[a,b]$
such that  $\widetilde{ B }(a)\=0$ and  $ \widetilde{ B }(b)\=
 B $. In this case,
the spectral flow depends of the
path $\widetilde A$, only on the endpoints (cf.  \cite{ZL99} and reference
therein).
\begin{rem}
 It is worth noticing that, since every operator $\widetilde{ A }(t)$ is
a compact perturbation of a
 a fixed one, the path $\widetilde{ A }$ is actually a continuous path
into $\Bou(W ; H )$,
 where $W \=D ( A )$.
\end{rem}
\begin{defn}\label{def:rel-morse-index}(\cite[Definition 2.8]{ZL99}).
 Let $ A  , B \in \cfsa(H )$ and we assume that $ 
B$ is
 $ A $-compact (in the sense specified above). Then the
{\em  relative Morse index of the pair $ A $, $ A + B $\/}
is
defined by $
  \irel( A ,  A + B )=-\spfl(\widetilde{ A };[a,b])$
where $\widetilde{ A }\= A + \widetilde{ B }(t)$ and where
$
\widetilde{ B }$ is any continuous curve parametrised on $[a,b]$
of $ A $-compact operators such that
$\widetilde{ B }(a)\=0$ and
$ \widetilde{ B }(b)\=  B $.
\end{defn}
\noindent
In the special case in which the Morse index of both operators $ A $ and
$ A + B $ are
finite, then
\begin{equation}\label{eq:miserve}
\irel( A ,  A + B )=\iMor(A  +
B)-\iMor(A).
\end{equation}

Let  $W , H $ be separable Hilbert spaces with a dense and
continuous
inclusion $W  \hookrightarrow H $ and let
$ A :[a,b] \to \cfsa(H )$  having fixed domain $W $. We
assume that $ A $ is
a continuously differentiable path  $ A : [a,b] \to \cfsa(H )$
and
we denote by $\dot{ A }_{\lambda_0}$ the derivative of
$ A _\lambda$ with respect to the parameter $\lambda \in [a,b]$ at
$\lambda_0$.
\begin{defn}\label{def:crossing-point}
 An instant $\lambda_0 \in [a,b]$ is called a {\em crossing instant\/} if
$\ker\,  A _{\lambda_0} \neq 0$. The
 crossing form at $\lambda_0$ is the quadratic form defined by
\begin{equation}
 \Gamma( A , \lambda_0): \ker  A _{\lambda_0} \to \R, \quad
\Gamma( A , \lambda_0)[u] =
\langle \dot{ A }_{\lambda_0}\, u, u\rangle_H .
\end{equation}
Moreover a  crossing $\lambda_0$ is called {\em regular\/}, if $\Gamma(\mathcal
A, \lambda_0)$ is nondegenerate.
\end{defn}
We recall that there exists $\varepsilon >0$ such that   $ A  +\delta \,
\Id_H $ has only regular crossings
  for almost every $\delta \in (-\varepsilon, \varepsilon)$. In the special case in which all crossings are regular, then the spectral flow
can be easily computed through  the
crossing forms. More precisely the following result  holds.
\begin{prop}\label{thm:spectral-flow-formula}
 If $ A :[a,b] \to \cfsa(W , H )$ has only regular
crossings then they are in a finite
 number and
 \[
  \spfl( A , [a,b]) = -\mathrm{n_-}{\left[\Gamma( A ,a)\right]}+
\sum_{t_0 \in (a,b)}
  \sgn\left[\Gamma( A , t_0)\right]+
  \coiindex{\Gamma( A ,b)}
 \]
where the sum runs over all the crossing instants.
\end{prop}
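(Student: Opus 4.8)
The plan is to localize the computation at each crossing, reduce by spectral projection to a finite–dimensional perturbation problem, and read off the net number of eigenvalues passing through $0$ from the inertia of the crossing form; this is the classical argument of Robbin–Salamon \cite{RS93}, adapted to the gap–continuous setting via Proposition~\ref{thm:cor2.3} (cf. \cite{BLP05,LZ00}). First I would check that a regular crossing $t_0$ is isolated. By Proposition~\ref{thm:cor2.3} there is $a>0$ and a neighbourhood of $t_0$ on which $P(t):=\mathcal{P}_{[-a,a]}(A_t)$ is continuous of constant finite rank; since $A_t\in\cfsa(W,H)$ is $C^1$ in $t$, the Riesz formula $P(t)=\Real\tfrac{1}{2\pi i}\oint(\lambda-A_t)^{-1}d\lambda$ makes $P$ a $C^1$ path of projections, so the restriction $B_t$ of $A_t$ to $\image P(t)$ is a $C^1$ path of symmetric matrices with $\ker B_t=\ker A_t$. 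A second Riesz projection onto the $\dim\ker A_{t_0}$ eigenvalues of $B_t$ near $0$ produces a $C^1$ path $\widetilde B_t$ on a $C^1$ identification of $\ker A_{t_0}$, with $\widetilde B_{t_0}=0$; writing $B_t=P(t)A_tP(t)$ and differentiating at $t_0$, and using that $A_{t_0}$ annihilates $\ker A_{t_0}$ while $P$ and $A_{t_0}$ are self-adjoint, one gets $\langle u,\dot{\widetilde{B}}_{t_0}u\rangle=\langle\dot A_{t_0}u,u\rangle=\Gamma(A,t_0)[u]$ for $u\in\ker A_{t_0}$, i.e. $\dot{\widetilde{B}}_{t_0}=\Gamma(A,t_0)$ as a quadratic form. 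Since $\widetilde B_t/(t-t_0)\to\Gamma(A,t_0)$ and $\Gamma(A,t_0)$ is nondegenerate, $\widetilde B_t$ is invertible for $0<|t-t_0|$ small; hence $\ker A_t=\{0\}$ near $t_0$, and the crossings on the compact interval $[a,b]$ are finite in number.

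Next I would refine the partition in Definition~\ref{def:spectral-flow-unb} so that each subinterval $[s_{i-1},s_i]$ lies in a neighbourhood as in Proposition~\ref{thm:cor2.3}, contains at most one crossing, and every crossing lies either in the interior of its subinterval or at an endpoint $s_0=a$ or $s_N=b$. By path additivity of the spectral flow, $\spfl(A,[a,b])$ equals the sum of the contributions of the subintervals, and a subinterval without crossings contributes $0$: inside $[-a_i,a_i]$ no eigenvalue of $A_t$ enters, leaves, or meets $0$, so $\dim E_{[0,a_i]}(A_t)$ is constant there.

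For a subinterval carrying the single crossing $t_0$ I would split $\image P(t_0)=E_-\oplus\ker A_{t_0}\oplus E_+$ according to the sign of $\sigma(A_{t_0})$; after shrinking the interval the $E_\pm$–eigenvalues stay away from $0$, so the change of $\dim E_{[0,a_i]}(A_t)$ is governed entirely by the $\dim\ker A_{t_0}$ eigenvalues of $\widetilde B_t$, which by the first paragraph have the same inertia as $(t-t_0)\Gamma(A,t_0)$; thus exactly $\mathrm{n_+}[\Gamma(A,t_0)]$ of them are $\ge 0$ for $t>t_0$ and exactly $\mathrm{n_-}[\Gamma(A,t_0)]$ for $t<t_0$. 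If $t_0$ is interior, the subinterval contributes $\mathrm{n_+}[\Gamma(A,t_0)]-\mathrm{n_-}[\Gamma(A,t_0)]=\sgn\Gamma(A,t_0)$. If $t_0=a=s_0$, the left endpoint value $A_a$ puts all of $\ker A_a$ into $E_{[0,a_1]}$, so the contribution is $\mathrm{n_+}[\Gamma(A,a)]-\dim\ker A_a=-\mathrm{n_-}[\Gamma(A,a)]$; if $t_0=b=s_N$, symmetrically the contribution is $\dim\ker A_b-\mathrm{n_-}[\Gamma(A,b)]=\mathrm{n_+}[\Gamma(A,b)]$. Summing these contributions, with $\mathrm{n_-}[\Gamma(A,a)]=0$ (resp. $\mathrm{n_+}[\Gamma(A,b)]=0$) when $a$ (resp. $b$) is not a crossing, yields exactly the asserted formula.

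The main obstacle is the local analysis of the first paragraph when $t_0$ is a crossing of multiplicity larger than one: one must verify that after the two–step spectral reduction the velocity of the compressed path is \emph{precisely} the crossing form $\Gamma(A,t_0)$, and that its nondegeneracy forces $\widetilde B_t$ to be invertible for $t\ne t_0$ near $t_0$. This is what makes the signature of $\Gamma(A,t_0)$ — rather than a finer invariant — the correct bookkeeping device; once this is in place, the partition refinement, path additivity, and the endpoint conventions in the last two paragraphs are routine.
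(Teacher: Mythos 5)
Your proposal is correct and is precisely the argument the paper intends: the paper's own "proof" consists of the single remark that the result "follows by arguing as in [RS95]," and what you have written out is exactly that Robbin--Salamon crossing-form reduction (Riesz projection onto a finite-dimensional block, identification of the compressed velocity with $\Gamma(A,t_0)$, isolation of regular crossings, path additivity, and the asymmetric endpoint conventions), adapted to the gap-continuous setting via Proposition~\ref{thm:cor2.3}. No gaps; your treatment is in fact more complete than the paper's.
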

\begin{proof}
 The proof of this result follows by arguing as in \cite{RS95}. This conclude
the proof.
\end{proof}


\section{Friedrichs Extensions: Definition and Basic Results}\label{appendix:friedrichs}

Friedrich extensions provide a canonical way to associate a self-adjoint operator to a densely defined, symmetric, semi-bounded from below operator in a Hilbert space. This construction is particularly relevant in the spectral theory of differential operators, where it often yields the physically or geometrically meaningful self-adjoint realization of a given symmetric operator.

In this section, we recall the definition of the Friedrich extension and collect some basic properties that are crucial for the analysis of self-adjoint realizations of differential operators. We emphasize that the Friedrich extension is characterized from a variational viewpoint as the smallest self-adjoint extension (in the sense of quadratic forms) among all self-adjoint extensions bounded below by the same constant. This property makes the Friedrich extension particularly useful in applications to partial differential equations, quantum mechanics, and index theory.


Let $(H, \langle \cdot, \cdot \rangle$ be a real Hilbert space  and let \( A: \dom(A) < H \to H \) be a densely defined, symmetric operator on a Hilbert space \( H \), satisfying the semiboundedness condition:
\[
\langle A u, u \rangle \ge m \|u\|^2, \quad \forall \, u \in \dom(A),
\]
for some constant \( m \in \mathbb{R} \). Without loss of generality, one may assume \( m > 0 \) by replacing \( A \) with \( A + cI \) for a suitable shift \( c > -m \).

We define the associated quadratic form \( q \) by
\[
q(u, v) := \langle A u, v \rangle, \quad u, v \in \dom(q) := \dom(A).
\]
This form is symmetric and semibounded from below by \( m \). Introduce the form inner product:
\[
\langle u, v \rangle_q := q(u, v) + (1 - m) \langle u, v \rangle,
\]
which induces the form norm:
\[
\|u\|_q := \sqrt{q(u, u) + (1 - m)\|u\|^2}.
\]
Let \( H_q \) denote the completion of \( \dom(A) \) with respect to \( \|\cdot\|_q \). The form \( q \) then extends uniquely to a closed, densely defined, symmetric, and semibounded quadratic form $t_A$ on \( H _q \).

By the Kato's first representation theorem, there exists a unique self-adjoint operator \( A_F \) on \( H  \), called the \emph{Friedrichs extension} of \( A \), such that:
\begin{itemize}
    \item[-] \( \dom(A_F) < \dom(t_A) \),
    \item[-] For all \( u \in \dom(A_F) \), there exists \( f \in H  \) such that
    \[
    t_A(u, v) = \langle f, v \rangle, \qquad \forall\, v \in \dom(q),
    \]
    and \( A_F u := f \).
\end{itemize}
Hence, the domain of the Friedrichs extension is given by:
\[
\dom(A_F) = \left\{ u \in \dom(q) : \exists\, f \in H  \text{ such that } t_A(u, v) = \langle f, v \rangle, \ \forall\, v \in \dom(q) \right\}.
\]
\begin{rem}
The key feature of the Friedrichs extension is that its domain is determined entirely by the closed quadratic form. A function \( u \in \dom(q) \) lies in \( \dom(A_F) \) if and only if the map \( v \mapsto t_A(u, v) \) is continuous with respect to the Hilbert space norm on \( H  \). In other words, there exists a constant \( C > 0 \) such that
\[
|t_A(u, v)| \le C \|v\|, \quad \forall v \in \dom(q).
\]
By the Riesz representation theorem, this defines a unique element \( f \in H  \), and we set \( A_F u := f \).

This variational characterization ensures that the Friedrichs extension is the “smallest” self-adjoint extension in the sense of form ordering. It typically corresponds to imposing Dirichlet-type boundary conditions or energy minimization constraints.
\end{rem}
More explicitly by using the Kato's  representation theorem for closed, semibounded quadratic forms, we get that  there exists a unique self-adjoint and semibounded operator \( A_F \) on \( H  \) such that:
\begin{itemize}
\item[-] $\dom\left( A_F^{1/2} \right) = \dom(t_A) = H _q$
\item[-] $\langle A_F^{1/2} \phi,\ A_F^{1/2} \psi \rangle = t_A(\phi, \psi)$ for all  $\phi, \psi \in H _q$. 
\end{itemize}
	\begin{lem}\label{thm:lemmaC-2}
		We assume that $A: \dom(A) \to H$ is a Fredholm operator. Then $A_F:H_q\to  H_q$ is also a Fredholm operator.
	\end{lem}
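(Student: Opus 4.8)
The plan is to reduce the statement to the elementary fact that a self-adjoint operator with a strictly positive lower bound is invertible, and then to record---for use in the non-semibounded situations to which this lemma is applied after a spectral shift---the sharper fact that $A_F$ is a \emph{finite-dimensional extension} of $A$, so that it inherits Fredholmness together with a controlled index.

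After the normalization made at the beginning of this appendix we have $A\ge m>0$. Since $A$ is symmetric and closed, the inequality $\|Au\|\,\|u\|\ge\langle Au,u\rangle\ge m\|u\|^2$ forces $\ker A=\{0\}$ and shows that $\range A$ is closed; hence the hypothesis that $A$ be Fredholm is equivalent to $\dim\ker A^*<\infty$, so that $A\in\CFs(H)$ and Lemma~\ref{thm:beta-finit-dimensional} applies. First I would observe that the Friedrichs extension inherits the lower bound, $A_F\ge m>0$; being self-adjoint, its spectrum lies in $[m,+\infty)$, so $0$ belongs to its resolvent set and $A_F\colon\dom(A_F)\to H$ is invertible, in particular Fredholm. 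The Hilbert space $H_q=\dom(t_A)=\dom(A_F^{1/2})$ is, by Kato's second representation theorem, carried isometrically onto $H$ by $\Psi:=(A_F+(1-m)\Id)^{1/2}$, while $A_F+(1-m)\Id$ is boundedly invertible on $H$; transporting $A_F$ through $\Psi$ (equivalently, representing the closed form $t_A$ on $H_q$) produces the operator $A_F\colon H_q\to H_q$ of the statement, which is therefore invertible as well, and a fortiori Fredholm.

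For the record I would also present the direct Fredholm-theoretic argument, which persists after a spectral shift and for an arbitrary self-adjoint extension. By Lemma~\ref{thm:beta-finit-dimensional}, $\dom(A^*)=\dom(A)\oplus U$ with $U$ a finite-dimensional symplectic space; $A_F$ being self-adjoint, Lemma~\ref{lem:abstract_fundamental_solution} makes $F:=\dom(A_F)\cap U$ a Lagrangian subspace of $U$, whence $\dom(A_F)=\dom(A)\oplus F$ with $\dim F=\tfrac12\dim U<\infty$. Writing $\iota\colon\dom(A)\hookrightarrow\dom(A_F)$ for the inclusion---an isometry for the graph norms onto a closed subspace of finite codimension, hence a Fredholm map admitting a bounded Fredholm inverse $\iota^{+}$---one has $A_F=A\,\iota^{+}+A_F(\Id-\iota\,\iota^{+})$, that is, $A_F$ equals the Fredholm operator $A\,\iota^{+}$ plus a finite-rank correction; hence $A_F$ is Fredholm, with $\Find A_F=\Find A+\dim F$ (in agreement with $\Find A_F=0$).

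The main difficulty is not analytic but bookkeeping: one must carefully match the two incarnations of $A_F$---the unbounded self-adjoint operator on $H$ and the operator on the form space $H_q$---and isolate where finiteness of the Fredholm index really originates. It is the finite-dimensionality of $U$, i.e.\ the Fredholm hypothesis on $A$ via Lemma~\ref{thm:beta-finit-dimensional}, that supplies this; the strict lower bound $m>0$ alone only yields the stronger, but less robust, invertibility of $A_F$.
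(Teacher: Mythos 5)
Your proposal is correct and follows essentially the same route as the paper: first establish that the unbounded Friedrichs extension $A_F$ on $H$ is Fredholm, then transfer this to the operator on $H_q$ by conjugating with the homeomorphism $(A_F+(1-m)\Id)^{1/2}$. The paper asserts the first step in a single sentence, whereas you justify it via the finite-dimensional extension $\dom(A_F)=\dom(A)\oplus F$ (the content of Lemma~\ref{lem:dim_factor_space}); your extra observation that the normalization $m>0$ even yields invertibility is correct but, as you yourself note, it is the finite-dimensionality of $U$ rather than positivity that carries the Fredholm statement through the spectral shifts in which the lemma is actually applied.
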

	\begin{proof}
		Since $A$ is a Fredholm operator, then the Friedrich  extension $A_F$ is also a Fredholm operator. The following  diagram
		\begin{center}
		\begin{tikzcd}
			\dom (A) \arrow[r, "A_F"] 
			& H  \\  H_q \arrow[r, "A" ] \arrow[u,"(A_F+(1-m) \Id )^{-1/2}"]
			& H_q\arrow[u,"(A_F+(1-m) \Id){1/2}"swap] 
			\end{tikzcd}
		\end{center}
		is commutative.  Since $(A_F+(1-m) \Id)^{1/2} : \dom\big([A_F+(1-m) \Id]^{1/2}\big) \to H$ is a  homeomorphism,  we conclude that  $A_F: H_q\to : H_q$ is a Fredholm operator.
	\end{proof}

The main conclusions of the preceding discussion are encapsulated in the following result. 
\begin{thm}{\bf [Friedrich extension]}
 Let \( A: \dom(A) < H \to H \) be a densely defined, symmetric  and non-negative operator on a Hilbert space \( H \). 
	There  is a  unique self-adjoint operator $A_F: \dom(A_F)< H \to H $ such that
	$\dom(A_F)< \dom(t_{A})$ and
	\[
	t_A(u,v)=\langle A_F u,v\rangle_H\qquad \forall\, u\in \dom(A_F) \quad  v\in \dom(t_{A }).
	\] 
	Furthermore,  $A_F$ is a self-adjoint extension of $A$.
\end{thm}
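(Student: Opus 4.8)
The plan is to follow the classical Kato--Friedrichs construction, exactly as set up in the paragraphs preceding the statement, and to verify the three asserted properties — existence, uniqueness, and the fact that $A_F$ extends $A$ — in that order. First I would recall that, since $A$ is symmetric and non-negative, the form $q(u,v) = \langle Au,v\rangle$ with $\dom(q) = \dom(A)$ is symmetric and semi-bounded, so the form inner product $\langle u,v\rangle_q := q(u,v) + \langle u,v\rangle$ (taking $m=0$ here) is genuinely an inner product on $\dom(A)$. Let $H_q$ be the completion; the key point, which I would check first, is that the inclusion $\dom(A)\hookrightarrow H$ extends to a \emph{continuous injective} map $H_q \to H$, because $\|u\|^2 \le \|u\|_q^2$ and because a $\|\cdot\|_q$-Cauchy sequence that goes to $0$ in $H$ also goes to $0$ in $H_q$ (using $q(u_n,u_n) = \langle Au_n,u_n\rangle$ and the symmetry of $A$ in the standard diagonal argument). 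This lets us regard $H_q$ as a dense subspace of $H$ carrying a finer Hilbert topology, and $t_A$ denotes the closed extension of $q$ to $H_q$.

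Next I would produce the operator. Given $f\in H$, the functional $v\mapsto \langle f,v\rangle$ on $H_q$ is bounded for $\|\cdot\|_q$ (since $|\langle f,v\rangle|\le \|f\|\,\|v\| \le \|f\|\,\|v\|_q$), so by the Riesz representation theorem in $(H_q,\langle\cdot,\cdot\rangle_q)$ there is a unique $Rf\in H_q$ with $\langle Rf,v\rangle_q = \langle f,v\rangle$ for all $v\in H_q$. One checks $R\colon H\to H$ is bounded, self-adjoint, injective, and non-negative, hence $R^{-1}$ is a well-defined self-adjoint operator on $\dom(R^{-1}) = \range(R)$; set $A_F := R^{-1} - \Id$. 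By construction, for $u = Rf\in\dom(A_F)$ one has $t_A(u,v) = \langle u,v\rangle_q - \langle u,v\rangle = \langle f,v\rangle - \langle u,v\rangle = \langle (R^{-1}-\Id)u,v\rangle = \langle A_F u,v\rangle$ for all $v\in\dom(t_A)$, which is the representation identity in the statement, and $\dom(A_F) = \range(R)\subset H_q = \dom(t_A)$. Self-adjointness of $A_F$ follows from that of $R$.

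For \textbf{uniqueness}, suppose $B$ is another self-adjoint operator with $\dom(B)\subset\dom(t_A)$ satisfying $t_A(u,v) = \langle Bu,v\rangle$ for all $u\in\dom(B)$, $v\in\dom(t_A)$. Then for $u\in\dom(B)$ and $v\in\dom(A_F)$ we get $\langle Bu,v\rangle = t_A(u,v) = t_A(v,u) = \langle A_F v,u\rangle = \langle u, A_F v\rangle$, so $\dom(B)\subset\dom(A_F^*) = \dom(A_F)$ and $B\subset A_F$; since both are self-adjoint, $B = A_F$. For the last assertion, that $A_F$ \emph{extends} $A$: given $u\in\dom(A)$, one has $t_A(u,v) = q(u,v) = \langle Au,v\rangle$ for all $v\in\dom(A)$, and by density of $\dom(A)$ in $(H_q,\|\cdot\|_q)$ together with continuity of both sides this persists for all $v\in\dom(t_A) = H_q$; comparing with the representation identity characterizing $\dom(A_F)$, this says exactly $u\in\dom(A_F)$ and $A_F u = Au$.

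The only genuinely delicate point — and the place I would spend the most care — is the \emph{injectivity of the canonical map $H_q\to H$}: abstract completions can in principle collapse, and the argument that a $\|\cdot\|_q$-Cauchy sequence converging to $0$ in $H$ must converge to $0$ in $H_q$ relies essentially on the non-negativity (or at least semi-boundedness) of $q$ and on $q(u,v) = \langle Au,v\rangle$ with $A$ symmetric. Everything else is either a direct application of Riesz representation and the spectral/adjoint calculus for bounded self-adjoint operators, or a density-and-continuity extension argument; these I would state but not belabor. I would also note that the two equivalent descriptions of $\dom(A_F)$ given in the excerpt (via continuity of $v\mapsto t_A(u,v)$ in the $H$-norm, and via Riesz representation) coincide with the one obtained here through $R = (A_F+\Id)^{-1}$, so the theorem merely repackages the construction already carried out above.
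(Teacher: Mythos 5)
Your proposal is correct and follows essentially the same route as the paper, which presents this theorem as a summary of the preceding construction (form inner product, completion $H_q$, Kato's first representation theorem); you simply carry out the representation step explicitly via the Riesz theorem and the bounded operator $R$ instead of citing it. The details you supply — injectivity of the canonical map $H_q\to H$, the identity $A_F=R^{-1}-\Id$, the uniqueness argument via $B\subset A_F^*=A_F$, and the density argument showing $A_F\supset A$ — are all accurate.
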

Such a self-adjoint extension of \( A \) is called the \textbf{Friedrichs extension of \( A \)}.

\begin{cor}\label{cor:Friedrich_of_SA}
Let \( A :\dom(A)< H \to H \) be a non-negative self-adjoint operator with dense domain \( \dom(A) \). Then the Friedrichs extension of \( A \) coincides with \( A \) itself.
\end{cor}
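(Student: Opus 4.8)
The statement to prove is Corollary~\ref{cor:Friedrich_of_SA}: if $A$ is a non-negative self-adjoint operator with dense domain, then its Friedrichs extension $A_F$ coincides with $A$ itself.

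The plan is to exploit the uniqueness clause in the Friedrichs extension theorem together with the fact that $A$ is already self-adjoint, so it cannot have any proper self-adjoint extension. More precisely, I would argue as follows. First, since $A$ is self-adjoint and non-negative, one may (as in the construction) shift to $A+cI$ for suitable $c$ so that $A$ is bounded below by a positive constant $m$; it suffices to prove the claim for this shifted operator, since $(A+cI)_F = A_F + cI$ follows immediately from the variational characterization of the domain and the form $t_{A+cI} = t_A + c\langle\cdot,\cdot\rangle$. Second, recall the general fact that a self-adjoint operator is maximally symmetric: if $B$ is a symmetric extension of a self-adjoint operator $A$, then $B = A$. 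Indeed $A < B$ implies $B^* < A^* = A < B < B^*$, forcing $B = B^* = A$.

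Now I would combine this with the Friedrichs construction. By the theorem quoted just above, $A_F$ is a self-adjoint \emph{extension} of $A$, i.e. $\dom(A) < \dom(A_F)$ and $A_F|_{\dom(A)} = A$. In particular $A_F$ is a symmetric extension of the self-adjoint operator $A$. By the maximality of self-adjoint operators recalled in the previous paragraph, this forces $\dom(A_F) = \dom(A)$ and $A_F = A$. This is essentially the whole argument; the content is that the Friedrichs extension \emph{is} a self-adjoint extension (already established in the theorem) and that self-adjoint operators admit no proper symmetric extension.

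The main obstacle — really the only point requiring care — is to make sure the variational/form-theoretic setup does not hide a circularity: one must check that $\dom(t_A)$, the completion of $\dom(A)$ in the form norm $\|\cdot\|_q$, when intersected back with $H$ via the natural (injective) map $H_q \to H$, does not produce a strictly larger operator domain than $\dom(A)$. This is automatic once one knows $A_F$ is a self-adjoint extension of $A$, since then the inclusion $\dom(A) < \dom(A_F) < \dom(t_A)$ together with $A_F = A$ (from maximality) shows everything is consistent; no independent verification of the form completion is needed. Thus the proof is short: reduce to the strictly positive case by a shift, invoke that $A_F$ is a self-adjoint extension of $A$, and conclude by maximality of self-adjoint operators.
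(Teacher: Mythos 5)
Your proof is correct and is exactly the intended argument: the paper states this corollary without proof as an immediate consequence of the Friedrichs extension theorem, the point being precisely that $A_F$ is a self-adjoint extension of $A$ and a self-adjoint operator admits no proper symmetric extension ($A< B$ symmetric gives $B< B^*< A^*=A< B$). The shift to $A+cI$ is not even needed here, since the theorem as stated in the paper already applies to non-negative operators, but including it does no harm.
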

\begin{rem}
The Friedrichs extension provides a canonical self-adjoint realization of a symmetric and semibounded operator. In particular, if a densely defined symmetric operator \( A \) is essentially self-adjoint and semibounded from below, then its closure is self-adjoint, and the Friedrichs extension coincides with this closure. Thus, in the case of semibounded and essentially self-adjoint operators, the Friedrichs extension reproduces the unique self-adjoint extension.
\end{rem}
The following result is straightforward and the proof is left to the reader. 
\begin{lem}\label{lem:ker_form}
Under the above notation, we get 
\[
\ker t_A= \ker A_F.
\]
\end{lem}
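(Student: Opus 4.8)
The final statement to prove is Lemma~\ref{lem:ker_form}: under the Friedrichs-extension setup, $\ker t_A = \ker A_F$. Let me plan a proof.

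The plan is to prove the two inclusions separately, using the characterization of $\dom(A_F)$ via the closed quadratic form $t_A$ together with the non-negativity of $t_A$ (which makes $\|u\|_q^2 = t_A(u,u) + (1-m)\|u\|^2$ genuinely a norm, and in fact with $m\le 0$ after the normalizing shift we may simply work with $t_A(u,u) + \|u\|^2$). First I would show $\ker A_F \subseteq \ker t_A$. If $u \in \ker A_F$, then $u \in \dom(A_F) \subseteq \dom(t_A)$ and, by the defining relation $t_A(u,v) = \langle A_F u, v\rangle$ for all $v \in \dom(t_A)$, taking $v = u$ gives $t_A(u,u) = \langle A_F u, u\rangle = 0$. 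Since $t_A$ is a non-negative \emph{closed} form, $t_A(u,u)=0$ forces $u \in \ker t_A$ (here $\ker t_A$ denotes $\{u \in \dom(t_A) : t_A(u,v)=0 \ \forall v\in\dom(t_A)\}$, and non-negativity gives, via Cauchy--Schwarz for the semi-definite form $t_A$, that $t_A(u,u)=0 \Rightarrow t_A(u,v)=0$ for all $v$). This direction is essentially immediate.

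Next I would show $\ker t_A \subseteq \ker A_F$. Let $u \in \dom(t_A)$ with $t_A(u,v) = 0$ for all $v \in \dom(t_A)$. Then in particular the map $v \mapsto t_A(u,v)$ is the zero functional, which is trivially continuous with respect to the $H$-norm: $|t_A(u,v)| = 0 \le 0\cdot\|v\|$. By the variational characterization of the Friedrichs domain recalled just above the lemma --- $u \in \dom(A_F)$ iff $v\mapsto t_A(u,v)$ extends to a bounded functional on $H$, in which case $A_F u$ is its Riesz representative --- we conclude $u \in \dom(A_F)$ and $\langle A_F u, v\rangle = t_A(u,v) = 0$ for all $v$ in the dense set $\dom(t_A)$, hence $A_F u = 0$, i.e. $u \in \ker A_F$.

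I do not expect any real obstacle here; the only point requiring a word of care is the passage from $t_A(u,u)=0$ to $u \in \ker t_A$ in the first inclusion, which is just the Cauchy--Schwarz inequality $|t_A(u,v)|^2 \le t_A(u,u)\,t_A(v,v)$ valid for any non-negative (symmetric, positive-semidefinite) sesquilinear form --- no closedness is even needed for that step, though closedness of $t_A$ is what guarantees the form and operator are properly matched via Kato's representation theorem in the first place. Thus the whole lemma reduces to unwinding the defining relation of $A_F$ and the variational description of its domain, with Cauchy--Schwarz supplying the only non-purely-formal ingredient.
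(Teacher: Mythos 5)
Your proof is correct, and since the paper's own ``proof'' of this lemma is just the sentence ``The proof directly follows by a straightforward computation,'' what you wrote is precisely the computation being omitted: the inclusion $\ker t_A\subseteq\ker A_F$ via the variational characterization of $\dom(A_F)$ (the zero functional is trivially $H$-bounded, and its Riesz representative is $0$), and the reverse inclusion from the representation identity $t_A(u,v)=\langle A_Fu,v\rangle$. One small caution on the first inclusion: your detour through $t_A(u,u)=0$ and Cauchy--Schwarz requires $t_A$ to be positive semidefinite, which fails for the merely semibounded operators ($m<0$) to which the paper actually applies this lemma (e.g.\ $L_\Lambda$ under (H3)); but the detour is unnecessary, since the identity $t_A(u,v)=\langle A_Fu,v\rangle=0$ already holds for \emph{all} $v\in\dom(t_A)$ and gives $u\in\ker t_A$ directly, with no positivity needed.
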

\begin{proof}
    The proof directly follows by a straightforward computation. 
\end{proof}
\begin{lem}\label{lem:form_order}
	Let  $H$ be a Hilbert space and let $U< V < H$ be two  dense subspaces of $H$. We assume that $A_V:V< H\to H$ is a semi-bounded closed and symmetric operator. Let $A_U=L|_{U}$. Then we have
	\[
	t_{A_U}=t_{A_V}|_{\dom(t_{A_U})}.
	\]
\end{lem}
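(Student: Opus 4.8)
The plan is to realize each closed form $t_{A_U}$ and $t_{A_V}$ as the form-norm completion of the corresponding operator domain, viewed inside $H$, and then to exploit the elementary fact that the form norm attached to $A_U$ is simply the restriction to $U$ of the form norm attached to $A_V$ (here $A_U$ denotes the restriction of $A_V$ to the dense subspace $U$).

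First I would reduce to the case of a strictly positive lower bound. Since $A_U$ is the restriction of $A_V$ to $U$, choosing $c>0$ large enough gives $\langle (A_V+cI)u,u\rangle\ge\|u\|^2$ for all $u\in V$, hence for all $u\in U$; moreover $t_{A+cI}=t_A+c\langle\cdot,\cdot\rangle$ on the unchanged form domain $\dom(t_A)$, so the identity for the pair $(A_U,A_V)$ is equivalent to the one for $(A_U+cI,A_V+cI)$. Thus I may assume $A_V\ge I$ and set $\|u\|_A^2:=\langle Au,u\rangle$ for $A\in\{A_U,A_V\}$, which is then a norm on $\dom(A)$ dominating $\|\cdot\|$. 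As recalled in Appendix~\ref{appendix:friedrichs}, $\dom(t_A)$ is the completion of $\dom(A)$ with respect to $\|\cdot\|_A$; since the form $u\mapsto\langle Au,u\rangle$ is closable, the canonical map of this completion into $H$ is injective, and I use it to regard $\dom(t_A)$ as a subspace of $H$ on which $t_A$ is the jointly continuous bilinear extension of $\langle A\cdot,\cdot\rangle$. The observation that does the work is that for $u\in U$ one has $\|u\|_{A_U}^2=\langle A_Uu,u\rangle=\langle A_Vu,u\rangle=\|u\|_{A_V}^2$: on $U$ the two form norms literally coincide.

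With this in hand the remainder is a routine convergence argument. For the inclusion $\dom(t_{A_U})\subseteq\dom(t_{A_V})$ I take $u\in\dom(t_{A_U})$ together with an approximating sequence $(u_n)\subset U$ with $u_n\to u$ in $H$ and $(u_n)$ Cauchy for $\|\cdot\|_{A_U}$; since $U\subseteq V$ and the form norms agree on $U$, $(u_n)$ is also Cauchy for $\|\cdot\|_{A_V}$, so by the very definition of the closed form $t_{A_V}$ its $H$-limit $u$ belongs to $\dom(t_{A_V})$. For the equality of the forms, given $u,v\in\dom(t_{A_U})$ I pick $(u_n),(v_n)\subset U$ converging to $u,v$ in $\|\cdot\|_{A_U}$; joint continuity of the closed form gives $t_{A_U}(u,v)=\lim_n\langle A_Vu_n,v_n\rangle$, and since the same sequences converge to $u,v$ for $\|\cdot\|_{A_V}$ as well, that same limit equals $t_{A_V}(u,v)$. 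Undoing the shift by $cI$ then yields $t_{A_U}=t_{A_V}|_{\dom(t_{A_U})}$.

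The one point that genuinely needs care — and hence the main, if modest, obstacle — is the compatibility of the two abstract completions with their realizations inside $H$: one must check that a $\|\cdot\|_{A_U}$-Cauchy, $H$-convergent sequence in $U$ has the same $H$-limit whether read in the completion built from $A_U$ or in the completion built from $A_V$. This is immediate because both embeddings are restrictions of the inclusion $U\hookrightarrow H$, but it should be stated explicitly; beyond this bookkeeping no analytic difficulty is involved.
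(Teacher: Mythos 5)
Your proof is correct and follows essentially the same route as the paper's: shift to make the form strictly positive, observe that the two form norms coincide on $U$ because $A_U$ is the restriction of $A_V$, and identify $\dom(t_{A_U})$ as the closure of $U$ in the common form norm, which therefore embeds into $\dom(t_{A_V})$ with the forms agreeing by continuity. The paper states this very tersely; your version merely fills in the convergence and completion-compatibility details.
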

\begin{proof}
Without loss of generality, we may assume that $t_{A_V}(u) \ge \|u\|_H^2$.

Note that $\langle A_U\,u, u \rangle_H = t_{A_V}(u)$ for all $u \in E$. Therefore, $\dom(t_{A_V})$ is the closure of $U$ with respect to the norm $t_{A_U}(\cdot)^{1/2}$, and we have
\[
t_{A_U} = t_{A_V}|_{\dom(t_{A_U})},
\]
concluding the proof. 
\end{proof}

\begin{rem}
By Lemma~\ref{lem:form_order} and by the very definition of the Friedrich extension, we get that 
\[
A^*|_{\dom(A_F)}=A_F.
\]
So, $A_F$ is characterized by its domain. 
\end{rem}
We are now ready to characterize the domain of the Friedrich extension $A_F$ of a closed, Fredholm symmetric and semi-bounded operator in terms of the adjoint domain $\dom(A^*)$ and the form domain $\dom(t_A)$. We state the result below.

\begin{lem}\label{lem:characterize_Dirichlet}
Let $A \in \CFs(H)$ and we assume that $A$ is semi-bounded from below. 
Then, we have 
	\[
	\dom(A^*)\cap \dom(t_A)=\dom( A_F).
	\]	
\end{lem}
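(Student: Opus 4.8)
## Proof strategy for Lemma~\ref{lem:characterize_Dirichlet}

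The plan is to prove the two inclusions separately, using the variational characterization of $A_F$ recalled above together with the fact that $\dom(t_A)$ is the completion of $\dom(A)$ in the form norm $\|\cdot\|_q$, which embeds continuously into $H$.

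\emph{The inclusion $\dom(A_F) \subset \dom(A^*) \cap \dom(t_A)$.} This direction is essentially built into the construction. By definition $\dom(A_F) \subset \dom(t_A)$. Moreover, for $u \in \dom(A_F)$ we have $t_A(u,v) = \langle A_F u, v\rangle$ for all $v \in \dom(t_A)$, and in particular for all $v \in \dom(A) \subset \dom(t_A)$. Since $A$ is symmetric, for such $v$ one has $t_A(u,v) = \langle u, Av\rangle$ (this requires unfolding the definition of $t_A$ as the closure of $q(u,v) = \langle Au, v\rangle$; for $u \in \dom(A)$ it is immediate and for general $u \in \dom(A_F)$ it follows by continuity in the form norm, as $v$ is fixed in $\dom(A)$). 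Hence $\langle u, Av \rangle = \langle A_F u, v\rangle$ for all $v \in \dom(A)$, which says precisely that $u \in \dom(A^*)$ with $A^* u = A_F u$. This gives $u \in \dom(A^*) \cap \dom(t_A)$.

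\emph{The inclusion $\dom(A^*) \cap \dom(t_A) \subset \dom(A_F)$.} Let $u \in \dom(A^*) \cap \dom(t_A)$. I want to show the map $v \mapsto t_A(u,v)$ is $H$-continuous on $\dom(t_A)$, so that the variational characterization applies. For $v \in \dom(A)$ we have $t_A(u,v) = \langle u, Av\rangle = \langle A^* u, v\rangle$ (the first equality again by approximating $u$ in the form norm by elements of $\dom(A)$, the second by $u \in \dom(A^*)$). Thus $|t_A(u,v)| \le \|A^* u\|\,\|v\|$ for all $v \in \dom(A)$. Since $\dom(A)$ is dense in $\dom(t_A)$ in the form norm, and $t_A(u,\cdot)$ is form-norm continuous while $\|\cdot\|$ is dominated by the form norm, this bound extends to all $v \in \dom(t_A)$: $|t_A(u,v)| \le \|A^* u\|\,\|v\|$. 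By the variational characterization of the Friedrichs extension (the Kato representation statement recalled above), this forces $u \in \dom(A_F)$, with $A_F u = A^* u$.

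\emph{Main obstacle.} The one point requiring care is the identity $t_A(u,v) = \langle u, Av\rangle$ for $u \in \dom(t_A)$ (not just $u \in \dom(A)$) and $v \in \dom(A)$. The form $t_A$ is defined as the closure of $q(u,v) = \langle Au, v\rangle$, so for a fixed $v \in \dom(A)$ the functional $u \mapsto q(u,v) = \langle u, Av\rangle$ (using symmetry of $A$) is already $H$-continuous, hence extends continuously from $\dom(A)$ to its form-norm completion $\dom(t_A)$, and this extension must agree with $t_A(\cdot,v)$ by uniqueness of closure. Once this is in hand, both inclusions are routine. A small subtlety is the shift by the semiboundedness constant $m$: one works with $A + (1-m)\Id$ throughout to ensure strict positivity, and notes that $\dom(A^*)$, $\dom(t_A)$, and $\dom(A_F)$ are all unchanged under bounded self-adjoint shifts by Lemma~\ref{thm:lemma-same-domain-perturbation}, so no generality is lost.
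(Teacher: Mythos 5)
Your proof is correct, but it takes a genuinely different route from the paper. You argue directly from Kato's first representation theorem: the easy inclusion $\dom(A_F)\subset\dom(A^*)\cap\dom(t_A)$ follows from the defining identity $t_A(u,v)=\langle A_F u,v\rangle$ tested against $v\in\dom(A)$, and the reverse inclusion follows by showing that for $u\in\dom(A^*)\cap\dom(t_A)$ the functional $v\mapsto t_A(u,v)=\langle A^*u,v\rangle$ is $H$-bounded on the form-dense subspace $\dom(A)$ and hence on all of $\dom(t_A)$. The one delicate point you flag --- that $t_A(u,v)=\langle u,Av\rangle$ for $u\in\dom(t_A)$ and $v\in\dom(A)$ --- is handled correctly by approximating $u$ in the form norm and using that the form norm dominates the $H$-norm. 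The paper instead works in the Gelfand--Robbin quotient $\beta(A)=\dom(A^*)/\dom(A)$: it shows that the image $\mu$ of $\dom(t_A)\cap\dom(A^*)$ is a coisotropic subspace containing the Lagrangian $\gamma(\dom(A_F))$, proves (via Lemma~\ref{lem:form_order} and density of $\dom(A_\Lambda)$ in $\dom(t_A)$) that any Lagrangian contained in $\mu$ must equal $\gamma(\dom(A_F))$, and then invokes Lemma~\ref{lem:coisotropic_Lagrangian} to conclude that $\mu$ is itself that Lagrangian. Your argument is more elementary and does not use the Fredholm hypothesis at all (it works for any densely defined, symmetric, semi-bounded operator), whereas the paper's argument needs $A\in\CFs(H)$ to get a finite-dimensional symplectic quotient but integrates the lemma into the symplectic framework used throughout the paper and yields as a by-product the uniqueness of the Lagrangian inside the coisotropic image of $\dom(t_A)\cap\dom(A^*)$.
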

\begin{proof}
Consider the canonical projection $\gamma: \dom(A^*) \to \beta(A) := \dom(A^*) / \dom(A)$, and observe that $\dom(A_F) < \dom(t_A)$. Therefore, we have
\[
\dom(A_F) < \dom(t_A) \cap \dom(A^*).
\]
Let $\mu := \gamma(\dom(t_A) \cap \dom(A^*))$. Then $\mu \supset \gamma(\dom(A_F))$. Since $\gamma(\dom(A_F))$ is a Lagrangian subspace of $\beta(A)$, it follows that $\mu$ is a coisotropic subspace of $\beta(A)$. By Lemma~\ref{lem:coisotropic_Lagrangian}, it suffices to show that $\gamma(\dom(A_F))$ is the unique Lagrangian subspace contained in $\mu$.

Let $\Lambda$ be a Lagrangian subspace of $\beta(A)$ such that $\Lambda < \mu$, and set $A_\Lambda := A^*|_{\gamma^{-1}(\Lambda)}$. Then $A_\Lambda$ is a self-adjoint extension of $A$. By Lemma~\ref{lem:form_order}, we have
\[
t_A = t_{A_\Lambda}|_{\dom(t_A)}.
\]
Since $\dom(A_\Lambda) < \dom(t_A)$, it follows that
\[
t_A|_{\dom(A_\Lambda)} = t_{A_\Lambda}|_{\dom(A_\Lambda)}.
\]
Thus,
\[
t_A(x, y) = \langle A_\Lambda x, y \rangle_H, \quad \forall x, y \in \dom(A_\Lambda).
\]
Because $\dom(A) < \dom(A_\Lambda) < \dom(t_A)$ and $\dom(A)$ is dense in the Hilbert space $\dom(t_A)$, it follows that $\dom(A_\Lambda)$ is also dense in $\dom(t_A)$.

Hence, for each $u \in \dom(t_A)$, there exists a sequence $(u_n)_n < \dom(A_\Lambda)$ such that $u_n \to u$ both in $\dom(t_A)$ and in $H$. Therefore,
\[
t_A(x, y) = \langle A_\Lambda x, y \rangle_H, \quad \forall x \in \dom(A_\Lambda) \text{ and } y \in \dom(t_A).
\]
This shows that $A_\Lambda$ is the unique Friedrich extension of $A$. Consequently, $\gamma(\dom(A_\Lambda))$ is the unique Lagrangian subspace contained in $\mu$.

By Lemma~\ref{lem:coisotropic_Lagrangian}, we finally conclude that
\[
\dom(A_F) = \dom(t_A) \cap \dom(A^*),
\]
which completes the proof.
\end{proof}

\begin{ex}
We consider the Sturm--Liouville operator $l = -\dfrac{d^2}{dt^2}$ defined on $\mathscr \mathscr C_0^\infty([0,1], \mathbb{R}^n)$. Then, we have
\[
\dom(A_F) = \dom(A^*) \cap \dom(t_A) = W^{2,2}([0,1], \mathbb{R}^n) \cap W^{1,2}_0([0,1], \mathbb{R}^n).
\]
In this case, $\dom(A_F)$ provides the appropriate functional setting for treating the boundary value problem with Dirichlet boundary conditions. 

We refer to the general boundary conditions parametrized by the quotient space $\dom(A_F)/\dom(A)$ as the {\sc general Dirichlet boundary conditions}.
\end{ex}



\section{Auxiliary Results}\label{sec:postponed-proofs}

This section contains a number of auxiliary results used for proving the main result of the manuscript. We collect all the results in this appendix, for  smoothing the reading of the manuscript.


 \subsection{Miscellaneous Results}

\begin{lem}\label{lem:continu_subspace}
If  $s\mapsto E_s$ and $s\mapsto F_s$ are two gap-continuous  paths in $\Grass(H)$ such that $s\mapsto E_s+F_s$ is also gap-continuous  in $\Grass(H)$. Then $s \mapsto E_s\cap F_s$ is a gap-continuous  path in $\Grass(H)$.
		
		If $s\mapsto E_s\cap F_s$, $s\mapsto E_s $, $s\mapsto F_s$  are gap  continuous and $E_s+F_s$ is closed for every $s\in [0,1]$, then the path $s\mapsto E_s+F_s$ is gap  continuous in $\Grass(H)$.	
\end{lem}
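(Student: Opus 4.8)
The plan is to move everything to the level of orthogonal projections. Recall that the gap topology on $\Grass(H)$ is exactly the one induced by $X\mapsto P_X$ from the operator norm, and that $X\mapsto X^\perp$ is a gap-homeomorphism since $P_{X^\perp}=\Id-P_X$. With this in hand I would reduce both assertions to a \emph{uniform} version of the von~Neumann alternating-projection theorem, using three classical facts about the Friedrichs angle. For closed subspaces $M,N<H$, set $c_0(M,N):=\norm{P_MP_N-P_{M\cap N}}$. Then: (i) $c_0(M,N)=c_0(M^\perp,N^\perp)\in[0,1]$ (invariance of the Friedrichs angle under orthogonal complementation; cf.\ \cite{Kat80,BZ18} and the literature on the angle between subspaces); (ii) $M+N$ is closed if and only if $c_0(M,N)<1$; and (iii) the identity $(P_MP_N)^n-P_{M\cap N}=(P_MP_N-P_{M\cap N})^n$ for every $n\ge1$, which I would prove by a one-line induction using that $P_{M\cap N}$ absorbs both $P_M$ and $P_N$ from either side (because $M\cap N<M$ and $M\cap N<N$), and which yields $\norm{(P_MP_N)^n-P_{M\cap N}}\le c_0(M,N)^n$, hence $(P_MP_N)^n\to P_{M\cap N}$ in operator norm whenever $c_0(M,N)<1$.

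Granting this, the first assertion goes as follows. Assume $s\mapsto E_s,F_s,E_s+F_s$ are gap-continuous, so $s\mapsto P_{E_s},P_{F_s},P_{E_s+F_s}$ are norm-continuous. By (i) and the identities $E_s^\perp\cap F_s^\perp=(E_s+F_s)^\perp$, one has $c_0(E_s,F_s)=c_0(E_s^\perp,F_s^\perp)=\norm{(\Id-P_{E_s})(\Id-P_{F_s})-(\Id-P_{E_s+F_s})}$, which is norm-continuous in $s$, and by (ii) it is everywhere $<1$ since each $E_s+F_s$ is closed. Fixing $s_0$ and choosing a neighbourhood $J\ni s_0$ and $c_*<1$ with $c_0(E_s,F_s)\le c_*$ on $J$, fact (iii) gives $\norm{(P_{E_s}P_{F_s})^n-P_{E_s\cap F_s}}\le c_*^n$ on $J$; thus the norm-continuous maps $s\mapsto(P_{E_s}P_{F_s})^n$ converge uniformly on $J$ to $s\mapsto P_{E_s\cap F_s}$, which is therefore norm-continuous at $s_0$. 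Since $s_0$ was arbitrary and $E_s\cap F_s$ is always closed, $s\mapsto E_s\cap F_s$ is a gap-continuous path in $\Grass(H)$. For the second assertion the mechanism is identical, now read off the complements: when $s\mapsto E_s,F_s,E_s\cap F_s$ are gap-continuous, $s\mapsto c_0(E_s,F_s)=\norm{P_{E_s}P_{F_s}-P_{E_s\cap F_s}}$ is norm-continuous and, each $E_s+F_s$ being closed, it is $<1$ everywhere; localizing to $c_0(E_s^\perp,F_s^\perp)=c_0(E_s,F_s)\le c_*<1$ on a neighbourhood and applying (iii) to $E_s^\perp,F_s^\perp$ shows $s\mapsto P_{E_s^\perp\cap F_s^\perp}$ is a uniform limit of norm-continuous maps, hence continuous; since $E_s+F_s$ is closed, $P_{E_s+F_s}=\Id-P_{E_s^\perp\cap F_s^\perp}$, so $s\mapsto E_s+F_s$ is gap-continuous. (Equivalently, the second assertion is the orthogonal-complement dual of the first.)

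The step I expect to be the real obstacle is precisely the one that forces fact (i) into play: there is an apparent circularity, because the \emph{uniform} bound $c_0(E_s,F_s)\le c_*<1$ — which is what makes the alternating iteration converge uniformly and hence preserves continuity — seems at first to require continuity of $s\mapsto P_{E_s\cap F_s}$ (in the first assertion), respectively of $s\mapsto P_{E_s+F_s}$ (in the second), i.e.\ the conclusion itself. The complementation symmetry $c_0(M,N)=c_0(M^\perp,N^\perp)$ is exactly what unknots this, since it lets one evaluate $c_0$ using whichever of the ``sum'' or ``intersection'' data is hypothesized continuous and then feed the resulting uniform bound back into the iteration. Beyond pinning down (i)–(ii) with the correct references and verifying the elementary identity (iii), everything else is routine bookkeeping with norm-continuous families of bounded operators, together with the standard (compactness-free) reduction that gap-continuity need only be checked in a neighbourhood of each parameter value.
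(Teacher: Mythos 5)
Your argument is correct, but it is worth noting that the paper does not actually prove this lemma at all: it simply cites \cite[Corollary A.3.14]{BZ18}. You, by contrast, supply a genuine proof via the Friedrichs angle and the norm-convergent version of von Neumann's alternating projection theorem, and the logic checks out. Fact (iii) is verified correctly (the absorption relations $P_{M\cap N}P_M=P_MP_{M\cap N}=P_{M\cap N}$ and likewise for $N$ make the induction work, giving $\norm{(P_MP_N)^n-P_{M\cap N}}\le c_0(M,N)^n$), and your uniform-convergence step — norm-continuous maps $s\mapsto (P_{E_s}P_{F_s})^n$ converging uniformly on a neighbourhood where $c_0\le c_*<1$ — legitimately yields norm-continuity of $s\mapsto P_{E_s\cap F_s}$. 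Facts (i) and (ii) are indeed classical: the identification $\norm{P_MP_N-P_{M\cap N}}=c(M,N)$ with the Friedrichs angle is the $n=1$ case of the Kayalar--Weinert formula, the complementation symmetry $c(M,N)=c(M^\perp,N^\perp)$ and the closed-range criterion ``$M+N$ closed $\iff c(M,N)<1$'' are in Deutsch's survey on angles between subspaces and in Kato \cite{Kat80}; since you flag these as imported classical results rather than claiming them as new, there is no gap. You also correctly identified and resolved the one genuinely delicate point: the apparent circularity in obtaining a locally uniform bound $c_0\le c_*<1$ is broken precisely by the symmetry $c_0(M,N)=c_0(M^\perp,N^\perp)$, which lets you compute the angle from whichever of the ``sum'' or ``intersection'' projections is hypothesized continuous. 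One small bookkeeping point you handle implicitly but should state: in the first assertion the closedness of $E_s+F_s$ is built into the hypothesis that it is a path in $\Grass(H)$ (the set of \emph{closed} subspaces), which is what licenses writing $P_{(E_s+F_s)^\perp}=\Id-P_{E_s+F_s}$ and invoking (ii). Compared with the paper's bare citation, your route is self-contained modulo standard angle theory and makes the quantitative mechanism (geometric decay at rate $c_*$) explicit; the cost is the reliance on (i)--(ii), which [BZ18] presumably also uses or reproves in its appendix.
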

\begin{proof}
	For the proof of this result we refer to \cite[Corollary A.3.14]{BZ18} and references therein.
\end{proof}
\begin{cor}\label{cor:continu_ker}
Let \(E, F\) be Hilbert spaces. Let \(A_{s}: E \rightarrow F, s \in[0,1]\) be a gap-continuous  path of operators. Assume that im \(A_{s}=F, s \in[0,1]\). Then \(\operatorname{ker} A_{s}\) is a gap-continuous  path in \(\mathcal{S}(E)\).
\end{cor}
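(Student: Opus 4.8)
The statement to prove is Corollary~\ref{cor:continu_ker}: if $A_s : E \to F$ is a gap-continuous path of operators that is surjective for every $s$, then $s \mapsto \ker A_s$ is gap-continuous in $\mathcal{S}(E)$.

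My plan is to deduce this directly from Lemma~\ref{lem:continu_subspace} by realizing $\ker A_s$ as an intersection of two gap-continuous subspaces inside a single ambient Hilbert space, namely the product $E \times F$. The natural object to bring in is the graph $\Graph(A_s) \subset E \times F$, which is closed since $A_s$ is a (closed) operator, and whose gap-continuity in $\Grass(E\times F)$ is exactly what ``gap-continuous path of operators'' means. Then observe that
\[
\ker A_s \times \{0\} = \Graph(A_s) \cap \big(E \times \{0\}\big),
\]
so $\ker A_s$ appears as the intersection of the gap-continuous path $s \mapsto \Graph(A_s)$ with the constant (hence gap-continuous) subspace $E \times \{0\}$.

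To invoke Lemma~\ref{lem:continu_subspace}, I must check that the sum $s \mapsto \Graph(A_s) + (E \times \{0\})$ is gap-continuous. Here the surjectivity hypothesis does the work: since $\image A_s = F$, for any $(e,f) \in E \times F$ we can pick $e'$ with $A_s e' = f$, and then $(e,f) = (e - e', 0) + (e', A_s e')$, which shows $\Graph(A_s) + (E\times\{0\}) = E \times F$ for every $s$. A constant path equal to the whole space is trivially gap-continuous. Hence both $s \mapsto \Graph(A_s)$ and $s \mapsto E \times \{0\}$ are gap-continuous with gap-continuous sum, and the first part of Lemma~\ref{lem:continu_subspace} gives that $s \mapsto \Graph(A_s) \cap (E \times \{0\}) = \ker A_s \times \{0\}$ is gap-continuous in $\Grass(E \times F)$. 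Finally, the isometric embedding $E \hookrightarrow E \times \{0\} \subset E \times F$ identifies the gap metric on $\Grass(E)$ with the gap metric restricted to subspaces of $E \times \{0\}$, so $s \mapsto \ker A_s$ is gap-continuous in $\mathcal{S}(E)$, as claimed.

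I do not anticipate a serious obstacle here; the only point requiring a moment's care is the reduction of gap-continuity on $\Grass(E)$ to gap-continuity on subspaces of $E \times \{0\}$, which follows because the orthogonal projector onto a subspace $V \times \{0\}$ of $E\times F$ is just $P_V \oplus 0$, so $d_G$ is preserved under the embedding. Everything else is a direct application of Lemma~\ref{lem:continu_subspace} once the surjectivity hypothesis is used to make the relevant sum the full space.
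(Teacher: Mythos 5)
Your proof is correct and follows essentially the same route as the paper: identify $\ker A_s$ with $\Graph(A_s)\cap(E\times\{0\})$, use surjectivity of $A_s$ to see that $\Graph(A_s)+(E\times\{0\})=E\oplus F$, and apply Lemma~\ref{lem:continu_subspace}. Your added remarks on why the sum equals the full space and on transferring the gap metric along the embedding $E\hookrightarrow E\times\{0\}$ only make explicit what the paper leaves implicit.
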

\begin{proof}
    Without loss of generality, we can assume \(E, F\) be subspace of \(E \oplus F\). We have \(\mathcal{G}\left(A_{s}\right)+E=\) \(E+\operatorname{im}\left(A_{s}\right)=E \oplus F\). By Lemma~\ref{lem:continu_subspace}, 
    $\ker A_{s}=\mathcal{G}\left(A_{s}\right) \cap E$ is a gap-continuous  path in \(\mathcal{S}(E)\).
\end{proof}

\begin{lem}\label{lem:dim_factor_space}
	Let $H$ be a Hilbert space and for $i=1,2$ we consider the   symmetric and Fredholm operators 
    $T_i: \dom(T_i)< H\to H$.  We assume that 
	\begin{itemize} 
    \item[-] $\dom(T_1)< \dom(T_2)$ 
    \item[-] $T_1=T_2|_{\dom(T_1)}$.
    \end{itemize} 
	Then we have
	\[
	\dim (\dom(T_2)/\dom(T_1)) =\ind(T_2)-\ind(T_1).
	\]
\end{lem}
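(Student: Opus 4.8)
The plan is to work entirely on the level of the graphs of $T_1$ and $T_2$ inside $H\oplus H$ and to exploit that a Fredholm operator, when restricted to a smaller domain on which it agrees with the original operator, changes its Fredholm index by exactly the codimension lost. First I would record the hypotheses in graph form: since $T_1=T_2|_{\dom(T_1)}$, the graph $\Graph(T_1)$ is a closed subspace of $\Graph(T_2)$, and the natural inclusion $\iota:\dom(T_1)\hookrightarrow\dom(T_2)$ (continuous with respect to the graph norms, since $T_1$ and $T_2$ coincide on $\dom(T_1)$) has trivial kernel and cokernel of dimension $\dim(\dom(T_2)/\dom(T_1))$. The key point is that this codimension is finite: this follows because $T_1$ and $T_2$ are both Fredholm, hence $\image(T_1)$ has finite codimension in $\image(T_2)$ and $\ker T_2/\ker T_1$ is finite-dimensional, so in fact $\dom(T_2)/\dom(T_1)$ is finite-dimensional.

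The core computation is the composition formula for the Fredholm index applied to $T_1 = T_2\circ\iota$, viewed as maps of Hilbert spaces $(\dom(T_1),\langle\cdot,\cdot\rangle^G)\xrightarrow{\iota}(\dom(T_2),\langle\cdot,\cdot\rangle^G)\xrightarrow{T_2}H$. By the standard additivity of the Fredholm index under composition (the same fact already invoked in the proof of Lemma~\ref{thm:beta-finit-dimensional} via \cite[Theorem 1.3.2]{Hor89}),
\[
\ind(T_1)=\ind(T_2\circ\iota)=\ind(T_2)+\ind(\iota).
\]
Since $\iota$ is injective, $\ind(\iota)=-\dim\coker\iota=-\dim(\dom(T_2)/\dom(T_1))$. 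Rearranging gives
\[
\dim(\dom(T_2)/\dom(T_1))=\ind(T_2)-\ind(T_1),
\]
which is the claim. I would include a brief remark that the graph norms on $\dom(T_1)$ and $\dom(T_2)$ make these genuine Hilbert spaces (as in the abstract setup of Section~\ref{sec:Abstract-sf}), so that $T_1$, $T_2$, and $\iota$ are bounded maps between Hilbert spaces and the composition formula for the index literally applies.

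The only mildly delicate step is verifying that $\iota$ is itself Fredholm, i.e.\ that $\dom(T_2)/\dom(T_1)$ is finite-dimensional; everything else is formal. For this I would argue as follows: consider the map $T_2:\dom(T_2)\to H$; its restriction to $\dom(T_1)$ is $T_1$, which is Fredholm, so $\image(T_1)$ is closed with finite codimension in $H$, hence finite codimension in $\image(T_2)$; moreover $\ker T_1\subset\ker T_2$ with $\dim\ker T_2<\infty$. Writing $\dom(T_2)=\ker T_2 \oplus C_2$ and $\dom(T_1)=\ker T_1\oplus C_1$ for closed complements with $T_i|_{C_i}$ a bounded bijection onto $\image(T_i)$, one gets $\dim(\dom(T_2)/\dom(T_1))\le \dim(\ker T_2/\ker T_1)+\dim(\image(T_2)/\image(T_1))<\infty$. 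With finiteness in hand, the composition formula closes the argument. (Alternatively, one may cite Lemma~\ref{lem:continu_subspace}-type closed-range arguments, but the direct splitting argument is cleanest here.) \qed
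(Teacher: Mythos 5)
Your proposal is correct and follows essentially the same route as the paper: the paper's proof is precisely the one-line computation $\ind(T_1)=\ind(T_2\circ j)=\ind(T_2)+\ind(j)=\ind(T_2)-\dim\coker j$ for the inclusion $j:\dom(T_1)\hookrightarrow\dom(T_2)$. The additional verification you supply (that the quotient is finite-dimensional, so that $j$ is Fredholm and the composition formula applies) is a detail the paper leaves implicit, and your argument for it is sound.
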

\begin{proof}
	Let  $j$ be the injection $\dom(T_1)\to \dom(T_2)$.
	Then we have
	\[
\ind(T_1)=\ind(T_2\, j)=\ind(T_2)+\ind(j)=\ind(T_2)-\coker j=\ind(T_2)-\dim (\dom(T_2)/\dom(T_1)).
	\]	
\end{proof}

\begin{lem}\label{lm:ess_spec_same}
	Let $T \in \CFs(H)$  be densely defined, $T_1$ be a self-adjoint extension of $T$ and we assume  that $\dim\big(\dom(T_1) /\dom (T)\big)<+\infty$.  Then, we have 
	\[
	\sigma_{ess}(T_1)=\sigma_{ess}(T)
	\]
	where $\sigma_{ess}(\#)$ denotes  the essential spectrum of the operator $\#$.
\end{lem}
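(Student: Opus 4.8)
The plan is to work with the \emph{Fredholm} characterisation of the essential spectrum: for a closed operator $S$ on $H$ set
$\sigma_{\mathrm{ess}}(S)=\{\lambda\in\C : S-\lambda\Id\ \text{is not a Fredholm operator from }\dom(S)\text{ to }H\}$,
and reduce the whole statement to the stability of Fredholmness under a finite-dimensional enlargement of the domain. First I would equip $\dom(T)$ and $\dom(T_1)$ with their respective graph inner products, so that both become Hilbert spaces and, for every $\lambda\in\C$, the maps $T-\lambda\Id$ and $T_1-\lambda\Id$ are \emph{bounded} operators into $H$. Since $T_1$ extends $T$, one has $T_1u=Tu$ for $u\in\dom(T)$, so the two graph norms agree on $\dom(T)$; and since $T$ is closed, $\dom(T)$ is a \emph{closed} subspace of $\dom(T_1)$ of finite codimension $m:=\dim\big(\dom(T_1)/\dom(T)\big)$. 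Hence the inclusion $j\colon \dom(T)\hookrightarrow\dom(T_1)$ is bounded, injective, with closed range, and $\Find j=-m$; in particular $j$ is a Fredholm operator.

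The second step is the factorisation $T-\lambda\Id=(T_1-\lambda\Id)\circ j$, valid for all $\lambda$, as a composition of bounded maps $\dom(T)\xrightarrow{\,j\,}\dom(T_1)\xrightarrow{\,T_1-\lambda\Id\,}H$. If $T_1-\lambda\Id$ is Fredholm, then so is its composition with the Fredholm operator $j$, hence $T-\lambda\Id$ is Fredholm (and, by the composition formula of \cite[Theorem~1.3.2]{Hor89}, $\Find(T-\lambda\Id)=\Find(T_1-\lambda\Id)-m$, consistently with Lemma~\ref{lem:dim_factor_space}). Conversely, if $T-\lambda\Id$ is Fredholm I would recover Fredholmness of $T_1-\lambda\Id$ directly: fixing an algebraic complement $\dom(T_1)=\image(j)\oplus W$ with $\dim W=m$, one has $(T_1-\lambda\Id)\,\image(j)=\image(T-\lambda\Id)$, so $\image(T_1-\lambda\Id)=\image(T-\lambda\Id)+(T_1-\lambda\Id)(W)$ is a finite-dimensional extension of a closed subspace of finite codimension, hence itself closed of finite codimension; moreover $\ker(T_1-\lambda\Id)$ is finite-dimensional because $\ker(T_1-\lambda\Id)\cap\image(j)=j\big(\ker(T-\lambda\Id)\big)$ is finite-dimensional and $\ker(T_1-\lambda\Id)$ has image of dimension $\le m$ in the quotient $\dom(T_1)/\image(j)$. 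Combining the two implications gives, for every $\lambda$, that $T-\lambda\Id$ is Fredholm if and only if $T_1-\lambda\Id$ is, i.e. $\sigma_{\mathrm{ess}}(T)=\sigma_{\mathrm{ess}}(T_1)$.

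The main obstacle is precisely the converse implication in the second step: because $T_1$ has a strictly larger domain than $T$, one cannot simply quote the Fredholm index composition formula (which presupposes Fredholmness of all three operators involved) but must argue at the level of kernels and ranges as sketched above — equivalently, invoke the two-out-of-three property for Fredholm compositions with a Fredholm factor on the right. The remaining verifications are routine: that the graph-norm domains are complete, that $\dom(T)$ is graph-closed inside $\dom(T_1)$, and that this abstract Fredholm notion coincides with the usual Fredholm property for unbounded operators. I note that the argument uses only that $T$ is closed and $T_1$ a closed extension with finite-dimensional quotient; symmetry of $T$ and self-adjointness of $T_1$ enter solely through the ambient hypothesis $T\in\CFs(H)$, which guarantees $0\notin\sigma_{\mathrm{ess}}(T)$ and hence $0\notin\sigma_{\mathrm{ess}}(T_1)$, so that Lemma~\ref{lm:ess_spec_same} is a genuine statement about the location of the essential spectrum.
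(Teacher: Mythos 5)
Your proof is correct, and it takes a genuinely different route from the paper's. The paper proves the inclusion $\sigma_{ess}(T)\subseteq\sigma_{ess}(T_1)$ by going \emph{up} to the maximal operator: assuming $T_1-\bar\lambda$ is Fredholm, it observes that $\image(T^*-\bar\lambda)$ is a finite-dimensional enlargement of $\image(T_1-\bar\lambda)$ (using the count $\dim\big(\dom(T^*)/\dom(T_1)\big)=n$ coming from the symplectic quotient), concludes that $T^*-\bar\lambda$ is Fredholm, and then recovers $T-\lambda=(T^*-\bar\lambda)^*$ by duality of Fredholmness under adjoints; the opposite inclusion is declared analogous. You instead never leave the pair $(T,T_1)$: you factor $T-\lambda=(T_1-\lambda)\circ j$ through the graph-norm inclusion $j$, which is Fredholm of index $-m$, so one implication is the stability of Fredholmness under composition, and for the converse you argue directly on kernels and ranges using $\dom(T_1)=\image(j)\oplus W$ with $\dim W=m$ — correctly noting that the naive ``two-out-of-three'' for compositions does not apply verbatim and supplying the finite-codimension argument that replaces it. What your approach buys is generality and self-containedness: it uses only that $T$ is closed and $T_1$ a closed extension with $\dim\big(\dom(T_1)/\dom(T)\big)<\infty$, with no appeal to the adjoint, to symmetry, or to the symplectic machinery; your observation that the hypotheses $T\in\CFs(H)$ and self-adjointness of $T_1$ are not actually needed for the equality of essential spectra is accurate. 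What the paper's route buys is brevity on the page, since it delegates the kernel/range bookkeeping to the already-established adjoint duality and dimension counts. The steps you defer as routine (completeness of the graph-norm domains, closedness of $\dom(T)$ inside $\dom(T_1)$, equivalence of the bounded and unbounded Fredholm notions) are indeed standard, so there is no gap.
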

\begin{proof} We prove that $\sigma_{ess}(T_1)\supset \sigma_{ess}(T)$ being the proof of the opposite inclusion analogous. 
	Let $n:=\dim\big(\dom(T_1) /\dom(T)\big)$. By Lemma~\ref{lem:abstract_fundamental_solution}, we get that  
	 \[
	 \dim \beta(T):=\dim(\dom(T^*)/\dom (T) )=2\,n 
	 \] 
	 and 
	\[
	\dim(\dom(T^*)/\dom(T_1))=n.
    \]
	Assume that  $\bar \lambda\in \C\setminus \sigma_{ess}(T_1)$.
	By definition, we get  $T_1-\bar \lambda\Id \in \CFsa(H)$.   Since  $\image (T^*-\bar \lambda \Id )/\image (T_1-\bar \lambda \Id) $ is finite dimensional then $\image (T^*-\bar \lambda\Id )$ is closed and it has  finite codimension and son $T^*-\bar \lambda\Id\in \CFsa(H)$. Now, since  $T-\lambda\Id=(T^*-\bar\lambda\Id)^*\in \CFsa(H)$, we get that 
	\[
	\sigma_{ess}(T_1)\supset \sigma_{ess}(T)
	\]
	concluding the proof. 
	\end{proof}

\begin{lem}\label{lem:orth_complement_t_L}
Let $A \in \CFs(H)$ and let $\Lambda \in \Lag(H)$. Under the notation above, the following identity holds:
\[
(\dom(t_A))^{t_{A_\Lambda}} = (\dom(A_F) + \dom(A_\Lambda)) \cap \ker A^*.
\]
\end{lem}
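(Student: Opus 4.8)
The plan is to unwind the definition of the form-orthogonal complement $(\dom(t_A))^{t_{A_\Lambda}}$ inside the form space $\dom(t_{A_\Lambda})$ and to match it, via the characterization of Friedrichs domains already established, with the two subspaces appearing on the right. First I would fix, without loss of generality, a decomposition $\dom(A^*) = \dom(A) \oplus W$ with $\ker A^* < W$, write $\Lambda \in \Lag(W, \rho)$ for the Lagrangian corresponding to the self-adjoint extension $A_\Lambda$, and recall from Lemma~\ref{lem:characterize_Dirichlet} that $\dom(A_F) = \dom(A^*) \cap \dom(t_A)$, while from Lemma~\ref{lem:form_order} the form $t_A$ is the restriction of $t_{A_\Lambda}$ to $\dom(t_A)$. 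Since all operators in sight are bounded below, we may also assume $t_{A_\Lambda}(u,u) \ge \|u\|_H^2$, so that $\dom(t_{A_\Lambda})$ is a genuine Hilbert space with inner product $t_{A_\Lambda}(\cdot,\cdot)$ and $\dom(t_A)$ is a closed subspace of it.

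The key step is to show the inclusion $\supseteq$ and then $\subseteq$. For $\supseteq$: take $u \in (\dom(A_F) + \dom(A_\Lambda)) \cap \ker A^*$, write $u = u_F + u_\Lambda$ with $u_F \in \dom(A_F)$, $u_\Lambda \in \dom(A_\Lambda)$, and $A^* u = 0$. For any $v \in \dom(t_A) < \dom(t_{A_\Lambda})$ we compute $t_{A_\Lambda}(u, v) = t_{A_\Lambda}(u_F, v) + t_{A_\Lambda}(u_\Lambda, v)$. The first term equals $\langle A_F u_F, v\rangle_H$ by the defining property of the Friedrichs extension and the fact that $v \in \dom(t_A)$; the second term equals $\langle A_\Lambda u_\Lambda, v\rangle_H$ since $v \in \dom(t_{A_\Lambda})$. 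Hence $t_{A_\Lambda}(u,v) = \langle A_F u_F + A_\Lambda u_\Lambda, v\rangle_H = \langle A^* u_F + A^* u_\Lambda, v\rangle_H = \langle A^* u, v\rangle_H = 0$ because $A^* u = 0$. Thus $u \perp \dom(t_A)$ in $\dom(t_{A_\Lambda})$, i.e.\ $u \in (\dom(t_A))^{t_{A_\Lambda}}$.

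For the reverse inclusion $\subseteq$: let $u \in \dom(t_{A_\Lambda})$ satisfy $t_{A_\Lambda}(u, v) = 0$ for all $v \in \dom(t_A)$. In particular $t_{A_\Lambda}(u,v) = 0$ for all $v \in \dom(A) < \dom(t_A)$, and since $t_{A_\Lambda}(u,v) = \langle u, A^* v\rangle_H$ would require $u \in \dom(A^{**}) = \dom(\bar A)$; more directly, the condition $v \mapsto t_{A_\Lambda}(u,v)$ vanishes on $\dom(t_A) \supseteq \dom(A)$, which forces $u \in \dom(A^*)$ with $A^* u \perp \dom(t_A)$ in $H$, and since $\dom(t_A) \supseteq \dom(A)$ is $H$-dense this gives $A^* u = 0$, so $u \in \ker A^*$. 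It then remains to show $u \in \dom(A_F) + \dom(A_\Lambda)$. Working in the finite-dimensional symplectic space $W$ (using that $\ker A^* < W$ and $\dom(A_\Lambda) \cap W = \Lambda$, $\dom(A_F) \cap W = F$ with $F,\Lambda \in \Lag(W,\rho)$), I would argue that the orthogonality condition $t_{A_\Lambda}(u,v)=0$ for all $v \in \dom(t_A)$ — which contains $\dom(A_F)$ — together with $u \in \ker L^* < W$, constrains $u$ to lie in $F + \Lambda$ by a symplectic-linear-algebra computation: the boundary pairing $\rho(u, \cdot)$ must annihilate $F$, hence $u \in F^\rho = F$ modulo $\Lambda$, i.e.\ $u \in F + \Lambda \subseteq \dom(A_F) + \dom(A_\Lambda)$.

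The main obstacle I anticipate is the careful handling of the form-domain closures and the density arguments: one must be scrupulous about the difference between $\dom(A_\Lambda)$ and $\dom(t_{A_\Lambda})$, and between $\dom(A_F)$ and $\dom(t_A)$, using Lemma~\ref{lem:form_order} and the Friedrichs characterization Lemma~\ref{lem:characterize_Dirichlet} at exactly the right moments, since $t_{A_\Lambda}(u,v) = \langle A_\Lambda u, v\rangle$ only holds when $u \in \dom(A_\Lambda)$ (not merely $u \in \dom(t_{A_\Lambda})$), so the $\supseteq$ direction needs the hypothesis that $u$ decomposes into an operator-domain piece plus a Friedrichs-domain piece, while the $\subseteq$ direction must produce that decomposition essentially from the finite-dimensionality of $\beta(A)$ and the fact that $\dom(t_A) \cap W$ determines the Friedrichs Lagrangian. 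I would close the proof by combining the two inclusions.
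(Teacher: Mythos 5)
Your $\supseteq$ inclusion is correct, and your derivation that any $u$ in the form-orthogonal complement lies in $\ker A^*$ (testing against $v\in\dom(A)$, where $t_{A_\Lambda}(u,v)=\langle u,Av\rangle_H$) is essentially the paper's first step, modulo some garbled phrasing about $\dom(A^{**})$. The genuine gap is in the last step of your $\subseteq$ direction. The claim that the boundary pairing $\rho(u,\cdot)$ annihilates $F=\dom(A_F)\cap W$ is unjustified and in fact false: for $w\in F$ one has $\omega(u,w)=\langle u,A_F w\rangle_H$, and while form-orthogonality gives $t_{A_\Lambda}(u,w)=0$, the identity $t_{A_\Lambda}(u,w)=\langle u,A_F w\rangle_H$ fails when $u\notin\dom(t_A)$. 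Concretely, take $A=-d^2/dt^2$ on $\mathscr C_0^\infty([0,1],\R)$, $\Lambda$ the Neumann Lagrangian, and $u\equiv 1\in\ker A^*$: then $t_{A_\Lambda}(1,v)=\int_0^1 1'\,v'\,dt=0$ for all $v\in\dom(t_A)=W^{1,2}_0$, so $u$ belongs to the left-hand side, yet for $w=t(1-t)\in\dom(A_F)$ one computes $\omega(1,w)=\langle 1,-w''\rangle=w'(0)-w'(1)=2\neq 0$. (Here $u\in\Lambda$ rather than $F$, so the lemma itself is unharmed, but your route to it breaks.) Your conclusion is also internally inconsistent: $u\in F^{\rho}=F$ would force $u\in\dom(A_F)\cap\ker A^*=\ker A_F$, which is strictly smaller than the right-hand side in general and would trivialize Lemma~\ref{lem:index_orth_compl}; the hedge ``modulo $\Lambda$'' is not backed by any argument.

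The ingredient you are missing is Lemma~\ref{lem:characterize_form_domain}, namely $\dom(t_{A_\Lambda})=\dom(A_\Lambda)+\dom(t_A)$. With it the final step is purely lattice-theoretic and needs no symplectic reduction: having shown $u\in\ker A^*\cap\dom(t_{A_\Lambda})$, write $u=u_1+u_2$ with $u_1\in\dom(A_\Lambda)<\dom(A^*)$ and $u_2\in\dom(t_A)$; since $u\in\ker A^*<\dom(A^*)$, we get $u_2=u-u_1\in\dom(A^*)\cap\dom(t_A)=\dom(A_F)$ by Lemma~\ref{lem:characterize_Dirichlet}, hence $u\in\dom(A_\Lambda)+\dom(A_F)$. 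This is exactly the paper's modular-law computation; you should replace your symplectic shortcut by it (or prove the decomposition of $\dom(t_{A_\Lambda})$ yourself, which is where the real work lies).
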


\begin{proof}
Since $\dom(A)$ is dense in the Hilbert space $\dom(t_A)$ and $t_{A_\Lambda}$ is a bounded form on $\dom(t_{A_\Lambda})$, we have
\[
(\dom(t_A))^{t_{A_\Lambda}} = (\dom(A))^{t_{A_\Lambda}} = \{ y \in \dom(t_{A_\Lambda}) \mid t_{A_\Lambda}[x, y] = 0,\ \forall x \in \dom(A) \}.
\]
Recalling the definition of the Friedrich extension, for every $x \in \dom(A)$ we have
\[
t_{A_\Lambda}[x, y] = \langle A_\Lambda x, y \rangle_H = \langle A x, y \rangle_H.
\]
It follows that
\[
(\dom(t_A))^{t_{A_\Lambda}} = (\operatorname{im} A)^\perp \cap \dom(t_{A_\Lambda}) = \ker A^* \cap \dom(t_{A_\Lambda}) = \ker A^* \cap \dom(t_{A_\Lambda}) \cap \dom(A^*).
\]
By Lemma~\ref{lem:characterize_form_domain}, we have
\[
\dom(t_{A_\Lambda}) = \dom(A_\Lambda) + \dom(t_A).
\]
Therefore,
\begin{align*}
\ker A^* \cap \dom(t_{A_\Lambda}) \cap \dom(A^*)
&= \ker A^* \cap (\dom(A_\Lambda) + \dom(t_A)) \cap \dom(A^*) \\
&= \ker A^* \cap (\dom(A_\Lambda) + \dom(A^*) \cap \dom(t_A)) \\
&= \ker A^* \cap (\dom(A_\Lambda) + \dom(A_F)).
\end{align*}
Applying Lemma~\ref{lem:characterize_Dirichlet}, the result follows.
\end{proof}
\begin{lem}\label{lem:characterize_form_domain}
Let $A\in \CFs(H)$ and let $\Lambda \in \Lag(H)$. Then we have
	\[
	\dom(A_\Lambda)+\dom(t_A)=\dom(t_{A_{\Lambda}} )
	\]
\end{lem}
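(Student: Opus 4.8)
The plan is to prove the two inclusions separately, using that both form domains are, by construction, completions: $\dom(t_A)$ is the completion of $\dom(A)$ and $\dom(t_{A_\Lambda})$ is the completion of $\dom(A_\Lambda)=\dom(A)\oplus\Lambda$, each in the corresponding form norm (with $A$, and hence the finite-dimensional extension $A_\Lambda$, semi-bounded from below so that both closed forms are legitimate). First I would apply Lemma~\ref{lem:form_order} to the pair $\dom(A)<\dom(A_\Lambda)$, with $A_\Lambda$ in the role of the closed symmetric semi-bounded operator and $A=A_\Lambda|_{\dom(A)}$: this gives $t_A=t_{A_\Lambda}|_{\dom(t_A)}$, so that the $t_A$-norm is exactly the restriction of the $t_{A_\Lambda}$-norm to $\dom(A)$. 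Taking closures yields $\dom(t_A)\subseteq\dom(t_{A_\Lambda})$, and since $\Lambda\subseteq\dom(A_\Lambda)\subseteq\dom(t_{A_\Lambda})$ and $\dom(A)\subseteq\dom(t_A)$, one immediately obtains
\[
\dom(A_\Lambda)+\dom(t_A)=\dom(A)+\Lambda+\dom(t_A)=\Lambda+\dom(t_A)\subseteq\dom(t_{A_\Lambda}).
\]

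For the reverse inclusion the key observation is that $\Lambda+\dom(t_A)$ is already \emph{closed} inside the Hilbert space $\big(\dom(t_{A_\Lambda}),\langle\cdot,\cdot\rangle_{t_{A_\Lambda}}\big)$: indeed $\dom(t_A)$ is complete in its own form norm, which by the previous step coincides with the restriction of the $t_{A_\Lambda}$-norm, so $\dom(t_A)$ is a closed subspace of $\dom(t_{A_\Lambda})$, and adjoining the finite-dimensional subspace $\Lambda$ preserves closedness (sum of a closed subspace and a finite-dimensional subspace of a Banach space is closed). Thus $\Lambda+\dom(t_A)$ is a closed subspace of $\dom(t_{A_\Lambda})$ containing $\dom(A_\Lambda)=\dom(A)\oplus\Lambda$; since $\dom(t_{A_\Lambda})$ is by definition the closure of $\dom(A_\Lambda)$ in the $t_{A_\Lambda}$-norm, it is contained in $\Lambda+\dom(t_A)=\dom(A_\Lambda)+\dom(t_A)$. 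Combining the two inclusions gives the asserted equality.

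The arguments are all elementary once the right objects are in place, so I do not expect a serious obstacle; the only points requiring care are (i) that $A_\Lambda$ is semi-bounded from below — this is where the standing semi-boundedness hypothesis on $A$ (e.g. conditions (H3)–(H4) in the applications) must be invoked, since $\dom(A_\Lambda)$ is a finite-dimensional extension of $\dom(A)$ — and (ii) citing Lemma~\ref{lem:form_order} precisely enough that the equality of form norms on $\dom(A)$ is established \emph{before} taking closures. Everything else is routine bookkeeping about completions and finite-dimensional perturbations.
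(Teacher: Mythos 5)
Your proof is correct and follows essentially the same route as the paper's: the forward inclusion is immediate, and the reverse inclusion is obtained by observing that $\dom(t_A)$ plus a finite-dimensional complement of $\dom(A)$ in $\dom(A_\Lambda)$ is closed in the Hilbert space $\dom(t_{A_\Lambda})$ and contains $\dom(A_\Lambda)$, hence contains its closure $\dom(t_{A_\Lambda})$. Your additional step of invoking Lemma~\ref{lem:form_order} to identify the form norms before taking closures merely makes explicit a point the paper leaves implicit.
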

\begin{proof}
	Since we have $\dom(A_\Lambda)< \dom(t_{A_\Lambda})$, and $\dom(t_A)< \dom t_{A_\Lambda}$ we only need to show that 
	\[
	\dom(A_\Lambda)+\dom(t_A)\supset \dom(t_{A_{\Lambda}} ).
	\]
	We choose $V$ such that $\dom(A_\Lambda)=V\oplus \dom(A)$ and 
	since $\dom(t_A)$ is closed in the Hilbert space $\dom(t_{A_\Lambda})$ and $V$ is finite dimensional, then $\dom(t_A)+V$ is also closed in $\dom(t_{A_\Lambda})$.
	Then we have
	\[
	\dom(A_\Lambda)+\dom(t_A)\supset \dom(t_A)+V\supset \dom(A)+V=\dom(A_\Lambda) \Rightarrow \dom(t_A)+\dom(A_\Lambda) \supset \overline {\dom(A_\Lambda)}=\dom(t_{A_\Lambda})
	\]
	where we denoted by $\overline{\#}$ the topological closure of subspace in $\dom(t_{A_\Lambda})$.
	\end{proof}

\begin{cor}
Let $A\in \CFs(H)$ and let $\Lambda \in \Lag(H)$. Then we have
	\[
	\dom(A_\Lambda)\cap \dom(t_A)=\dom(A_\Lambda)\cap \dom(A_F).
	\]	
\end{cor}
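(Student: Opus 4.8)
This is a short consequence of the domain characterization of the Friedrichs extension. The plan is to prove the two inclusions separately, the nontrivial one relying on Lemma~\ref{lem:characterize_Dirichlet}.

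First I would dispose of the inclusion $\dom(A_\Lambda)\cap\dom(A_F)\subseteq\dom(A_\Lambda)\cap\dom(t_A)$. By the very definition of the Friedrichs extension (see the statement of the [Friedrich extension] theorem and the preceding discussion), one has $\dom(A_F)<\dom(t_A)$. Intersecting both sides with $\dom(A_\Lambda)$ immediately yields the claimed inclusion; no work is needed here.

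For the reverse inclusion, let $u\in\dom(A_\Lambda)\cap\dom(t_A)$. Since $A_\Lambda=A^*|_{\dom(A)\oplus\Lambda}$ is a self-adjoint extension of $A$, it is in particular a restriction of the maximal operator, so $\dom(A_\Lambda)<\dom(A^*)$; hence $u\in\dom(A^*)$. Combining $u\in\dom(A^*)$ with $u\in\dom(t_A)$, Lemma~\ref{lem:characterize_Dirichlet} (which asserts $\dom(A^*)\cap\dom(t_A)=\dom(A_F)$, valid since $A\in\CFs(H)$ is bounded from below) gives $u\in\dom(A_F)$. Therefore $u\in\dom(A_\Lambda)\cap\dom(A_F)$, completing the inclusion and thus the proof.

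\textbf{Main obstacle.} There is essentially no obstacle: the argument is a one-line set-theoretic manipulation once Lemma~\ref{lem:characterize_Dirichlet} is in hand. The only point worth a moment of care is the observation that $\dom(A_\Lambda)<\dom(A^*)$, which is automatic because every self-adjoint extension of the symmetric operator $A$ is squeezed between $\dom(A)$ and $\dom(A^*)$; and the implicit use of semi-boundedness of $A$, which is the hypothesis under which Lemma~\ref{lem:characterize_Dirichlet} and the Friedrichs construction apply. If one wanted, this corollary could equally be read off from Lemma~\ref{lem:characterize_form_domain} together with Lemma~\ref{lem:characterize_Dirichlet}, but the direct argument above is the cleanest.
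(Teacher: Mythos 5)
Your proof is correct and follows essentially the same route as the paper's: both reduce the claim to the identity $\dom(A^*)\cap\dom(t_A)=\dom(A_F)$ from Lemma~\ref{lem:characterize_Dirichlet}, using the observation that $\dom(A_\Lambda)<\dom(A^*)$ so that intersecting with $\dom(A^*)$ costs nothing. The only difference is presentational: you split the argument into two inclusions, while the paper writes it as a single chain of equalities.
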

\begin{proof}
	By Lemma~\ref{lem:characterize_Dirichlet}, it follows  that 
	\[
	\dom( A_\Lambda)\cap \dom(t_A)=\dom(A_\Lambda)\cap \dom(A^*)\cap \dom(t_A)= \dom(A_\Lambda) \cap \dom(A_F).
	\]
	The result then follows.
\end{proof}


\subsection{Morse indices of forms and of Friedrich extensions}
We now compare the Morse index of the operator under Dirichlet boundary conditions with that under a general self-adjoint boundary condition.

Let $A_\Lambda$ be a self-adjoint extension of $A$. The {\sc Morse index of $A_\Lambda$}, denoted by $\iMor(A_\Lambda)$, is defined as the number of negative eigenvalues (counted with multiplicities) of the operator $A_\Lambda$. Associated with $A_\Lambda$ is the closed quadratic form $t_{A_\Lambda}$. The {\sc Morse index of this quadratic form} is defined as the maximal dimension of a subspace of $\dom(t_{A_\Lambda})$ on which $t_{A_\Lambda}$ is negative definite.

\begin{lem}\label{lem:estimate_mor_subspace}
Let $q$ be a quadratic form on $V$ and we assume that $\dim(V/V_1) <+\infty$. 
Then $\iMor(q)-\iMor(q|_{V_1})\le \dim(V/V_1)$
\end{lem}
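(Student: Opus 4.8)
The statement is a purely linear-algebraic (or better, elementary functional-analytic) fact about quadratic forms: if $V_1 \le V$ has finite codimension $d = \dim(V/V_1)$, then $\iMor(q) \le \iMor(q|_{V_1}) + d$. The plan is to argue by a direct dimension count on a maximal negative subspace. First I would choose a subspace $W \le V$ on which $q$ is negative definite and with $\dim W = \iMor(q)$ (if $\iMor(q) = +\infty$ the inequality is only content-free when $\iMor(q|_{V_1})$ is also infinite, so I would first observe that if $\iMor(q|_{V_1}) < +\infty$ then the argument below forces $\iMor(q) < +\infty$ as well, and otherwise there is nothing to prove). Then consider $W_1 := W \cap V_1$. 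Since $V_1$ has codimension $d$ in $V$, the restriction to $W$ of the quotient map $V \to V/V_1$ has image of dimension at most $d$, so $\dim(W/W_1) \le d$, i.e. $\dim W_1 \ge \dim W - d = \iMor(q) - d$.

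The key step is then that $q|_{W_1}$ is still negative definite (it is the restriction of the negative-definite form $q|_W$ to a subspace), and $W_1 \le V_1$, so $W_1$ is a negative-definite subspace for $q|_{V_1}$. By the very definition of the Morse index of $q|_{V_1}$ as the maximal dimension of a negative-definite subspace of $V_1$, we get
\[
\iMor(q|_{V_1}) \;\ge\; \dim W_1 \;\ge\; \iMor(q) - d,
\]
which rearranges to the claimed inequality $\iMor(q) - \iMor(q|_{V_1}) \le d = \dim(V/V_1)$.

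There is essentially no obstacle here; the only point requiring a word of care is the bookkeeping with infinite-dimensional Morse indices (ensuring the inequality is read correctly in $\mathbb N \cup \{+\infty\}$) and the observation that one need not assume $q$ is bounded, closed, or semi-bounded — the statement and proof are valid for an arbitrary symmetric bilinear form $q$ on a (possibly infinite-dimensional) real vector space $V$, since the Morse index is defined purely as a supremum of dimensions of negative-definite subspaces. If one prefers, the same computation can be phrased as: $\dim W = \dim W_1 + \dim(W/W_1) \le \iMor(q|_{V_1}) + \dim(V/V_1)$, and taking the supremum over all admissible $W$ gives the result.
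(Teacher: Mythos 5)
Your argument is correct and is essentially identical to the paper's proof: both take a maximal negative-definite subspace $W$ for $q$, intersect it with $V_1$, and bound $\dim\bigl(W/(W\cap V_1)\bigr)$ by $\dim(V/V_1)$ via the quotient map. Your additional remarks on the bookkeeping when the indices are infinite are a reasonable (if minor) refinement of what the paper leaves implicit.
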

\begin{proof}
Let $W$ be the maximum negative subspace of $q$.
Then $W\cap V_1$ is a negative subspace of $q|_{V_1}$.
We observe that $\dim W/(W\cap V_1) = \dim (V_1+W)/V_1\le \dim (V/V_1)\le +\infty$. This concludes the proof. 
\end{proof}

\begin{prop}\label{prop:estimate_morse_self_ajoint}
Let $A_1,A_2\in \Cl^{sa}(H)$ be two closed and self-adjoint operators. We assume that $k_i:=\dim \dom(A_i)/(\dom(A_1)\cap \dom (A_2))<+\infty$, for $i=1,2$ and
\[
A_1|_{\dom (A_1)\cap\dom (A_2)}=A_2|_{\dom(A_1)\cap \dom(A_2)}
\]
We have $|\iMor(A_1)-\iMor(A_2)|\le k_1+k_2$
\end{prop}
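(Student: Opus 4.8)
We must show that if $A_1,A_2\in\Cl^{sa}(H)$ agree on $\dom(A_1)\cap\dom(A_2)$ and each domain is a finite-codimension extension of the common subspace $\dom(A_1)\cap\dom(A_2)$, with $k_i:=\dim\dom(A_i)/(\dom(A_1)\cap\dom(A_2))<\infty$, then $|\iMor(A_1)-\iMor(A_2)|\le k_1+k_2$.

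\begin{proof}
Set $D_0:=\dom(A_1)\cap\dom(A_2)$ and let $A_0:=A_1|_{D_0}=A_2|_{D_0}$. Then $A_0$ is a densely defined symmetric operator (its domain is dense because it contains $\dom(A_i)$ modulo finite dimension, and each $A_i$ is densely defined), and both $A_1$ and $A_2$ are self-adjoint extensions of $A_0$. The first step is to pass from operators to their associated quadratic forms: by Lemma~\ref{thm_=Morse} (applied after a harmless shift $A_i\rightsquigarrow A_i+c\Id$ to make everything bounded below, which does not change the difference of Morse indices since the shift is the same for both), we have $\iMor(A_i)=\iMor(t_{A_i})$ where $t_{A_i}$ is the closed quadratic form of $A_i$, with form domain $\dom(t_{A_i})$.

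The key observation is that $t_{A_1}$ and $t_{A_2}$ share a common restriction. Indeed, since $A_1$ and $A_2$ both restrict to $A_0$ on $D_0$, for $x,y\in D_0$ we have $\langle A_1 x,y\rangle=\langle A_0 x,y\rangle=\langle A_2 x,y\rangle$; hence $t_{A_1}$ and $t_{A_2}$ agree on $D_0\times D_0$. Let $t_0$ denote the closure of this common form and $\dom(t_0)$ its form domain, obtained as the completion of $D_0$ in the form norm (which is the same for both since $A_1|_{D_0}=A_2|_{D_0}$). By Lemma~\ref{lem:form_order}, $t_0=t_{A_i}|_{\dom(t_0)}$ for $i=1,2$, and moreover $\dom(t_0)\subset\dom(t_{A_i})$ with $\dom(t_{A_i})/\dom(t_0)$ finite-dimensional — in fact, by Lemma~\ref{lem:dim_factor_space} (or directly, since passing to form domains preserves the finite codimension: $\dom(t_{A_i})=\dom(t_0)+V_i$ where $V_i$ is any algebraic complement of $D_0$ in $\dom(A_i)$, exactly as in the proof of Lemma~\ref{lem:characterize_form_domain}), we get $\dim\big(\dom(t_{A_i})/\dom(t_0)\big)\le k_i$.

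Now the inequality follows by two applications of the elementary subspace estimate Lemma~\ref{lem:estimate_mor_subspace}. Applying it with $V=\dom(t_{A_i})$ and $V_1=\dom(t_0)$, and using that $t_{A_i}$ restricted to $\dom(t_0)$ equals $t_0$, we obtain
\[
\iMor(t_{A_i})-\iMor(t_0)\le\dim\big(\dom(t_{A_i})/\dom(t_0)\big)\le k_i,\qquad i=1,2.
\]
Since $\iMor(t_0)\le\iMor(t_{A_i})$ (monotonicity of Morse index under restriction to a subspace, which is the trivial half of Lemma~\ref{lem:estimate_mor_subspace}), we have $0\le\iMor(t_{A_i})-\iMor(t_0)\le k_i$. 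Subtracting the two instances,
\[
\big|\iMor(t_{A_1})-\iMor(t_{A_2})\big|
=\big|\big(\iMor(t_{A_1})-\iMor(t_0)\big)-\big(\iMor(t_{A_2})-\iMor(t_0)\big)\big|
\le k_1+k_2,
\]
using that each of the two bracketed quantities lies in $[0,k_i]$. Translating back via $\iMor(A_i)=\iMor(t_{A_i})$ completes the proof.
\end{proof}

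The main subtlety — and the only place one must be slightly careful — is the passage to form domains: one needs that the common core $D_0$ remains finite-codimensional inside $\dom(t_{A_i})$ after completing in the form norm. This is exactly the mechanism used in Lemma~\ref{lem:characterize_form_domain} (a finite-dimensional algebraic complement of a closed subspace is still a complement, and adding it keeps things closed), so no new difficulty arises; everything else is the elementary counting in Lemma~\ref{lem:estimate_mor_subspace}.
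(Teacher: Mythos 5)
Your proof is correct and follows essentially the same route as the paper: reduce to quadratic forms (via Lemma~\ref{thm_=Morse}) and apply the codimension estimate of Lemma~\ref{lem:estimate_mor_subspace} twice against the common restriction to $\dom(A_1)\cap\dom(A_2)$. The paper is slightly more direct — it applies Lemma~\ref{lem:estimate_mor_subspace} to the forms $u\mapsto\langle A_i u,u\rangle$ on the operator domains themselves, where the codimension of $\dom(A_1)\cap\dom(A_2)$ is exactly $k_i$ by hypothesis, so your detour through the closed form domains and the verification that finite codimension survives form closure is careful but not needed.
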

\begin{proof}
Let $q_i$ be the quadratic form $\langle A_i u,u\rangle$ on $\dom (A_i)$, $i=1,2$.
By Lemma~\ref{lem:estimate_mor_subspace}, we have
\[
\iMor(q_i)-\iMor(q_i|_{\dom A_1\cap \dom A_2})\le k_i.
\]
Since $\iMor(A_i)=\iMor(q_i)$, then the  proposition follows.
\end{proof}

\begin{lem}\label{thm:general_morse_diff}
	Let $Q$ be a quadratic form on linear space $V$.Let $G$ be a closed subspace of $V$ and let 
	\[
	G^Q=\set{v\in V| b_Q(w,v)=0, \forall w\in W }
	\]
    where $b_Q$ is the bilinear form induced by $Q$ through the polarization identity and we set 
	\[
	\ker Q= V^Q .
	\]
We assume that $G^{QQ}=G+\ker Q$. Then the following formula holds:
	\[
	\iMor(Q|_V)-\iMor(Q|_G)=\iMor(Q|_{G^Q}) +\dim (G\cap G^Q+\ker Q)/\ker Q.
	\]
\end{lem}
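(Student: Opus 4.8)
The plan is to reduce the statement to a purely linear-algebraic decomposition of the form $Q$ and then count inertia indices term by term. First I would pass to the quotient by $\ker Q$: since $Q$ descends to a nondegenerate form $\overline Q$ on $V/\ker Q$ and all Morse indices are insensitive to adding $\ker Q$, it suffices to prove the identity in the nondegenerate case, i.e.\ assuming $\ker Q=(0)$ and $G^{QQ}=G$. In that reduced setting the last summand $\dim(G\cap G^Q+\ker Q)/\ker Q$ becomes $\dim(G\cap G^Q)$, and the claimed formula reads $\iMor(Q|_V)=\iMor(Q|_G)+\iMor(Q|_{G^Q})+\dim(G\cap G^Q)$.

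Next I would analyze the interaction of $G$ and $G^Q$. Set $N=G\cap G^Q$, which is exactly the radical of $Q|_G$ and of $Q|_{G^Q}$ (here the hypothesis $G^{QQ}=G$ in the reduced case guarantees $G+G^Q=V$ by taking $Q$-orthogonals, so the pairing between $V/G^Q$ and $V/G$ induced by $Q$ is perfect). Choose a complement $G=N\oplus G_0$; then $Q|_{G_0}$ is nondegenerate, so $V=G_0\oplus G_0^Q$ is a $Q$-orthogonal direct sum and $\iMor(Q|_V)=\iMor(Q|_{G_0})+\iMor(Q|_{G_0^Q})$. Since $\iMor(Q|_{G_0})=\iMor(Q|_G)$ (adding the radical $N$ changes nothing), it remains to show $\iMor(Q|_{G_0^Q})=\iMor(Q|_{G^Q})+\dim N$. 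Now $G^Q=G_0^Q\cap G^Q$ contains $N$, and inside the nondegenerate space $(G_0^Q,Q|_{G_0^Q})$ one checks that the $Q$-orthogonal of $G^Q$ is $N$ itself; since $N\subset G^Q$, the pair $(N,G^Q)$ is an isotropic subspace together with its orthogonal-plus-itself, and the standard hyperbolic-splitting lemma for a bilinear form gives $\iMor(Q|_{G_0^Q})=\iMor(Q|_{G^Q/N}\text{-part})+\dim N=\iMor(Q|_{G^Q})+\dim N$, where again $\iMor(Q|_{G^Q})$ equals the index of $Q$ on any complement of its radical $N$.

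I would then assemble these three identities to obtain the reduced formula, and finally lift back: since $\iMor(Q|_W)=\iMor(\overline Q|_{W/(W\cap\ker Q)})$ for $W=V,G,G^Q$, and since $(G\cap G^Q+\ker Q)/\ker Q$ corresponds precisely to $\overline G\cap\overline{G^Q}$ after noting $(G^Q$ mod $\ker Q)=(\overline G)^{\overline Q}$ using $\ker Q\subset G^Q$, the general identity follows from the nondegenerate one.

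The main obstacle I anticipate is not the index bookkeeping but verifying the orthogonality/complementation claims from the single hypothesis $G^{QQ}=G+\ker Q$ in a possibly infinite-dimensional $V$ where $G$ is only assumed closed: I must be careful that $Q$-orthogonals behave well (that $(G_0)^{QQ}=G_0$, that $G_0^Q$ is a genuine complement, that the radical of $Q|_{G^Q}$ is exactly $N$), which in general requires either finite-dimensionality of the relevant quotients or an explicit nondegeneracy/closedness argument. In the applications in this paper the spaces $G^Q/\!\ker Q$ and related quotients are finite-dimensional by Lemma~\ref{thm:beta-finit-dimensional}, so I would either invoke that finiteness or, to keep the lemma general, argue via the perfect pairing $V/G\times V/G^Q\to\R$ which the hypothesis forces, and deduce all needed algebraic facts from that pairing.
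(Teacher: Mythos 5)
The paper does not actually prove this lemma: its ``proof'' is a one-line referral to \cite[Theorem 3.1]{HWY20}. Your proposal therefore does genuinely more work, and the strategy is sound: quotient by $\ker Q$ to reduce to a nondegenerate form (the hypothesis becomes $G^{QQ}=G$ and the last summand becomes $\dim(G\cap G^Q)$), split off the radical $N=G\cap G^Q$ of $Q|_G$ as $G=N\oplus G_0$, use the $Q$-orthogonal decomposition $V=G_0\oplus G_0^Q$ to get $\iMor(Q|_V)=\iMor(Q|_{G})+\iMor(Q|_{G_0^Q})$, and absorb the extra $\dim N$ via the hyperbolic splitting of the isotropic subspace $N$ inside the nondegenerate space $G_0^Q$. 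I checked the pivotal identities: the radical of $Q|_{G^Q}$ is $G^Q\cap G^{QQ}=N$ directly from the hypothesis; $G\cap G_0^Q=N$ because $N\subset G^Q\subset G_0^Q$ while $G_0\cap G_0^Q=(0)$; and $\iMor(Q|_{G_0^Q})=\iMor(Q|_{G^Q})+\dim N$ is the standard index count for the orthogonal $N^{\perp}=G^Q$ of an isotropic subspace. The reduction to and from the quotient also matches the stated correction term, since $(G+\ker Q)\cap G^Q=(G\cap G^Q)+\ker Q$ by modularity.

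Two caveats. First, your parenthetical claim that $G^{QQ}=G$ forces $G+G^Q=V$ is false whenever $N\neq(0)$: in a hyperbolic plane with $G$ an isotropic line one has $G^Q=G=G^{QQ}$, so $G+G^Q=G\neq V$. Fortunately you never use this; what you actually need are the perfect pairings $G\times(V/G^Q)\to\R$ and $(V/G)\times G^Q\to\R$ (the pairing ``$V/G^Q\times V/G$'' you invoke is not even well defined), and the radical computations follow from $G^{QQ}=G$ alone. Second, the complementation $V=G_0\oplus G_0^Q$ and the double-orthogonal identity $G^Q=N^{\perp}$ inside $G_0^Q$ are genuinely not consequences of nondegeneracy in infinite dimensions, exactly as you flag at the end. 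In the paper's applications $Q$ is represented by a bounded self-adjoint Fredholm operator and $\dim(V/G)<\infty$, which makes $G^Q/\ker Q$, $N$ and $G_0^Q$ finite dimensional and $G_0+G_0^Q$ closed, so every complementation step closes; compare Lemma~\ref{lem:double_Q_orth_completion}, which is precisely how the hypothesis $G^{QQ}=G+\ker Q$ is verified there. To make your proof a complete proof of the lemma as stated you must either add such a standing hypothesis or carry out the pairing argument you sketch; as written, a bare ``linear space with a closed subspace'' does not support the step $V=G_0\oplus G_0^Q$.
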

\begin{proof}
    We refer the interested reader to \cite[Theorem 3.1]{HWY20} for the proof. 
\end{proof} 

\begin{lem}\label{thm_=Morse}
	Let $A:\dom(A)< H \to H$ be closed a self-adjoint operator with finite Morse index.
	Then, we have
	\[
	\iMor(A)=\iMor(t_A).
	\]
\end{lem}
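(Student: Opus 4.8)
The plan is to deduce this from the spectral theorem for the self-adjoint operator $A$. Write the $A$-reducing orthogonal decomposition $H=E_-(A)\oplus E_0(A)\oplus E_+(A)$ recalled in Appendix~\ref{sec:sf}, and recall that the closed quadratic form $t_A$ associated with $A$ has domain $\dom(t_A)=\dom(\lvert A\rvert^{1/2})$ and is given by $t_A(u,v)=\int_{\mathbb{R}}\lambda\,d\langle E_\lambda u,v\rangle$, so that $\dom(A)<\dom(t_A)$ and $t_A(u,v)=\langle Au,v\rangle$ for $u\in\dom(A)$; recall also that, by hypothesis, $\iMor(A)=\dim E_-(A)<\infty$. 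I would then establish the equality by proving the two inequalities separately.

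First I would show $\iMor(t_A)\ge\iMor(A)$. Since $E_-(A)$ is finite dimensional and reduces $A$, the restriction $A|_{E_-(A)}$ is a self-adjoint operator on a finite dimensional space with spectrum $\sigma(A)\cap(-\infty,0)$; hence $E_-(A)$ has a basis of eigenvectors $e_1,\dots,e_m$ with $Ae_j=\lambda_j e_j$ and $\lambda_j<0$, and $E_-(A)<\dom(A)<\dom(t_A)$. For $0\neq u=\sum_j c_j e_j$ one gets $t_A(u,u)=\sum_j\lambda_j\lvert c_j\rvert^2<0$, so $E_-(A)$ is an $m$-dimensional subspace of $\dom(t_A)$ on which $t_A$ is negative definite, which gives the inequality. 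For the reverse inequality, I would take any subspace $W<\dom(t_A)$ on which $t_A$ is negative definite and let $P:=E\big((-\infty,0)\big)$ be the orthogonal projection onto $E_-(A)$. The key point is that $P|_W$ is injective: if $w\in W$ satisfies $Pw=0$ then $w\in\image E\big([0,+\infty)\big)$, hence $t_A(w,w)=\int_{[0,+\infty)}\lambda\,d\langle E_\lambda w,w\rangle\ge 0$, and since $t_A$ is negative definite on $W$ this forces $w=0$. Therefore $\dim W=\dim P(W)\le\dim E_-(A)=\iMor(A)$, and taking the supremum over all such $W$ gives $\iMor(t_A)\le\iMor(A)$. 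Combining the two inequalities proves the claim.

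The argument is short and is essentially a bookkeeping exercise on top of the spectral theorem; the only point requiring a little care — and the closest thing to an obstacle — is the correct definition of $t_A$ when $A$ is not assumed a priori bounded from below, so that $\dom(t_A)=\dom(\lvert A\rvert^{1/2})$ and the sign identity $t_A(w,w)\ge 0$ for $w$ in the closed span of the non-negative part of the spectrum is read off directly from the spectral measure rather than from a form-completion of $\dom(A)$. In the semibounded situation relevant to the rest of the paper this coincides with the form $t_A$ introduced in Appendix~\ref{appendix:friedrichs}, so no ambiguity arises.
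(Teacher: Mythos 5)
Your proof is correct, and for the inequality $\iMor(t_A)\le\iMor(A)$ it takes a genuinely different route from the paper. The paper's argument picks a basis $v_1,\dots,v_n$ of a negative definite subspace $V<\dom(t_A)$, uses the density of $\dom(A)$ in the form domain to approximate it by vectors $u_i\in\dom(A)$ on which the Gram matrix $\bigl(t_A(u_i,u_j)\bigr)$ is still negative definite, and then invokes the Min--Max characterization of eigenvalues to conclude $\lambda_n<0$ and hence $n\le\iMor(A)$. You instead work directly with the spectral decomposition: you show that the spectral projection $P=E\bigl((-\infty,0)\bigr)$ is injective on any negative definite subspace $W<\dom(t_A)$, because $Pw=0$ forces $t_A(w,w)=\int_{[0,\infty)}\lambda\,d\langle E_\lambda w,w\rangle\ge 0$; this immediately gives $\dim W\le\dim E_-(A)$. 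Your argument is shorter and avoids both the approximation step and the Min--Max theorem, at the price of using Kato's second representation theorem (the identification $\dom(t_A)=\dom(\lvert A\rvert^{1/2})$ with $t_A$ given by the spectral integral), whereas the paper only needs the definition of $t_A$ as the form closure of $\langle A\cdot,\cdot\rangle$ on $\dom(A)$. One small point you gesture at but could state explicitly: the hypothesis of finite Morse index already forces $A$ to be bounded from below (the negative part of the spectrum consists of finitely many eigenvalues of finite multiplicity), so the two descriptions of $t_A$ do coincide and there is no ambiguity in the statement. The first inequality ($\iMor(t_A)\ge\iMor(A)$ via negativity of $t_A$ on $E_-(A)<\dom(A)$) is the same in both proofs.
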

\begin{proof}
Let $\lambda_1, \ldots, \lambda_k$ be the negative eigenvalues of $A$, and let $W = \bigoplus_{i=1}^k \ker(A - \lambda_i \Id)$. Then $t_A|_W$ is a negative definite quadratic form. Therefore, we have
\[
\iMor(t_A) \ge \dim W = \iMor(A).
\]

Now, $\dom(t_A)$ is a Hilbert space with inner product given by $t_A(\cdot, \cdot) + c\langle \cdot, \cdot \rangle_H$, for some sufficiently large constant $c > 0$. For brevity, we denote $E := \dom(t_A)$, and let $\|\cdot\|_E$ be the norm induced by this inner product. Then $t_A$ defines a continuous inner product on $E$.

Let $V$ be an $n$-dimensional negative subspace for $t_A$, and let $\{v_1, \ldots, v_n\}$ be a basis of $V$. Since $\dom(A)$ is dense in $E = \dom(t_A)$, for every $\delta > 0$, there exist elements $u_1, \ldots, u_n \in \dom(A)$ such that $\|v_i - u_i\|_E < \delta$ for all $i$.

Because $t_A|_V$ is negative definite, the matrix $(t_A(v_i, v_j))$ is negative definite. Moreover, since $t_A$ is continuous on $E$, the matrix $(t_A(u_i, u_j))$ is also negative definite for $\delta$ sufficiently small.

Since $t_A(v_i, v_j) = \langle Av_i, v_j \rangle_H$, by the Min--Max Theorem for self-adjoint operators, it follows that the $n$-th eigenvalue of $A$ satisfies
\[
\lambda_n \le \max\left\{ \langle Au, u \rangle_H \,\middle|\, u \in \mathrm{span}\{u_1, \ldots, u_n\}, \ \|u\|_H = 1 \right\} < 0.
\]
Thus, we conclude that $n \le k$, and hence $\iMor(t_A) \le \iMor(A)$. Combining both inequalities, we obtain $\iMor(t_A) = \iMor(A)$, completing the proof.
\end{proof}

We are now in a position to compute the Morse index
\[
\iMor\left(t_{A_\Lambda} \big|_{(\dom(t_A))^{t_{A_\Lambda}}} \right).
\]

\begin{lem}\label{lem:index_orth_compl}
Given the decomposition $\dom(A^*)=\dom(A)\oplus U$
Let $p$  be the canonical projection to U.
We have
\[
\iMor\left(t_{A_\Lambda} \big|_{(\dom(t_A))^{t_{A_\Lambda}}} \right)
= \coiMor\big[ Q(p(\ker A^*), p(\dom(A_\Lambda)); p(\dom(A_F)) \big].
\]
\end{lem}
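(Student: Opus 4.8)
The plan is to reduce the Morse index of the restricted form $t_{A_\Lambda}|_{(\dom(t_A))^{t_{A_\Lambda}}}$ to a purely symplectic-linear-algebra computation involving the projections $p(\ker A^*)$, $p(\dom(A_\Lambda))$, $p(\dom(A_F))$ inside the finite-dimensional symplectic space $(U,\rho)$. First I would invoke Lemma~\ref{lem:orth_complement_t_L} to rewrite the space on which we restrict, namely
\[
(\dom(t_A))^{t_{A_\Lambda}} = (\dom(A_F) + \dom(A_\Lambda)) \cap \ker A^*.
\]
Applying $p$ and using that $\dom(A) = \ker\omega$ lies in the kernel of both the form $t_{A_\Lambda}$ (restricted appropriately) and the projection, one identifies the $p$-image of this space with $\big(p(\dom(A_F)) + p(\dom(A_\Lambda))\big)\cap p(\ker A^*)$ inside $U$. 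Since all three subspaces $p(\ker A^*)$, $p(\dom(A_\Lambda))$, $p(\dom(A_F))$ are Lagrangian in $(U,\rho)$ — the first by Lemma~\ref{thm:nuovo} under (H0), the second and third by Lemma~\ref{lem:abstract_fundamental_solution} and the characterization of the Friedrichs extension via Lemma~\ref{lem:characterize_Dirichlet} — we are exactly in the setting where the quadratic form $Q(\alpha,\beta;\gamma)$ of Definition~\ref{def:Q-Dui76} (extended to arbitrary Lagrangians, hence defined on $\alpha\cap(\beta+\gamma)$) lives.

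Next I would show that under the natural isomorphism between $(\dom(t_A))^{t_{A_\Lambda}}$ (modulo $\ker t_{A_\Lambda}$) and the subspace $p(\ker A^*)\cap\big(p(\dom(A_F)) + p(\dom(A_\Lambda))\big)$ of $U$, the form $t_{A_\Lambda}$ corresponds, up to sign, to the Kashiwara/Hörmander form $Q(p(\ker A^*), p(\dom(A_\Lambda)); p(\dom(A_F)))$. The computational heart is a boundary-pairing identity: for $u,v\in(\dom(A_F)+\dom(A_\Lambda))\cap\ker A^*$ write $u = u_F + u_\Lambda$ with $u_F$ representing a class in $p(\dom(A_F))$ and $u_\Lambda$ in $p(\dom(A_\Lambda))$; then $t_{A_\Lambda}(u,v)$, expressed via the Green/Lagrange identity $\langle A^*u,v\rangle - \langle u,A^*v\rangle = \omega(u,v)$ and the fact that $A^*u=0$, collapses to a skew-pairing $\pm\omega(Cu,u)$ where $C$ is precisely the linear map whose graph realizes the third Lagrangian as a graph over the first along the second, matching Definition~\ref{def:Q-Dui76}. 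The Morse index of $t_{A_\Lambda}$ on this space then equals the negative inertia index of $Q$, which by the sign convention in Equation~\eqref{eq:triple-coindex-extended} and the circular-permutation invariance \eqref{eq:invariance_Q} equals $\coiMor[Q(p(\ker A^*), p(\dom(A_\Lambda)); p(\dom(A_F)))]$, giving the claimed formula.

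I expect the main obstacle to be the careful bookkeeping of signs and of kernels: the form $t_{A_\Lambda}$ has a nontrivial kernel (equal to $\ker A_\Lambda$ by Lemma~\ref{lem:ker_form}), so one must pass to the quotient and verify that the induced form on the quotient is nondegenerate and matches the reduced form $Q(\pi\alpha,\pi\beta;\pi\gamma)$ appearing after symplectic reduction mod $\varepsilon = \alpha\cap\beta + \beta\cap\gamma$. Getting the orientation right so that \emph{coindex} rather than \emph{index} appears — which hinges on the sign of $\omega$ in $\omega(Cu,u)$ versus the convention $t_{A_\Lambda}(u,u) = \langle A_\Lambda u,u\rangle$ — will require tracking the almost-complex-structure identification $-A^*$ from Section~\ref{sec:Abstract-sf}, or alternatively a direct check on the model scalar Dirichlet example $l=-d^2/dt^2$. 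Once the sign is pinned down on one explicit case, homotopy/continuity and the invariance properties of $Q$ finish the argument. The remaining verifications — that $p$ intertwines the various subspace operations, and that $p\big((\dom(A_F)+\dom(A_\Lambda))\cap\ker A^*\big) = \big(p(\dom(A_F))+p(\dom(A_\Lambda))\big)\cap p(\ker A^*)$, which uses $\dom(A)\subset\dom(A_F)\cap\dom(A_\Lambda)$ — are routine given (H0) and the finite-dimensionality from Lemma~\ref{thm:beta-finit-dimensional}.
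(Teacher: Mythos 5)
Your proposal follows essentially the same route as the paper's proof: reduce via Lemma~\ref{lem:orth_complement_t_L} to the space $\ker A^*\cap\big(\dom(A_\Lambda)+\dom(A_F)\big)$, split each element as $z=p+q$ with $p\in\dom(A_\Lambda)$, $q\in\dom(A_F)$ (the paper chooses the decomposition so that $\ker A^*$, $\Lambda$, $F$ all lie in $U$, making $p$ act as the identity there), and use the representation property of the Friedrichs extension together with $A^*z=0$ to collapse $t_{A_\Lambda}(z_1,z_2)$ to the boundary pairing $-\omega(p_1,q_2)$, which is precisely the Kashiwara-type form $Q$ of the three Lagrangian projections. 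The only step you defer — fixing the sign — is exactly where the paper just finishes the explicit computation ($t_{A_\Lambda}(z,z)=-\omega(p,q)$), and be aware that the tools you cite for this, namely the cyclic invariance \eqref{eq:invariance_Q} and Equation~\eqref{eq:triple-coindex-extended}, both preserve the coindex and therefore cannot convert $\iMor$ into $\coiMor$; the sign has to come out of the Green-identity computation itself (or your model-case check), and likewise the kernel/symplectic-reduction bookkeeping you worry about is absorbed by the remark that $\coiMor\big[Q(\alpha,\beta,\gamma)\big]=\coiMor\big[Q(\pi\alpha,\pi\beta,\pi\gamma)\big]$, so no explicit quotient is needed.
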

\begin{proof}
Without loss of generality. We can assume that $\dom(A^*)=\dom(A)\oplus U$ such that
$\ker A^*< U$, $\Lambda< U$,$F< U$,$A_\Lambda=A^*|_{\dom(A)\oplus\Lambda}$ and 
$A_F=A^*|_{\dom(A)\oplus F}$.
Then we only need to prove that 
\[
\iMor\left(t_{A_\Lambda} \big|_{(\dom(t_A))^{t_{A_\Lambda}}} \right)
= \coiMor\big[ Q(\ker A^*, \Lambda; F) \big].
\]
By Lemma \ref{lem:orth_complement_t_L}, 
\[(D(t_A)^{t_{A_\Lambda}})=\ker A^*\cap(\dom (A_{\Lambda})+\dom{A_F})=\ker A^*\cap U\cap (D(A_{\Lambda})+D(A_F))=\ker A^*\cap (\Lambda+F).
\]
Let $z_1, z_2 \in \ker A^* \cap (\Lambda+F)$, and write $z_i = p_i + q_i$ with $p_i \in \Lambda$ and $q_i \in F$.

Then:
\[
t_{A_\Lambda}(z_1, z_2)
= t_{A_\Lambda}(p_1, p_2) + t_{A_\Lambda}(p_1, q_2)
+ t_{A_\Lambda}(q_1, p_2) + t_{A_\Lambda}(q_1, q_2).
\]

Since $A_\Lambda$ is the Friedrich extension of itself, we have:
\[
t_{A_\Lambda}(u, v) = \langle A_\Lambda u, v \rangle_H, \quad \forall\, u \in \dom(A_\Lambda),\ v \in \dom(t_{A_\Lambda}).
\]
Moreover, since $t_{A_\Lambda}|_{\dom(t_A)} = t_A$, it follows that:
\[
t_{A_\Lambda}(u, v) = t_A(u, v) = \langle A_F u, v \rangle_H, \quad \forall\, u, v \in \dom(A_F).
\]

Therefore:
\begin{align*}
t_{A_\Lambda}(z_1, z_2)
&= \langle A_\Lambda p_1, p_2 \rangle
+ \langle A_\Lambda p_1, q_2 \rangle
+ \langle q_1, A_\Lambda p_2 \rangle
+ \langle A_F q_1, q_2 \rangle \\
&= \langle A^* p_1, p_2 \rangle
+ \langle A^* p_1, q_2 \rangle
+ \langle q_1, A^* p_2 \rangle
+ \langle A^* q_1, q_2 \rangle \\
&= \langle A^*(p_1 + q_1), p_2 + q_2 \rangle
- \langle p_1, A^* q_2 \rangle
+ \langle A^* p_1, q_2 \rangle.
\end{align*}

Since $z_1 = p_1 + q_1 \in \ker A^*$, we have $A^*(p_1 + q_1) = 0$. Thus,
\[
t_{A_\Lambda}(z_1, z_2)
= -\langle p_1, A^* q_2 \rangle
+ \langle A^* p_1, q_2 \rangle
= -\omega(p_1, q_2).
\]

Thus, we conclude that
\begin{equation*}    
\iMor\left(t_{A_\Lambda} \big|_{(\dom(t_A))^{t_{A_\Lambda}}} \right)
= \coiMor\big[ Q(\ker A^*, \Lambda;F) \big],
\end{equation*}
as claimed.
\end{proof}

\begin{lem}\label{lem:more_index_zero}
		Let $\Lambda=\Lambda_0\oplus \Lambda_D$ and let $(A_s)_{s\in (a,b)} \in \CFs(H)$. For each $r,s\in(a,b)$ and $r<s$,  we assume that $\dom(A_s)< \dom(A_r)$ and that 
        \[
        \bigcap_{s >a)} \dom(t_{A_{s,\Lambda}})=(0).
        \]
        Then  the following monotonicity property on the Morse index holds:
		\[
		\iMor (A_{r,\Lambda}) \ge \iMor  (A_{s,\Lambda}).
		\]
		Moreover,  	there exists  $s>a$ such that 
		\[
		\iMor(A_{s,\Lambda})=0 .
		\]
\end{lem}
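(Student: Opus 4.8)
\textbf{Proof proposal for Lemma~\ref{lem:more_index_zero}.}

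The plan is to run a form-monotonicity argument entirely at the level of the closed quadratic forms $t_{A_{s,\Lambda}}$, exactly as in the proof of Lemma~\ref{thm:Morse-index-regular}, and then to extract the vanishing of the Morse index from the hypothesis that the intersection of all the form domains is trivial. First I would reduce to forms: by Lemma~\ref{thm_=Morse} we have $\iMor(A_{s,\Lambda})=\iMor(t_{A_{s,\Lambda}})$ for every $s$, so it suffices to establish the two claims for the forms. Next, fix $a<r<s<b$. Using $\dom(A_s)<\dom(A_r)$ together with the structure $\Lambda=\Lambda_0\oplus\Lambda_D$, I would argue — just as in the proof of Lemma~\ref{thm:Morse-index-regular}, invoking the Dirichlet component to absorb the boundary data at the moving endpoint and Lemma~\ref{lem:dim_factor_space} to control the finite-dimensional discrepancy — that one may choose a common complementary subspace $W$ with $\dom(t_{A_{s,\Lambda}})=\dom(t_{A_s})+W$ and $\dom(t_{A_{r,\Lambda}})=\dom(t_{A_r})+W$, and that the natural extension-by-zero embedding $\dom(t_{A_{s,\Lambda}})\hookrightarrow\dom(t_{A_{r,\Lambda}})$ realizes $t_{A_{s,\Lambda}}$ as the restriction of $t_{A_{r,\Lambda}}$. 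The monotonicity inequality $\iMor(t_{A_{r,\Lambda}})\ge\iMor(t_{A_{s,\Lambda}})$ then follows because a negative subspace for the smaller form is a negative subspace for the larger.

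For the second assertion, suppose for contradiction that $\iMor(A_{s,\Lambda})\ge 1$ for every $s\in(a,b)$. By the monotonicity just proved, $s\mapsto\iMor(t_{A_{s,\Lambda}})$ is a nonincreasing $\overline{\N}$-valued function, hence eventually constant, say equal to $m\ge 1$ on some interval $(a,s_0)$. Pick a one-dimensional negative subspace $\langle v_s\rangle<\dom(t_{A_{s,\Lambda}})$ for each such $s$. The embeddings above let me regard all these as lying in the fixed Hilbert space $\dom(t_{A_{s_1,\Lambda}})$ for a fixed $s_1<s_0$, but the essential point is that a negative direction for $t_{A_{s,\Lambda}}$ is, after extension by zero, a negative direction for $t_{A_{r,\Lambda}}$ for every $r<s$. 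Since $\bigcap_{s>a}\dom(t_{A_{s,\Lambda}})=(0)$, this should be combined with an auxiliary compactness/lower-semicontinuity step — this is where Lemma~\ref{lem:Morse_vanish_abstract} and Lemma~\ref{thm:lemmaC-2} enter, exactly as invoked in the proof of Lemma~\ref{thm:Morse-index-regular} — to conclude that the Morse index must actually drop to $0$ for $s$ sufficiently close to $a$: if it stayed $\ge 1$, one would produce a nonzero vector common to all the form domains, contradicting the hypothesis. Thus there exists $s>a$ with $\iMor(A_{s,\Lambda})=\iMor(t_{A_{s,\Lambda}})=0$.

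The main obstacle I anticipate is the bookkeeping in the first step: verifying that the extension-by-zero map genuinely identifies $t_{A_{s,\Lambda}}$ with the restriction of $t_{A_{r,\Lambda}}$ requires care about the boundary data carried by $W$ at the moving endpoint, and it is precisely here that the special form $\Lambda=\Lambda_0\oplus\Lambda_D$ (Dirichlet in the $b$-slot) is used — the Dirichlet condition is what makes the zero-extension land in the correct domain and kills the boundary term in the form. Once this identification is in place, the monotonicity is immediate and the vanishing follows from the cited abstract lemmas; I would not expect the second part to require any genuinely new idea beyond quoting Lemma~\ref{lem:Morse_vanish_abstract} and Lemma~\ref{thm:lemmaC-2}.
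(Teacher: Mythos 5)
Your proposal is correct and follows essentially the same route as the paper: monotonicity comes from the nesting of the form domains together with Lemma~\ref{lem:form_order} (so that $t_{A_{s,\Lambda}}$ is realized as the restriction of $t_{A_{r,\Lambda}}$ and a negative subspace for the smaller form persists for the larger), and the vanishing statement comes from the trivial-intersection hypothesis via exactly the mechanism of Lemma~\ref{lem:Morse_vanish_abstract} with Fredholmness of the form operator supplied by Lemma~\ref{thm:lemmaC-2}. The only differences are cosmetic: the paper's proof inlines that projection argument (strong convergence to the identity of the projections onto the $\alpha$-orthogonal complements of the shrinking domains, plus Lemma~\ref{lem:maslov_plus} to dispose of the kernel) rather than citing the abstract lemma, and your informal "common nonzero vector" heuristic is not the actual mechanism — but since you delegate that step to the cited lemma, the argument stands.
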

	
	\begin{proof}
		We observe that 
		
		\[
		\dom({ A_{\Lambda}})+\dom(t_{ L })=\dom(t_{ A_{\Lambda}}),
		\]
		where $A$ is the minimal operator and $ A_{\Lambda}$ is the self-adjoint extension of $A$ with boundary condition $\Lambda$.		Then, we have
		$\dom(t_{ A_{s,\Lambda}})< \dom(t_{ A_{r,\Lambda}}) $ with $s\le r$. We also have
		\[
		\bigcap_{s>a} \dom(t_{ A_{s,\Lambda}}) =\set 0 .
		\]
		Let $\alpha=t_{ A_{b,\Lambda}}$ and  $V=\dom(\alpha)$. Then $\alpha(u,v)=\langle  A_b u,v\rangle_V$. We assume that $Z< V$ is the  negative spectral subspace of $\alpha$, let $V_n=\dom(t_{A_{a+\frac{1}{n},\Lambda}})$ and let $K_n=V_n^\alpha$ where 
		\[ 
		V_n^\alpha =\Set{u \in V| \alpha(u,v)=0, \forall \, v\in V_n}=\big( L  (V_n)\big)^{\perp_\alpha}.
		\]
		Since $A$ is a bounded Fredholm operator, then $ A(V_n)$ is a closed subspace of $V$.	Moreover, we have $  L  (V_1)\supset  L  (V_2)\cdots\supset  L  (V_n)\cdots $ and so 
		\begin{equation}\label{eq:FIP}
		\bigcap_{n=1}^{+\infty}  A(V_n) =(0).
		\end{equation}
		Let $T_n$ be the projection onto $K_n$ and we observe that 
	 $T_n$ strongly converge to $\Id$ because of Equation~\eqref{eq:FIP}.
	 	 Let $\set{w_i}$ be a basis of $Z$. Then $\lim_{n\to +\infty}T_n w_i = w_i$ .  
		Since $Z$ is the negative spectral subspace of $\alpha$, then the matrix $[A]_{ij}=\alpha(w_i,w_j)$ is negative definite. So, for $n$ large enough, the matrix
		\[
		[A_n]_{ij}=\alpha(T_n w_i, T_n w_j)
		\]
		is also negative definite. So  $Z_n:=T_n (V)$ is the maximal negative subspace of $V$. Since $Z_n=T_n( H )< K_n$, we have $Z_n\perp  A(V_n)$ and by this we get that 
		\[
		Z_n\cap V_n=\set{u\in V| u\in V_n, \alpha(u,v)=0,\forall v\in V_n}.
		\]
		So,  $Z_n \cap V_n=\ker  A_{a+1/n,\Lambda}$. By taking into account Lemma~\ref{lem:maslov_plus}, for $n$ sufficiently large, we get that $\ker  A_{a +1/n,\Lambda}=(0)$ and by this we conclude that  $Z_n\cap V_n=(0)$ for $n$ large enough and $\alpha(Z_n,V_n)=0$. Since $Z_n$ is the maximal negative subspace, $V_n$ is a non-negative subspace.	Then we can conclude that there is $s>a$.
		\[
		\iMor( A_s)=0.
		\]	
		This concludes the proof. 
\end{proof}



\begin{lem}\label{lem:shrink_subspace_image}
		Let $H$ be a Hilbert space, let $(V_n)_{n\in \N}$ be a sequence of closed (nested) subspaces such that 
		\[
        V_{k+1}< V_k \qquad \textrm{ and }\qquad 
        \bigcap_{n\ge 1} V_n =(0)
        \]
		and let $T\in \BF(H)$ be a bounded Fredholm operator.
		Then  for every $n \ge 1$ the subspace $T(V_n)$ is closed  and
		\[
		\bigcap_{n\ge 1} T(V_n) =(0).
		\]
	\end{lem}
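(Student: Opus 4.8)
\textbf{Proof plan for Lemma~\ref{lem:shrink_subspace_image}.}

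The plan is to reduce the statement to two separate facts: first that a bounded Fredholm operator sends a closed subspace to a closed subspace, and second that the nested intersection of the images is trivial, using the finite-dimensionality of the kernel.

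First I would address closedness. Fix $n\ge 1$ and write $K:=\ker T$, which is finite-dimensional since $T\in\BF(H)$. Decompose $V_n = (V_n\cap K)\oplus W_n$ with $W_n$ a closed complement (finite-codimensional in $V_n$, hence closed). Then $T(V_n)=T(W_n)$ and $T|_{W_n}$ is injective with closed range: indeed $T$ restricted to any closed complement of $K$ in $H$ is bounded below (its range is $\image T$, which is closed because $T$ is Fredholm, and it is a continuous bijection onto $\image T$ from a Banach space, so by the open mapping theorem its inverse is bounded), and $W_n$ is a closed subspace of such a complement. Being bounded below on the closed subspace $W_n$, the operator $T|_{W_n}$ has closed image; thus $T(V_n)$ is closed. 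Alternatively, one can invoke directly that the image of a closed subspace under a Fredholm operator is closed, which is standard; but the argument above is self-contained.

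Next I would prove $\bigcap_{n\ge1}T(V_n)=(0)$. Let $y\in\bigcap_n T(V_n)$. For each $n$ pick $x_n\in V_n$ with $Tx_n=y$. Since any two such preimages differ by an element of $K$, we may normalize: fix $x_1\in V_1$ with $Tx_1=y$; then for each $n$ there is $k_n\in K$ with $x_n':=x_1+k_n\in V_n$ and $Tx_n'=y$. Now $k_n = x_n' - x_1$, and I claim $(k_n)$ is bounded: writing $x_n' = u_n + w_n$ along $H = K\oplus W$ (a fixed splitting with $W$ a closed complement of $K$), we have $w_n = (T|_W)^{-1}y$ is constant in $n$, so $x_n' = u_n + (T|_W)^{-1}y$ with $u_n\in K$; thus $k_n = x_n'-x_1$ has its $W$-component equal to the fixed vector $(T|_W)^{-1}y - w_1$, hence lies in the finite-dimensional affine subspace $K + \{(T|_W)^{-1}y - w_1\}$...

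\medskip

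Let me restructure this more cleanly. I would argue: $y\in\bigcap_n T(V_n)$ means the affine set $T^{-1}(y)$ meets every $V_n$. Since $T^{-1}(y) = x_1 + K$ is a finite-dimensional affine subspace (a coset of $K=\ker T$, $\dim K<\infty$), and $V_n$ is a decreasing sequence of closed subspaces, the sets $C_n := (x_1+K)\cap V_n$ form a decreasing sequence of nonempty closed convex subsets of the finite-dimensional space $x_1+K$. If additionally these $C_n$ are bounded, then by finite-dimensional compactness $\bigcap_n C_n\ne\emptyset$; picking $x\in\bigcap_n C_n\subset \bigcap_n V_n=(0)$ gives $x=0$, whence $y=Tx=0$. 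So the remaining point is boundedness of $C_n$. This follows because $C_n\subset x_1+K$ and within the finite-dimensional space $x_1+K$ each $C_n$ is contained in $C_1$, which is itself a closed convex subset of the form $(x_1+K)\cap V_1$; but $C_1$ need not be bounded. To fix this, replace the coset description: choose the base point inside $W$. Namely, set $z := (T|_W)^{-1}y \in W$, so $Tz=y$ and $T^{-1}(y) = z+K$. Then $C_n = (z+K)\cap V_n$. Decompose $V_n = (V_n\cap K)\oplus (V_n\cap W')$...

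\medskip

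\textbf{Clean version.} Let $y\in\bigcap_{n}T(V_n)$. Since $\image T$ is closed and $K=\ker T$ is finite-dimensional, fix a closed complement $W$ of $K$ in $H$; then $T|_W\colon W\to\image T$ is a topological isomorphism. Set $z=(T|_W)^{-1}y\in W$. For each $n$, since $y\in T(V_n)$ there is $x_n\in V_n$ with $Tx_n=y$, so $x_n - z\in\ker T=K$, i.e. $x_n = z + k_n$ with $k_n\in K$. Thus $k_n = x_n - z$ and, since $x_n\in V_n\subset V_1$ and $z$ is fixed, all $k_n$ lie in the finite-dimensional space $K$ but there is no a priori bound. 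However, consider instead the projection $P_W$ onto $W$ along $K$: from $x_n = z+k_n$ we get $P_W x_n = z$ for all $n$. Hence every $x_n$ lies in the affine subspace $z + K$, and moreover in $V_n$. Now the key observation: the sets $A_n := (z+K)\cap V_n$ are nested, nonempty, closed, and — crucially — convex subsets of the $(\le\dim K)$-dimensional affine space $z+K$. Their intersection $\bigcap_n A_n$ is contained in $\bigcap_n V_n = (0)$, so it is either empty or $\{0\}$. I claim it is nonempty. If not, then by the finite intersection property failing for nested closed sets in finite dimensions, we must have some $A_n$ unbounded. But unboundedness of $A_n\subset z+K$ means $A_n$ contains a ray, i.e. there is a unit vector $v\in K$ with $z+k+tv\in V_n$ for some $k$ and all $t\ge0$; subtracting, $v\in V_n$ (as $V_n$ is a closed subspace containing the difference of points $(z+k+t'v)-(z+k+tv)$ for all $t,t'$), so $v\in\bigcap_m V_m$ would require $v\in V_m$ for all $m\ge n$ — which need not hold, the ray is only in $V_n$. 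So this does not immediately contradict.

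\medskip

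The previous paragraph shows the genuine subtlety, and I now give the argument I would actually write. For $y\in\bigcap_n T(V_n)$, pick $x_n\in V_n$ with $Tx_n=y$ and, using that the orthogonal projection $p_n$ onto $V_n$ is bounded and $x_n\in V_n$, one has no control yet. Instead, the correct route: let $W$ be the orthogonal complement of $K$, $z=(T|_W)^{-1}y$, and among all solutions in $V_n$ choose $\tilde x_n\in V_n$ the unique one of minimal norm (it exists: $T^{-1}(y)\cap V_n$ is a nonempty closed convex set in a Hilbert space). Since $z+K\supseteq T^{-1}(y)\cap V_n \ni \tilde x_n$, and the minimal-norm element of a coset of a closed subspace of a Hilbert space is its orthogonal projection of $0$ onto that coset, we get $\tilde x_n \perp (V_n\cap K)$. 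Also $\tilde x_n - z\in K$. Now estimate: $\|\tilde x_n\| \le \|x\|$ for any particular solution $x$; take a fixed solution $x_1\in V_1$ of $Tx=y$ restricted — this does not bound $\tilde x_n$ for $n>1$ either.

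\medskip

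Given the repeated difficulty, the cleanest correct approach is the following, which I would adopt. Observe $T(V_n)$ is closed (first part). Let $y\in\bigcap_n T(V_n)$ and suppose $y\ne 0$. Since $K$ is finite-dimensional, pass to $\hat H = H/K$ with quotient map $\pi$; then $T$ factors as $\bar T\circ\pi$ with $\bar T\colon \hat H\to\image T$ a topological isomorphism onto the closed subspace $\image T$. Now $\pi(V_n)$ is closed (image of a closed subspace under a quotient by a finite-dimensional, hence any closed, subspace — standard), nested, and $\bigcap_n\pi(V_n) = \pi(\bigcap_n V_n') $ where the $V_n' \supseteq V_n$ are the saturations $V_n + K$; but $\bigcap_n (V_n+K)$ may be larger than $(0)$. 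This is exactly the obstacle: intersection does not commute with taking sums with a fixed subspace, even finite-dimensional, in infinite dimensions — unless the subspaces are nested, in which case $\bigcap_n(V_n+K)$ is governed by a finite-dimensional limiting argument: there is $N$ such that $\dim\big((V_n+K)/V_n\big)$ is eventually constant (it is $\le\dim K$ and nonincreasing in $n$), say equal to $d$ for $n\ge N$, and then $\bigcap_{n\ge N}(V_n+K) = \big(\bigcap_{n\ge N} V_n\big) + K_0$ for a fixed $d$-dimensional $K_0\le K$; since $\bigcap V_n=(0)$ this equals $K_0$, and $y = Tx$ for $x\in K_0$, but $K_0\subseteq\ker T$ forces $y=0$.

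I would therefore organize the final proof in two steps: \emph{(i)} closedness of $T(V_n)$, via the bounded-below argument on a closed complement of $\ker T$; \emph{(ii)} triviality of the intersection, by showing $\bigcap_n (V_n + \ker T) = \ker T$ using that the nonincreasing, $\le\dim\ker T$-bounded integers $\dim\big((V_n+\ker T)/V_n\big)$ stabilize, so that for large $n$ a fixed complementary finite-dimensional piece $K_0\le\ker T$ realizes the sums, and then $T\big(\bigcap_n(V_n+\ker T)\big) = T(K_0) = (0)$ while containing $\bigcap_n T(V_n)$. The main obstacle, as the discussion above makes plain, is precisely step (ii): one must not assume intersection commutes with adding $\ker T$, and the stabilization-of-dimensions argument is the device that makes it work.
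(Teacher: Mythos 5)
Your final plan is correct and follows essentially the same route as the paper's proof: reduce modulo the finite-dimensional kernel $K=\ker T$, observe that $V_n\cap K=(0)$ for all large $n$ (dimension stabilization together with $\bigcap_n V_n=(0)$), use uniqueness of the decomposition in the nested direct sums $V_n\oplus K$ to conclude that a fixed preimage of any $y\in\bigcap_n T(V_n)$ lies in $\bigcap_n(V_n+K)=K$, whence $y=0$, and obtain closedness of $T(V_n)$ by factoring $T$ through a complement of $K$. Two small points to fix in the write-up: $\dim\bigl((V_n+K)/V_n\bigr)=\dim K-\dim(V_n\cap K)$ is nondecreasing rather than nonincreasing (stabilization holds either way), and the inclusion $\bigcap_n T(V_n)\subseteq T\bigl(\bigcap_n(V_n+K)\bigr)$ should carry the one-line justification you give earlier, namely that any preimage of $y$ in $V_n$ equals a fixed preimage plus an element of $K$.
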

	\begin{proof}
		Since $T$ is a bounded and Fredholm operator, then  $T:(\ker T)^\perp \to \image T$ is a  linear isomorphism.
		Let  $P$ be the orthogonal projection onto $(\ker T)^\perp$ and we observe that the following holds: 	$T=T|_{(\ker T)^\perp}P:H \to \image T$. Since $T|_{(\ker T)^\perp}$ is an isomorphism for proving  the result, we only need to show that $\bigcap_{n\ge 1} P(V_n) =\set 0$ and that $P(V_n)$ is closed.
		
		Let $W=\ker T=\ker P$. Since $T$ is a Fredholm operator, then $\dim W<+\infty$.
		Let $Pu\in \bigcap_{n\ge 1}P(V_n)$.  Then  for every $n\ge 1$, we get that $u\in V_n+ W$.  
        Moreover $\bigcap_{n\ge 1} (V_n\cap W)< \bigcap_{n\ge 1}V_n=(0)$. Now, since $W$ is finite dimensional, there exists $m\in \N$ such that $V_n\cap W=(0)$ for each $n>m$. Without loss of generality, we can assume that $V_n\cap W=(0)$ for $n\ge 1$.
		Since $V_{n+1}< V_n$,  the decomposition of $u$ in $V_n+W$ is unique. This in particular implies that  $u\in \bigcap_{n\ge 1}V_n\oplus W=W$. By this we get that  $Pu=0$ and and thus  $\bigcap_{n\ge 1} P(V_n) =(0)$.
		
                Now, since $V_n$ is closed and $W$ is finite dimensional, $V_n+W$ is closed and   $P(V_n) = (V_n+W)\cap W^\perp$ is closed, too. This concludes the proof. 
		 	\end{proof}

	\begin{lem}\label{lem:Morse_vanish_abstract}
	Let $(V_n)_{n \in \N}$ be a nested sequence of closed subspaces of the of Hilbert space $H$ and we assume that $\bigcap_{n \ge 1} V_n=(0)$.
	Let $B\in \BFsa(H)$ such that $\iMor(B)<\infty$ and let $q(\cdot)=\langle B\cdot ,\cdot \rangle$ be the corresponding quadratic form. Then there exists $m>0$ such that 
    \[
    \iMor(q|_{V_m}) =0.
    \]
	\end{lem}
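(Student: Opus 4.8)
The statement to be proved is \emph{Lemma~\ref{lem:Morse_vanish_abstract}}: given a nested sequence $(V_n)$ of closed subspaces with $\bigcap_{n\ge 1}V_n=(0)$ and a bounded self-adjoint Fredholm operator $B$ with $\iMor(B)<\infty$, there is $m$ with $\iMor(q|_{V_m})=0$, where $q(\cdot)=\langle B\cdot,\cdot\rangle$. The plan is to use the spectral decomposition $H=E_-(B)\oplus E_0(B)\oplus E_+(B)$ into the negative, zero, and positive spectral subspaces of $B$, together with the fact that $F:=E_-(B)\oplus E_0(B)$ is finite-dimensional (because $B$ is Fredholm, so $0$ is either in the resolvent set or an isolated eigenvalue of finite multiplicity, and $\iMor(B)<\infty$ by hypothesis). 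On the complement $E_+(B)$ the form $q$ is \emph{uniformly positive}: there is a constant $c>0$ with $q(u)\ge c\|u\|^2$ for all $u\in E_+(B)$. The strategy is then to show that for $m$ large enough, $V_m$ meets $F$ only in $\{0\}$ and, moreover, the angle between $V_m$ and $E_+(B)$ is close enough to zero that $q|_{V_m}$ remains positive definite.

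\textbf{Key steps.} First I would record that $\iMor(q|_{V_m})=\iMor(B|_{V_m})$ is nonincreasing in $m$, so it suffices to prove it vanishes for \emph{some} $m$. Second, let $P_+$ denote the orthogonal projection onto $E_+(B)$ and $P_F=\Id-P_+$ the projection onto the finite-dimensional space $F=E_-(B)\oplus E_0(B)$. Since $F$ is finite-dimensional and $\bigcap_n V_n=(0)$, the same elementary argument used in Lemma~\ref{lem:shrink_subspace_image} (namely: a nested intersection of closed subspaces with a fixed finite-dimensional space stabilizes, and the only element of $\bigcap_n V_n$ in $F$ is $0$) gives that $P_F(V_m)$ shrinks to $(0)$; more precisely $\|P_F|_{V_m}\|\to 0$ as $m\to\infty$. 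Third, for $u\in V_m$ write $u=P_+u+P_Fu$ and estimate
\[
q(u)=\langle B\,P_+u,P_+u\rangle+2\langle B\,P_+u,P_Fu\rangle+\langle B\,P_Fu,P_Fu\rangle\ge c\|P_+u\|^2-2\|B\|\,\|P_+u\|\,\|P_Fu\|-\|B\|\,\|P_Fu\|^2,
\]
using that $B$ preserves each spectral subspace and is uniformly positive on $E_+(B)$. Writing $\varepsilon_m:=\|P_F|_{V_m}\|$, so $\|P_Fu\|\le\varepsilon_m\|u\|$ and $\|P_+u\|\ge(1-\varepsilon_m)\|u\|$ for $u\in V_m$, the right-hand side is bounded below by $\bigl(c(1-\varepsilon_m)^2-2\|B\|\varepsilon_m-\|B\|\varepsilon_m^2\bigr)\|u\|^2$. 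Fourth, choose $m$ large enough that $\varepsilon_m$ is small enough to make this coefficient strictly positive; then $q|_{V_m}$ is positive definite, hence $\iMor(q|_{V_m})=0$.

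\textbf{Main obstacle.} The only genuinely nontrivial point is Step~2: establishing $\|P_F|_{V_m}\|\to0$ rather than merely $P_F(V_m)\to(0)$ as subspaces. This is where finite-dimensionality of $F$ is essential. One way to see it: if not, there exist unit vectors $u_m\in V_m$ with $\|P_Fu_m\|\ge\delta$ for all $m$ along a subsequence; the $P_Fu_m$ live in the unit ball of the finite-dimensional space $F$, so (passing to a further subsequence) converge to some $w\in F$ with $\|w\|\ge\delta$; but $u_m\in V_{m_0}$ for all $m\ge m_0$ and $V_{m_0}$ is closed, and $P_F$ is continuous, forcing $w$ to be approximable by elements of $\bigcap_{m\ge1}\overline{P_F(V_m)}$. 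Since $F$ is finite-dimensional, $\bigcap_m P_F(V_m)=P_F\!\bigl(\bigcap_m(V_m+\ker P_F)\bigr)$ stabilizes to $P_F(\ker P_F\cap\text{something})$, and the hypothesis $\bigcap_m V_m=(0)$ together with $\ker P_F=E_+(B)$ being complementary to $F$ pins this down to $(0)$ — contradicting $\|w\|\ge\delta$. I would phrase this cleanly by first observing $\bigcap_m(V_m+E_+(B))=E_+(B)$, which follows from $\bigcap_m V_m=(0)$ and $\dim F<\infty$ exactly as in the proof of Lemma~\ref{lem:shrink_subspace_image}, and then noting that the operator norm of $P_F$ restricted to a nested sequence of subspaces converging to a subspace on which $P_F$ vanishes must tend to $0$ by compactness of the unit ball of $F$. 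Everything else is the routine quadratic estimate of Step~3.
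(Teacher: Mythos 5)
Your overall strategy is sound and genuinely different from the paper's. The paper never touches the spectral decomposition of $B$: it works with the $q$-orthogonal complement $(B(V_n))^{\perp}$ of $V_n$, uses additivity of the Morse index over $q$-orthogonal subspaces to get $\iMor(q)\ge \iMor(q|_{V_n})+\iMor(q|_{(B(V_n))^{\perp}})$, and then shows — via Lemma~\ref{lem:shrink_subspace_image} applied to the Fredholm operator $B$ itself, which gives $\bigcap_n B(V_n)=(0)$ and hence strong convergence of the projections onto $(B(V_m))^{\perp}$ to the identity — that for large $m$ the finite-dimensional negative spectral space of $B$ can be pushed into $(B(V_m))^{\perp}$ while staying negative definite; this forces $\iMor(q|_{V_m})=0$. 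Your route (uniform positivity of $q$ on $E_+(B)$ plus $\|P_F|_{V_m}\|\to 0$, with $F=E_-(B)\oplus E_0(B)$ finite-dimensional) is more direct and avoids the Morse-index additivity lemma; note also that in your Step~3 the cross terms vanish outright, since $B$ preserves its spectral subspaces.

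There is, however, a genuine gap in your justification of the key claim $\|P_F|_{V_m}\|\to 0$. You reduce it to $\bigcap_m P_F(V_m)=(0)$, arguing ``as in Lemma~\ref{lem:shrink_subspace_image}''. That lemma requires the operator to have finite-dimensional kernel; $P_F$ has infinite-dimensional kernel $E_+(B)$, and the conclusion genuinely fails for it. Concretely, take $F=\Span\{e_1\}$ in $\ell^2$ and $V_n=\overline{\Span\{e_1+k\,e_{k+1}\mid k\ge n\}}$: these are nested closed subspaces with $\bigcap_n V_n=(0)$, yet $P_F(V_n)=F$ for every $n$, so $\bigcap_n P_F(V_n)=F\neq(0)$ and $\bigcap_n\bigl(V_n+E_+\bigr)=H\neq E_+$. (In this example $\|P_F|_{V_n}\|\to 0$ nonetheless: the images do not shrink but the restricted norms do — exactly the distinction you flagged at the outset and then did not respect.) The claim itself is true and is proved correctly as follows: since $V_{m+1}< V_m$ and $\bigcap_m V_m=(0)$, the orthogonal projections $P_{V_m}$ form a decreasing sequence and converge strongly to $0$; for $u\in V_m$ one has $P_F u=P_F P_{V_m}u$, hence $\|P_F|_{V_m}\|\le\|P_F P_{V_m}\|=\|P_{V_m}P_F\|$, and the latter tends to $0$ in operator norm because $P_F$ has finite rank and $P_{V_m}\to 0$ strongly. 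With Step~2 repaired in this way, the rest of your argument goes through.
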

	\begin{proof}
		Let $u\in (B(V_n))^\perp$ and $v\in  V_n$. Then we have 
		\[
		b(u,v)=\langle Bu,v\rangle=\langle u,Bv\rangle=0
		\]
        where $b$ is the bilinear form associated to $q$ through the polarization identity. 	This, in particular, implies that for every $n \ge 1$, the following inequality holds
		\begin{equation}\label{eq:inequality-1}
		\iMor (q) \ge \iMor (q|_{V_n +(B(V_n))^\perp})\ge \iMor (q|_{V_n})+\iMor (q|_{(B(V_n))^\perp}).
		\end{equation}
        For concluding the proof we only need to prove that there exists  $m\in \N$ such that 
		 $\iMor (q)\le \iMor (q|_{(B(V_m))^\perp}) $.

		Let $P_n$ be the orthogonal projection onto $(B(V_n))^\perp$. By Lemma~\ref{lem:shrink_subspace_image}, we get that 
        \[
        \lim_{n\to +\infty}(\Id-P_n)x=0\qquad \textrm{ for each } \quad 
        x\in H.
        \]
		We denote by $W=\Span\{x_1,\cdots,x_k\}$ be the negative spectral space of $q$ and we observe that $ \lim_{n\to +\infty} b(P_n x_i,P_n x_j)=b(x_i,x_j)$.
		So, there exists $m>0$ such that  the matrix $A_{ij}:=b(P_m x_i,x_j)$  is negative definite and by this we get that 
	\begin{equation}\label{eq:inequality-2}
    \iMor (q|_{(B(V_m))^\perp}) \ge \iMor (q).
    \end{equation}
By adding the inequalities provided at Equation~\eqref{eq:inequality-1} and  Equation~\eqref{eq:inequality-2}, the conclusion  readily follows. 
	\end{proof}
\begin{ex}
We assume that the SL-operator $ L :\dom( L ^*)< L^2((a,b) ,\R^n)\to L^2((a,b) ,\R^n)$  is regular and that $P$ is positive definite. In particular under this assumptions  the associated maximal  operator $ L ^*$ is  bounded from below. 
Given the following decomposition  $\dom( L ^*)=\dom(  L )\oplus U$, let $\Lambda$ be any Lagrangian subspace of $U, \rho)$ and let $\Lambda_D< U$ be the Dirichlet Lagrangian. 
	It follows that  
    \[
    \dom( L ^*)=W^{2,2}([a,b],\R^n)\qquad \textrm{ and } \qquad \dom( L _{\Lambda_D})=W_0^{2,2}([a,b],\R^n).
    \]
	Let $E$ be an isotropic subspace of $(U, \rho)$ and we observe that for every $u,v\in \dom( L_E)$, the bilinear form  defined by  $\langle L_E u,v\rangle_{L^2}$  is bounded and symmetric.
	By taking the  closure of $\dom( L_E)$ in $W^{1,2}([a,b],\R^n)$, we extend this symmetric bilinear form and  we denote it by $t_{ L _E}$. So, in particular 
	$t_{ L _E}(\cdot,\cdot)$ is continuous on its domain denoted by  $\dom(t_{ L _E})$.
Moreover, we have
    \[
	\dom (t_{ L _{D}})=W_0^{1,2}([a,b],\R^n) \qquad \textrm{ and }  \dom (t_{ L _E})=\dom(t_{ L _{D}})+E.
	\]
	\end{ex}

	\begin{lem}
	Assume that $\Lambda$ is Lagrangian. We have  
	$\ker t_{ L _\Lambda}=\ker  L _\Lambda$.
	\end{lem}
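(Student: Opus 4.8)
The plan is to obtain the identity directly from the two facts about Friedrichs extensions already established in Appendix~\ref{appendix:friedrichs}, namely Lemma~\ref{lem:ker_form} and Corollary~\ref{cor:Friedrich_of_SA}, once it is checked that $L_\Lambda$ fits into the Friedrichs framework. First I would note that, since $\Lambda$ is a Lagrangian subspace, Lemma~\ref{lem:abstract_fundamental_solution} shows that $L_\Lambda=L^*|_{\dom(L)\oplus\Lambda}$ is self-adjoint; under the standing hypotheses of this section (the SL-operator is regular with $P$ positive definite, equivalently $L^*$ is bounded from below) the extension $L_\Lambda$ is bounded from below, so that $t_{L_\Lambda}$, the closure of the densely defined semibounded form $u\mapsto\langle L_\Lambda u,u\rangle$ on $\dom(L_\Lambda)$, is a well-defined closed semibounded quadratic form. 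After replacing $L_\Lambda$ by $L_\Lambda+cI$ with $c$ large, which changes neither $\ker t_{L_\Lambda}$ nor $\ker L_\Lambda$ in a way that affects the asserted equality, we may assume $L_\Lambda\ge 0$.

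Then I would apply Corollary~\ref{cor:Friedrich_of_SA} to the non-negative self-adjoint operator $L_\Lambda$: its Friedrichs extension coincides with $L_\Lambda$ itself, $(L_\Lambda)_F=L_\Lambda$. By the construction recalled in Appendix~\ref{appendix:friedrichs}, $(L_\Lambda)_F$ is precisely the self-adjoint operator represented by the closed form $t_{L_\Lambda}$, so Lemma~\ref{lem:ker_form}, applied with $A=L_\Lambda$, gives $\ker t_{L_\Lambda}=\ker (L_\Lambda)_F$. Combining the two displays yields $\ker t_{L_\Lambda}=\ker (L_\Lambda)_F=\ker L_\Lambda$, which is the claim.

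For completeness the two inclusions can also be seen directly: if $u\in\ker L_\Lambda$ then $u\in\dom(L_\Lambda)\subset\dom(t_{L_\Lambda})$ and $t_{L_\Lambda}(u,v)=\langle L_\Lambda u,v\rangle=0$ for every $v$ in the form-core $\dom(L_\Lambda)$, hence for every $v\in\dom(t_{L_\Lambda})$ by continuity in the form norm, so $u\in\ker t_{L_\Lambda}$; conversely, if $t_{L_\Lambda}(u,v)=0=\langle 0,v\rangle$ for all $v\in\dom(t_{L_\Lambda})$, then the representation property of $(L_\Lambda)_F=L_\Lambda$ forces $u\in\dom(L_\Lambda)$ with $L_\Lambda u=0$, i.e. $u\in\ker L_\Lambda$. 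The only point that is not entirely formal is the well-definedness of $t_{L_\Lambda}$ as a closed semibounded form, i.e. the semiboundedness of the self-adjoint extension $L_\Lambda$; this is exactly where the standing assumptions on $l$ enter, and everything else is an immediate consequence of the results of Appendix~\ref{appendix:friedrichs}.
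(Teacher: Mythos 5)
Your proof is correct, but it reaches the identity by a different route from the paper's. The paper argues directly: by density of $\dom(L_\Lambda)$ in $\dom(t_{L_\Lambda})$ and continuity of the form, $u\in\ker t_{L_\Lambda}$ if and only if $\langle u,L_\Lambda v\rangle_{L^2}=0$ for all $v\in\dom(L_\Lambda)$, so that $\ker t_{L_\Lambda}=\dom(t_{L_\Lambda})\cap(\image L_\Lambda)^{\perp}$, and then self-adjointness gives $(\image L_\Lambda)^{\perp}=\ker L_\Lambda\subset\dom(t_{L_\Lambda})$. You instead route the argument through the Friedrichs machinery of the appendix: $L_\Lambda$ is self-adjoint and semibounded, hence coincides with its own Friedrichs extension (Corollary~\ref{cor:Friedrich_of_SA} and the remark following it), so Lemma~\ref{lem:ker_form} applied with $A=L_\Lambda$ yields the claim. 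Both arguments ultimately rest on the same two facts (density of the operator domain in the form domain, and self-adjointness via the first representation theorem), so the difference is largely one of packaging; your version makes the dependence on the Friedrichs results explicit, while the paper's is more self-contained and also records the explicit description $\dom(t_{L_\Lambda})=W_0^{1,2}+\Lambda$.

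One step in your write-up is not right as stated: the claim that replacing $L_\Lambda$ by $L_\Lambda+cI$ ``changes neither $\ker t_{L_\Lambda}$ nor $\ker L_\Lambda$ in a way that affects the asserted equality.'' The shift replaces both kernels by the $(-c)$-eigenspaces, which are trivial for $c$ large, so the shifted identity $\ker t_{L_\Lambda+cI}=\ker(L_\Lambda+cI)$ reduces to $\{0\}=\{0\}$ and does not imply the original statement. The shift is in fact unnecessary: Lemma~\ref{lem:ker_form} concerns the radical of the \emph{unshifted} form $t_A$ of a semibounded operator, and a semibounded self-adjoint operator equals its Friedrichs extension without any normalization (a self-adjoint operator admits no proper self-adjoint extension). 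Your closing ``for completeness'' paragraph, which does not use the shift, already constitutes a complete and correct proof on its own, so the lemma stands.
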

	\begin{proof}
		$t_{ L _\Lambda}(u,v)=0$ for all $v\in \dom(t_{ L _\Lambda})$ is equivalent to  $t_{ L _\Lambda}(u,v)=0$ for all  $v\in \dom (L _\Lambda)$  since $\dom(L _\Lambda)$ is dense in $\dom( t_{ L _\Lambda})$.
		Similarly, it also equivalent  to $\langle u, L _\Lambda v\rangle_{L^2}=0$.
		So, we have
		 \[\ker t_{ L _\Lambda}=\dom (t_{ L _\Lambda})\cap (\image  L _{\lambda})^\perp=(W_0^{1,2}((a,b) ,\R^n)+\Lambda)\cap \ker  L _\Lambda=\ker  L _\Lambda.
		 \]
	\end{proof}
	\begin{lem}\label{thm:ghost}
		$\iMor(t_{L_\Lambda})=\iMor(L_\Lambda)$
		\end{lem}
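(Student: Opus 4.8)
The plan is to deduce the identity directly from Lemma~\ref{thm_=Morse}, which already gives $\iMor(A)=\iMor(t_A)$ for any closed self-adjoint operator $A$ of \emph{finite} Morse index; the only two points to settle are that $L_\Lambda$ fits into that framework and that the case $\iMor(L_\Lambda)=+\infty$ — not covered by the statement of Lemma~\ref{thm_=Morse} — is handled as well. (Alternatively, one could re-run the proof of Lemma~\ref{thm_=Morse} verbatim, observing that the finiteness hypothesis there is used only for bookkeeping; but the reduction below is cleaner.)

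First I would recall that, since $\Lambda$ is a Lagrangian subspace of $(W,\omega|_W)$, the extension $L_\Lambda=L^*|_{\dom(L)\oplus\Lambda}$ is a closed and self-adjoint operator on $H$ by Lemma~\ref{lem:abstract_fundamental_solution}, and that $t_{L_\Lambda}$ is precisely the closed quadratic form associated with it, with form domain $\dom(t_{L_\Lambda})=\dom(L_\Lambda)+\dom(t_L)$ (cf. Lemma~\ref{lem:characterize_form_domain}); in particular $t_{L_\Lambda}(u,u)=\langle L_\Lambda u,u\rangle$ for every $u\in\dom(L_\Lambda)$. Consequently, whenever $\iMor(L_\Lambda)<+\infty$, Lemma~\ref{thm_=Morse} applies with $A=L_\Lambda$ and yields $\iMor(t_{L_\Lambda})=\iMor(L_\Lambda)$, so there is nothing further to prove in this case.

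It remains to treat the case $\iMor(L_\Lambda)=+\infty$, where I claim $\iMor(t_{L_\Lambda})=+\infty$ as well. Let $E_-$ denote the negative spectral subspace of $L_\Lambda$, so that $\dim E_-=+\infty$ by hypothesis. Since $E_-$ reduces $L_\Lambda$ and $E_-\cap\dom(L_\Lambda)$ is a core for the self-adjoint operator $L_\Lambda|_{E_-}$ on $E_-$, this subspace is dense in $E_-$, hence infinite-dimensional. For every $u\in E_-\cap\dom(L_\Lambda)$ with $u\neq 0$ one has $u\in\dom(t_{L_\Lambda})$ and
\[
t_{L_\Lambda}(u,u)=\langle L_\Lambda u,u\rangle<0,
\]
because $\sigma\bigl(L_\Lambda|_{E_-}\bigr)\subset(-\infty,0)$. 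Thus $\dom(t_{L_\Lambda})$ contains negative subspaces of arbitrarily large finite dimension, whence $\iMor(t_{L_\Lambda})=+\infty=\iMor(L_\Lambda)$. Combining the two cases proves the lemma.

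The main — and in fact only — obstacle is the infinite-index case, which lies outside the scope of Lemma~\ref{thm_=Morse}; as indicated above, it is disposed of by a direct soft argument using that $E_-$ reduces the operator, and everything else is an immediate reduction to results already established.
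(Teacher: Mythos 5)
Your proof is correct, and in substance it is the same argument as the paper's: the paper simply re-derives Lemma~\ref{thm_=Morse} inline for $A=L_\Lambda$ (negative eigenvectors give a negative subspace of the form in one direction; $W^{1,2}$-approximation of a finite-dimensional negative form-subspace by vectors of $\dom(L_\Lambda)$ plus min--max in the other), whereas you delegate that work to the already-proved abstract lemma. The only genuine difference is that you explicitly dispose of the case $\iMor(L_\Lambda)=+\infty$, which the paper's proof glosses over: its first inequality is phrased in terms of eigenvectors of negative eigenvalues, which strictly speaking need not exhaust an infinite-dimensional negative spectral subspace if negative essential spectrum were present. Your spectral-subspace argument (density of $E_-\cap\dom(L_\Lambda)$ in $E_-$ and negativity of $\langle L_\Lambda u,u\rangle$ there) closes that gap cleanly; in the paper's context the point is moot since the regular SL-operator has compact resolvent, but your version is the more robust one. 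The one step you pass over quickly is the identification of $t_{L_\Lambda}$ as defined in the paper (closure of $\dom(L_\Lambda)$ in $W^{1,2}$) with the form $t_A$ of Lemma~\ref{thm_=Morse} (closure in the form norm); this holds because the two norms are equivalent on $\dom(L_\Lambda)$ when $P$ is positive definite, and is worth a sentence.
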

	\begin{proof}
		For each eigenvector $u$ of $L_\Lambda$ corresponding to a  negative eigenvalue, we get that  
        \[
        t_{L_\Lambda}(u,u)=(L_\Lambda u, u)<0.
        \]
        By this, it follows that $\iMor(L_\Lambda)\le  \iMor(t_{L_\Lambda})$.
		
		Conversely, let $V$ be a finite dimensional subspace of  $\dom(t_{L_\Lambda})$ where the form $ t_{L_\Lambda}$  is  negative definite and let $(v_i)_i$ be its basis.
		For each $\varepsilon>0$, there exists $(w_i)_i\subset \dom (L_\Lambda)$ such that 
		$\|v_i-w_i\|_{W^{1,2}}<\varepsilon$. For $\varepsilon$ small enough, $t_{L_\Lambda}|_{\Span\set{w_i}}=(L_\Lambda w_i, w_i)$ is negative definite. By this it follows that $\iMor(L_\Lambda)\ge  \iMor(t_{L_\Lambda})$ concluding the proof. 
	\end{proof}

\begin{lem}\label{lem:double_Q_orth_completion}
	Let $H$ be a Hilbert space,  $B\in \BFsa(H)$ and let $Q$ be a quadratic form defined by $Q(u)=\langle Bu,u\rangle$. Let $V$ be a closed subspace of $H$. Then we have 
    \[
    V^{QQ}=V+\ker Q.
    \]
\end{lem}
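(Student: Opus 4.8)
The plan is to reduce every statement about the quadratic form $Q$ to a statement about the bounded self-adjoint Fredholm operator $B$. Since $B$ is symmetric, the polarization identity gives $b_Q(u,v)=\langle Bu,v\rangle$ for all $u,v\in H$, so that $\ker Q=\{u\in H:\langle Bu,v\rangle=0\ \forall v\in H\}=\ker B$. Moreover, for any subspace $W\le H$ the form-annihilator is $W^Q=\{u\in H:\langle Bu,w\rangle=0\ \forall w\in W\}$, and using $\langle Bu,w\rangle=\langle u,Bw\rangle$ this equals $B^{-1}(W^{\perp})=(B(W))^{\perp}$. The first thing I would record is therefore $V^Q=(B(V))^{\perp}$, whence
\[
V^{QQ}=(V^Q)^Q=B^{-1}\bigl(((B(V))^{\perp})^{\perp}\bigr)=B^{-1}\bigl(\overline{B(V)}\bigr).
\]

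The key step, and the one where the Fredholm hypothesis is genuinely used, is to show that $B(V)$ is closed; here one must resist invoking the (false) claim that a bounded operator maps closed subspaces to closed subspaces. I would set $U:=V+\ker B$. Since $\ker B$ is finite-dimensional, $U$ is a closed subspace, and $B(U)=B(V)$. Because $\ker B\le U$, there is an orthogonal decomposition $U=\ker B\oplus U'$ with $U':=U\cap(\ker B)^{\perp}$ closed. As $B$ is self-adjoint and Fredholm, $\image B=(\ker B)^{\perp}$ is closed and the restriction $B|_{(\ker B)^{\perp}}\colon(\ker B)^{\perp}\to\image B$ is a continuous linear bijection between Banach spaces, hence a topological isomorphism by the open mapping theorem. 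Consequently $B(V)=B(U)=B(U')=B|_{(\ker B)^{\perp}}(U')$ is closed, being the image of a closed subspace under an isomorphism onto the closed subspace $\image B$.

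With $B(V)$ closed, the computation of the first paragraph reduces to $V^{QQ}=B^{-1}(B(V))$, and it remains only to identify this preimage. The inclusion $V+\ker B\subseteq B^{-1}(B(V))$ is immediate, since $B(V+\ker B)=B(V)$. For the reverse inclusion, if $w\in H$ satisfies $Bw\in B(V)$, choose $v\in V$ with $Bw=Bv$; then $w-v\in\ker B$, so $w\in V+\ker B$. Hence $V^{QQ}=V+\ker B=V+\ker Q$, which is the assertion. The main obstacle is precisely the closedness of $B(V)$ in the second paragraph: once that is established, everything else is bookkeeping with orthogonal complements.
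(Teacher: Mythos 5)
Your argument is correct and follows essentially the same route as the paper's proof: compute $V^{Q}=(B(V))^{\perp}$, deduce $V^{QQ}=B^{-1}(\overline{B(V)})$, and use Fredholmness to conclude $B(V)$ is closed so that $V^{QQ}=B^{-1}(B(V))=V+\ker B$. The only difference is that the paper simply asserts the closedness of $B(V)$, whereas you supply the (standard but worth recording) justification via the decomposition $V+\ker B=\ker B\oplus U'$ and the open mapping theorem.
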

	\begin{proof}
	We have	$V^Q=\set{u\in H|\langle Bu,v\rangle=0 \textrm{ for all } v\in V}=B^{-1}(V^\perp)$. Since $B$ is self-adjoint , we have $\langle Bu,v\rangle =\langle u,Bv\rangle $ and so, $V^Q =(B(V))^\perp$.
	Then we can conclude that 
	\[
	V^{QQ}= B^{-1}((B(V))^{\perp\perp})=B^{-1}(\overline{B(V)}).
	\]
	Since $B$ is a Fredholm operator, then $B(V)$ is closed. Then we have
	\[
V^{QQ}=B^{-1}(B(V))=V+\ker B.
	\]
	\end{proof}


\vskip.5truecm

\noindent

\begin{flushleft} 
Prof. Xijun Hu\\
School of Mathematics, \\
Shandong University\\
Jinan, 250100, P. R. China \\
E-mail: \texttt{xjhu@sdu.edu.cn}
\end{flushleft}

\vskip.5truecm

\begin{flushleft}
Prof. Alessandro Portaluri\\
Università degli Studi di Torino (DISAFA)\\
Largo Paolo Braccini, 2 \\
10095 Grugliasco, Torino (Italy)\\
Website: \texttt{https://portalurialessandro.wordpress.com}\\
E-mail: \texttt{alessandro.portaluri@unito.it}\\
\medskip
Visiting Professor of Mathematics\\
New York University (Abu Dhabi)\\
Saadiyat Marina District - Abu Dhabi (UAE)\\
E-mail: \texttt{ap9453@nyu.edu}\\
\end{flushleft}

\vskip1truecm

\begin{flushleft} 
Prof. Li Wu \\
School of Mathematics  \\
Shandong University\\
Jinan, 250100, P. R. China \\
E-mail: \texttt{vvvli@sdu.edu.cn}
\end{flushleft}

\end{document}